\documentclass{amsart}

\usepackage{amsmath,amssymb,enumerate}
\usepackage{mathptmx}

\usepackage[all]{xy}
\usepackage{amsthm}
\usepackage{amssymb} 
\usepackage{amsmath,amscd} 
\usepackage{mathrsfs}
\usepackage{color}
\usepackage{enumerate}

\setcounter{tocdepth}{1}

%
%


\def\GrMod{\operatorname{\mathsf{GrMod}}}

\def\turn!{\textup{!`}}

\def\op{\textup{op}}

\def\pd{\mathop{\mathrm{pd}}\nolimits}

\def\injdim{\mathop{\mathrm{id}}\nolimits}

\def\mrb{\mathrm{b}}

\def\Spec{\operatorname{Spec}}

\def\thick{\mathop{\mathsf{thick}}\nolimits}

\def\Tor{\operatorname{Tor}}


\def\kk{{\mathbf k}}


\def\NN{{\mathbb N}}

\def\ZZ{{\mathbb Z}}



\def\cD{{\mathcal D}}
\def\cE{{\mathcal E}}
\def\cF{{\mathcal F}}

\def\cI{{\mathcal I}}

\def\cP{{\mathcal P}}

\def\cX{{\mathcal X}}
\def\cY{{\mathcal Y}}
\def\cZ{{\mathcal Z}}


\def\sfC{{\mathsf{C}}}
\def\sfD{{\mathsf{D}}}

\def\sfH{{\mathsf{H}}}

\def\sfK{{\mathsf{K}}}



\def\tuB{{\textup{B}}}

\def\tuH{{\textup{H}}}

\def\tuZ{{\textup{Z}}}


\def\frkp{{\mathfrak{p}}}




\def\id{\operatorname{id}}
\def\op{\operatorname{op}}

\def\mod{\operatorname{mod}}
\def\Mod{\operatorname{Mod}}
\def\GrMod{\operatorname{Mod}^{\mathbb{Z}}}

\def\Ker{\mathop{\mathrm{Ker}}\nolimits}

\def\Proj{\operatorname{Proj}} 
\def\Inj{\operatorname{Inj}}

\def\Add{\operatorname{Add}}
\def\add{\operatorname{add}}

\def\Cok{\operatorname{Cok}}
\def\Coker{\operatorname{Cok}}
\def\image{\operatorname{Im}}

\def\Hom{\operatorname{Hom}}

\def\End{\operatorname{End}}

\def\Ext{\operatorname{Ext}}

\def\gldim{\operatorname{gldim}}



\newcommand{\RHom}{\operatorname{\Bbb{R}Hom}}

\newcommand{\lotimes}{\otimes^{\mathbb{L}}}

\newcommand{\cone}{\operatorname{\mathsf{cn}}}


\def\RHom{\operatorname{\mathbb{R}Hom}}

\def\Perf{\mathsf{perf}}








\newtheorem{lemma}{Lemma}[section]
\newtheorem{proposition}[lemma]{Proposition}
\newtheorem{theorem}[lemma]{Theorem}
\newtheorem{corollary}[lemma]{Corollary}

\newtheorem{claim}[lemma]{Claim}

\theoremstyle{definition}

\newtheorem{remark}[lemma]{Remark}
\newtheorem{example}[lemma]{Example}
\newtheorem{definition}[lemma]{Definition}
\newtheorem{conjecture}[lemma]{Conjecture}

\theoremstyle{remark}



\def\champ{{\mathop{\mathrm{amp}}}}
\def\depth{{\mathop{\mathrm{depth}}}}

\def\inf{{\mathop{\mathrm{inf}}}}
\def\sup{{\mathop{\mathrm{sup}}}}

\def\ulfrkp{{\underline{\frkp}}}

\def\fd{\mathop{\mathrm{fd}}\limits}

\title[Resolutions  of DG-modules]
{
Resolutions and homological dimensions of DG-modules
}

\date{}

\dedicatory{\small Dedicated to L. Avramov's 70 birthday}
\author{Hiroyuki Minamoto
}

\keywords{Differential graded algebras; Differential graded modules}

\subjclass[2010]{16E45, 16E05, 13D05}

\begin{document}

\address{Department of Mathematics Osaka Prefecture University,\\ Sakai City, Japan}
\email{minamoto@mi.s.osakafu-u.ac.jp}

\begin{abstract}
Recently, Yekutieli introduced projective dimension, injective dimension and flat dimension of DG-modules 
by generalizing the characterization of projective dimension, injective dimension  and flat dimension of ordinary modules 
by vanishing of $\Ext$ or $\Tor$-groups.  
In  this paper, 
we introduce DG-version of projective, injective and flat resolution for  DG-modules over a connective DG-algebra  
which are different from known DG-version of projective,  injective and flat  resolutions. 
An important feature of these resolutions is  that, roughly speaking, the ``length" of these resolutions give projective, injective or flat dimensions. 
We show that these resolutions allows us to investigate basic properties of projective, injective and flat dimensions of DG-modules. 
As an application we introduce the global dimension of a connective DG-algebra 
and show that finiteness of global dimension is derived invariant. 
\end{abstract}

\maketitle

\tableofcontents

\section{Introduction}
Differential graded (DG) algebra lies in the center of homological algebra 
and allows us to use techniques of homological algebra of  ordinary algebras in much wider context.  
The projective resolutions and the injective resolutions which are the fundamental tools of homological algebra 
already have their DG-versions, which are called a DG-projective resolution and a DG-injective resolution.  
The aim of  this paper is to introduce a different 
DG-versions  
for  DG-modules over a connective DG-algebra. 
The motivation came from the projective dimensions and the injective dimensions for DG-modules introduced by Yekutieli. 
 
We explain the details by focusing on the projective dimension and the projective resolution. 
Let $R$ be an ordinary algebra. 
One of the most fundamental and basic homological invariant for a (right) $R$-module $M$ is the projective dimension $\pd_{R}M$. 
Avramov-Foxby \cite{Avramov-Foxby:Homological dimensions}  generalized the projective dimension 
for an object of the derived category $M \in \sfD(R)$. 
Recently, Yekutieli \cite{Yekutieli} introduced the projective dimension $\pd_{R} M$ 
for an object of $M \in \sfD(R)$ in the case where $R$ is a DG-algebra 
from the view point that 
the number  $\pd_{R} M$ measures how the functor $\RHom_{R}(M, -)$ changes the amplitude of the cohomology groups. 
(In the case where $R$ is an ordinary ring, for a DG-$R$-module $M \in \sfC(R)$  we have two projective dimensions, by Avramov-Foxby  and by Yekutieli. 
 It is explained in Remark \ref{AF projective dimension} that  they are essentially the same.)

Let $R$ be an ordinary ring and $M$ an $R$-module, again. 
Recall that the projective dimension $\pd_{R} M$ is characterized as the smallest length of 
projective resolutions $P_{\bullet}$ 
\[
 0 \to P_{d} \to P_{d-1} \to \cdots \to P_{1} \to P_{0} \to M \to 0. 
\]
There are the notion of DG-projective resolution, which is also called projectively cofibrant replacement  and so on,
which is a generalization of  a projective resolution for a DG-module $M$ over a  DG-algebra $R$. 
However, it is not suitable to measure the projective dimension.  
The aim of this paper is to introduce a notion of a sup-projective (sppj) resolution of an object of $M \in \sfD(R)$ 
which can measure the projective dimension, in the case where $R$ is a connective ($=$non-positive) DG-algebra. 

Recall that a (cohomological) DG-algebra $R$ is called \textit{connective} if 
the vanishing condition $\tuH^{> 0}(R) =0$ of the cohomology groups is satisfied. 
There are rich sources  of connective DG-algebras:  
the Koszul algebra $K_{R}(x_{1}, \cdots, x_{d})$ in  commutative ring theory, 
and an endomorphism DG-algebra $\RHom(S,S)$ of a silting object $S$ 
(for silting object see \cite{Aihara-Iyama}). 
We would like to point out that a commutative connective DG-algebras 
are regarded as the coordinate algebras of derived affine schemes 
in derived algebraic geometry (see e.g. \cite{Gaitsgory-Rozenblyum}).

Let $R$ be a connective DG-algebra. 
We set $\cP := \Add R \subset \sfD(R)$ to be the additive closure of $R$ inside $\sfD(R)$.
Namely, $\cP$ is the full subcategory consisting of $M \in \sfD(R)$ 
which is a direct summand of some coproduct of $R$. 
In sppj resolution, $\cP$ plays the role of projective modules in the usual projective resolution. 

A sppj resolution $P_{\bullet} $ of $M \in \sfD^{< \infty}(R)$ 
is a sequence of exact triangles $\{ \cE_{i}\}_{i \geq 0}$ 
\[
\cE_{i} : M_{i +1} \xrightarrow{g_{i +1}} P_{i} \xrightarrow{f_{i}} M_{i} \to 
\]
such that $f_{i}$ is a sppj morphism (Definition \ref{sppj morphism definition}), 
where we set $M_{0} := M$.  
We often exhibit a sppj resolution $P_{\bullet}$ as below by splicing $\{\cE_{i}\}_{i \geq 0}$ 
\[
P_{\bullet}:  \cdots \to P_{i}  \xrightarrow{ \delta_{i}} P_{i -1} \xrightarrow{ \delta_{i -1}} \cdots \to P_{1} \xrightarrow{\delta_{1}} P_{0} \to M
\]
where we set $\delta_{i} := g_{i} f_{i}$. 
It is analogous to that in the case where $R$ is an ordinary algebra,  
a projective resolution $P_{\bullet}$ of an $R$-module  $M$ is constructed by 
splicing exact sequences 
\[
0 \to M_{i+1} \to P_{i} \to M_{i} \to 0 
\]
with $P_{i}$ projective.

We state the main result which gives equivalent conditions of $\pd M =d$. 
For a DG-$R$-module $M$, we set $\sup M := \sup\{ n \in \ZZ \mid \tuH^{n}(M) \neq 0\}$. 
We denote by $\sfD^{< \infty}(R)$ the derived category of DG-$R$-modules $M$ bounded from above, i.e., $\sup M < \infty$. 
We note that if $M \in \sfD(R)$ has finite projective dimension, then it belongs to $\sfD^{< \infty}(R)$ (Lemma \ref{201711191911}). 

\begin{theorem}[{Theorem \ref{sppj resolution theorem}}]
Let $M \in \sfD^{< \infty}(R)$ and $d \in \NN$ a natural number. 
Then  
the following conditions are equivalent 

\begin{enumerate}[(1)]
\item 
$\pd  M  = d$.

\item 
For any sppj resolution $P_{\bullet}$, 
there exists a natural number $e \in \NN$ 
which satisfying the following properties 

\begin{enumerate}[(a)]
\item $M_{e}$ belongs to $\cP[-\sup M_{e}]$.  

\item 
$d = e+ \sup P_{0} -\sup M_{e}$. 

\item 
The structure morphism $g_{e}: M_{e} \to P_{e-1} $ is not a split-monomorphism. 
\end{enumerate} 

\item 
$M$ has sppj resolution $P_{\bullet}$ of length $e$ 
which satisfies the following properties. 
\[
 P_{e} \xrightarrow{\delta_{e}} P_{e-1} \xrightarrow{\delta_{e-1}} \cdots 
 P_{1} \xrightarrow{\delta_{1}} P_{0} \xrightarrow{f_{0}}  M  
\]

\begin{enumerate}[(a)]
\item 
$d = e+ \sup P_{0} -\sup P_{e}$. 

\item 
The $e$-th differential $\delta_{e}$ is not a split-monomorphism. 
\end{enumerate} 

\item 
 The functor $F= \RHom(M, -)$ sends the standard heart $\Mod \tuH^{0}(R)$ 
to $\sfD^{[-\sup M , d -\sup M]}(R)$ 
and there exists $N \in \Mod H^{0}$ such that $\tuH^{d  -\sup M}(F(N)) \neq 0$.

\item 
$d$ is the smallest number 
which satisfies  
\[
M \in \cP[-\sup M] \ast \cP[-\sup M +1] \ast \cdots \ast \cP[-\sup M  +d]. 
\] 

\end{enumerate}
\end{theorem}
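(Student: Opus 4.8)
The backbone of the argument is the equivalence
\[
\pd M\le d\ \iff\ M\in\cP[-\sup M]\ast\cP[-\sup M+1]\ast\cdots\ast\cP[-\sup M+d];
\]
taking $d$ minimal this is (1)$\Leftrightarrow$(5), condition (4) is a d\'evissage reformulation of (1), and (2),(3) are the sppj-resolution incarnations of (5). So the plan is: prove (1)$\Leftrightarrow$(4); prove the backbone, deducing (1)$\Leftrightarrow$(5); then bridge to (3) and (2) through a sppj resolution and its comparison theorem. Connectivity of $R$ is used throughout in the form ``$\cP=\Add R$ lies in cohomological degrees $\le0$''. I would also establish first, as a base case, that $\pd N\le0\iff N\in\cP[-\sup N]$.

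For (1)$\Leftrightarrow$(4) I would reduce $\pd$ to the standard heart, proving
\[
\pd M\ =\ \sup M\ +\ \sup\{\,\sup\RHom(M,N')\mid 0\ne N'\in\Mod\tuH^0(R)\,\}.
\]
Here ``$\ge$'' is immediate; ``$\le$'' is a Postnikov-tower d\'evissage of an arbitrary test object $N$ into its truncations, with subquotients $\tuH^n(N)[-n]$, using sub- and super-additivity of $\sup\RHom(M,-)$ and $\inf\RHom(M,-)$ along exact triangles together with the a priori bound $\RHom(M,N')\in\sfD^{\ge-\sup M}(R)$ for $N'$ in the heart (valid since $M\in\sfD^{\le\sup M}(R)$). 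Since $\inf\RHom(M,N')=-\sup M$ is attained, e.g.\ for $N'=\tuH^{\sup M}(M)$, condition (4) says exactly that $F(\Mod\tuH^0(R))\subseteq\sfD^{[-\sup M,\,d-\sup M]}(R)$ with the top degree realised, i.e.\ $\pd M=d$.

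For the backbone, ``$\Leftarrow$'' is cheap: for $N'$ in the heart $\RHom(\cP,N')$ is concentrated in degree $0$, hence $\RHom(\cP[j],N')$ in degree $j$, and applying the triangulated functor $\RHom(-,N')$ to the $d+1$ defining triangles puts $\RHom(M,N')$ in $\sfD^{[-\sup M,\,d-\sup M]}(R)$. For ``$\Rightarrow$'' I would take a sppj resolution $\{\cE_i:M_{i+1}\xrightarrow{g_{i+1}}P_i\xrightarrow{f_i}M_i\to\}$ of $M$, with $P_i\in\cP[-\sup M_i]$, $\sup M_{i+1}\le\sup M_i$ and $\sup P_i=\sup M_i$ (the last two because $f_i$, being sppj, is surjective on $\tuH^{\sup M_i}$). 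Applying $\RHom(-,N')$ to $\cE_i$ and using that $\RHom(P_i,N')$ sits in the single degree $-\sup M_i$ gives the dimension-shift estimate $\sup\RHom(M_{i+1},N')\le\max(-\sup M_i,\,\sup\RHom(M_i,N')-1)$, which through the heart identity reads $\pd M_{i+1}\le\pd M_i-1+(\sup M_{i+1}-\sup M_i)\le\pd M_i-1$ as long as $M_{i+1}\notin\cP[-\sup M_{i+1}]$ (using the base case). Hence within $d$ steps one reaches $M_e\in\cP[-\sup M_e]$; splicing the triangles gives $M\in P_0\ast P_1[1]\ast\cdots\ast P_{e-1}[e-1]\ast M_e[e]$, whose factors lie in $\cP[-\sup M_i+i]$ with exponents strictly increasing, and summing the sharp recursion down to $M_e$ (where $\pd M_e=0$) yields $e+\sup M-\sup M_e\le\pd M\le d$, so the exponents are contained in $\{-\sup M,\dots,-\sup M+d\}$; padding with zero objects then lands $M$ in the filtration of length $d+1$. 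This proves the backbone, hence (1)$\Leftrightarrow$(5). Moreover the splicing together with the ``$\Leftarrow$'' direction gives $\pd M\le e+\sup M-\sup M_e$, and combined with the lower bound just obtained, $\pd M=e+\sup M-\sup M_e=e+\sup P_0-\sup P_e$, which is (3a) (equivalently (2b) since $\sup P_e=\sup M_e$).

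It remains to handle the non-split-mono conditions and to pass from ``some'' to ``every'' resolution. If $g_e$, equivalently $\delta_e=g_ef_e$, were a split monomorphism, then $M_{e-1}$ would be a direct summand of $P_{e-1}\in\cP[-\sup M_{e-1}]$, so $e-1$ would already be a termination index, contradicting the minimality of $e$ forced by (5); this gives (5)$\Leftrightarrow$(3). Finally (2)$\Rightarrow$(3) is just the existence of a sppj resolution, and (3)$\Rightarrow$(2) follows from the comparison (essential uniqueness) of sppj resolutions established earlier: the termination index $e$, the numbers $\sup M_i$, and the non-split-mono-ness of $g_e$ are invariants of $M$, so the good resolution of (3) witnesses the properties for every sppj resolution. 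I expect the real obstacle to be the ``$\Rightarrow$'' half of the backbone — certifying that the $\ast$-filtration extracted from a sppj resolution has length exactly $d+1$, keeping the step count, the $\sup$-jumps, and the amplitude bound of (4) in lockstep — together with pinning down the bookkeeping identity $d=e+\sup P_0-\sup P_e$; the base case $\pd N\le0\iff N\in\cP[-\sup N]$ and the comparison theorem for sppj resolutions are the two prior inputs the proof leans on.
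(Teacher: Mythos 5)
Your architecture matches the paper's in outline (the base case $\pd N=0\iff N\in\cP[-\sup N]$ is Lemma \ref{20171219205II}, the dimension-shift recursion is Lemma \ref{20171219228}, and the termination-and-splicing argument is the proof of (1)$\Rightarrow$(2) inside Theorem \ref{sppj resolution theorem lemma II}), but two of the inputs you lean on do not hold as stated. First, the ``comparison (essential uniqueness) of sppj resolutions'' you invoke for (3)$\Rightarrow$(2) is false: neither the termination index $e$ nor the numbers $\sup M_i$ are invariants of $M$. The paper's own example $M_{(n)}=R\oplus R[n]$ admits one sppj resolution of length $2$ and another of length $n$; only the combination $e+\sup P_0-\sup P_e$ (for resolutions whose last map is not a split mono) is an invariant. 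This also undercuts your treatment of the non-splitting condition, which appeals to ``the minimality of $e$ forced by (5)'' even though the $e$ of a given resolution need not be minimal. The paper instead gets (2) by running the termination recursion on an \emph{arbitrary} sppj resolution (your backbone argument essentially already does this), and gets (3)$\Rightarrow$(1) from the explicit computation of $\Hom(M,N[n])$ in Lemma \ref{sppj resolution lemma}: with $N=\tuH^{\sup}(P_e)$ and the equivalence $\cP\simeq\Proj H^{0}$, non-splitting of $\delta_e$ forces $\Hom(M,N[d-\sup M])\neq0$, which is the required lower bound on $\pd M$. Your recursion route is repairable (non-splitting of $g_e$ gives $\pd M_{e-1}>0$ by the proof of Lemma \ref{20171219205II}, hence $\pd M_i>0$ for all $i<e$ by Corollary \ref{20171219205}, and then the sharp recursion applies), but the lower-bound half of that ``sharp recursion'' is itself the nontrivial second half of Lemma \ref{20171219228}, and you only sketch the upper-bound half.

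Second, your d\'evissage proof of (4)$\Rightarrow$(1) covers only bounded test objects. Projective concentration in Yekutieli's sense quantifies over all intervals $[m,n]$ with $m,n\in\{-\infty\}\cup\ZZ\cup\{\infty\}$, and for $N$ unbounded below the bound $\sup\RHom(M,N)\le n-\sup M+d$ does not follow from triangle sub-additivity over a finite Postnikov tower. This is precisely where the paper does its hardest work: Claim \ref{201804052018} shows that the heart condition already forces every sppj resolution to terminate (via the splitting Lemma \ref{201711171950} and the case analysis there), and only then are the unbounded intervals reduced to the bounded case. Your proposal has no counterpart to this step, and you cannot borrow the finite resolution from your backbone ``$\Rightarrow$'' argument, since that argument starts from (1) --- exactly what (4)$\Rightarrow$(1) is trying to establish. (A $\holim$/Mittag--Leffler argument along $N\cong\holim_{k}\sigma^{\ge-k}N$ would be an alternative way to close this case, but it is not what you wrote.)
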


The condition (4) tells that the projective dimension of $M$ can be measured  by only looking the standard heart $\Mod \tuH^{0}(R)$ 
of the derived category $\sfD(R)$. 
The condition (5) says that the projective dimension $\pd M$ is the smallest number of extensions 
by which we obtain $M$ from the ``projective objects" $\cP$ (see Definition \ref{cP definition}).

A similar result for the injective dimension and the flat dimensions are given in Theorem \ref{ifij resolution theorem}
and Theorem \ref{spft resolution theorem}. 

In the final part of this paper,  
we introduce the global dimension $\gldim R$ of a connective DG-algebra $R$. 
For an ordinary ring $R$, a key result to define the global dimension $\gldim R$ is 
that the supremum  of the projective dimensions $\pd M$ of all $R$-modules $M$ and 
that of the injective dimensions $\injdim M$ coincide. 
We provide a similar result for a connective DG-algebra $R$. 
It is well-known that the ordinary global dimensions is not preserved by derived equivalence, 
but their finiteness is preserved. We prove the DG-version of this result.

In the subsequent  paper \cite{CDGA}  we study connective commutative DG-algebras (CDGA), 
more precisely, piecewise Noetherian CDGA, which is a DG-counter part of commutative Noetherian algebra. 
First  we develop basic notion (e.g. depth, localization) and 
establish their properties (e.g. Auslander-
Buchsbaum formula). 
Then, we study (minimal) inf-injective (ifij) resolutions  introduced  in this paper. 
We observe that 
a DG-counter  part $E_{R}(R/\ulfrkp')$  of the class of  indecomposable injective modules 
are  parametrized by  prime ideals $\frkp \in \Spec H^{0}$ of the $0$-th cohomology algebra $H^{0}: =\tuH^{0}(R)$. 
This fact is compatible with the view point of derived algebraic geometry that 
the base affine scheme of the derived affine scheme $\Spec R$ associated to 
a CDGA $R$ is the affine scheme $\Spec H^{0}$. 
We introduce the Bass number $\mu_{R}^{i}(\frkp, M)$  and shows that it describes a minimal ifij resolution of $M$ as in ordinary commutative ring theory. 
One of the main result  is a structure theorem of a minimal ifij resolution of a dualizing complex $D$. 

\begin{theorem}[{\cite{CDGA}}]
Let $R$ be a connective piecewise Noetherian CDGA.  
Assume that $R$ has a dualizing complex  $D \in \sfD(R)$ with a minimal ifij resolution $I_{\bullet}$ of length $e$.  
Then  $H^{0}:= \tuH^{0}(R)$ is catenary and $\dim H^{0} < \infty$. 
If moreover we assume that $\tuH^{0}(R)$ is local, 
then  the following statements hold. 
\begin{enumerate}[(1)]

\item  $\inf I_{-i} = \inf D$ for $i = 0, \cdots, e$.  

\item $e = \injdim D =\depth D = \dim H^{0}$.

\item 
\[
I_{-i} = \bigoplus_{\frkp } E_{R}(R/\ulfrkp')[-\inf D] 
\]
where $\frkp$ run all prime ideals such that $i = \dim H^{0} - \dim H^{0}/\frkp$. 
 \end{enumerate}
 \end{theorem}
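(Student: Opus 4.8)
The plan is to transpose the Grothendieck--Hartshorne structure theorem for minimal injective resolutions of dualizing complexes to the connective DG setting. The engine is the Bass-number description of minimal inf-injective resolutions established in \cite{CDGA}: if $M \in \sfD(R)$ has finite injective dimension, then in its minimal ifij resolution $I_\bullet$ one has $I_{-i} \cong \bigoplus_{\frkp \in \Spec H^0} E_R(R/\ulfrkp')[s_{\frkp,i}]^{\oplus \mu_R^i(\frkp, M)}$ for suitable shifts $s_{\frkp,i}$. Granting this, statements (1) and (3) reduce to two assertions about the dualizing complex $D$: first, $\mu_R^i(\frkp, D) = 1$ when $i = \dim H^0 - \dim H^0/\frkp$ and $\mu_R^i(\frkp, D) = 0$ otherwise; second, the shift attached to each summand is the single value $s_{\frkp,i} = -\inf D$, independent of $\frkp$ and $i$ --- which is exactly the assertion $\inf I_{-i} = \inf D$ of (1).

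First I would compute the Bass numbers by imitating commutative ring theory. Formation of $E_R(R/\ulfrkp')$ and of $\mu_R^\bullet(\frkp, -)$ commutes with localization at $\frkp$, using the localization theory of connective piecewise Noetherian CDGAs developed in \cite{CDGA}, and $D_\frkp$ is again a dualizing complex, now over a CDGA whose $0$-th cohomology is the local ring $(H^0)_\frkp$. One thereby reduces to the case in which $H^0$ is local with maximal ideal $\frkm$, where $\mu_R^i(\frkm, D)$ equals the dimension over the residue field of $\tuH^i$ of the residue-field complex $\RHom_R(\kk(\frkm), D)$. The characterizing property of a dualizing complex forces $\RHom_R(\kk(\frkm), D)$ to be one-dimensional, concentrated in a single cohomological degree $\delta(\frkp)$; letting $\frkp$ vary, $\delta$ is a codimension function on $\Spec H^0$. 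The first assertion of the theorem is then formal: a Noetherian scheme admitting a codimension function is catenary, and since $D$ has a minimal ifij resolution of finite length $e$ the function $\delta$ is bounded, so $\dim H^0 \le e < \infty$. When $H^0$ is local the scheme $\Spec H^0$ is connected, hence $\delta$ is unique up to an additive constant; fixing that constant by the normalization of $D$ used in \cite{CDGA} and using catenarity to pass between heights and the quantities $\dim H^0 - \dim H^0/\frkp$ yields $\delta(\frkp) = \dim H^0 - \dim H^0/\frkp$; tracking the same constant through the identification of the summands gives $s_{\frkp,i} = -\inf D$. Together these prove (1) and (3).

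The numerical identities (2) then follow. The length is $e = \max\{\, i : I_{-i} \neq 0 \,\} = \max_{\frkp}\,(\dim H^0 - \dim H^0/\frkp) = \dim H^0$, the maximum being attained at $\frkm$, while $I_0 \neq 0$ because a minimal prime of a component of maximal dimension contributes in degree $i = 0$. The equality $\injdim D = e$ is the inf-injective analogue of Theorem~\ref{sppj resolution theorem} (namely Theorem~\ref{ifij resolution theorem}): in general the injective dimension equals the length of the minimal ifij resolution plus a correction given by the difference of the $\inf$'s of the two extreme terms $I_0$ and $I_{-e}$, and this correction vanishes here since $\inf I_{-i} = \inf D$ for every $i$. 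Finally $\depth D = e$ follows from the characterization of depth developed in \cite{CDGA} (via $\RHom_R(\kk(\frkm), -)$, together with the Auslander--Buchsbaum--type formula proved there), since $\RHom_R(\kk(\frkm), D)$ has already been located in the single cohomological degree $\delta(\frkm) = \dim H^0$.

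The hard part will be the passage from homological algebra over the DG-algebra $R$ to the geometry of the ordinary scheme $\Spec H^0$: arranging that localization, the residue-field DG-modules $\kk(\frkp)$, and the Bass numbers behave as in the classical case is the substantive content of \cite{CDGA}, and a priori the higher cohomology $\tuH^{<0}(R)$ could distort the function $\delta$. Concretely, the delicate point is to show that the codimension function attached to $D$ is honestly $\dim H^0 - \dim H^0/(-)$, up to the normalization constant, rather than that quantity plus a correction coming from $\tuH^{<0}(R)$; this is the precise form of the heuristic, emphasized in the introduction, that a dualizing complex over $\Spec R$ only ``sees'' the classical base $\Spec H^0$. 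Once this is secured the classical arguments transcribe with little change, and the remaining bookkeeping for the shifts $-\inf D$ is routine.
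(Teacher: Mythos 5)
There is an important mismatch of expectations here: this paper does not prove the statement at all. The theorem is quoted, with attribution, from the companion paper \cite{CDGA}, and all of the ingredients you invoke (dualizing DG-modules, localization of piecewise Noetherian CDGAs, residue-field complexes, Bass numbers $\mu^{i}_{R}(\frkp,M)$, depth, the modules $E_{R}(R/\ulfrkp')$) are developed there, not here. So there is no in-text proof to compare yours against; I can only measure your sketch against what this paper announces. On that score your outline is the expected one and matches the advertised strategy: the paper says \cite{CDGA} shows that Bass numbers describe minimal ifij resolutions and explicitly presents the theorem as the analogue of the Hartshorne--Foxby structure theorem, so transposing the classical codimension-function argument is almost certainly the intended route, and your deduction of catenarity and $\dim H^{0}<\infty$ from a bounded codimension function, and of (2) from (1), (3) together with Theorem \ref{ifij resolution theorem}, is sound modulo that toolkit.

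The genuine weak point is where you declare the shift bookkeeping ``routine''. In this framework statement (1) is not a normalization constant: by the corollary on minimal ifij resolutions following Lemma \ref{ifij resolution lemma}, for a simple $H^{0}$-module $S$ the groups $\Hom(S,D[n])$ are concentrated in the degrees $n=i+\inf I_{-i}$, so $\inf I_{-i}=\inf D$ for all $i$ is equivalent to the assertion that the cohomological degrees in which $\RHom_{R}(\kk(\frkm),D)$ and its localized analogues live are exactly the consecutive values $\inf D,\inf D+1,\dots,\inf D+e$, with each prime of codimension $i$ contributing to $I_{-i}$ and to no other term and with no extra shift. Proving this needs (a) surjectivity of the codimension function $\delta$ onto $\{0,\dots,\dim H^{0}\}$, which is where the local and catenary hypotheses enter, and (b) that minimality plus the Bass-number description pins each $E_{R}(R/\ulfrkp')$ into the single term $I_{-\delta(\frkp)}$ with the single shift $-\inf D$, i.e.\ that $\tuH^{<0}(R)$ cannot create shifted copies. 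You flag (b) yourself in your last paragraph as the delicate point, but you do not resolve it: as written, your $s_{\frkp,i}=-\inf D$ is assumed (it is the conclusion of (1)) rather than derived. Likewise, your appeal to ``the characterizing property of a dualizing complex'' to get $\RHom_{R}(\kk(\frkm),D)$ one-dimensional and concentrated in a single degree requires the DG definition of a dualizing complex and its compatibility with localization and reduction to $H^{0}$, which is again the substantive content of \cite{CDGA} and cannot be taken for granted from the present paper alone.
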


This statement is completely analogues to the structure theorem of a minimal injective resolution of 
a dualizing complex, 
which is  one of the fundamental result in  classical commutative ring theory  
 proved in \cite{Residues and Duality}, summarized in \cite[Theorem 4.2]{Foxby:Complexes}. 
In \cite{CDGA} we  see that 
not only this theorem but also other results about minimal ifij resolutions 
are analogous  to  classical results about minimal injective resolution.
This fact supports that the ifij resolution is a proper generalization of the injective resolution.  
We can expect that  it become an indispensable tool for studying DG-modules 
like an ordinary injective resolutions for studying modules.

The paper is organized as follows. 
In Section \ref{Resolutions of DG-modules}, we introduce and study a sppj resolution.  
Section \ref{ifij resolution} deals with an inf-injective(ifij) resolution.   
Since the basic properties are proved in  the same way with that of the similar statement of sppj resolutions, most of all proofs are omitted. 
However we need to study the class $\cI$ which plays the role of injective modules for a ordinary injective resolution. 
This class of DG-modules was already studied by Shaul in \cite{Shaul:Injective}, in which he denoted $\cI$ by $\Inj R$. 
But we take a different approach to the class $\cI$.  
Section \ref{spft resolution} deals with a sup-flat (spft) resolution. 
A description of the class $\cF$ of flat dimension $0$ is still a conjecture. 
In Section \ref{The global dimension} we introduce the global dimension $\gldim R$ of a connective DG-algebra $R$ 
and prove that finiteness is preserved by derived equivalence.

\subsection{Notation and convention}

The basic setup and notations are the followings.

Throughout the paper, we fix a base commutative ring $\kk$ and  (DG, graded) algebra is  (DG, graded) algebra over $\kk$.
We denote by $R =(R, \partial) $   a connective cohomological  DG-algebra. 
Recall that  ``connective" means that $\tuH^{> 0}(R) = 0$.  
We note that every connective DG-algebra $R$ is quasi-isomorphic to a DG-algebra $S$ such that $S^{>0} =0$. 
Since quasi-isomorphic DG-algebras have equivalent derived categories, 
it is harmless to assume that $R^{>0} = 0$ for our purpose. 

For simplicity we denote by $H := \tuH(R)$ the cohomology algebra of $R$, 
by $H^{0} := \tuH^{0}(R)$ the $0$-th cohomology algebra of $R$. 
We denote by $\GrMod H$ the category of graded $H$-modules, 
by $\Mod H^{0}$ the category of $H^{0}$-modules. 

We denotes by  $\sfC(R)$  the category of DG-$R$-modules and cochain morphisms, 
by $\sfK(R)$  the homotopy category of DG-$R$-modules 
and by  $\sfD(R)$ the derived category of DG $R$-modules. 
The symbol $\Hom$ denotes the $Hom$-space of $\sfD(R)$. 

Let $n \in \{ -\infty \} \cup \ZZ \cup \{ \infty\}$. 
The symbols $\sfD^{<n}(R)$, $\sfD^{>n}(R)$ denote the full subcategories of $\sfD(R)$ consisting of $M$ such that 
$\tuH^{\geq n}(M) = 0, \ \tuH^{\leq n}(M) = 0$ respectively. 
We set 
$\sfD^{[a,b]}(R) = \sfD^{\geq a}(R) \cap \sfD^{\leq b}(R)$ 
for $a,b \in \{ - \infty \} \cup \ZZ \cup \{ \infty\}$ such that $a \leq b$. 
We set  $\sfD^{\mrb}(R) := \sfD^{< \infty}(R) \cap \sfD^{> -\infty}(R)$. 

Since $R$ is connective, the pair $(\sfD^{\leq 0}(R), \sfD^{\geq 0}(R))$ is a $t$-structure in $\sfD(R)$, 
which is called the \textit{standard} $t$-structure. 
The truncation functors  are denote by  $\sigma^{<n}, \sigma^{>n}$.  
We identify the heart $\sfH = \sfD^{\leq 0}(R) \cap \sfD^{\geq 0}(R)$ 
 of the standard  $t$-structure with  $\Mod H^{0}$ via the functor $\Hom(H^{0}, -)$, 
 which fits into the following commutative diagram 
 \[
 \begin{xymatrix}{ 
  && \sfH \ar[drr]^{\mathsf{can}} \ar[dd]_{\cong}^{\Hom(H^{0},-)} && \\ 
 \sfD(R) \ar[urr]^{\tuH^{0}} \ar[drr]_{\Hom(R, -)}  &&&& \sfD(R) \\ 
 && \Mod H^{0},  \ar[urr]_{f_{*}} &&
 }\end{xymatrix}
 \]
 where $\mathsf{can}$ is the canonical inclusion functor 
 and $f_{*}$ is the restriction functor along a canonical projection $f: R \to H^{0}$.

For a DG-$R$-module $M\neq 0$, we set
$\inf M := \inf \{ n \in  \ZZ \mid \tuH^{n}(M) \neq 0\}$,  
$\sup M := \sup \{ n \in \ZZ \mid \tuH^{n}(M) \neq 0\}$, 
$\champ M := \sup M - \inf M$. 
In the case $\inf M > - \infty$, we use the abbreviation $\tuH^{\inf}(M) := \tuH^{\inf M }(M)$. 
Similarly in the case $\sup M < \infty $, we use the abbreviation $\tuH^{\sup}(M) := \tuH^{\sup M}(M)$. 
We formally set $\inf 0:= \infty$ and $\sup 0 := - \infty$. 

In the case where we need to indicate the DG-algebra $R$, we denote $\sup_{R} M, \inf_{R} M$ and $\champ_{R} M$.

\vspace{10pt}
\noindent
\textbf{Acknowledgment}


The author would like to thank L. Shaul for his comments on the first draft of this paper 
which helped to improve many points.  
He also thanks M. Ono for drawing  his attention to  the papers by Shaul, 
and for comments on earlier versions of this paper.   
He expresses his thanks  to I. Iwanari and O. Iyama for comments on the first version of this paper, to S. Yasuda for suggesting Theorem \ref{global dimension theorem}. 
to I. Kikumasa for giving a question which leads to Proposition \ref{global dimension 0}.  
He is grateful to  A. Yekutieli for pointing out that the definition of projective concentration 
in earlier versions was opposite to the his original definition.  
The author  was partially  supported by JSPS KAKENHI Grant Number 26610009.

\section{Sup-projective resolutions of DG-modules}\label{Resolutions of DG-modules}


\subsection{Projective dimension of $M \in \sfD(R)$ after Yekutieli}

We recall the definition of the projective dimension of $M \in \sfD(R)$ introduced by Yekutieli. 

\begin{definition}[{\cite[Definition 2.4]{Yekutieli}}]
Let  $a \leq b \in \{ -\infty \} \cup \ZZ \cup \{\infty\}$. 

\begin{enumerate}[(1)] 
\item 
An object $M \in \sfD(R)$ 
is said 
to have \textit{projective concentration} $[a,b]$ 
if  the functor  $F = \RHom_{R}(M, -)$ 
sends $\sfD^{[m,n]}(R)$ to 
$\sfD^{[m -b,n -a ]}(\kk)$ 
for any $m \leq n \in \{-\infty\} \cup \ZZ \cup \{\infty\}$.
\[
F(\sfD^{[m,n]}(R)) \subset \sfD^{[m -b,n -a]}(\kk).
\]

\item 
An object $M \in \sfD(R)$ 
is said 
to have \textit{strict projective concentration} $[a,b]$
if it has projective concentration $[a,b]$ 
and does't have projective concentration $[c,d]$ 
such that $[c,d] \subsetneq [a,b]$. 

\item 
An object $M \in \sfD(R)$ 
is said 
to have projective dimension $d \in \NN$ 
if it has strict projective concentration $[a,b]$ for $a,b\in \ZZ$.
such that $d= b-a$. 

In the case where, $M$ does't have a finite interval as  projective concentration, 
it is said to have infinite projective dimension. 

We denote the projective dimension by $\pd M$.
\end{enumerate}
\end{definition}

\begin{remark}\label{AF projective dimension}
Let $R$ be an ordinary algebra. 
Avramov-Foxby \cite{Avramov-Foxby:Homological dimensions} introduced 
another   projective dimension for a complex $M \in \sfC(R)$. 
If we denote by $\textup{AF}\pd M$ the projective dimension of Avramov-Foxby, 
then it is easy to see that $\textup{AF}\pd M = \pd M -\sup M$. 
\end{remark}

The following lemma is proved later after the proof of Theorem \ref{Shaul theorem}.
\begin{lemma}\label{201711191911}
If $M \in \sfD(R)$ has finite projective dimension, 
then it belongs to $\sfD^{<\infty}(R)$. 
\end{lemma}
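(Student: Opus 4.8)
The plan is to prove the contrapositive in a quantitative form: if $\sup M = \infty$, then for every $d \in \NN$ the object $M$ fails to have projective concentration $[a,b]$ with $b - a = d$, hence $\pd M = \infty$. The key observation is that projective concentration controls how $F = \RHom_R(M,-)$ shifts cohomological amplitude, and by testing $F$ on the single module $H^0 = \tuH^0(R)$, viewed as an object of the heart $\sfH \subset \sfD(R)$, we can detect large $\sup M$.

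First I would compute $F(H^0) = \RHom_R(M, H^0)$. Using the adjunction/base-change along the canonical projection $f \colon R \to H^0$ together with the identification of the heart with $\Mod H^0$ recalled in the Notation section, one has a natural isomorphism relating $\RHom_R(M, H^0)$ to $\RHom_{H^0}(H^0 \lotimes_R M, H^0)$, or more directly: for each $n$, $\tuH^n(\RHom_R(M, H^0))$ is built from $\Hom_{H^0}$ and $\Ext^1_{H^0}$ of the cohomology modules of $H^0 \lotimes_R M$. The crucial point is the behavior of $\sup$ and $\inf$: since $R$ is connective, $H^0 \lotimes_R M$ has $\inf (H^0 \lotimes_R M) = \inf M$ (the bottom cohomology is $\tuH^{\inf}(M) \otimes_{H^0} H^0 = \tuH^{\inf}(M) \neq 0$, using right-exactness of $H^0 \lotimes_R -$ in the relevant sense and flatness of the projection onto the bottom degree), and $\RHom$ into a module over $H^0$ turns the lowest nonvanishing homological degree of the source into the highest cohomological degree of the target. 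Concretely, if $\inf (H^0 \lotimes_R M) = s$, then $\tuH^{-s}(\RHom_R(M, H^0)) \cong \Hom_{H^0}(\tuH^{s}(H^0 \lotimes_R M), H^0) \neq 0$ (after replacing $H^0$ by a faithful injective cogenerator if necessary — or simply testing against $\Hom_{H^0}(-, I)$ for $I$ an injective cogenerator, which is still an object of the heart). Thus $\sup F(N) \geq -\inf M$ for a suitable $N$ in the heart.

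Next I would extract the inequality. Suppose $M$ has projective concentration $[a,b]$. Applying the definition with $[m,n] = [0,0]$ (the heart) gives $F(\sfH) = F(\sfD^{[0,0]}(R)) \subset \sfD^{[-b, -a]}(\kk)$, so in particular $\sup F(N) \leq -a$ for every $N$ in the heart. Combined with the previous paragraph, $-\inf M \leq -a$, i.e. $a \leq \inf M$. Symmetrically, testing $F$ on $H^0$ from the top: the smallest nonvanishing cohomological degree of $\RHom_R(M, H^0)$ is governed by $\sup(H^0 \lotimes_R M) \geq \sup M$ when $\sup M < \infty$; more robustly, I would argue that if $\sup M = \infty$ then $H^0 \lotimes_R M$ (or the derived tensor detecting top cohomology) is unbounded above as well, so $\inf F(N)$ can be made arbitrarily negative as $N$ ranges over the heart — but projective concentration $[a,b]$ forces $\inf F(N) \geq -b$, a fixed bound, contradiction. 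Hence no finite $[a,b]$ can be a projective concentration, so $\pd M = \infty$, completing the contrapositive.

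The main obstacle I anticipate is the careful bookkeeping in identifying $\tuH^{\bullet}(\RHom_R(M, H^0))$ in terms of the cohomology of $M$ — one must be precise about the spectral sequence or filtration relating $\RHom_R(M, H^0)$ to $\RHom_{H^0}(\sigma^{\leq 0}(\text{something}), H^0)$, and verify that the extreme (bottom, resp. top) cohomology module does not accidentally vanish. The cleanest route is probably: replace $H^0$ by an injective cogenerator $I$ of $\Mod H^0$, note $I \in \sfH$, and use that $\RHom_R(M, I) \cong \RHom_{H^0}(H^0 \lotimes_R M, I) \cong \Hom_{H^0}(\tuH^{-\bullet}(H^0 \lotimes_R M), I)$ degreewise (exactness of $\Hom(-, I)$), reducing everything to the elementary fact that $H^0 \lotimes_R M$ has the same $\inf$ as $M$ and is unbounded above whenever $M$ is. That last unboundedness claim is where I would spend the most care, possibly invoking a minimal-model or Postnikov-tower argument over the connective algebra $R$.
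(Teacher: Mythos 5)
Your strategy — test $F=\RHom_R(M,-)$ against injective objects of the heart and read off the cohomology of $M$ — differs from the paper's in one crucial respect, and that difference is where the gap lies. For $J\in\Inj H^{0}$ regarded as an object of the heart via $f_{*}$, the adjunction you invoke gives
\[
\tuH^{n}\bigl(\RHom_{R}(M,f_{*}J)\bigr)\;\cong\;\Hom_{H^{0}}\bigl(\tuH^{-n}(M\lotimes_{R}H^{0}),\,J\bigr),
\]
so what a heart injective detects is the cohomology of $M\lotimes_{R}H^{0}$, \emph{not} of $M$. Your whole argument therefore rests on the claim that $\sup(M\lotimes_{R}H^{0})=\infty$ whenever $\sup M=\infty$, which you correctly flag as the delicate point but do not prove. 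This is not routine: $-\lotimes_{R}H^{0}$ is only right $t$-exact, and it creates cohomology in arbitrarily \emph{low} degrees (already $H^{0}\lotimes_{R}H^{0}$ is typically unbounded below). Writing $M=\hocolim_{n}\sigma^{\leq n}M$ and using that $\lotimes$ commutes with this colimit, one sees that the top class $\tuH^{n}(M)=\tuH^{n}(\sigma^{\leq n}M\lotimes_{R}H^{0})$ can in principle be cancelled at later stages by the low-degree Tor classes contributed by $\tuH^{n+1}(M)[-n-1]\lotimes_{R}H^{0}$, so the statement is a genuine derived-Nakayama-type assertion for unbounded modules, not an elementary bookkeeping fact. (Your parallel claim that $\inf(M\lotimes_{R}H^{0})=\inf M$ is false for the same reason, though that half of your argument is not needed for the lemma.)

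The paper avoids this entirely by not restricting to the heart: it defers the lemma until after Theorem \ref{Shaul theorem} precisely so that it can test against the objects $I=\Hom^{\bullet}_{R^{0}}(R,K)\in\cI$, which lie in $\sfD^{\geq 0}(R)$ but \emph{not} in the heart (their cohomology is $\Hom_{H^{0}}(H^{-i},\pi^{!}K)$ in degree $i$), and which satisfy the clean formula $\Hom(M,I[n])\cong\Hom_{H^{0}}(\tuH^{-n}(M),\tuH^{0}(I))$ reading off $\tuH^{-n}(M)$ itself. Since the definition of projective concentration quantifies over all intervals, including $[0,\infty]$, these test objects are admissible, and taking $\tuH^{0}(I)$ to be an injective cogenerator immediately forces $\tuH^{>-a}(M)=0$. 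To repair your argument you would either need to prove the unboundedness claim for $M\lotimes_{R}H^{0}$ (which I expect is at least as hard as the lemma), or enlarge your class of test objects beyond the heart to something playing the role of $\cI$.
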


The following lemma is deduced from the property of the standard $t$-structure of $\sfD(R)$.
\begin{lemma}\label{201712182258}
Let $M \in \sfD^{< \infty}(R)$ 
and $F:= \RHom(M, -)$. 
Then for all $m \leq n$, 
\[
F(\sfD^{[m,n]}(R)) \subset \sfD^{[m -\sup M, \infty]} (R)
\] 
and there is $N \in \sfD^{[m,n]}(R)$ such that $\tuH^{m- \sup M}(F(N) ) \neq 0$. 
\end{lemma}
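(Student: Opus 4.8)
The plan is to reduce both claims to the orthogonality axiom of the standard $t$-structure, via the identity $\tuH^{i}(F(N)) = \Hom_{\sfD(R)}(M, N[i])$ for $N \in \sfD(R)$ and $i \in \ZZ$. Set $s := \sup M \in \ZZ$ (the case $M = 0$ being excluded), so that $\tuH^{s}(M) = \tuH^{\sup}(M) \neq 0$; since $\sup M < \infty$, we have $M \in \sfD^{\leq s}(R)$.

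For the inclusion, I would fix $N \in \sfD^{[m,n]}(R)$ and an integer $i < m - s$. Then $N[i] \in \sfD^{\geq m-i}(R) \subseteq \sfD^{\geq s+1}(R)$, since $N \in \sfD^{\geq m}(R)$ and $m - i > s$. Because $M \in \sfD^{\leq s}(R)$, orthogonality of the standard $t$-structure gives $\tuH^{i}(F(N)) = \Hom_{\sfD(R)}(M, N[i]) = 0$. As $i < m - s$ was arbitrary, $F(N) \in \sfD^{[m-s, \infty]}(R)$.

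For the sharpness statement I would exhibit $N$ explicitly. Consider the truncation triangle $\sigma^{\leq s-1}M \to M \xrightarrow{\pi} \sigma^{\geq s}M \to \sigma^{\leq s-1}M[1]$ of the standard $t$-structure. Using $M \in \sfD^{\leq s}(R)$ and chasing the associated long exact cohomology sequence, one sees that $\sigma^{\geq s}M$ is cohomologically concentrated in degree $s$, that $\tuH^{s}(\sigma^{\geq s}M) \cong \tuH^{\sup}(M)$, and that $\tuH^{s}(\pi)$ is an isomorphism; in particular $\pi \neq 0$, since $\tuH^{\sup}(M) \neq 0$. Now put $N := (\sigma^{\geq s}M)[s - m]$, which is cohomologically concentrated in degree $m$, so $N \in \sfD^{[m,m]}(R) \subseteq \sfD^{[m,n]}(R)$ (here $m \leq n$ is used). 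Since $N[m-s] \cong \sigma^{\geq s}M$, we obtain
\[
\tuH^{m-s}(F(N)) = \Hom_{\sfD(R)}\bigl(M,\, \sigma^{\geq s}M\bigr) \ni \pi \neq 0,
\]
which is what we want.

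I do not expect a genuine obstacle: the only points needing care are the two structural facts about $\sigma^{\geq s}M$ (concentration in a single degree, and the identification of its top cohomology via $\pi$), both of which drop out of the long exact sequence of the truncation triangle together with $M \in \sfD^{\leq s}(R)$, plus the routine bookkeeping of shifts. Connectivity of $R$ enters only to guarantee that $(\sfD^{\leq 0}(R), \sfD^{\geq 0}(R))$ is a $t$-structure, as recorded in the conventions.
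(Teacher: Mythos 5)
Your proof is correct and follows essentially the same route as the paper: the inclusion via $t$-structure orthogonality with $M \in \sfD^{\leq \sup M}(R)$, and the sharpness via the truncation morphism $M \to \sigma^{\geq \sup M}M \cong \tuH^{\sup}(M)[-\sup M]$, with your test object $N = (\sigma^{\geq \sup M}M)[\sup M - m]$ being exactly the paper's choice $\tuH^{\sup}(M)[-m]$ up to isomorphism. You merely spell out the bookkeeping that the paper leaves implicit.
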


\begin{proof}
Let $ \ell < m -\sup M$. Then 
$\tuH^{\ell}(F(N)) = \Hom(M,N[\ell]) = 0$ 
for $N \in \sfD^{[m,n]}(R)$. 
This proves the first statement. 

The standard $t$-structure induces a nonzero morphism $M \to \tuH^{\sup}(M)[-\sup M]$. 
Thus,  $N = \tuH^{\sup}(M)[-m]$ has the desired property. 
\end{proof}

We deduce the following useful corollaries. 

\begin{corollary}\label{201712182258II}
An object  $M \in \sfD^{<\infty}(R)$ has projective dimension $d$ 
if and only if
it has strict projective concentration $[\sup M -d, \sup M]$. 
\end{corollary}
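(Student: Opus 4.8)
The plan is to chase the definitions, using Lemma \ref{201712182258} to pin down the lower endpoint of any projective concentration interval. First I would record the trivial direction: if $M$ has strict projective concentration $[\sup M - d, \sup M]$, then by definition $\pd M = \sup M - (\sup M - d) = d$, so there is nothing to prove there. The content is the forward implication. Suppose $M \in \sfD^{<\infty}(R)$ has projective dimension $d$; by definition this means $M$ has strict projective concentration $[a,b]$ for some $a \le b$ in $\ZZ$ with $d = b - a$. I want to show $b = \sup M$ and $a = \sup M - d$; clearly the second equality follows from the first together with $d = b-a$, so the whole corollary reduces to identifying $b$ with $\sup M$.

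To identify $b$, I would combine the two halves of Lemma \ref{201712182258} with the definition of projective concentration. On one hand, having projective concentration $[a,b]$ means $F(\sfD^{[m,n]}(R)) \subset \sfD^{[m-b, n-a]}(\kk)$ for all $m \le n$, where $F = \RHom(M,-)$. On the other hand, Lemma \ref{201712182258} says that for each $m$ there is $N \in \sfD^{[m,n]}(R)$ with $\tuH^{m - \sup M}(F(N)) \ne 0$; this forces $m - \sup M \ge m - b$, i.e. $b \ge \sup M$. For the reverse inequality $b \le \sup M$: Lemma \ref{201712182258} also gives $F(\sfD^{[m,n]}(R)) \subset \sfD^{[m - \sup M, \infty]}(R)$, i.e. $M$ has projective concentration of the form $[\sup M, b']$ for a suitable (possibly infinite) upper bound. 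More carefully, I would argue that if $M$ had projective concentration $[a,b]$ with $b > \sup M$, then $[a, \sup M] \subsetneq [a,b]$ would still be a projective concentration — this is exactly the containment $F(\sfD^{[m,n]}(R)) \subset \sfD^{[m - \sup M, n - a]}(\kk)$, which holds because $\sfD^{[m-\sup M, n-a]}$ contains $\sfD^{[m-b, n-a]}$ and also absorbs the extra lower-degree vanishing guaranteed by the first inclusion of Lemma \ref{201712182258} — contradicting strictness. Hence $b = \sup M$, and therefore $a = b - d = \sup M - d$, so $[a,b] = [\sup M - d, \sup M]$ is the strict projective concentration, as claimed.

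The one point requiring genuine care — the main obstacle — is the contradiction argument showing $b \le \sup M$: I must verify that shrinking the interval from $[a,b]$ down to $[a,\sup M]$ really does preserve the projective-concentration property, which amounts to checking that the two containments furnished by Lemma \ref{201712182258} are jointly strong enough to replace $m - b$ by the larger value $m - \sup M$ in the target $t$-structure bound, uniformly in $m \le n$. Once that compatibility is confirmed, strictness of $[a,b]$ immediately yields $\sup M \le b$, and combined with the witness from the second half of Lemma \ref{201712182258} the equality $b = \sup M$ drops out.
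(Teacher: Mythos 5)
Your argument is correct and is essentially the paper's own route: the corollary is stated there as an immediate deduction from Lemma \ref{201712182258}, exactly the two-sided pinning of the endpoint $b$ that you carry out (the witness forces $b \ge \sup M$, the containment lets you shrink a strict interval to force $b \le \sup M$). The only point left implicit is that $a \le \sup M$, which you need for $[a,\sup M]$ to be a legitimate interval in the shrinking step; it follows from the witness part of the same lemma with $m=n$, since $\tuH^{m-\sup M}(F(N))\neq 0$ together with $F(N)\in \sfD^{[m-b,\,m-a]}(\kk)$ forces $m-\sup M\le m-a$.
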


\begin{corollary}\label{201804030002}
Let $L \to M  \to N \to $  be an exact triangle in $\sfD(R)$. 
Then, we have 
\[
\pd M - \sup M \leq \sup \{ \pd L - \sup L, \pd N - \sup N\}.
\]
\end{corollary}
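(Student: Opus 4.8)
The plan is to recast the quantity $\pd X - \sup X$ as a \emph{one-sided} condition on the functor $\RHom(X,-)$ that is manifestly stable under extensions, and then apply it along the triangle.

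\smallskip
\emph{Step 1: a reformulation.} For $X \in \sfD^{<\infty}(R)$ and $e \in \ZZ$, I claim that $\pd X - \sup X \le e$ holds if and only if $\RHom(X, N) \in \sfD^{\le e}(\kk)$ for every $N \in \sfD^{\le 0}(R)$; equivalently, using $\RHom(X, N[n]) \cong \RHom(X,N)[n]$, if and only if $\RHom(X, \sfD^{\le n}(R)) \subseteq \sfD^{\le n+e}(\kk)$ for all $n \in \ZZ$. Indeed, combining Corollary~\ref{201712182258II} with Lemma~\ref{201712182258} (whose second part forces any projective concentration $[a,b]$ of $X$ to satisfy $b \ge \sup X$, so that it may be normalized to $[a,\sup X]$), one sees that $\pd X \le \sup X + e$ if and only if $X$ has projective concentration $[-e,\sup X]$. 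Now, the lower bound $\RHom(X, \sfD^{[m,n]}(R)) \subseteq \sfD^{\ge m-\sup X}(\kk)$ is automatic by the first part of Lemma~\ref{201712182258}, so $X$ has projective concentration $[-e,\sup X]$ exactly when the upper bound $\RHom(X, \sfD^{[m,n]}(R)) \subseteq \sfD^{\le n+e}(\kk)$ holds for all $m\le n$, which (taking $m=-\infty$) is precisely the displayed condition.

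\smallskip
\emph{Step 2: the triangle.} Write the given triangle as $L \to M \to N \to L[1]$ and set $e := \sup\{\pd L - \sup L,\ \pd N - \sup N\}$. If $e=\infty$ there is nothing to prove, so assume $e<\infty$; then $\pd L, \pd N < \infty$, hence $L,N \in \sfD^{<\infty}(R)$ by Lemma~\ref{201711191911}, and inspection of the long exact cohomology sequence of the triangle gives $M \in \sfD^{<\infty}(R)$ as well. Fix $N' \in \sfD^{\le 0}(R)$ and apply the contravariant triangulated functor $\RHom(-,N')$ to the triangle, obtaining an exact triangle
\[
\RHom(N,N') \to \RHom(M,N') \to \RHom(L,N') \to \RHom(N,N')[1]
\]
in $\sfD(\kk)$. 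By Step 1 the two outer terms lie in $\sfD^{\le e}(\kk)$, and since $\sfD^{\le e}(\kk)$ is closed under extensions (long exact cohomology sequence again) so does $\RHom(M,N')$. As $N' \in \sfD^{\le 0}(R)$ was arbitrary, Step 1 applied to $M$ yields $\pd M - \sup M \le e$, which is the assertion.

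\smallskip
\emph{Main obstacle.} The only point requiring care is Step 1 — specifically the observation that the lower-amplitude part of the definition of projective concentration is automatic for $X \in \sfD^{<\infty}(R)$ (this is exactly where Lemma~\ref{201712182258} enters). It is this that converts ``projective dimension'' into a one-sided bound on $\RHom(X,-)$, and such bounds behave well in triangles; once the reformulation is in place, Step 2 is formal. The degenerate cases in which one of $L,M,N$ vanishes make the inequality trivial and need no separate treatment.
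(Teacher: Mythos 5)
Your proof is correct and is essentially the deduction the paper intends (the corollary is stated without proof as a consequence of Lemma \ref{201712182258} and Corollary \ref{201712182258II}): the lower half of the projective concentration is automatic for objects of $\sfD^{<\infty}(R)$, so $\pd X-\sup X\le e$ reduces to the one-sided condition $\RHom(X,\sfD^{\le n}(R))\subseteq\sfD^{\le n+e}(\kk)$, which is stable under extensions. Your Step 1 reformulation and the extension-closedness argument in Step 2 match this intended route exactly.
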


\subsection{The class $\cP$ and sup-projective (sppj) resolution}

The class $\cP$ plays the role of projective modules in the usual projective resolutions  for  sup-projective resolutions.

\begin{definition}\label{cP definition}
We denote by  $\cP \subset \sfD(R)$  the full subcategory of 
direct summands of a direct sums of $R$. 
In other words, $\cP = \Add R$.  
\end{definition}

The basic properties of $\cP$ are summarized in the lemma below. 
By $\Proj H^{0}$ we denote the full subcategory of projective $H^{0}$-modules. 

\begin{lemma}\label{basic of cP lemma} 
\begin{enumerate}[(1)]
\item
For $N \in \sfD(R)$ and $P \in \cP$, the map below associated to the $0$-th cohomology functor $\tuH^{0}$ is an isomorphism.
\[
\Hom(P, N) \xrightarrow{\cong} \Hom(\tuH^{0}(P), \tuH^{0}(N)).  
\]

\item
For $N \in \Mod H^{0}$, we have 
\[
\Hom(P, N[n]) =
\begin{cases}
\Hom_{\Mod H^{0}}(\tuH^{0}(P), N) & n = 0, \\ 
0  & n \neq 0.
\end{cases}
\]

\item 
The functor $\tuH^{0}$ induces an equivalence $\cP \cong \Proj H^{0}$. 
\end{enumerate}
\end{lemma}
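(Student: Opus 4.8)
The plan is to prove all three parts essentially at once, with (1) as the foundational computation and (2), (3) as quick consequences. The whole point is that $R$ is connective, so $\RHom(R,N) \simeq N$ and the truncation $\sigma^{>0}$ kills nothing relevant when we compute maps \emph{out of} $R$.

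First I would establish (1) for $P = R$ itself. For any $N \in \sfD(R)$ we have the tautological identification $\Hom(R, N[n]) = \tuH^n(N)$. In particular $\Hom(R,N) = \tuH^0(N)$. On the other hand, since $R$ is connective, the counit $R \to H^0$ is a morphism in $\sfD(R)$ inducing an isomorphism on $\tuH^0$, and $\tuH^0(R) = H^0$ is a free $H^0$-module of rank one. So $\Hom_{\Mod H^0}(\tuH^0(R), \tuH^0(N)) = \Hom_{\Mod H^0}(H^0, \tuH^0(N)) = \tuH^0(N)$, and one checks the map in the statement is the identity under these identifications. Then I would pass to a direct sum: $\tuH^0$ commutes with coproducts in $\sfD(R)$ (coproducts are exact), and $\Hom(R^{(S)}, N) = \Hom(R, N)^{S} = \tuH^0(N)^S$ — here I should be slightly careful, since in general $\Hom$ out of an infinite coproduct is a product, not a coproduct; but $R$ is compact in $\sfD(R)$... actually no, the statement as written is $\Hom(P,N) \xrightarrow{\cong} \Hom(\tuH^0 P, \tuH^0 N)$, and $\Hom(R^{(S)},N) = \prod_S \tuH^0(N)$ while $\Hom_{\Mod H^0}(\tuH^0(R^{(S)}), \tuH^0 N) = \Hom_{\Mod H^0}((H^0)^{(S)}, \tuH^0 N) = \prod_S \tuH^0 N$, so they match. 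Finally a direct summand $P$ of $R^{(S)}$: the isomorphism is functorial in $P$, so it restricts along the idempotent defining $P$; equivalently $\tuH^0$ is additive and preserves the splitting. This gives (1) in general.

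For (2): take $N \in \Mod H^0 \subset \sfD(R)$, i.e.\ $N$ concentrated in cohomological degree $0$. Then $\Hom(P, N[n]) = \Hom(P, N[n])$ and I apply (1) with $N$ replaced by $N[n]$: it equals $\Hom_{\Mod H^0}(\tuH^0(P), \tuH^0(N[n]))$, and $\tuH^0(N[n]) = \tuH^{-n}(N)$, which is $N$ when $n=0$ and $0$ otherwise. (Alternatively, for $n > 0$: $\Hom(P, N[n])$ is a summand of $\Hom(R^{(S)}, N[n]) = \prod_S \tuH^n(N) = 0$ since $N$ is in degree $0$; for $n<0$: it is a summand of $\prod_S \tuH^{n}(N) = 0$ likewise.) For (3): by (1), $\tuH^0$ gives a full and faithful functor $\cP \to \Mod H^0$ — fullness and faithfulness are exactly the bijectivity in (1) with $N \in \cP$, since $\tuH^0(N) \in \Mod H^0$ for $N \in \cP$. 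Its image lands in $\Proj H^0$ because $\tuH^0(R) = H^0$ is projective (free), $\tuH^0$ preserves coproducts, so $\tuH^0(R^{(S)})$ is free, and $\tuH^0$ preserves direct summands, so $\tuH^0(P)$ is a summand of a free module, hence projective. Essential surjectivity onto $\Proj H^0$: every projective $H^0$-module $Q$ is a summand of some $(H^0)^{(S)} = \tuH^0(R^{(S)})$; the idempotent $e \in \End_{\Mod H^0}((H^0)^{(S)})$ cutting out $Q$ lifts, by fullness (1), to an idempotent $\tilde e \in \End_{\sfD(R)}(R^{(S)})$; since $\sfD(R)$ is idempotent-complete (it has coproducts), $\tilde e$ splits off an object $P \in \cP$ with $\tuH^0(P) \cong Q$. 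Hence $\tuH^0 : \cP \xrightarrow{\cong} \Proj H^0$.

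The only genuinely delicate point is the bookkeeping with infinite (co)products in part (1) — making sure the target $\Hom_{\Mod H^0}(\tuH^0 P, \tuH^0 N)$ is computed correctly for $P$ an arbitrary summand of an arbitrary coproduct of $R$'s, and that the map is the obvious one. Everything else is formal once connectivity is used to identify $\Hom_{\sfD(R)}(R, N) = \tuH^0(N)$ and $\tuH^0(R) = H^0$. I would also remark that idempotent-completeness of $\sfD(R)$, needed for essential surjectivity in (3), follows from the existence of countable coproducts (Bökstedt–Neeman), which $\sfD(R)$ has.
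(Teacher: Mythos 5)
Your proposal is correct and follows essentially the same route as the paper: part (1) is reduced to the identification $\Hom(R,N)\cong \tuH^{0}(N)$ (extended to coproducts and summands), (2) follows immediately, and (3) is proved by full faithfulness from (1) together with lifting an idempotent from $\End_{H^{0}}(F)$ to $\End(\underline{F})$ and splitting it in $\sfD(R)$. You merely make explicit some bookkeeping the paper leaves implicit (products versus coproducts in the $\Hom$ computation, and idempotent completeness of $\sfD(R)$), which is fine.
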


\begin{remark}
Lurie \cite[Section 7.2]{Lurie:HA} studied the class $\cP$ for $\mathbb{E}_{1}$-algebras 
and obtained the same result with (3) of above lemma in \cite[Corollary 7.2.2.19]{Lurie:HA}. 
\end{remark}

\begin{proof}
(1) is a consequence of the isomorphism $\Hom(R, N) \cong \tuH^{0}(N)$ for $N \in \sfD(R)$. 

(2) is an immediate consequence of (1) 

(3) By (1), the functor $\tuH^{0}: \cP \to \Proj H^{0}$ is fully faithful. 
We prove it is essentially surjective. 
Let $Q \in \Proj H^{0}$. 
We need to show that there exists $\underline{Q} \in \cP$ such that 
$\tuH(\underline{Q} ) \cong Q$. 
The case where $Q$ is a free $H^{0}$-module is clear. 
We deal with the general cases. 
Then, 
there exists a free $H^{0}$-module $F$ and an idempotent element $e \in \End_{H^{0}}(F)$ whose kernel is $Q$. 
Let $\underline{F} \in \cP$ be such that $\tuH(\underline{F}) \cong F$. 
By (1), there exists an idempotent element $\underline{e} \in \End \underline{F}$ 
which is sent to $e$ by the map associated to $\tuH^{0}$. 
It is easy to check that the direct summand of $\underline{F}$ corresponds to $\underline{e}$ 
has the desired property. 
\end{proof}

\begin{remark}\label{cP remark}
Almost all the result of the rest of this section are deduced from the properties of $\cP$ given in Lemma \ref{basic of cP lemma}.  
We can state and prove these results in an abstract setting of a triangulated category with $t$-structure whose heart has enough projectives. 
\end{remark}

By (1), it is clear that any object $P \in \cP$ has has projective dimension $0$. 
Using this fact and standard argument of triangulated category, 
we obtain the following corollary. 

Recall that for full subcategories $\cX, \cY \subset \sfD(R)$, we define a full subcategory 
$\cX \ast \cY$ to be the full subcategory consisting $Z \in \sfD(R)$ 
which fits into an exact triangle 
$X \to Z \to Y \to $ with $X \in \cX,  Y \in \cY$. 
Note that this operation is associative, i.e., $(\cX \ast \cY) \ast \cZ = \cX \ast(\cY \ast \cZ)$. 
We remark that if $Z \in \sfD(R)$ satisfies 
$\Hom(Z, \cX) = 0, \ \Hom(Z, \cY) = 0$, 
then 
$\Hom(Z, \cX \ast \cY ) = 0$.

\begin{corollary}\label{20171219208}
Let $a ,b \in \ZZ$ such that $a \leq b$. 
Then for any object $M \in \cP[a] \ast \cP[a + 1] \ast \cdots \ast \cP[b]$, 
we have $\pd M \leq b-a$.  
\end{corollary}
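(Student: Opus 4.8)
\textbf{Proof plan for Corollary \ref{20171219208}.}
The plan is to induct on $b-a$, using the associativity of $\ast$ and the stability of ``$\pd(-)-\sup(-)$'' under exact triangles provided by Corollary \ref{201804030002}. For the base case $b=a$, an object $M\in\cP[a]$ is just $P[a]$ for some $P\in\cP$ (up to the summand/coproduct bookkeeping), and since $\pd P=0$ by Lemma \ref{basic of cP lemma}(1) (as noted in the text just before the statement), we get $\pd M=0=b-a$.

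For the inductive step, write $M\in\cP[a]\ast\big(\cP[a+1]\ast\cdots\ast\cP[b]\big)$, so there is an exact triangle $X\to M\to N\to$ with $X\in\cP[a]$ and $N\in\cP[a+1]\ast\cdots\ast\cP[b]$. By the base case $\pd X=0$, and hence $\pd X-\sup X=-a$ since $\sup X=a$ for $X=P[a]$ with $P\neq 0$ (the zero case is trivial). By the induction hypothesis $\pd N\leq b-a-1$, and $\sup N\leq -(a+1)$ because $N\in\cP[a+1]\ast\cdots\ast\cP[b]$ forces $\tuH^{<a+1}$ to vanish (each $\cP[i]$ is concentrated in degree $-i\geq -b$, actually one should note $N\in\sfD^{[-b,-a-1]}(R)$); hence $\pd N-\sup N\leq (b-a-1)-(-(a+1))$... here I need to be careful with signs, so let me instead just apply Corollary \ref{201804030002} directly. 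It gives
\[
\pd M-\sup M\leq\sup\{\pd X-\sup X,\ \pd N-\sup N\}.
\]
I will bound the right-hand side: the first term is $-a$; for the second, $\pd N\leq b-a-1$ by induction and $\sup N\geq -b$ since $N$ lies in degrees $[-b,-a-1]$, giving $\pd N-\sup N\leq (b-a-1)+b$ — that's too weak, so the right approach is to track $\sup$ more tightly, namely $\sup N=-a-1$ is not guaranteed but $\sup N\leq -a-1<-a$ is, which combined with the induction hypothesis on the \emph{shifted} quantity is what works; concretely, rephrase the induction hypothesis as ``$\pd N-\sup N\leq b-(a+1)-\sup N+\sup N$'' — i.e. carry the statement in the form $\pd(-)\leq (\text{top shift})-(\text{bottom shift})+\sup(-)$ is not clean either.

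The cleanest route, and the one I would actually write, is to induct with the hypothesis applied to $N$ shifted so its top sits at $-a-1$: more precisely, first prove the two-term case ($b=a+1$) by hand from the triangle $X\to M\to N\to$ with $X\in\cP[a]$, $N\in\cP[a+1]$, where $\pd X=\pd N=0$, $\sup X=a$, $\sup N=a+1$... wait, $\sup(P[i])=-i$, so $\sup X=-a$ and $\sup N=-(a+1)$; then $M\in\sfD^{[-b,-a]}(R)$ so $\sup M\leq -a$, and Corollary \ref{201804030002} yields $\pd M\leq \sup M+\sup\{-a-(-a),\ 0-(-(a+1))\}=\sup M+\max\{0,1\}=\sup M+1\leq -a+1=b-a+(\sup M-(-a))$, which needs $\sup M\geq -a$... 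The honest statement of the main obstacle: \textbf{the bookkeeping of the $\sup$ terms in Corollary \ref{201804030002} is the crux}, and I expect to need the sharper observation that for $M\in\cP[a]\ast\cdots\ast\cP[b]$ one automatically has $-b\leq\inf M$ and $\sup M\leq -a$, and moreover that it suffices to prove the inequality $\pd M\leq b-a$ via the equivalent ``Avramov--Foxby normalized'' form $\pd M-\sup M\leq -a$ when the triangle is chosen with $X$ in the \emph{lowest} piece $\cP[-b]$ rather than the highest — i.e. reversing the filtration so the inductive term $N$ has controlled $\sup$. Once the triangle is arranged so that $\sup$ of the inductive piece is exactly one more than $\sup M$ at each stage, Corollary \ref{201804030002} telescopes to give $\pd M-\sup M\leq b-a+\sup M-\sup M$, i.e. $\pd M\leq b-a+\sup M$, and combining with $\sup M\leq -a$ finishes. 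Writing out this telescoping carefully, and verifying the $\sup$-estimate on partial $\ast$-products, is the only real content; everything else is formal manipulation of exact triangles.
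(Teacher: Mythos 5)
Your overall strategy (induct on the number of $\ast$-factors and control a normalized quantity via Corollary \ref{201804030002}) is workable, but the argument you actually assemble does not close, and the inequality you end with is false. The telescoped conclusion you state, $\pd M-\sup M\leq b-a$, i.e.\ $\pd M\leq b-a+\sup M$, fails already for a single factor: take $M=R[a]$ with $a>0$ (so $b=a$); then $\pd M=0$ and $\sup M=-a$, so $\pd M-\sup M=a>0=b-a$. Moreover, even granting that inequality, ``combining with $\sup M\leq -a$'' yields $\pd M\leq b-2a$, which is weaker than the desired bound whenever $a<0$, so the last line does not finish the proof. The obstacle you identify (needing to reverse the filtration so that the inductive piece has controlled $\sup$) is also a red herring: Corollary \ref{201804030002} is symmetric in the two outer terms of the triangle, so no rearrangement is needed.

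The correct bookkeeping is this: for nonzero $P[i]$ with $P\in\cP$ one has $\pd P[i]-\sup P[i]=0-(-i)=i$, so the right inductive statement is $\pd M-\sup M\leq b$ (the maximum of the per-piece values $a,a+1,\dots,b$), not $\leq b-a$. Indeed, writing $M$ in a triangle $X\to M\to N\to$ with $X\in\cP[a]$ and $N\in\cP[a+1]\ast\cdots\ast\cP[b]$, induction plus Corollary \ref{201804030002} gives $\pd M-\sup M\leq\sup\{a,\,b\}=b$ (you in fact computed exactly this bound at one point, $(b-a-1)+(a+1)=b$, before discarding it). Separately, $\cP[i]\subset\sfD^{\leq -a}(R)$ for all $i\geq a$ and $\sfD^{\leq -a}(R)$ is extension-closed, so $\sup M\leq -a$; combining, $\pd M\leq b+\sup M\leq b-a$. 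The paper's intended ``standard argument'' is even more direct: $P[i]$ has projective concentration $[-i,-i]$, and the long exact $\Hom$-sequence shows that an extension of objects with projective concentrations $[c_1,d_1]$ and $[c_2,d_2]$ has projective concentration contained in the convex hull, so $M$ has projective concentration $[-b,-a]$ and $\pd M\leq b-a$ by the definition together with Corollary \ref{201712182258II}.
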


When we construct a usual projective resolution for an ordinary module, 
we use surjective homomorphisms from a projective modules. 
Next we introduce the notion which plays that role for our sup-projective resolution.

\begin{definition}[sppj morphisms]\label{sppj morphism definition} 
Let $M \in \sfD^{< \infty}(R), M \neq 0$. 

\begin{enumerate}
\item 
A sppj  morphism $f: P \to M $ is a morphism in $\sfD(R)$ 
such that $P \in \cP[- \sup M]$ 
and the morphism $\tuH^{\sup M}(f)$ is surjecitve. 

\item 
A sppj morphism $f: P \to M $ is called minimal  if 
the morphism $\tuH^{\sup M}(f) $ is a projective cover. 
\end{enumerate}
\end{definition}

By Lemma \ref{basic of cP lemma},  for any $M \in \sfD^{< \infty}(R)$, there exists a sppj morphism $f: P \to M$. 
The following two lemmas give a motivation to introduce  sppj resolutions.

\begin{lemma}\label{20171219228}
Let $M \in \sfD^{< \infty}(R)$ and $f: P \to M$ a sppj morphism 
and $N := \cone (f)[-1]$ the cocone of $f$. 
Assume that $1\leq \pd M $. 
Then,   $\pd N = \pd M -1 -\sup M +\sup N$. 
\end{lemma}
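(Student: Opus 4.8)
The plan is to analyze the exact triangle $N \xrightarrow{g} P \xrightarrow{f} M \xrightarrow{+1}$ using Corollary \ref{201804030002} applied to this triangle and to its rotations, together with the fact that $P \in \cP[-\sup M]$ has projective dimension $0$ and $\sup P = -\sup M$ (by Lemma \ref{basic of cP lemma}). First I would record the basic numerical facts: since $\tuH^{\sup M}(f)$ is surjective, the long exact cohomology sequence of the triangle gives $\tuH^{n}(N) = \tuH^{n}(P)$ for $n > \sup M$ (both vanish), $\tuH^{\sup M}(N) = \ker \tuH^{\sup M}(f)$, and $\tuH^{n}(N) \cong \tuH^{n-1}(M)$ for $n < \sup M$ via the connecting map (more precisely $\tuH^{n-1}(M) \twoheadrightarrow \tuH^{n}(N)$ for $n = \sup M$ and isomorphisms below). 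In particular $N \in \sfD^{<\infty}(R)$ and $\sup N \leq \sup M$; I should not need the exact value of $\sup N$ beyond $\sup N \le \sup M$, and the formula will be phrased in terms of $\sup N$.

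Next I would prove the two inequalities separately. For $\pd N - \sup N \leq \pd M - 1 - \sup M$: apply Corollary \ref{201804030002} to the rotated triangle $P \to M \to N[1] \xrightarrow{+1}$, giving $\pd M - \sup M \leq \sup\{\pd P - \sup P,\ \pd N[1] - \sup N[1]\} = \sup\{0 - (-\sup M),\ (\pd N) - (\sup N - 1)\} = \sup\{\sup M,\ \pd N - \sup N + 1\}$. Since we assume $1 \le \pd M$, i.e. $\pd M - \sup M > \sup M$ is impossible only if... — here I must be careful: the point is that $\pd M - \sup M$ cannot equal $\sup M$ forced by the first term unless that is genuinely the max, but $\pd M \ge 1$ together with $M \ne 0$ means $\pd M - \sup M = \pd M - \sup M$; using Corollary \ref{201712182258II} and $\pd M \ge 1$ one sees the bound $\sup M$ coming from $P$ is the value $\pd P - \sup P$ of an object of strict projective concentration $[\sup M, \sup M]$, which cannot dominate since then $M$ would have $\pd M = 0$ contradicting $\pd M \ge 1$ — I would spell this out via the characterization that $\pd M = 0 \iff M \in \cP[-\sup M]$, and if $f$ is not already such that $M \in \cP[-\sup M]$ then the max is the second term. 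Thus $\pd M - \sup M \leq \pd N - \sup N + 1$, i.e. $\pd N - \sup N \geq \pd M - 1 - \sup M$. Hmm — I realize the inequality I extracted points the other way, so let me instead get the \emph{upper} bound on $\pd N$ from the triangle $N \to P \to M \xrightarrow{+1}$ directly: Corollary \ref{201804030002} on $M \to N[1] \to P[1] \xrightarrow{+1}$ is the rotation giving $\pd N[1] - \sup N[1] \le \sup\{\pd M - \sup M,\ \pd P[1] - \sup P[1]\}$, and since $\pd P[1] - \sup P[1] = 0 - (-\sup M - 1)+ \dots$ — again just $-\sup P = \sup M$, shifted — one gets $\pd N - \sup N + 1 \le \sup\{\pd M - \sup M,\ \sup M + 1\}$; and the reverse bound $\pd M - \sup M \le \sup\{\pd N - \sup N + 1, \sup M\}$ from the other rotation. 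Combining the two, and using $\pd M \ge 1$ to eliminate the spurious $\sup M$ / $\sup M + 1$ terms (an object of projective dimension $\le 0$ among $\cP[-\sup M]$ cannot be $M$), yields $\pd N - \sup N + 1 = \pd M - \sup M$, which is exactly the claimed formula $\pd N = \pd M - 1 - \sup M + \sup N$.

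The main obstacle is the case analysis needed to discard the "boundary" term $\sup M$ (resp. $\sup M + 1$) that Corollary \ref{201804030002} produces from the projective summand $P$: one must verify that this term is never the actual supremum under the hypothesis $\pd M \geq 1$. I expect to handle this by the equivalence $\pd M = 0 \iff M \in \cP[-\sup M]$ (which follows from Lemma \ref{basic of cP lemma}(2) and Corollary \ref{20171219208}), noting that if the boundary term dominated in \emph{both} rotated inequalities then $\pd M - \sup M \le \sup M$ would be forced in a way that, chased back through the triangle, makes $g : N \to P$ split off $M$ as a summand in $\cP[-\sup M]$, contradicting $\pd M \ge 1$. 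Once that degenerate possibility is excluded, the two inequalities pin down $\pd N$ exactly, and no further computation is required.
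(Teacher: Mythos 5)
Your overall route---rotating the triangle $N \to P \to M \to$ and applying Corollary \ref{201804030002} twice, once with $M$ as the middle term and once with $N$ (shifted) as the middle term---is viable and genuinely different from the paper's proof. The paper instead computes $\Hom(N,L[i])$ directly from the long exact sequence, using at the boundary degree that surjectivity of $\tuH^{\sup M}(f)$ forces $f_{*}\colon \Hom(M,L[m]) \to \Hom(P,L[m])$ to be injective, and then exhibits an explicit test object with $\Hom(N,L[n+d-1])\neq 0$ to show the concentration is strict. Your argument is shorter but delegates all the substance to Corollary \ref{201804030002} (which the paper states without proof); the paper's computation also produces the finer $\Hom$-group information that is reused in Lemma \ref{sppj resolution lemma}.

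As written, however, your argument does not close, and the reason is a concrete sign error: for $P \in \cP[-\sup M]$ one has $\sup P = \sup M$, \emph{not} $-\sup M$. Indeed $\tuH^{n}(Q[k]) = \tuH^{n+k}(Q)$, so for $Q \in \cP$ (which has $\sup Q = 0$ by Corollary \ref{201901042037}) one gets $\sup (Q[-\sup M]) = \sup Q + \sup M = \sup M$; this is also recorded as the equality $\sup P_{i} = \sup M_{i}$ in the definition of sppj resolutions. Hence the contribution of $P$ to your two sups is $\pd P - \sup P = -\sup M$ (resp.\ $\pd P[1] - \sup P[1] = 1 - \sup M$), not $\sup M$ (resp.\ $\sup M + 1$). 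With the correct values the boundary term is discarded at once: $\pd M \geq 1$ gives $\pd M - \sup M \geq 1 - \sup M > -\sup M$, so in $\pd M - \sup M \leq \sup\{-\sup M,\ \pd N - \sup N + 1\}$ the sup must be the second entry (lower bound), and in $\pd N - \sup N \leq \sup\{\pd M - \sup M - 1,\ -\sup M\}$ it is the first entry (upper bound); no case analysis is needed and your ``main obstacle'' evaporates. With your values the obstacle is real---if, say, $\pd M = 1$ and $\sup M$ is large and positive, the term $\sup M + 1$ genuinely dominates and the inequality gives nothing---and the rescue you sketch (that domination of the boundary term would ``make $g$ split off $M$ as a summand in $\cP[-\sup M]$'') is not a valid deduction: which entry of a numerical supremum is larger carries no structural information about the triangle. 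So fix $\sup P = \sup M$, note $N \neq 0$ (otherwise $M \cong P$ and $\pd M = 0$) so that the arithmetic with $\sup N$ is meaningful, and your proof is complete.
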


\begin{proof}
Set $d = \pd M, F= \RHom(M,-)$ and let $g: N \to P$  be the canonical morphism. 
We may assume that $\sup M = 0$ by shifting the degree. 

We claim that $N$ has projective concentration $[1-d, 0]$. 
Let  $m \leq n \in \{-\infty \} \cup \ZZ \cup \{ \infty \}$ and $L \in \sfD^{[m,n]}(R)$. 
We need to prove  $\Hom(N,L[i]) = 0$ for $i \in \ZZ \setminus [m, n+d -1]$.

 By the assumption, 
 we have $\Hom(M,L[i]) = 0$ for $i \in \ZZ \setminus [m, n+d]$. 
 We also have $\Hom(P,L[i]) = 0$ for $i \in \ZZ\setminus [m,n]$.  
 We consider the exact sequence 
 \begin{equation}\label{201712182246}
 \Hom(P, L[i]) \to \Hom(N,L[i]) \to \Hom(M, L[i+1]) \to \Hom(P,L[i+1]).  
 \end{equation}
 Using this, we can  check that  $\Hom(N,L[i]) = 0$ for $i \in \ZZ\setminus [m-1, n +d -1]$. 
It remains to prove the case $i = m-1$. 
In that case the exact sequence (\ref{201712182246}) become 
 \begin{equation}\label{201712182246II}
 0 \to \Hom(N,L[m-1]) \to \Hom(M, L[m]) \xrightarrow{f_{*}} \Hom(P,L[m]).  
 \end{equation}
Under the isomorphisms,  
\[
\Hom(M,L[m]) \cong \Hom(\tuH^{0}(M), \tuH^{m}(L)), \ \ \Hom(P, L[m]) \cong \Hom(\tuH^{0}(P), \tuH^{m}(L))
\]
the map $f_{*}$ corresponds to the map $\Hom(\tuH^{0}(f), \tuH^{m}(L))$, which is injective. 
Thus, we conclude that $\Hom(N, L[m-1]) = 0$. 
This completes the proof of the claim.

By Lemma \ref{201712182258II} and the claim, $N$ has projective concentration $[1-d, \sup N]$. 
Let $[a,b]$ be a strict projective concentration of $N$. 
Then, by Lemma \ref{201712182258II}, $ 1 -d \leq a = \sup N - \pd N, b = \sup N$. 
Therefore to prove the desired formula $\pd N = d-1 + \sup N$. 
It is enough to show that 
there exists $m\leq n \in \{-\infty\} \cup \ZZ \cup \{ \infty\}$ 
and $L \in \sfD^{[m,n]}(R)$ such that $\Hom(N, L[n+d -1]) \neq 0$. 

Since $d = \pd M$, there exists $m\leq n \in \{-\infty\} \cup \ZZ \cup \{ \infty\}$ 
and $L \in \sfD^{[m,n]}(R)$ such that $\Hom(M, L[n+d]) \neq 0$. 
Since we assume that $ d \leq 1$, we have $\Hom(P, L[n +d]) = 0$. 
 Therefore from the $i = n + d -1$ case of the  exact sequence (\ref{201712182246}),  
we deduce that there exists a surjection $\Hom(N,L[n+ d-1]) \twoheadrightarrow \Hom(M,L[n +d]) \neq 0$.  
 Thus, we conclude that $\Hom(N, L[n +d -1]) \neq 0$ as desired. 
\end{proof}

\begin{lemma}\label{20171219205II} 
Let $M \in \sfD(R) \setminus \{ 0\}$. 
Then $\pd M = 0$ if and only if $M \in \cP[-\sup M] $. 
\end{lemma}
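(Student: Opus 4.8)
The plan is to prove both implications separately, relying on Lemma \ref{basic of cP lemma} and Lemma \ref{201712182258II}. First I would dispose of the easy direction: if $M \in \cP[-\sup M]$, then $M = P[-\sup M]$ for some $P \in \cP$. By part (1) of Lemma \ref{basic of cP lemma}, for any $N \in \sfD(R)$ and any $m \le n$ we have $\Hom(P, N[\ell]) \cong \Hom(\tuH^{0}(P), \tuH^{\ell}(N))$, which vanishes whenever $\tuH^{\ell}(N) = 0$; hence for $N \in \sfD^{[m,n]}(R)$ the complex $F(P) = \RHom_{R}(P, N)$ lives in $\sfD^{[m,n]}(\kk)$. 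Shifting, $F(M) = \RHom_{R}(M, N)$ lands in $\sfD^{[m-\sup M, n-\sup M]}(\kk)$, so $M$ has projective concentration $[\sup M, \sup M]$, a single point, and therefore $\pd M \le 0$, i.e. $\pd M = 0$ (the concentration cannot be empty since $M \ne 0$).

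For the converse, suppose $\pd M = 0$. After shifting we may assume $\sup M = 0$, so that $M \in \sfD^{\le 0}(R)$ and $\tuH^{0}(M) \ne 0$. The idea is to use a minimal sppj morphism $f \colon P \to M$ with $P \in \cP$ and $\tuH^{0}(f)$ surjective (such $f$ exists by Lemma \ref{basic of cP lemma}), and to show $f$ is an isomorphism by analyzing its cocone $N = \cone(f)[-1]$. If $\pd M = 0$, then we are in the case $\pd M < 1$, so Lemma \ref{20171219228} does not directly apply; instead I would argue directly that $N = 0$. From the defining triangle $N \to P \xrightarrow{f} M \to$, the long exact cohomology sequence and the surjectivity of $\tuH^{0}(f)$ give $\tuH^{i}(N) = 0$ for $i \ge 0$, so $N \in \sfD^{\le -1}(R)$; in particular $\sup N \le -1$. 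Now apply Lemma \ref{201712182258II} to $M$: since $\pd M = 0$ means $M$ has strict projective concentration $[0,0]$, the functor $F = \RHom(M,-)$ sends $\sfD^{[m,n]}(R)$ into $\sfD^{[m,n]}(\kk)$ exactly. Taking $N' = \tuH^{\sup N}(N)[-\sup N] \in \sfD^{[\sup N, \sup N]}(R)$, the triangle and the vanishing $\Hom(P, N'[\ell]) = 0$ for $\ell \ne 0$ force $\Hom(M, N'[\sup N + 1]) \xrightarrow{\cong} \Hom(N, N'[\sup N])$ (from the rotated triangle $M[-1] \to N \to P \to M$, or directly from the triangle applied with target $N'$). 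But $\Hom(M, N'[\sup N + 1])$ vanishes because $\sup N + 1 \le 0 = \sup M$ forces $\sup N + 1 < 0$ unless $\sup N = -1$... here I need to be careful and instead compute $\Hom(N, N'[\sup N]) \supseteq \Hom_{\tuH^{0}(R)}(\tuH^{\sup N}(N), \tuH^{\sup N}(N)) \ne 0$ if $N \ne 0$, while the projective concentration bound on $M$ shows this Hom-group is controlled, yielding a contradiction unless $N = 0$.

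Let me restate the converse argument more cleanly, as that is where the subtlety lies. With $\sup M = 0$ and $f \colon P \to M$ a minimal sppj morphism, set $N = \cone(f)[-1]$, so there is a triangle $N \xrightarrow{g} P \xrightarrow{f} M \xrightarrow{+1}$. As above $N \in \sfD^{\le -1}(R)$. Suppose for contradiction $N \ne 0$ and let $s = \sup N \le -1$. Rotating, there is a triangle $M[-1] \to N \to P \xrightarrow{f} M$, giving an exact sequence $\Hom(M, Y[-1]) \to \Hom(N, Y) \to \Hom(P, Y) \to \Hom(M, Y)$ for any $Y$. Take $Y = \tuH^{s}(N)[-s]$. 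Then $\Hom(P, Y) = \Hom_{\tuH^{0}(R)}(\tuH^{0}(P), \tuH^{s}(N))$ by Lemma \ref{basic of cP lemma}(2), and the map $\Hom(N, Y) \to \Hom(P, Y)$ is precomposition with $g$; but $g$ factors through $\sigma^{\ge s}N$, and the truncation morphism $N \to \tuH^{s}(N)[-s]$ composed with $g$ need not vanish, so $\Hom(N,Y) \ne 0$. Meanwhile $\Hom(M, Y[-1]) = \Hom(M, \tuH^{s}(N)[-s-1])$, and since $\pd M = 0$ with $\sup M = 0$, Lemma \ref{201712182258II} (strict concentration $[0,0]$) gives $\Hom(M, Z[\ell]) = 0$ for $Z$ a module and $\ell \ne 0$; here $Z = \tuH^{s}(N)$ and the shift is $-s-1 \ge 0$, with equality $-s-1 = 0$ only if $s = -1$. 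Running the bookkeeping, the only way to avoid a nonzero $\Hom$ obstructing $\pd M = 0$ is $N = 0$, whence $f$ is an isomorphism and $M \cong P \in \cP = \cP[-\sup M]$. The main obstacle is precisely this last bookkeeping step: pinning down why the concentration bound on $M$ together with $N \in \sfD^{\le -1}(R)$ forces $N = 0$ rather than merely $N \in \sfD^{\le -1}(R)$; I expect the cleanest route is in fact to invoke Lemma \ref{20171219228} in the contrapositive form — if $N \ne 0$ then $N$ has some projective dimension and $\pd M = \pd N + 1 + \sup M - \sup N \ge 1$, contradicting $\pd M = 0$ — which requires first checking that Lemma \ref{20171219228}'s proof, or a trivial variant, covers the step from $\pd M = 0$ back to $N$, and this reduction is the part I would write out in full.
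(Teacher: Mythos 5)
Your ``if'' direction is fine and matches the paper. The ``only if'' direction, however, has a genuine gap, and it is located exactly where you suspected. First, you cannot reduce to a \emph{minimal} sppj morphism: minimality in the sense of Definition \ref{sppj morphism definition} requires $\tuH^{0}(f)$ to be a projective cover, and projective covers of arbitrary $H^{0}$-modules need not exist; Lemma \ref{basic of cP lemma} only provides some sppj morphism. Second, your claim that the long exact sequence gives $\tuH^{i}(N)=0$ for all $i\geq 0$ is false: exactness only yields $N\in \sfD^{\leq 0}(R)$, since $\tuH^{0}(N)$ surjects onto $\ker\tuH^{0}(f)$ and also receives $\tuH^{-1}(M)$, neither of which you can kill without already knowing that $\tuH^{0}(M)$ is projective --- which is essentially the conclusion you are after. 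Third, and most importantly, the target of your argument ($N=0$, i.e.\ $f$ is an isomorphism) is simply the wrong statement: it fails for a non-minimal sppj morphism (take $M=R$, $P=R^{\oplus 2}$, $f$ the projection, so $N=R\neq 0$ while $\pd M=0$), and it fails even in principle whenever $\tuH^{0}(M)$ is projective but not free and $P$ is chosen free. Your proposed rescue via Lemma \ref{20171219228} ``in contrapositive form'' does not work either: that lemma has $1\leq \pd M$ as a hypothesis, and its formula is false when $\pd M=0$ --- the correct statement in that case is Corollary \ref{20171219205}, which says $\pd N=0$, not $N=0$.

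What must be proved instead, and what the paper proves, is that the structure morphism $g\colon N\to P$ is a \emph{split monomorphism}, so that $M\cong \cone(g)$ becomes a direct summand of $P$ and one concludes $M\in\cP$ because $\cP$ is closed under direct summands. The paper's mechanism is: use $\pd M=0$ to get $\Hom(M,\tuH^{0}(N)[1])=0$, so that $\Hom(P,\tuH^{0}(N))\to\Hom(N,\tuH^{0}(N))$ is surjective; lift the canonical map $h\colon N\to \tuH^{0}(N)$ to $k\colon P\to\tuH^{0}(N)$ and then to $\ell\colon P\to N$ with $h\ell=k$, so that $\tuH^{0}(\ell)\tuH^{0}(g)=\id$; transport the idempotent $\tuH^{0}(g\ell)$ back along $\End(P)\cong\End(\tuH^{0}(P))$ to split off a summand $Q$ of $P$ inside $N$; and finally show the complement $N'$ of $Q$ in $N$ vanishes by the cohomological argument with $\tuH^{s}(N)[-s]$, $s=\sup N'<0$. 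If you want to keep your framework, you should redirect your cocone analysis toward producing this splitting of $g$ rather than toward $N=0$.
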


\begin{proof}
``if" part is clear. 
We prove ``only if" part. 
Let $M \in \sfD(R)$ be such that $\pd M = 0$. 
By \ref{201712182258}, $\sup M < \infty$. 
Shifting the degree, we may assume $\sup M = 0$. 
We take an exact triangle below with $f$ sppj  
\begin{equation}\label{20171219135}
N \xrightarrow{g} P \xrightarrow{f} M \to.  
\end{equation}
Since $\cP$ is closed under taking direct summand, 
it is enough to show that $g$ is a split monomorphism.

Observe that $N \in \sfD^{\leq 0}(R)$. 
The above exact triangle induces the exact sequence 
\[
 0\to \Hom(M, \tuH^{0}(N)) \to \Hom(P, \tuH^{0}(N))\xrightarrow{g_{*}}\Hom(N, \tuH^{0}(N)) \to 0
\]
where we use $\Hom(M, \tuH^{0}(N)[1]) = 0$. 
Under the isomorphisms  
\[
\Hom(P , \tuH^{0}(N)) \cong \Hom(\tuH^{0}(P), \tuH^{0}(N)), \  
\Hom(N , \tuH^{0}(N)) \cong \Hom(\tuH^{0}(N), \tuH^{0}(N)) 
\]
the morphism $g_{*}$ corresponds  the morphism $\Hom(\tuH^{0}(g), \tuH^{0}(N))$. 
Let $h: N \to \tuH^{0}(N)$ be a canonical morphism. 
Then, there exists a morphism $k :P \to \tuH^{0}(N)$ such that $kg = h$. 
Note that $h \in \Hom(N, \tuH^{0}(N))$  corresponds to $\id_{\tuH^{0}(N)} \in \Hom(\tuH^{0}(N), \tuH^{0}(N))$ 
under the above isomorphism 
and hence  $\tuH^{0}(k) \tuH^{0}(g) = \id_{\tuH^{0}(N)}$. 
Since $h$ induces an isomorphism $\Hom(P, N) \xrightarrow{ \cong } \Hom(P, \tuH^{0}(N))$, 
there exists a morphism $\ell: P \to N$ such that $h\ell = k$. 
\[
\begin{xymatrix}
{
N \ar[rr]_{g} \ar[d]_{h} &&  P \ar@/_1pc/[ll]_{\ell} \ar[dll]^{k} \\
\tuH^{0}(N),  
}\end{xymatrix}
\begin{xymatrix}{
Q \ar[drr]^{i} \ar[d]^{\ell i}  && \\ 
N \ar@/^1pc/[u]^{jg} \ar[rr]_{g} && P.  
 }\end{xymatrix}
\] 
Check that $\tuH^{0}(\ell) \tuH^{0}(g) = \id_{\tuH^{0}(N)}$. 
Therefore if we set $ e = g \ell$, 
then $\tuH^{0}(e)$ is an idempotent element of $\End(\tuH^{0}(P))$. 
Since the $0$-th cohomology group functor $\tuH^{0}$ induces an isomorphism  
$\End(P) \cong  \End(\tuH^{0}(P))$, we conclude that $e$ is an idempotent element.  
Let $Q$ be the corresponding direct summand of $P$. 
More precisely, $Q$ is an object of $\sfD(R)$ equipped with morphisms $i :Q \to P$ and $j : P \to Q$ 
such that $ji = \id_{Q}$ and $ij = e$. 
Then, we have $g\ell i = ei = iji= i$ and $jg\ell i = \id_{Q}$. 
This shows that there exists an isomorphism $N \cong Q \oplus N'$ for some $N' \in \sfD(R)$ 
under which the morphism $g$ corresponds to $\begin{pmatrix} i \\ g' \end{pmatrix}$ for some $g' : N' \to P$.
\[
g: N \cong Q \oplus N' \xrightarrow{  \ \ \ \begin{pmatrix} i \\ g' \end{pmatrix} \ \ \ } P. 
\]
If we show $N' = 0$, then $Q \cong N$ and finish the proof. 

Assume on the contrary that $N' \neq 0$. 
First observe that 
$\tuH^{0}(Q) = \tuH^{0}(N)$  by construction. 
Therefore, $s:= \sup N' < 0$.   
We have 
$\Hom(Q \oplus N', \tuH^{s}(N)[-s]) \neq 0$. 
On the other hand, 
applying $\Hom(-, \tuH^{s}(N)[-s])$ to the exact triangle (\ref{20171219135}), 
we obtain an exact sequence 
\[
\Hom(P, \tuH^{s}(N)[-s]) \to \Hom(N, \tuH^{s}(N)[-s]) \to \Hom(M, \tuH^{s}(N)[-s+1]).    
\] 
Since $s < 0$, both sides of this sequence is zero. 
Hence we conclude that $\Hom(Q \oplus N', \tuH^{s}(N)[-s]) =0$.
A contradiction. 
\end{proof}

In the proof we showed that 

\begin{corollary}\label{20171219205}
Let $M \in \sfD(R)$ be such that $\pd M = 0$ and $f: P \to M$ a sppj morphism. 
Then, the cocone $N = \cone f [-1] $ has $\pd N = 0$. 
\end{corollary}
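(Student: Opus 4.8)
The plan is to read this off from the proof of Lemma~\ref{20171219205II}, which was in fact phrased for an \emph{arbitrary} sppj morphism. There one starts, after shifting so that $\sup M = 0$, from the exact triangle $N \xrightarrow{g} P \xrightarrow{f} M \to$ attached to a sppj morphism $f$ and shows that $g$ is a split monomorphism; the only inputs are $\pd M = 0$ (used, via the projective concentration $[0,0]$, to get $\Hom(M, X[1]) = 0$ for $X \in \Mod H^{0}$) and the defining properties of $f$, namely $P \in \cP$ and $\tuH^{0}(f)$ surjective. Hence, for the sppj morphism $f$ in the statement, $g$ splits, so $N$ is a direct summand of $P \in \cP$; as $\cP$ is closed under direct summands, $N \in \cP$. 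In particular $\tuH^{n}(N) = 0$ for $n \neq 0$, so either $N = 0$ or $\sup N = 0$ and $N \in \cP = \cP[-\sup N]$; in both cases the ``if'' part of Lemma~\ref{20171219205II} gives $\pd N = 0$.

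Alternatively, one can avoid the shift and argue directly. Since $\pd M = 0$, Lemma~\ref{20171219205II} puts $M \in \cP[-\sup M]$, so by Lemma~\ref{basic of cP lemma}(3) both $\tuH^{\sup M}(P)$ and $\tuH^{\sup M}(M)$ are projective $\tuH^{0}(R)$-modules and $\tuH^{\sup M}(f)$ is a surjection between them; such a surjection splits. Lifting the splitting through the isomorphisms $\Hom(M, P) \cong \Hom(\tuH^{\sup M}(M), \tuH^{\sup M}(P))$ and $\End(M) \cong \End(\tuH^{\sup M}(M))$ of Lemma~\ref{basic of cP lemma}(1) shows $f$ is a split epimorphism in $\sfD(R)$, whence $P \cong N \oplus M$; thus $N \in \cP[-\sup M] = \cP[-\sup N]$ and $\pd N = 0$ again by Lemma~\ref{20171219205II}.

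There is no genuine obstacle here: the substance is already contained in Lemma~\ref{20171219205II}, and all that remains is to note that its proof is insensitive to which sppj morphism is used. The one point worth a sentence is the bookkeeping that $N \in \cP[-\sup M]$ forces $\sup N \in \{\sup M, -\infty\}$, so that Lemma~\ref{20171219205II} genuinely applies to $N$; and, for the extraction argument, that the normalization $\sup M = 0$ is harmless since both $\pd M = 0$ and $\pd N = 0$ are invariant under a common degree shift.
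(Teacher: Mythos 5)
Your first argument is exactly what the paper does: its ``proof'' of Corollary \ref{20171219205} is just the remark that the proof of Lemma \ref{20171219205II} (which indeed works for an arbitrary sppj morphism) already exhibits $N$ as a direct summand of $P$, hence $N \in \cP[-\sup M]$ and $\pd N = 0$. Your second, shift-free argument---using the statement of Lemma \ref{20171219205II} to get $M \in \cP[-\sup M]$ and then lifting a splitting of the surjection $\tuH^{\sup M}(f)$ of projective $H^{0}$-modules via Lemma \ref{basic of cP lemma}(1) to conclude that $f$ is a split epimorphism---is also correct and gives a clean alternative that relies only on the lemma's statement rather than its internals.
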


It is worth noting the following corollary. 

\begin{corollary}\label{201901042037}
The full subcategory $\cP \subset \sfD(R)$ is 
consisting of objects $P$ such that either $P = 0$ or  
$\pd P = 0$ and $\sup P = 0$.
\end{corollary}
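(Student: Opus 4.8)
The plan is to deduce the statement directly from Lemma \ref{20171219205II} together with Lemma \ref{basic of cP lemma}; no new construction is needed.

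First I would prove that every object of $\cP$ lies in the described subcategory. Let $P \in \cP$ with $P \neq 0$. As observed right after Lemma \ref{basic of cP lemma}, part (1) of that lemma already gives $\pd P = 0$, so it only remains to check $\sup P = 0$. Since $P$ is a direct summand of a coproduct of copies of $R$ and cohomology commutes with coproducts and with direct summands, connectivity of $R$ yields $\tuH^{n}(P) = 0$ for all $n > 0$, i.e. $\sup P \leq 0$. On the other hand, by Lemma \ref{basic of cP lemma}(3) the functor $\tuH^{0}$ restricts to an equivalence $\cP \xrightarrow{\sim} \Proj H^{0}$; in particular it detects the zero object, so $P \neq 0$ forces $\tuH^{0}(P) \neq 0$, and hence $\sup P = 0$.

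Conversely, suppose $P$ is an object with $P \neq 0$, $\pd P = 0$ and $\sup P = 0$. By Lemma \ref{20171219205II}, the condition $\pd P = 0$ gives $P \in \cP[-\sup P]$, and since $\sup P = 0$ this says precisely $P \in \cP$. Combining the two inclusions with the tautology $0 \in \cP$ finishes the argument.

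I do not expect any real obstacle here: the corollary is a formal repackaging of Lemmas \ref{basic of cP lemma} and \ref{20171219205II}. The only point that deserves a moment's care is the upgrade from $\sup P \le 0$ to $\sup P = 0$ for nonzero $P \in \cP$, which rests on the faithfulness (indeed fullness-and-faithfulness) of $\tuH^{0}$ on $\cP$ recorded in Lemma \ref{basic of cP lemma}(1) and (3).
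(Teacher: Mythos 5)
Your proof is correct and follows the same route the paper intends: the forward inclusion is the observation (made right after Lemma \ref{basic of cP lemma}) that objects of $\cP$ have projective dimension $0$, supplemented by the easy check that a nonzero $P\in\cP$ has $\sup P=0$, and the reverse inclusion is exactly Lemma \ref{20171219205II} specialized to $\sup P=0$. The one detail you rightly flag --- upgrading $\sup P\le 0$ to $\sup P=0$ via faithfulness of $\tuH^{0}$ on $\cP$ --- is left implicit in the paper, and your justification of it is sound.
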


Now from Lemma \ref{20171219228} and Lemma \ref{20171219205II} 
it is clear how to define a resolution of a DG-module which computes its projective dimension.

\begin{definition}[sppj resolutions]
\begin{enumerate}
\item 
A sppj resolution $P_{\bullet}$ of $M$ is a sequence of exact triangles  for $i \geq  0$ 
with $M_{0} := M$ 
\[
M_{i+1} \xrightarrow{g_{i+1}} P_{  i } \xrightarrow{f_{i}} M_{i}
\]
such that $f_{i}$ is sppj.

The following inequality folds
\[
\sup M_{i+1} = \sup P_{i+1} \leq \sup P_{i} =\sup M_{i}. 
\]

For a sppj resolution $P_{\bullet}$ with the above notations, 
we set $\delta_{i} := g_{i-1} \circ f_{i}$. 
\[
\delta_{i} :P_{ i} \to P_{i-1}.
\]
Moreover we write 
\[
 \cdots \to P_{i} \xrightarrow{\delta_{i}} P_{ i -1} \to \cdots \to P_{1} \xrightarrow{\delta_{1}} P_{0} \to M. 
\]

\item 
A sppj resolution $P_{\bullet}$ is said to have length $e$ if 
$P_{i } = 0$ for $i> e$ and 
$P_{e} \neq 0$.

\item 
A sppj resolution $P_{\bullet}$ is called minimal if $f_{i}$ is minimal 
for $i \geq 0$.

\end{enumerate}
\end{definition}

Using sppj resolution, we can compute $\Hom(M,N[n])$ for $N \in \mod H^{0}$.  

\begin{lemma}\label{sppj resolution lemma}
Let $N \in \mod H^{0}, M \in \sfD^{< \infty}(R)$ and $P_{\bullet}$  a sppj resolution of $M$. 
We denote the complexes below by $X_{i}, X'_{i}$. 
\[
\begin{split}
X_{i} : \Hom(P_{i-1}, N[-\sup P_{i}]) \to \Hom(P_{i }, N[-\sup P_{i}] ) \to \Hom(P_{i+1}, N[ -\sup P_{i}]) \\ 
X'_{i} :\Hom(\tuH^{\sup}(P_{i-1}), N) \to \Hom(\tuH^{\sup}(P_{ i }), N) \to \Hom(\tuH^{\sup}(P_{i +1}), N) 
\end{split}
\] 
Then, 
\[
\Hom(M,N[n]) =
\begin{cases} 
0 & n \neq i -\sup P_{i} \textup{ for any } i \geq 0 \\
\tuH(X_{i}) & n = i - \sup P_{i} \textup{ for some } i \geq 0 
\end{cases}
\]

Moreover, in the case $n = i -  \sup P_{i}$,  we have  
\[
\begin{split}
&\tuH(X_{i}) \\
&=
\begin{cases}
\Hom(\tuH^{\sup}(P_{i}), N) & \sup P_{i -1} \neq \sup P_{i} \neq \sup P_{i+1}, \\
\Ker[ \Hom(\tuH^{\sup}(P_{i }), N ) \to \Hom(\tuH^{\sup}(P_{i+1}) , N) ],  &
 \sup P_{i-1} \neq \sup P_{i} = \sup P_{i+1}, \\ 
 \Coker[\Hom(\tuH^{\sup}(P_{i -1}), N) \to \Hom( \tuH^{\sup}(P_{i}), N ) ], & 
 \sup P_{i -1} = \sup P_{i} \neq \sup P_{i+1}, \\ 
\tuH(X'_{i}), & 
 \sup P_{i -1} = \sup P_{i} = \sup P_{i +1}. 
\end{cases}  
\end{split}
\]
\end{lemma}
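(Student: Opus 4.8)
The plan is to compute $\Hom(M, N[n])$ by unwinding the sppj resolution one exact triangle at a time and applying the functor $\Hom(-, N[n])$. Since each $\cE_i: M_{i+1} \xrightarrow{g_{i+1}} P_i \xrightarrow{f_i} M_i \to$ is an exact triangle, applying $\Hom(-, N[n])$ yields a long exact sequence relating $\Hom(M_i, N[n])$, $\Hom(P_i, N[n])$ and $\Hom(M_{i+1}, N[n-1])$. The crucial input is Lemma \ref{basic of cP lemma}(2): since $P_i \in \cP[-\sup P_i]$ and $N \in \mod H^0 \subset \Mod H^0$, one has $\Hom(P_i, N[n]) = 0$ unless $n = -\sup P_i$, in which case it equals $\Hom(\tuH^{\sup}(P_i), N)$ (using that $\tuH^0(P_i[-\sup P_i]) = \tuH^{\sup P_i}(P_i)$ — I should double-check the sign convention, but it works out by the definition of $\cP$ and the shift). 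So the long exact sequences have almost all terms vanishing, and this will pin down $\Hom(M, N[n])$ degree by degree.

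First I would set up induction: the claim is that $\Hom(M_j, N[n]) = 0$ unless $n = i - \sup P_i$ with $i \geq j$, and in the boundary degrees it is computed by the corresponding piece of the complex $X_i$ (suitably truncated at the left when $i = j$). The base of the induction, or rather the mechanism, is to feed $\Hom(M_j, N[n-1])$ and the known vanishing of $\Hom(P_{j-1}, N[n])$ into the long exact sequence coming from $\cE_{j-1}$:
\[
\Hom(P_{j-1}, N[n]) \to \Hom(M_j, N[n-1]) \to \Hom(M_{j-1}, N[n]) \to \Hom(P_{j-1}, N[n+1]).
\]
Because only finitely many $\sup P_i$ can equal a given value in the relevant ranges (in fact $\sup P_i$ is weakly decreasing in $i$), for fixed $n$ only boundedly many $i$ satisfy $n = i - \sup P_i$, so the inductive unwinding terminates and the answer is a finite computation. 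The identification of $\Hom(M, N[n])$ with $\tuH(X_i)$ then comes from tracking which differentials in $X_i$ are present: the differential $\Hom(P_{i-1}, N[-\sup P_i]) \to \Hom(P_i, N[-\sup P_i])$ is nonzero only when $\sup P_{i-1} = \sup P_i$ (otherwise its source vanishes), and similarly on the other side — this is exactly the case distinction in the "Moreover" part, where I would read off $\tuH(X_i)$ as a kernel, cokernel, full term, or genuine cohomology according to which of $\sup P_{i-1} = \sup P_i$ and $\sup P_i = \sup P_{i+1}$ hold.

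The main obstacle I anticipate is bookkeeping: keeping the shifts and the indexing of $\delta_i = g_{i-1} f_i$ straight so that the maps in $X_i$ really are the ones induced by $\delta_i, \delta_{i+1}$ under the identification $\Hom(P_j, N[-\sup P_i]) \cong \Hom(\tuH^{\sup}(P_j), N)$ when $\sup P_j = \sup P_i$ (and are zero otherwise). Concretely I would need to verify that, under Lemma \ref{basic of cP lemma}(1)--(2), the connecting maps in the long exact sequences are precisely $\Hom(\tuH^{\sup}(\delta_i), N)$, which reduces to the fact that $\tuH^0$ is faithful on $\cP$ and that $g_i$ induces an isomorphism on $\Hom(P_{i-1}, -)$-groups into $\mod H^0$. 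Once that functoriality is pinned down, the degeneration of the long exact sequences and the four-way case split are routine, and the two displayed formulas follow by inspection.
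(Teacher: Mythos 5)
Your plan is correct and follows essentially the same route as the paper: apply $\Hom(-,N[n])$ to each triangle $M_{i+1}\to P_i\to M_i\to$, use that $\Hom(P_i,N[n])$ vanishes except at $n=-\sup P_i$ where it equals $\Hom(\tuH^{\sup}(P_i),N)$, chain the resulting isomorphisms down to the unique relevant index (noting $i-\sup P_i$ is strictly increasing), and read off the four cases from which outer terms of $X_i$ survive. The functoriality and bookkeeping issues you flag are exactly what the paper resolves with its commutative diagram identifying $\tuH(X_i)\cong\Hom(M_{i-1},N[-\sup P_i+1])$.
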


We note that for $i \neq j$, 
we have 
$i - \sup P_{i} \neq j -\sup P_{j}, 
i - \sup P_{i} + 1 \neq j -\sup P_{j} +1$. 
We also not that 
a pair  $i,j \geq 0$ satisfies $i - \sup P_{i} + 1= j - \sup P_{j}$ 
if and only if $ j= i+ 1$ and $ \sup P_{i} = \sup P_{j}$.

\begin{proof}
For simplicity we set $t_{i} := - \sup P_{i}$. 
We have 
\[
\Hom(P_{i} , N [n]) \cong
 \begin{cases}
\Hom_{\Mod H^{0}}(\tuH^{\sup}(P_{i}), N)  & n = t_{i}, \\
0 & n \neq t_{i}. 
\end{cases}
\]

From the induced exact sequence 
\[
\Hom(P_{i} , N[n-1]) \to \Hom(M_{i+1}, N[n-1]) \to \Hom(M_{i}, N[n]) \to \Hom(P_{i}, N[n]), 
\]
we deduce the following isomorphism and exact sequences. 

For $n \neq t_{i}, t_{i} + 1$, we have an isomorphism
\begin{equation}\label{201711121859}
\Hom(M_{i+1}, N[n-1]) \xrightarrow{\cong } \Hom(M_{i}, N[n]). 
\end{equation}

We have the exact sequence 
\begin{equation}\label{201711121858}
\begin{split}
0 = \Hom(M_{i+1}, N[t_{i} -1]) \to \Hom(M_{i}, N[ t_{i} ]) \to \Hom(P_{i}, N[t_{i}]) \\
 \to \Hom(M_{i+1}, N[t_{i} ]) \to \Hom(M_{i}, N[ t_{i} + 1]) \to 0,
\end{split}
\end{equation}
The first term is zero since $\sup M_{i+1} \leq \sup P_{i}$. 
We note that in a similar way we see that 
the induced map 
$ \Hom(M_{i+1}, N[t_{i}]) \to \Hom(P_{i+1}, N[ t_{i}] ) $ is injective.

We have the exact sequence 
\begin{equation}\label{201711162106}
\begin{split}
\Hom(P_{i-1}, N[ t_{i}]) \to \Hom(M_{i}, N[ t_{i} ]) \to \Hom(M_{i-1}, N[t_{i}+1]) \\
 \to \Hom(P_{i -1}, N[t_{i} + 1]) = 0 
\end{split}
\end{equation}
The last term is zero, since $t_{i} + 1 > t_{i -1}$.

Combining above observations, we obtain the following diagram,  
whose row and columns are exact. 
\[
\begin{xymatrix}{ 
& \Hom(P_{i-1}, N[t_{i}] ) \ar[d] \ar[dr] & &  0 \ar[d] \\
 0 \ar[r] & 
 \Hom(M_{i}, N[t_{i}]) \ar[r] \ar[d] & 
 \Hom(P_{i}, N[t_{i}]) \ar[r] \ar[dr] & \Hom(M_{i+1}, N[t_{i}] ) \ar[d] \\ 
 &\Hom( M_{i-1}, N[t_{i} + 1]) \ar[d] && \Hom(P_{i+1}, N[t_{i}] ) \\ 
& 0& & }\end{xymatrix} 
 \]
Observe that the slant line is the complex $X_{i}$. 
Thus we conclude that $\tuH(X_{i} ) \cong \Hom(M_{i -1}, N[t_{i} +1])$. 


We deal with the case $n = i + t_{i}$ for some $i \geq 0$. 
Then, $n \neq j+t_{j} +1, j + t_{j}$ for $j = 0, \dots,  i-2$. 
Therefore  
we have the following isomorphisms. 
\begin{equation*}
\begin{split}
\Hom(M,N[n]) \cong \Hom(M_{1}, N [n-1]) &\cong \cdots \cong  \Hom(M_{i-1}, N[n-i +1])  \\
\end{split}
\end{equation*}
Since $n -i +1 = t_{i} + 1$, 
we obtain the desired isomorphism 
\[
\Hom(M,N[n]) \cong \Hom(M_{i-1}, N[t_{i} + 1] ) \cong \tuH(X_{i}). 
\]

Next we deal with the case $n \neq  i +t_{i}$ for any $i \geq 0$. 
This case divided into the following two cases:
 
(I) $ n = i + t_{i} +1$ for some $i \geq 0$. 

(II) $ n \neq i+ t_{i} +1$ for any $ i \geq 0$. 

We deal with the case (I). 
Then, $n \neq j+t_{j} +1, j + t_{j}$ for $j = 0, \dots,  i-1$. 
Therefore  
we have the following isomorphisms. 
\begin{equation*}
\begin{split}
\Hom(M,N[n]) \cong \Hom(M_{1}, N [n-1]) &\cong \cdots \cong  \Hom(M_{i}, N[n-i ])  \\
\end{split}
\end{equation*}
Since $ n-i = t_{i} +1$, 
\[
\Hom(M,N[n]) = \Hom(M_{i}, N[ t_{i} + 1]) 
\]
We only have to show that $\Hom(M_{i}, N[t_{i} + 1]) = 0$. 
We have $t_{i} \neq t_{i +1}$, since otherwise $n= (i+1) + t_{i +1}$. 
Thus $\Hom(P_{i+1}, N[t_{i}]) = 0$. 
As we mentioned before the induced map 
$\Hom(M_{i+1}, N[t_{i}]) \to \Hom(P_{i+1}, N[t_{i}])$  
is injective. Thus, $\Hom(M_{i +1}, N[t_{i}]) = 0$. 
Finally, 
from the exact sequence 
(\ref{201711121858}), 
we obtain a 
surjection $\Hom(M_{i+1}, N[t_{i}]) \to \Hom(M_{i}, N[t_{i} + 1])$. 
Thus, we conclude 
$\Hom(M_{i}, N[t_{i} + 1]) = 0$ as desired.

We deal with the case (II). 
Namely we assume $n \neq i + t_{i}, i +t_{i} + 1$ for any $i \geq 0$. 
This case is divided into the following two cases: 

(II-i) $n < t_{0}$. (II-ii) $ n > t_{0}$. 

The case (II-i). 
Since $M \in \sfD^{\leq -t_{0}}(R) , N[n]  \in \sfD^{ > - t_{0}}(R)$, 
we have $\Hom(M, N[n]) = 0$. 

The case (II-ii). 
Let $ i= n -t_{0} +1$. 
The we have isomorphisms
\[
\begin{split}
\Hom(M,N[n]) \cong \Hom(M_{1}, N [n-1]) &\cong \cdots \cong  \Hom(M_{i}, N[n-i]) \\
\end{split}
\] 
Since $\sup M_{i} = \sup P_{i} = -t_{i} \leq -t_{0}< -(n -i)$, 
using  the standard $t$-structure  as above, 
we deduce that $\Hom(M_{i}, N[n-i]) = 0$. 
\end{proof}

The following theorem provides criteria 
for a natural number to be an upper bound of the projective dimension $\pd M$ 
of a DG-$R$-module $M$.

\begin{theorem}\label{sppj resolution theorem lemma II}
Let $M \in \sfD^{< \infty}(R)$ and $d \in \NN$ a natural number. 
Then,   
the following conditions are equivalent 

\begin{enumerate}[(1)]
\item 
$\pd  M  \leq  d$. 

\item 
For any sppj resolution $P_{\bullet}$, 
there exists a natural number $e \in \NN$ 
such that 
$M_{e} \in \cP[-\sup P_{e}]$ 
and 
$ e+ \sup P_{0} -\sup M_{e}\leq d$. 
In particular, 
we have a sppj resolution of length $e$. 
\[
 M_{e} \to P_{e-1} \to P_{e-2} \to \cdots P_{1} \to P_{0} \to M.
\]

\item 
$M$ has sppj resolution $P_{\bullet}$ of length $e$ 
such that $ e + \sup P_{0} - \sup P_{e} \leq d$.

\item 
The functor $F= \RHom(M, -)$ sends the standard heart $\Mod H^{0}$ 
to $\sfD^{[-\sup M, d  -\sup M  ]}(R)$. 

\item 
$M$ belongs to $\cP[-\sup M ] \ast \cP[ -\sup M +1] \ast \cdots \ast \cP[-\sup M +d]$. 

\end{enumerate}
\end{theorem}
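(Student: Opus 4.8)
The plan is to run the cycle $(1)\Rightarrow(2)\Rightarrow(3)\Rightarrow(5)\Rightarrow(1)$ and then attach $(4)$ through $(1)\Rightarrow(4)$ and $(4)\Rightarrow(1)$; throughout we may assume $M\neq 0$, so $\sup M\in\ZZ$. For $(1)\Rightarrow(2)$, fix an arbitrary sppj resolution of $M$ with triangles $M_{i+1}\xrightarrow{g_{i+1}}P_i\xrightarrow{f_i}M_i$. Since $f_i$ is sppj with cocone $M_{i+1}$, Lemma \ref{20171219228} gives, as long as $\pd M_i\geq 1$, the identity $\pd M_{i+1}-\sup M_{i+1}=(\pd M_i-\sup M_i)-1$, and in particular $\pd M_{i+1}\leq\pd M_i-1$. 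Hence $\pd M_i\leq\pd M-i$ while the earlier terms have positive projective dimension, so there is a least index $e\leq\pd M\leq d$ with $\pd M_e=0$, and then $M_e\in\cP[-\sup M_e]$ by Lemma \ref{20171219205II}. Telescoping the displayed identity over $0\leq i<e$ gives $\sup M_e=\sup M-\pd M+e$, hence $e+\sup P_0-\sup M_e=\pd M\leq d$, using $\sup P_0=\sup M$ (which holds because $f_0$ is sppj).

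The implication $(2)\Rightarrow(3)$ is immediate: in the resolution produced by $(2)$ one has $M_e\in\cP[-\sup M_e]$, so putting $P_e:=M_e$, $f_e:=\id_{M_e}$ and $M_{e+1}:=0$ gives a sppj resolution of length $e$ with $\sup P_e=\sup M_e$, whence $e+\sup P_0-\sup P_e\leq d$. For $(3)\Rightarrow(5)$, splice the triangles of a sppj resolution of length $e$: rotating $M_{i+1}\to P_i\to M_i$ gives $M_i\in\cP[-\sup P_i]\ast\{M_{i+1}[1]\}$, and since $M_e\cong P_e\in\cP[-\sup P_e]$, associativity of $\ast$ yields $M\in\cP[c_0]\ast\cP[c_1]\ast\cdots\ast\cP[c_e]$ with $c_i:=i-\sup P_i$. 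Because $\sup P_0\geq\sup P_1\geq\cdots\geq\sup P_e$ the integers $c_i$ strictly increase, with $c_0=-\sup M$ and $c_e=e-\sup P_e\leq d-\sup M$ by the hypothesis of $(3)$; since the zero object lies in every $\cP[j]$ and $\{0\}\ast\cX=\cX\ast\{0\}=\cX$, inserting zero factors into the gaps and at the top fills the list of shifts to $-\sup M,-\sup M+1,\dots,-\sup M+d$, so $M\in\cP[-\sup M]\ast\cP[-\sup M+1]\ast\cdots\ast\cP[-\sup M+d]$. Finally $(5)\Rightarrow(1)$ is exactly Corollary \ref{20171219208}.

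It remains to incorporate $(4)$. The implication $(1)\Rightarrow(4)$ is Corollary \ref{201712182258II}: if $\pd M\leq d$, then $M$ has projective concentration $[\sup M-d,\sup M]$, so $F=\RHom(M,-)$ sends $\sfD^{[0,0]}(R)=\Mod H^0$ into $\sfD^{[-\sup M,d-\sup M]}(R)$. For $(4)\Rightarrow(1)$ I must conversely check that $F$ then sends $\sfD^{[m,n]}(R)$ into $\sfD^{[m-\sup M,n-\sup M+d]}(R)$ for all $m\leq n$, since this means $M$ has projective concentration $[\sup M-d,\sup M]$ and hence $\pd M\leq d$ (Corollary \ref{201712182258II}); the lower bound is Lemma \ref{201712182258}, so only $F(\sfD^{[m,n]}(R))\subseteq\sfD^{\leq n-\sup M+d}(R)$ is in question. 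For finite $m\leq n$ this is dévissage: $L\in\sfD^{[m,n]}(R)$ is a finite iterated extension of the shifted heart objects $\tuH^k(L)[-k]$ ($m\leq k\leq n$), $F$ is triangulated, and $(4)$ gives $F(\tuH^k(L)[-k])\in\sfD^{[k-\sup M,k-\sup M+d]}(R)$, so the extension lands in $\sfD^{\leq n-\sup M+d}(R)$. For $m=-\infty$, write $L\in\sfD^{\leq n}(R)$ as the homotopy limit $\holim_{p\to-\infty}\sigma^{\geq p}L$; since $\RHom(M,-)$ preserves homotopy limits, $F(L)=\holim_p F(\sigma^{\geq p}L)$, and from the triangles $\tuH^p(L)[-p]\to\sigma^{\geq p}L\to\sigma^{\geq p+1}L$ together with the bound from $(4)$ one sees that the transition maps on $\tuH^{n-\sup M+d}$ are isomorphisms once $p<n$, so that tower is eventually constant, the relevant $\lim^1$ vanishes, and $\tuH^j(F(L))=0$ for $j>n-\sup M+d$ (the case $n=\infty$ imposing no upper bound).

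\textbf{Main obstacle.} The routine parts are the support bookkeeping in $(1)\Rightarrow(2)$ and the zero-padding in $(3)\Rightarrow(5)$. The main obstacle is $(4)\Rightarrow(1)$: one must upgrade the hypothesis about the single heart $\Mod H^0$ to the full projective concentration, and in particular handle test objects unbounded below, where a naive truncation argument fails because $\Hom(M,-)$ need not vanish on arbitrarily negative objects (it can see $\Ext$ in high degree); this forces the passage through the homotopy limit and the Mittag-Leffler vanishing of $\lim^1$.
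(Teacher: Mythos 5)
Your proposal is correct, and for the implications among (1), (2), (3), (5) it runs essentially parallel to the paper: the same use of Lemma \ref{20171219228} and Lemma \ref{20171219205II} for (1) $\Rightarrow$ (2), and the same splicing/padding into $\cP[-\sup M]\ast\cdots\ast\cP[-\sup M+d]$ followed by Corollary \ref{20171219208}. Where you genuinely diverge is the hard implication (4) $\Rightarrow$ (1). The paper first proves Claim \ref{201804052018}, namely that hypothesis (4) forces $M$ to admit a sppj resolution of \emph{finite length}; this is done internally, via the $\Hom$-computations of Lemma \ref{sppj resolution lemma} and the splitting Lemma \ref{201711171950}, and the finite resolution is then used to reduce unbounded test objects to the bounded case by truncation. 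You instead establish the projective concentration $[\sup M-d,\sup M]$ directly: d\'evissage over the heart handles $\sfD^{[m,n]}(R)$ with $m,n$ finite, the case $n=\infty$ needs only the lower bound of Lemma \ref{201712182258}, and the case $m=-\infty$ is treated by writing $L\cong\holim_{p}\sigma^{\geq p}L$, using that $\RHom(M,-)$ preserves such homotopy limits, and killing the $\lim^{1}$ term in the Milnor sequence by the eventual-isomorphism (Mittag--Leffler) argument at the critical degree $n-\sup M+d$ (your threshold ``$p<n$'' is the right one, up to an immaterial off-by-one). The trade-off: your route imports two standard but external facts — left-completeness of the standard $t$-structure on $\sfD(R)$ and the Milnor sequence for towers — which the paper never invokes, whereas the paper's route stays inside its own sppj machinery and, as a by-product, produces the finite sppj resolution under hypothesis (4), a structural conclusion your argument does not give. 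Conversely, your argument sidesteps the rather delicate splitting Lemma \ref{201711171950} entirely and is closer to the usual ``test against the heart'' proofs of such amplitude statements.
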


We need  a preparation. 

\begin{lemma}\label{201711171950}
Let $Q_{1}, Q_{2}, Q_{3}$ be objects of $\cP$,  
 $N_{3} \xrightarrow{g_{3}} Q_{2} \xrightarrow{f_{2}} N_{2} \to $ be an exact triangle  in $\sfD(R)$ and  
$f_{3}: Q_{3} \to N_{3}, g_{2} : N_{2} \to Q_{1}$ morphisms in $\sfD(R)$. 
We set $\delta_{3} := g_{3}f_{3}, \delta_{2}: = g_{2}f_{2}$. 
We consider the complex $Q_{3} \xrightarrow{\delta_{3}} Q_{2} \xrightarrow{\delta_{2}} Q_{1}$ inside $\sfD(R)$. 
Assume 
for any $N \in \Mod H^{0}$, the induced complex below is exact.  
\begin{equation}\label{201901041818}
\Hom_{\Mod H^{0}}(\tuH^{0}(Q_{1}), N) \to 
\Hom_{\Mod H^{0}}(\tuH^{0}(Q_{2}), N) \to 
 \Hom_{\Mod H^{0}}(\tuH^{0}(Q_{3}), N)
\end{equation}

Then the complex $Q_{3} \xrightarrow{\delta_{3}} Q_{2} \xrightarrow{\delta_{2}} Q_{1}$ inside $\sfD(R)$ splits as below. 
\begin{equation}\label{201804052136}
\begin{xymatrix}{
Q_{3} \ar[rr]^{\delta_{3}} \ar@{=}[d]^{\wr} &&
Q_{2} \ar[rr]^{\delta_{2}} \ar@{=}[d]^{\wr} &&
Q_{1} \ar@{=}[d]^{\wr} \\
Q''_{2}\oplus Q'_{3}  \ar[rr]_{\begin{pmatrix}\id& 0 \\0 & 0 \end{pmatrix}}  &&
Q''_{2}\oplus Q'_{2}  \ar[rr]_{\begin{pmatrix}0 & 0 \\0 & \id \end{pmatrix}}  &&
Q''_{1}\oplus Q'_{2}  
}\end{xymatrix}
\end{equation}
Moreover $Q''_{2}$ is a direct summand of $N_{3}$ 
and $Q'_{2}$ is a direct summand  of $N_{2}$, and there are  the following diagrams
\[
\begin{split}
(i) \begin{xymatrix}
{
Q_{3} \ar[rr]^{f_{3}} \ar@{=}[d] && N_{3} \ar@{=}[d]\\
Q''_{2} \oplus Q'_{3}  \ar[rr]_{ \begin{pmatrix} \id_{Q''_{2}} & 0 \\ 0 & \ast \end{pmatrix}} 
 && Q''_{2} \oplus N'_{3},  
}
\end{xymatrix}
(ii) 
\begin{xymatrix}
{
N_{3} \ar[rr]^{g_{3} } \ar@{=}[d] && Q_{2} \ar@{=}[d]\\
Q''_{2} \oplus N'_{3}  \ar[rr]_{ \begin{pmatrix} \id_{Q''_{2}} & 0 \\ 0 & 0 \end{pmatrix}} 
 && Q''_{2} \oplus Q'_{2},  
}
\end{xymatrix}
\\
(iii)
 \begin{xymatrix}
{
Q_{2} \ar[rr]^{f_{2}} \ar@{=}[d] && N_{2} \ar@{=}[d]\\
Q''_{2} \oplus Q'_{2} \ar[rr]_{ \begin{pmatrix} 0 & 0 \\ 0 & \id_{Q'_{2}} \end{pmatrix}} 
 &&  N'_{2} \oplus Q'_{2},   
}
\end{xymatrix}
(iv) 
 \begin{xymatrix}
{
N_{2} \ar[rr]^{g_{2} } \ar@{=}[d] && Q_{1} \ar@{=}[d]\\
N'_{2} \oplus Q'_{2}  
\ar[rr]_{ \begin{pmatrix}  \ast & 0 \\ 0 &  \id_{Q'_{2}} \end{pmatrix}} 
 &&  Q'_{1} \oplus Q'_{2}.   
}
\end{xymatrix} 
\end{split}\] 
Thus, the exact triangle
$N_{3} \xrightarrow{g_{3}}  Q_{2} \xrightarrow{f_{2}} N_{2} \to $ is a direct sum of the 
three exact triangles
\[
\begin{split}
& 0 \to Q''_{2} \xrightarrow{\id} Q'_{2} \to,  \\ 
& Q''_{2} \xrightarrow{\id } Q''_{2} \to 0 \to,  \\
& N'_{3} \to 0 \to N'_{2} \to.  
\end{split}
\]
In particular $N'_{3} \cong N'_{2}[-1]$. 
\end{lemma}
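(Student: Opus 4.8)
The plan is to transport the whole statement to the module category $\Proj H^{0}$ along the equivalence $\tuH^{0}\colon \cP \xrightarrow{\ \sim\ }\Proj H^{0}$ of Lemma \ref{basic of cP lemma}(3), prove there that the complex $\partial_{3},\partial_{2}$ of projective modules splits, transport the splitting back to $\cP$ to get \eqref{201804052136}, and finally feed the resulting direct-sum decomposition of $Q_{2}$ into the exact triangle $N_{3}\xrightarrow{g_{3}}Q_{2}\xrightarrow{f_{2}}N_{2}\to$ by elementary operations on matrices of morphisms.

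First I would show the complex splits. Put $P_{i}:=\tuH^{0}(Q_{i})$ and $\partial_{i}:=\tuH^{0}(\delta_{i})$. By Lemma \ref{basic of cP lemma}(1) the assignment $\delta_{i}\mapsto\partial_{i}$ identifies the complex $Q_{3}\to Q_{2}\to Q_{1}$ in $\cP$ with the complex $P_{3}\xrightarrow{\partial_{3}}P_{2}\xrightarrow{\partial_{2}}P_{1}$ in $\Proj H^{0}$ (it is a complex: $\delta_{2}\delta_{3}=g_{2}(f_{2}g_{3})f_{3}=0$ because $f_{2}g_{3}=0$), and the hypothesis says precisely that $\Hom_{H^{0}}(P_{1},N)\xrightarrow{\partial_{2}^{*}}\Hom_{H^{0}}(P_{2},N)\xrightarrow{\partial_{3}^{*}}\Hom_{H^{0}}(P_{3},N)$ is exact for every $N$. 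Let $C:=\Coker\partial_{3}$ with projection $q\colon P_{2}\to C$, and factor $\partial_{2}=\bar\partial_{2}q$, possible since $\partial_{2}\partial_{3}=0$. Applying exactness at $N=C$ to $q$, which satisfies $\partial_{3}^{*}(q)=q\partial_{3}=0$, produces $\psi\colon P_{1}\to C$ with $q=\psi\partial_{2}=\psi\bar\partial_{2}q$; as $q$ is epic, $\psi\bar\partial_{2}=\id_{C}$. Hence $\bar\partial_{2}$ is a split monomorphism, so $C$ is projective, so $q$ splits, so $\im\partial_{3}$ is a direct summand of $P_{2}$; being then projective, $\im\partial_{3}$ is also split off $P_{3}$ by $\partial_{3}$, and $\ker\partial_{3}$ is projective too. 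Setting $Q_{2}''\leftrightarrow\im\partial_{3}$, $Q_{2}'\leftrightarrow\bar\partial_{2}(C)$, $Q_{3}'\leftrightarrow\ker\partial_{3}$, and letting $Q_{1}''$ be a complement of $\bar\partial_{2}(C)$ in $P_{1}$, the complex takes the block form $\partial_{3}=\bigl(\begin{smallmatrix}\id&0\\0&0\end{smallmatrix}\bigr)$, $\partial_{2}=\bigl(\begin{smallmatrix}0&0\\0&\id\end{smallmatrix}\bigr)$; transporting along $\tuH^{0}$ gives \eqref{201804052136}.

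Next I would feed this into the triangle. From \eqref{201804052136} we have $\im\delta_{3}=Q_{2}''$, hence $Q_{2}''\subseteq\im g_{3}$, hence $f_{2}|_{Q_{2}''}=0$ because $f_{2}g_{3}=0$; and $\delta_{2}=g_{2}f_{2}$ with $\delta_{2}|_{Q_{2}'}$ the inclusion shows $f_{2}|_{Q_{2}'}$ is a split monomorphism (with retraction the evident composite of $g_{2}$ and a projection). Taking $N_{2}'$ to be the complement of $\im(f_{2}|_{Q_{2}'})$ in $N_{2}$ gives $N_{2}\cong N_{2}'\oplus Q_{2}'$ with $f_{2}=\bigl(\begin{smallmatrix}0&0\\0&\id\end{smallmatrix}\bigr)$, which is diagram (iii). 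Computing the cofibre of this block map yields $\cone(f_{2})=Q_{2}''[1]\oplus N_{2}'$, and since $\cone(f_{2})\cong N_{3}[1]$ this exhibits $N_{3}\cong Q_{2}''\oplus N_{3}'$ with $N_{3}':=N_{2}'[-1]$ (so in particular $N_{3}'\cong N_{2}'[-1]$); reading the cofibre triangle off componentwise and normalizing an isomorphism to the identity gives the block forms of $g_{3}$ and of the connecting map, i.e.\ diagram (ii). Diagrams (i) and (iv) then follow by the same kind of argument: $f_{3}$ is pinned down by $g_{3}f_{3}=\delta_{3}$, and $g_{2}$ by $g_{2}f_{2}=\delta_{2}$, up to off-diagonal entries, which one clears by automorphisms of $N_{3}$, resp.\ of $Q_{1}$ (and harmlessly $N_{2}$), that do not disturb the already-normalized maps since the relevant matrix entries of $g_{3}$ and $f_{2}$ vanish. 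With all four morphisms in the stated form, the triangle $N_{3}\to Q_{2}\to N_{2}\to$ is visibly the direct sum of $Q_{2}''\xrightarrow{\id}Q_{2}''\to0\to$, $\,0\to Q_{2}'\xrightarrow{\id}Q_{2}'\to$, and $N_{3}'\to0\to N_{2}'\to$; the first two being genuine triangles, so is the third, whence its connecting morphism $N_{2}'\to N_{3}'[1]$ is an isomorphism.

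I expect the main obstacle to be the bookkeeping in the last step: one juggles four morphisms across three direct-sum decompositions, and must check that the successive changes of basis producing the normal forms (i)--(iv) are mutually compatible, i.e.\ that normalizing one morphism does not undo a normalization already imposed on another. Everything else is soft: Lemma \ref{basic of cP lemma} reduces all structural content to the single module-theoretic fact that a length-two complex of projectives which becomes exact in the middle after applying $\Hom(-,N)$ for every $N$ must split.
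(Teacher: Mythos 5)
Your argument is correct, and its first half is the paper's: both reduce to $\Proj H^{0}$ via the equivalence $\tuH^{0}\colon\cP\simeq\Proj H^{0}$ of Lemma \ref{basic of cP lemma}(3) and prove the splitting \eqref{201804052136} by an elementary projective-module argument — the paper tests \eqref{201901041818} at $N=\Coker\tuH^{0}(\delta_{3})$ to get exactness of the module complex and then reads off projectivity of $\Coker\tuH^{0}(\delta_{2})$ from $\Ext^{1}$-vanishing, while you get the same splitting by showing the induced map out of $\Coker\tuH^{0}(\delta_{3})$ is a split monomorphism; these are interchangeable. Where you genuinely diverge is the second half. The paper uses the retraction/section data of \eqref{201804052136} to exhibit $\psi g_{3}\colon N_{3}\to Q_{2}''$ as a split epimorphism with section $f_{3}\phi'$, so the decomposition of $N_{3}$ and the block forms (i),(ii) appear together (similarly for $N_{2}$, (iii),(iv)), and the two leftover entries $\alpha,\beta$ are killed only at the end via $f_{2}g_{3}=0$. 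You instead normalize $f_{2}$ first (diagram (iii), from $f_{2}g_{3}=0$ and $\delta_{2}=g_{2}f_{2}$), obtain the decomposition of $N_{3}$ and diagram (ii) by comparing the given triangle with the direct sum triangle built on the block form of $f_{2}$, and then clear the stray off-diagonal entries of $f_{3}$ and $g_{2}$ by unipotent automorphisms of $N_{3}$ and $N_{2}$; the compatibility worry you flag is real but resolves exactly as you say, since the entries of $g_{3}$ and $f_{2}$ that these automorphisms would disturb are zero. Your route buys a cleaner explanation of why $N_{3}'\cong N_{2}'[-1]$ (it is built in from the cone comparison), at the price of invoking the standard facts that a split monomorphism yields a direct sum decomposition and that a direct summand of a distinguished triangle is distinguished; the paper's route avoids cone comparisons but leaves the final ``direct sum of triangles'' step more implicit. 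One small point worth making explicit in either version: once $g_{3}$ and $f_{2}$ have the diagonal block forms, the triangle identities $hf_{2}=0$ and $g_{3}[1]h=0$ force the connecting map $h$ to have only the component $N_{2}'\to N_{3}'[1]$, so the three-fold direct sum decomposition of the triangle is automatic.
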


\begin{proof}
First we prove the splitting \eqref{201804052136}. 
It follows from the exactness of \eqref{201901041818} for $N = \Cok \tuH^{0}(\delta_{3})$  that the complex $\tuH^{0}(Q_{3}) \xrightarrow{\tuH^{0}(\delta_{3})} \tuH^{0}(Q_{2}) \xrightarrow{\tuH^{0}(\delta_{2})} \tuH^{0}(Q_{1})$ is exact and hence 
it is a truncated projective resolution of $\Cok \tuH^{0}(\delta_{2})$. 
The cohomology of the   complex \eqref{201901041818} computes $\Ext^{1}(\Cok \tuH^{0}(\delta_{2}), N)$. 
Thus, we conclude that the $H^{0}$-module  $\Cok \tuH^{0}(\delta_{2})$ is projective 
and the complex $\tuH^{0}(Q_{3}) \xrightarrow{\tuH^{0}(\delta_{3})} \tuH^{0}(Q_{2})  \xrightarrow{\tuH^{0}(\delta_{2})} \tuH^{0}(Q_{1})$ splits. 

By Lemma \ref{basic of cP lemma}.(3), we have an equivalence $\cP \cong \Proj H^{0}$. 
Therefore we have the splitting \eqref{201804052136} as desired.

Let $\phi: Q_{3} \to Q''_{2}$ and $\phi': Q''_{2} \to Q_{3}$  be  
the retraction and the section of the left direct sum decomposition of \eqref{201804052136}. 
Let $\psi: Q_{2} \to Q''_{2}$ and  $\psi': Q''_{2} \to Q_{2}$ 
 be the retraction  and the  section of 
of the middle direct sum decomposition  \eqref{201804052136}. 
Note that we have $\delta_{3} = \psi'\phi$. 
\[
\begin{xymatrix}{
Q_{3} \ar[rr]^{\delta_{3}} \ar[d]_{\begin{pmatrix} \phi \\ * \end{pmatrix}} &&
Q_{2}  \\
Q''_{2}\oplus Q'_{3}  \ar[rr]_{\begin{pmatrix}\id& 0 \\0 & 0 \end{pmatrix}}  &&
Q''_{2}\oplus Q'_{2}  \ar[u]_{\begin{pmatrix} \psi'& * \end{pmatrix}} 
}\end{xymatrix}
\]

Then $\psi g_{3}: N_{3} \to Q''_{2}$ is a split epimorphism with a section $f_{3}\phi': Q''_{2} \to N_{3}$. 
Moreover, 
if we set $N'_{3} =\Ker \psi g_{3}$, then  the morphisms $f_{3}, g_{3} $ are of the following forms
\[
\begin{xymatrix}
{
Q_{3} \ar[rr]^{f_{3}} \ar@{=}[d] && N_{3} \ar@{=}[d]\\
Q''_{2} \oplus Q'_{3}  \ar[rr]_{ \begin{pmatrix} \id_{Q''_{2}} & 0 \\ 0 & f_{3}| \end{pmatrix}} 
 && Q''_{2} \oplus N'_{3},  
}
\end{xymatrix}
\begin{xymatrix}
{
N_{3} \ar[rr]^{g_{3}} \ar@{=}[d] &&  Q_{2} \ar@{=}[d]\\
Q''_{2} \oplus N'_{3}   \ar[rr]_{ \begin{pmatrix} \id_{Q''_{2}} & 0 \\ 0 & g_{3}|  \end{pmatrix}} 
 && Q''_{2} \oplus Q'_{2}
}
\end{xymatrix}
\]
where $f_{3}|, g_{3}|$ denote the restriction morphisms. 
The first diagram gives  $(i)$.
The second one gives $(ii)$ except the vanishing of $(2,2)$-component. i.e., $g_{3}|= 0$. 
We denote $\alpha:=g_{3}|$

In a similar way, we obtain $(iv)$ and 
$(iii)$ except the vanishing of $(1,1)$-component, 
which we denote by $\beta$. 

Observe that 
the exact triangle $N_{3} \xrightarrow{g_{3}} Q_{2} \xrightarrow{f_{2}} N_{2} \to $ is 
a direct sum of two exact sequences 
\[
 Q''_{2} \xrightarrow{\id} Q''_{2} \xrightarrow{\beta} N'_{2} \to, \ \ \ \ 
 N'_{3} \xrightarrow{\alpha} Q'_{2} \xrightarrow{\id} Q'_{2} \to. \] 
 Therefore we conclude that $\alpha = 0, \beta= 0$. 
\end{proof}

\begin{proof}[Proof of Theorem \ref{sppj resolution theorem lemma II}]
The implication (2) $\Rightarrow$ (3) is clear. 

We prove the implication (3) $\Rightarrow$ (1). 
From the assumption, we deduce that $M$ belongs to 
$\cP[-\sup P_{0}] \ast \cP[-\sup P_{1} +1] \ast \cdots \ast \cP[-\sup P_{e} +e]$. 
Thus by Corollary \ref{20171219208}, we conclude that 
$\pd M \leq e - \sup P_{e} + \sup P_{0} \leq d$.

We prove the implication (1) $\Rightarrow$ (2). 
If for $i \geq 1$ we have $\pd M_{i-1} > 0$, 
then 
 by Corollary \ref{20171219205}, we must have $\pd M_{j-1} > 0 $ for $j \leq i$. 
Therefore by Lemma \ref{20171219228}
$\pd M_{j} = \pd M_{j-1} - 1 -  \sup M_{j-1} + \sup M_{j}$ for $j \leq i$. 
Thus, we have 
\begin{equation}\label{20171219226}
\pd M_{i} + i + \sup M - \sup M_{i} = \pd  M
\end{equation}
 for $i$ such that $\pd M_{i -1} > 0$. 
Since $i + \sup M - \sup M_{i} \geq i$, 
the set $\{ i \geq 1 \mid \pd M_{i-1} > 0\}$ is finite. 
If we set $e:= \max \{ i \geq 1 \mid \pd M_{i-1} > 0\}$, then $\pd M_{e} = 0$ 
and hence $M \in \cP[-\sup M_{e}]$ by Lemma \ref{20171219205II}.  
From the equation (\ref{20171219226}) and $\pd M \leq d$, we deduce the desired inequality.

The implication (5) $\Rightarrow$ (1) is proved in Corollary \ref{20171219208}. 
The implications (2) $\Rightarrow$ (5), (1) $\Rightarrow$ (4) are clear. 

It remains to prove the implication (4) $\Rightarrow$ (1).

\begin{claim}\label{201804052018}
$M$ has a sppj resolution of finite length. 
\end{claim}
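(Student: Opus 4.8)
The plan is to construct a \emph{minimal} sppj resolution $P_{\bullet}$ of $M$ and show that it has finite length, by feeding hypothesis (4) of the theorem through the $\Ext$-computation of Lemma \ref{sppj resolution lemma}. First I would build $P_{\bullet}$ stage by stage, choosing at step $i$ a minimal sppj morphism $f_{i}\colon P_{i}\to M_{i}$ (these exist by Lemma \ref{basic of cP lemma} and the existence of projective covers over $H^{0}$) and setting $M_{i+1}:=\cone(f_{i})[-1]$. Put $a_{i}:=\sup P_{i}=\sup M_{i}$, so that $a_{0}\ge a_{1}\ge\cdots$ and $n_{i}:=i-a_{i}$ is strictly increasing with $n_{0}=-\sup M$ and $n_{i}\ge i-\sup M$. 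By Lemma \ref{sppj resolution lemma}, $\Hom(M,N[m])=0$ unless $m=n_{i}$ for some $i$, in which case it equals $\tuH(X_{i})$; since (4) gives $\Hom(M,N[m])=0$ for every $N\in\Mod H^{0}$ once $m>d-\sup M$, and $n_{i}>d-\sup M$ as soon as $i\ge d+1$, I conclude $\tuH(X_{i})=0$ for all $N\in\Mod H^{0}$ and all $i\ge d+1$. (This uses Lemma \ref{sppj resolution lemma} for arbitrary coefficient modules $N$, which its proof provides, so that vanishing of $\tuH(X_{i})$ for all $N$ forces the relevant $H^{0}$-modules to be zero.)

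The second step is to show that the supremum stabilises: $a_{i}=a_{d}=:s$ for all $i\ge d$. Suppose instead $a_{i-1}>a_{i}$ for some $i\ge d+1$ and read off $\tuH(X_{i})$ from the case formulas of Lemma \ref{sppj resolution lemma}. If also $a_{i}\ne a_{i+1}$, then $\tuH(X_{i})=\Hom_{H^{0}}(\tuH^{\sup}(P_{i}),N)$, so $\tuH^{\sup}(P_{i})=0$; but $\tuH^{\sup}(P_{i})\twoheadrightarrow\tuH^{\sup}(M_{i})\ne0$ because $f_{i}$ is sppj --- a contradiction. If $a_{i}=a_{i+1}$, then $\tuH(X_{i})=\Hom_{H^{0}}(\Coker\tuH^{\sup}(\delta_{i+1}),N)$, so $\tuH^{\sup}(\delta_{i+1})$ is surjective, hence (being a map between projective $H^{0}$-modules) split; by Lemma \ref{basic of cP lemma}(3) $\delta_{i+1}\colon P_{i+1}\to P_{i}$ is then split epic in $\sfD(R)$. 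Writing $\delta_{i+1}=g_{i+1}f_{i+1}$, this makes $g_{i+1}$ split epic, whence $M_{i+1}\cong P_{i}\oplus M_{i}[-1]$ and $\cone(f_{i})=M_{i+1}[1]\cong P_{i}[1]\oplus M_{i}$; applying $\tuH^{a_{i}}$ to the triangle $P_{i}\xrightarrow{f_{i}}M_{i}\to\cone(f_{i})\to P_{i}[1]$ gives $\tuH^{a_{i}}(\cone f_{i})=\Coker\tuH^{a_{i}}(f_{i})=0$ (as $f_{i}$ is sppj), while the splitting gives $\tuH^{a_{i}}(\cone f_{i})=\tuH^{a_{i}}(M_{i})\ne0$ --- again a contradiction. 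So no drop of the sup occurs after stage $d$.

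For $i\ge d+1$ we now have $a_{i-1}=a_{i}=a_{i+1}=s$, so Lemma \ref{sppj resolution lemma} gives $\tuH(X_{i})=\tuH(X'_{i})=0$ for all $N\in\Mod H^{0}$; via Lemma \ref{basic of cP lemma}(3) this is exactly the exactness hypothesis of Lemma \ref{201711171950} applied to $(Q_{1},Q_{2},Q_{3})=(P_{i-1}[s],P_{i}[s],P_{i+1}[s])$, the shifted triangle $\cE_{i}$, and the structure maps $f_{i+1},g_{i}$. That lemma decomposes $\cE_{i}$, and since $f_{i}$ is \emph{minimal} one can pin the decomposition down to $M_{i}\cong P_{i}\oplus N_{i}'$ with $N_{i}'=M_{i+1}[1]$ and $\tuH^{\sup}(N_{i}')=0$. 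Iterating at stages $d+1$ and $d+2$ gives $M_{d+1}\cong P_{d+1}\oplus P_{d+2}[1]\oplus N_{d+2}'[1]$; in particular $P_{d+2}[1]$ is a direct summand of $M_{d+1}$. If the resolution were infinite, then $P_{d+2}\ne0$, so taking $N:=\tuH^{\sup}(P_{d+2})\in\Mod H^{0}$ we would get a nonzero retract $\Hom(P_{d+2}[1],N[1-s])=\Hom(P_{d+2},N[-s])\cong\End_{H^{0}}(\tuH^{\sup}(P_{d+2}))\ne0$ of $\Hom(M_{d+1},N[1-s])$. But applying Lemma \ref{sppj resolution lemma} to the sppj resolution $\cdots\to P_{d+2}\to P_{d+1}\to M_{d+1}$ identifies $\Hom(M_{d+1},N[1-s])$ with $\tuH(X_{d+2})$, which is $0$ for all $N$ by the first step. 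This contradiction shows the resolution has finite length, proving the Claim. (Since $P_{\bullet}$ is minimal, the remainder of the proof of $(4)\Rightarrow(1)$ then uses Lemma \ref{sppj resolution lemma} once more on this finite resolution to conclude $\pd M\le d$.)

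The main obstacle is the passage from the mere vanishing of $\tuH(X_{i})$ for all coefficient modules to honest splittings of the differentials $\delta_{i}$ inside $\sfD(R)$: one must exploit both the equivalence $\cP\cong\Proj H^{0}$ (so that a split surjection on top cohomology lifts to a split surjection of objects) and the minimality of the resolution (to control the projective summands produced by Lemma \ref{201711171950}), and then extract the summand $P_{d+2}[1]$ of $M_{d+1}$ whose existence is incompatible with (4). Note that phrasing the final contradiction in terms of a nonzero $\Hom$-group, rather than an amplitude estimate, is what lets the argument cover $M$ that is unbounded below. Everything else is bookkeeping with the standard $t$-structure and the case analysis of Lemma \ref{sppj resolution lemma}.
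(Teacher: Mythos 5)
Your overall strategy is the right one and largely parallels the paper's: use hypothesis (4) together with Lemma \ref{sppj resolution lemma} to get exactness of the complexes $X_{i}$ for $i\geq d+1$, rule out a drop of $\sup$ in that range by the split-epimorphism argument, and then invoke Lemma \ref{201711171950} to split the triangles $\cE_{i}$. However, there is a genuine gap at the very first step: you build a \emph{minimal} sppj resolution, justified by ``the existence of projective covers over $H^{0}$''. Projective covers do not exist over an arbitrary ring: for instance if $H^{0}=\ZZ$ and at some stage $\tuH^{\sup}(M_{i})\cong\ZZ/2$, there is no projective cover, hence no minimal sppj morphism $P_{i}\to M_{i}$. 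The Claim is asserted for an arbitrary connective DG-algebra $R$ and arbitrary $M\in\sfD^{<\infty}(R)$ satisfying (4), so a minimal resolution is simply not available in general; the paper is careful about exactly this point (minimal sppj resolutions are only ever \emph{assumed} to exist, as in Corollary \ref{sppj resolution corollary 1}, in contrast with minimal ifij resolutions, whose existence follows from injective hulls).

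This is not a cosmetic issue, because minimality is used essentially in your third step: it is what forces the summand $Q''_{2}$ of Lemma \ref{201711171950} to vanish and yields the clean iteration $M_{i}\cong P_{i}\oplus M_{i+1}[1]$, from which you extract the summand $P_{d+2}[1]$ of $M_{d+1}$ and the contradictory nonzero $\Hom$-group. Without minimality the lemma only gives $P_{i}\cong Q''_{i}\oplus Q'_{i}$, $M_{i}\cong M'_{i}\oplus Q'_{i}$, $M_{i+1}\cong Q''_{i}\oplus M'_{i}[-1]$, and your iteration breaks down. As written, your argument proves the Claim only when every relevant $\tuH^{\sup}(M_{i})$ admits a projective cover (e.g. $H^{0}$ right perfect). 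The paper's proof of the Claim works with an \emph{arbitrary} sppj resolution: in the stabilized-$\sup$ case it tracks the complementary summands $M'_{j}$, uses $M'_{j+1}\cong M'_{j}[-1]$ together with $\tuH^{\sup}(M'_{j})=0$ (surjectivity of $\tuH^{\sup}(f_{j})$) to conclude $\tuH(M'_{\ell+1})=0$, hence $M_{\ell+1}\in\cP[-\sup M_{\ell+1}]$; in the sup-drop case it concludes directly that $M_{k}=0$, so the resolution terminates. Your steps 1 and 2 are fine (including the correct observation that Lemma \ref{sppj resolution lemma} holds for arbitrary $N\in\Mod H^{0}$), but step 3 needs to be replaced by an argument of this kind that does not presuppose minimality.
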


We postpone the proof of the claim.  
Assume that $M$ has a sppj resolution of length $\ell < \infty$. 
We prove that if $N \in \sfD^{[a,b]}(R)$, then $F(N) \in \sfD^{[a+t_{0}, b + t_{0} + d]}(\kk)$. 
First, the case $a, b \in \ZZ$ can be proved by induction on $b -a$. 
Next we deal with the case $a = -\infty$ and $b \in \ZZ$. 
Since $\Hom(\cP, \sfD^{< 0}(R)) =0 $, we have $F(\sigma^{< c}N)=0 $ for $c= -t_{\ell} -\ell$. 
Thus, $F(N) = F(\sigma^{\geq e}N)$ and the problem is reduced to the first case. 
Finally we deal with the case $a\in \ZZ$ and $b = \infty$. 
Since $\Hom(\cP, \sfD^{> 0}(R)) = 0$, 
we have $F(\sigma^{>- t_{0}}N) = 0$. 
Thus $F(N) \cong F( \sigma^{\leq - t_{0}} N) $ and the problem is reduced to the first case. 
To finish the proof we only have to show the claim. 

\begin{proof}[Proof of Claim \ref{201804052018}]
By shifting the degree, we may assume that $\sup M = 0 $. 
Let $P_{\bullet}$ be sppj resolutin of $M$. 
We set $t_{i} :=  -\sup P_{i}$. 
We take $\ell$ to be a natural number such that $ \ell + t_{\ell} > d-\sup M$. 
Then for $N \in \Mod H^{0}$ and $j \geq \ell$, we have $\Hom(M, N[j + t_{j}]) = 0$.


The situation is divided into the following two cases. 

(A) $t_{j} =t_{\ell}$ for $j \geq \ell$. 

(B) there exists $k > \ell$ such that 
$t_{\ell  } = t_{\ell - 1} = \cdots = t_{k-1} \neq t_{k}$.

First we deal with the case (A). 
We prove that $M_{\ell +1}$ belongs to $\cP[-\sup M_{\ell+ 1}]$ 
and hence that  $M$ has a sppj resolution of length $\ell +1$. 
Thanks to Lemma \ref{sppj resolution lemma}, 
we can apply (the shifted version of)  
Lemma \ref{201711171950} 
to $P_{j+1} \to P_{j} \to P_{j-1}$ for $j > \ell$. 

The sppj morphism $f_{j} : P_{j} \to M_{j}$  
is of the form
\[
\begin{pmatrix}
0 & 0 \\ 
0 & \id
\end{pmatrix}: 
P''_{j}\oplus P'_{j} 
\to M'_{j} \oplus P'_{j}. 
\]
and $M'_{j+1} \cong M'_{j}[-1]$. 
We note that since $\tuH^{- t_{\ell}}(f_{j})$ is surjective, we have $\tuH^{-t_{\ell}}(M'_{j}) = 0$. 

Therefore  for $i \geq 0$,  
\[
\tuH^{-t_{\ell} -i}(M'_{\ell+ 1}) = \tuH^{-t_{\ell}}(M'_{\ell+ 1}[-i]) = \tuH^{-t_{\ell}}(M'_{\ell+1+i}) =0.  
\]
Hence $\tuH^{\leq -t_{\ell}}(M'_{\ell +1}) = 0$. 
On the other hand, since $\sup M_{\ell +1} = -t_{\ell}$,  we have $\tuH^{>-t_{\ell}}(M_{\ell + 1}) = 0$. Thus, $\tuH(M'_{\ell + 1}) = 0$.  We conclude that  $M'_{\ell + 1} = 0$. 
 and $M_{\ell + 1}  \cong P'_{\ell +1}$ as desired.  

 Next we deal with the case (B). 
 We prove that the morphism $f_{k-1}: P_{k-1} \to M_{k-1}$ is an isomorphism 
and hence that $M$ has a sppj resolution of length $k -1$. 

First we assume that 
 $t_{k-1} \neq t_{k} \neq t_{k +1}$. 
Then $P_{k} = 0$  by Lemma \ref{sppj resolution lemma}. 
Thus in this case, $M_{k} = 0$. 
Hence $f_{k-1}: P_{k-1} \to M_{k-1}$ is an isomorphism.

Next we assume that 
 $t_{k -1} \neq t_{k} = t_{k +1}$. 
Then by Lemma \ref{sppj resolution lemma}  
the morphism $\delta_{k+1} : P_{k+1} \to P_{k}$ 
is a split epimorphism. 
Consequently,  
the morphism $g_{k+1}: M_{k+1} \to P_{k}$ is a split epimorphism. 
Therefore, 
there exists $L_{k+1}$ 
and an isomorphism $M_{k+1} \cong L_{k+1} \oplus P_{k}$ 
under which  $g_{k+1}$ corresponds to the canonical projection. 
Thus we conclude that  $ f_{k} = 0$ and 
by the definition of a sppj morphism  $M_{k} = 0$. 
Hence $f_{k-1}: P_{k-1} \to M_{k-1}$ is an isomorphism.  
\end{proof}

This completes the proof of Theorem \ref{sppj resolution theorem lemma II}. 
\end{proof}

The following is the main theorem of this section.

\begin{theorem}\label{sppj resolution theorem}
Let $M \in \sfD^{< \infty}(R)$ and $d \in \NN$ a natural number. 
Then  
the following conditions are equivalent 

\begin{enumerate}[(1)]
\item 
$\pd  M  = d$. 

\item 
For any sppj resolution $P_{\bullet}$, 
there exists a natural number $e \in \NN$ 
which satisfying the following properties 

\begin{enumerate}[(a)]
\item $M_{e} \in \cP[-\sup M_{e}]$. 

\item 
$d = e+ \sup P_{0} -\sup M_{e}$. 

\item 
$g_{e}$ is not a split-monomorphism. 
\end{enumerate}

\item 
$M$ has sppj resolution $P_{\bullet}$ of length $e$ 
which satisfies the following properties. 
\begin{enumerate}[(a)]
\item 
$d = e+ \sup P_{0} -\sup P_{e}$. 

\item 
$\delta_{e}$ is not a split-monomorphism. 
\end{enumerate}

\item 
 The functor $F= \RHom(M, -)$ sends the standard heart $\Mod H^{0}$ 
to $\sfD^{[-\sup M , d -\sup M]}(R)$ 
and there exists $N \in \Mod H^{0}$ such that $\tuH^{d  -\sup M}(F(N)) \neq 0$.

\item 
$d$ is the smallest number 
which satisfies  
\[
M \in \cP[-\sup M ] \ast \cP[-\sup M+1] \ast \cdots \ast \cP[ -\sup M +d]. 
\]

\end{enumerate}

\end{theorem}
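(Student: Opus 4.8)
The plan is to derive everything from Theorem~\ref{sppj resolution theorem lemma II}, which already characterises the inequality $\pd M \le d$, using that $\pd M = d$ is equivalent to ``$\pd M \le d$ and $\pd M \nleq d-1$''. Throughout one may assume $M \neq 0$, the case $M = 0$ being degenerate.

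I would first dispose of conditions (4) and (5), which follow directly. For (5): part (5) of Theorem~\ref{sppj resolution theorem lemma II} says $M \in \cP[-\sup M]\ast\cdots\ast\cP[-\sup M + d']$ is equivalent to $\pd M \le d'$ for each $d'\in\NN$; since $0\in\cP$, putting $0$ in the last slot shows these subcategories are increasing in $d'$, so the least $d'$ for which $M$ lies in them is exactly $\pd M$. For (4): part (4) of Theorem~\ref{sppj resolution theorem lemma II} at $d$ says the containment $F(\Mod H^{0})\subseteq\sfD^{[-\sup M, d-\sup M]}(R)$ is equivalent to $\pd M\le d$ (the lower bound $-\sup M$ being automatic by Lemma~\ref{201712182258}); granting that containment, the extra clause ``$\exists N$ with $\tuH^{d-\sup M}(F(N))\neq0$'' fails precisely when $F(\Mod H^{0})\subseteq\sfD^{[-\sup M,(d-1)-\sup M]}(R)$, i.e.\ (by Theorem~\ref{sppj resolution theorem lemma II}(4) at $d-1$, and trivially when $d=0$) precisely when $\pd M\le d-1$. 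Hence (4) $\Leftrightarrow\pd M=d$.

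For (2) and (3) the new ingredient is a small sub-lemma: in any sppj resolution $P_\bullet$ and for $e\ge1$, the structure morphism $g_e\colon M_e\to P_{e-1}$ is a split monomorphism if and only if $\pd M_{e-1}=0$. ``If'': when $\pd M_{e-1}=0$, Lemma~\ref{20171219205II} gives $M_{e-1}\in\cP[-\sup M_{e-1}]$; since $P_{e-1}$ lies there too and $\tuH^{\sup M_{e-1}}(f_{e-1})$ is a surjection of projective $H^{0}$-modules, Lemma~\ref{basic of cP lemma} lifts a section of it to a section of $f_{e-1}$, so $f_{e-1}$ is a split epimorphism, the triangle $\cE_{e-1}$ splits, and $g_e$ is a split monomorphism. ``Only if'': if $g_e$ is a split monomorphism then $\cE_{e-1}$ splits and $M_{e-1}$ is a direct summand of $P_{e-1}\in\cP$, so $M_{e-1}\in\cP=\cP[-\sup M_{e-1}]$ and $\pd M_{e-1}=0$ by Lemma~\ref{20171219205II}. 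I would also note that a sppj resolution of length $e$ forces $f_e$ to be an isomorphism — since $P_{e+1}=0$ makes $f_{e+1}\colon0\to M_{e+1}$ a sppj morphism only if $M_{e+1}=0$ — so that in that case $\delta_e$ is a split monomorphism exactly when $g_e$ is.

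Finally, for (2) and (3) the engine is the identity $\pd M_i + i + \sup M - \sup M_i = \pd M$ of equation~\eqref{20171219226}, valid whenever $\pd M_{i-1}>0$, together with Corollary~\ref{20171219205}, which makes $\{\,i\ge0:\pd M_i=0\,\}$ upward closed; by Theorem~\ref{sppj resolution theorem lemma II} this set is nonempty, so it has a finite minimum $e_0$. Assuming $\pd M=d$: in any sppj resolution put $e=e_0$; then $M_{e_0}\in\cP[-\sup M_{e_0}]$ by Lemma~\ref{20171219205II}, the identity~\eqref{20171219226} at $i=e_0$ (or triviality when $e_0=0$) gives $d=e_0+\sup P_0-\sup M_{e_0}$, and the sub-lemma gives that $g_{e_0}$ is not a split monomorphism, using $\pd M_{e_0-1}>0$ (with ``$g_0$'' understood as the non-split map $M\to0$ when $e_0=0$); truncating the resolution by setting $P_{e_0}:=M_{e_0}$, which is legitimate because $M_{e_0}\neq0$, then furnishes the length-$e_0$ resolution required in (3). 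Conversely, given (2), condition (a) forces $e\ge e_0$ while condition (c) and the sub-lemma force $\pd M_{e-1}>0$, hence $e\le e_0$, so $e=e_0$; then condition (b) and \eqref{20171219226} give $d=\pd M$. The same bracketing of the stopping index — this time via the remark that a length-$e$ resolution has $f_e$ an isomorphism, so condition (3)(b) about $\delta_e$ translates to the sub-lemma hypothesis on $g_e$ — handles (3)$\Rightarrow$(1). I expect this last pinning-down of the stopping index to $e_0$, and the accompanying bookkeeping of the equalities $\sup P_i=\sup M_i$ along the resolution, to be the only real work; everything else is assembled from Theorem~\ref{sppj resolution theorem lemma II} and the lemmas preceding it.
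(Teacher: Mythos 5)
Your proof is correct. For (1) $\Leftrightarrow$ (2) and for conditions (4) and (5) it is essentially the paper's argument: the paper likewise takes $e = \max\{i \mid \pd M_{i-1} > 0\}$, invokes the recursion (\ref{20171219226}), and reads (4), (5) off Theorem \ref{sppj resolution theorem lemma II}. Where you genuinely diverge is in (3) $\Rightarrow$ (1). The paper reduces that implication to exhibiting $N \in \Mod H^{0}$ with $\Hom(M, N[d-\sup M]) \neq 0$ and produces the witness $N = \tuH^{\sup}(P_{e})$ via the explicit computation of $\Hom(M,N[n])$ along a sppj resolution (Lemma \ref{sppj resolution lemma}), using non-splitness of $\delta_{e}$ to see that the relevant cokernel is nonzero. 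You instead prove a structural sub-lemma --- $g_{e}$ is a split monomorphism if and only if $\pd M_{e-1} = 0$ --- whose ``only if'' half is implicit in the paper's (1) $\Rightarrow$ (2) step and whose ``if'' half (splitting the surjection $\tuH^{\sup}(f_{e-1})$ of projective $H^{0}$-modules and lifting the section through the equivalence $\cP \cong \Proj H^{0}$ of Lemma \ref{basic of cP lemma}) is the new content; together with the observation that a length-$e$ resolution forces $f_{e}$ to be an isomorphism, this pins the stopping index of any qualifying resolution to $e_{0} = \min\{i \mid \pd M_{i} = 0\}$, after which (\ref{20171219226}) gives $d = \pd M$. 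Your route bypasses Lemma \ref{sppj resolution lemma} entirely in this theorem and makes explicit the uniqueness of the admissible stopping index, while the paper's route has the advantage of directly producing the test module witnessing condition (4). The only loose ends in your write-up (the meaning of $g_{0}$ and $\delta_{0}$ when $e=0$, and the identification $\cP = \cP[-\sup M_{e-1}]$ inside the sub-lemma, which tacitly normalizes $\sup M_{e-1}=0$) are boundary conventions that the paper itself also glosses over.
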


\begin{proof}
The implication (2) $\Rightarrow$ (3) is clear. 

We prove the implication (1) $\Rightarrow$ (2). 
In the proof of  Theorem \ref{sppj resolution theorem lemma II}, 
we showed that if we set $e = \max \{ i \mid \pd M_{i-1} > 0\}$, 
then $M_{e} \in \cP[-\sup M_{e}]$ and $\pd M = e + \sup M - \sup M_{e}$. 
If the morphism $g_{e}: M_{e} \to P_{e-1}$ is a split-monomorphism, 
then $M_{e-1}$ is a direct summand of $P_{e}$ and $\pd M_{e -1} = 0$. 
This contradict to the definition of $e$.

We prove the implication (3) $\Rightarrow$ (1). 
We remark that the condition that $\delta_{e}$ is not a split-monomorphism 
implies that $P_{e} \neq 0$. 
By Theorem \ref{sppj resolution theorem lemma II}, it is enough to prove that 
there exists $N \in \Mod H^{0}$ such that 
$\Hom(M, N[d -\sup M]) \neq 0$. 

Assume that  $\sup P_{e -1} \neq \sup P_{e}$. 
Then by Lemma \ref{sppj resolution lemma} for $N \in \Mod H^{0}$, 
\[
\Hom(M, N[d -\sup M ]) \cong  \Hom(\tuH^{\sup}(P_{e}) ,N).  
\]
Thus, $N = \tuH^{\sup}(P_{e})$ has the desired property.

Assume that  $\sup P_{e -1} =  \sup P_{e}$. 
Then by Lemma \ref{sppj resolution lemma} for $N \in \Mod H^{0}$, 
\[
\Hom(M, N[d -\sup M ]) \cong  \Coker[\Hom(\tuH^{\sup}(P_{e-1}) ,N) \to \Hom(\tuH^{\sup}(P_{e}) ,N)].  
\]
Since $\delta_{e}$ is not a split-monomorphism,  
$N = \tuH^{\sup}(P_{e})$ has the desired property. 

The equivalence (4) (resp. (5)) to the other conditions  follows from Theorem \ref{sppj resolution theorem lemma II}. 
\end{proof}

The length $e$ of a sppj resolution $P_{\bullet}$ of a DG-module $M$ possibly has several value.

\begin{example}
Let $n \in \NN$ and $M_{(n)} := R \oplus R[n]$. Then, it can be directly checked that $\pd M_{(n)} = n$. 
The DG-module $M_{(n)}$ has a sppj resolution of length $2$. 
\[
P_{\bullet} :  R[n-1] \xrightarrow{ 0 } R \xrightarrow{ \begin{pmatrix} 1 \\ 0 \end{pmatrix} } M_{(n)}. 
\]
On the other hand, consider the exact triangles   
\[
E_{m}: M_{(m-1)} \to R^{\oplus 2} \xrightarrow{ \begin{pmatrix} 1  & 0 \\ 0 & 0 \end{pmatrix} } M_{(m)}.  
\]
Since $M_{(0)} =R^{\oplus 2} $, 
splicing $\{E_{m}\}_{m =1}^{n}$ 
we obtain a sppj resolution of length $n$ 
\[
 M_{(0)} \to R^{\oplus 2} \to \cdots \to R^{\oplus 2} \to M_{(n)}. 
\]
\end{example}

\subsection{The subcategory of DG-modules of finite projective dimension}

Theorem \ref{sppj resolution theorem} has the following consequences.

We denote by $\sfD(R)_{\textup{fpd}}$ the full subcategory consisting of $M$ having finite projective dimension. 

\begin{proposition}
\[
\sfD(R)_{\textup{fpd}} = \thick \cP = \bigcup \cP[a]\ast \cP[a +1] \ast \cdots \ast \cP[b] 
\]
where $a,b$ run all the pairs of integers such that $a \leq b$. 
\end{proposition}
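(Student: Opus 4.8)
The plan is to prove the three-way equality by a cycle of inclusions, using Theorem~\ref{sppj resolution theorem} (equivalently Theorem~\ref{sppj resolution theorem lemma II}) to handle the subtle direction and standard thick-subcategory formalism for the rest. Write $\cP_{[a,b]} := \cP[a]\ast\cP[a+1]\ast\cdots\ast\cP[b]$ and $\cX := \bigcup_{a\le b}\cP_{[a,b]}$. First I would record the easy containments. Each $P\in\cP$ has $\pd P = 0$ by Corollary~\ref{201901042037} (or Lemma~\ref{basic of cP lemma}.(1)), so $\cP\subset\sfD(R)_{\textup{fpd}}$; since $\sfD(R)_{\textup{fpd}}$ is closed under shifts, cones and direct summands (the first by Corollary~\ref{201804030002} together with the observation that $\sup$ of a term in an exact triangle is controlled by the other two, the last because $\pd$ of a summand is $\le\pd$ of the whole by the $\RHom$-characterization), we get $\thick\cP\subset\sfD(R)_{\textup{fpd}}$. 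The containment $\cX\subset\thick\cP$ is immediate since $\cP_{[a,b]}$ is built from finitely many shifts of objects of $\cP$ by iterated extensions, and $\thick\cP$ is closed under extensions and shifts.

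The one nontrivial inclusion is $\sfD(R)_{\textup{fpd}}\subset\cX$. Here I would take $M\in\sfD(R)$ with $\pd M = d<\infty$. By Lemma~\ref{201711191911}, $M\in\sfD^{<\infty}(R)$, so Theorem~\ref{sppj resolution theorem} (condition (5), or already Theorem~\ref{sppj resolution theorem lemma II}(5)) applies and yields
\[
M\in\cP[-\sup M]\ast\cP[-\sup M+1]\ast\cdots\ast\cP[-\sup M+d] = \cP_{[-\sup M,\,-\sup M+d]}\subset\cX .
\]
This closes the cycle $\cX\subset\thick\cP\subset\sfD(R)_{\textup{fpd}}\subset\cX$, giving all three equal.

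The main obstacle—already resolved upstream—is precisely the passage from "$\pd M$ finite" to "$M$ lies in a bounded extension-string of $\cP$'s"; in the present write-up this is Claim~\ref{201804052018} inside the proof of Theorem~\ref{sppj resolution theorem lemma II} (existence of a finite-length sppj resolution), so in this proof I only need to cite it. A minor point to be careful about is that $\sfD(R)_{\textup{fpd}}$ is genuinely a triangulated (not just shift- and extension-closed) subcategory: closure under cones needs Corollary~\ref{201804030002} applied to both rotations of a triangle $L\to M\to N\to$, and closure under direct summands is needed to identify it with $\thick\cP$ rather than just the smallest triangulated subcategory containing $\cP$—this follows because if $M = M'\oplus M''$ then $\RHom(M,-)\cong\RHom(M',-)\oplus\RHom(M'',-)$, so finite projective concentration of $M$ forces the same for $M'$. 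With these observations in place the proof is a short formal argument.
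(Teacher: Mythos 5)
Your proposal is correct and matches the paper's intent: the paper states this proposition as a direct consequence of Theorem \ref{sppj resolution theorem} (condition (5) giving the nontrivial inclusion $\sfD(R)_{\textup{fpd}}\subset\bigcup_{a\le b}\cP[a]\ast\cdots\ast\cP[b]$, via Lemma \ref{201711191911}), with the remaining inclusions being the same formal thick-subcategory observations you spell out. No gaps.
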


We denote by $\sfD_{\mod H^{0}}(R)$ the full subcategory 
consisting of $M$ such that $\tuH^{i}(M)$ is a finitely generated graded  $H^{0}$-module for $i\in \ZZ$. 
We set 
\[
\sfD_{\mod H^{0}}(R)_{\textup{fpd}} = \sfD_{\mod H^{0}}(R) \cap \sfD(R)_{\textup{fpd}}, \ 
\cP_{\mod H^{0}}  = \sfD_{\mod H^{0}}(R) \cap \cP. 
\]
Note that $\cP_{\mod H^{0}} = \add R$, that is,  objects of $\cP_{\mod H^{0}}$ are precisely   direct summands  of  finite direct sums of $R$. 
 Therefore $\thick \cP_{\mod H^{0}} = \thick R$ is nothing but a perfect derived category $\Perf R$ of $R$.

We recall the notion of piecewise Noetherian DG-algebra, 
which play a role of Noetherian algebras in theory of ordinary rings. 

\begin{definition}
We call a DG-algebra $R$ 
\textit{right piecewise Noetherian} 
if $H^{0}$ is right Noetherian and $H^{-i}$ is finitely generated as a right $H^{0}$-module 
for $i \geq 0$. 
\end{definition}

The name is taken from \cite{Avramov-Halperin}. 
The same notion is called right cohomological pseudo-Noetherian in \cite{Yekutieli} 
and right Noetherian in \cite{Shaul:Homological, Shaul:Injective}.

\begin{proposition}\label{201901252356}
If $R$ is piecewise Noetherian, 
then 
\[
\sfD_{\mod H^{0}}( R)_{\textup{fpd}} = \Perf R =
\bigcup \cP_{\mod H^{0}}[a]\ast \cP_{\mod H^{0}}[a +1] \ast \cdots \ast \cP_{\mod H^{0}}[b] 
\]
where $a,b$ run all the pairs of integers such that $a \leq b$. 
\end{proposition}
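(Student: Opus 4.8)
The plan is to establish the three-way equality by proving two inclusions in each direction, using Proposition \ref{sppj resolution theorem} (via its precursor Theorem \ref{sppj resolution theorem lemma II}) as the engine for the passage between "finite projective dimension" and "membership in a finite $\ast$-product of copies of $\cP_{\mod H^{0}}[a]$", together with the observation already recorded in the excerpt that $\thick \cP_{\mod H^{0}} = \thick R = \Perf R$. The first thing I would do is dispose of the easy inclusions. Every object of $\cP_{\mod H^{0}}[a] \ast \cdots \ast \cP_{\mod H^{0}}[b]$ lies in $\sfD_{\mod H^{0}}(R)$, since $\tuH^{i}$ of a finite direct sum of shifts of $R$ is finitely generated over $H^{0}$ (because $R$ is piecewise Noetherian, each $H^{-j}$ is finitely generated, and $H^{0}$ is Noetherian so finite generation is preserved under the long exact sequences coming from the defining triangles); and it has finite projective dimension by Corollary \ref{20171219208}. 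Hence the right-hand union is contained in $\sfD_{\mod H^{0}}(R)_{\textup{fpd}}$. Also the right-hand union is clearly contained in $\thick \cP_{\mod H^{0}} = \Perf R$, and conversely $\Perf R = \thick R$ is contained in $\sfD_{\mod H^{0}}(R)_{\textup{fpd}}$ since any perfect complex has finitely generated cohomology (again using piecewise Noetherianity) and has finite projective dimension (it lies in a finite $\ast$-product of shifts of $\cP$, or directly $\RHom(R,-) = \tuH^0$ and one bootstraps along the triangles).

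The crux is therefore the inclusion $\sfD_{\mod H^{0}}(R)_{\textup{fpd}} \subseteq \bigcup \cP_{\mod H^{0}}[a]\ast\cdots\ast\cP_{\mod H^{0}}[b]$, equivalently (given the above) that an object with finitely generated cohomology and finite projective dimension is perfect. Here is where I would run the sppj-resolution machinery in the finitely generated setting. Let $M \in \sfD_{\mod H^{0}}(R)_{\textup{fpd}}$ with $\pd M = d$. Shifting, assume $\sup M = 0$. The key point is that one can choose the sppj resolution to consist of objects of $\cP_{\mod H^{0}} = \add R$: at each stage $f_i : P_i \to M_i$, one takes $P_i$ to surject onto $\tuH^{\sup}(M_i)$ via a \emph{finitely generated} free (or projective) cover, which is possible because $\tuH^{\sup}(M_i)$ is a finitely generated $H^0$-module — and this finite generation propagates down the resolution since $M_{i+1} = \cone(f_i)[-1]$ has cohomology sitting in long exact sequences with the finitely generated modules $\tuH^*(P_i)$ and $\tuH^*(M_i)$, and $H^0$ is Noetherian. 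Now by the proof of Theorem \ref{sppj resolution theorem} (the implication (1) $\Rightarrow$ (2), carried out via equation (\ref{20171219226}) and Lemma \ref{20171219205II}), setting $e = \max\{i \mid \pd M_{i-1} > 0\}$ we get $M_e \in \cP[-\sup M_e]$ and the resolution terminates at step $e$ with $e + \sup M - \sup M_e = d < \infty$. The terminal object $M_e$ is a direct summand of $\cP$ but also has finitely generated cohomology (by the propagation just noted), so $M_e \in \cP_{\mod H^{0}}[-\sup M_e]$. Splicing the finitely many triangles $\cE_0, \dots, \cE_{e-1}$ together with $M_e$ exhibits $M$ as an object of $\cP_{\mod H^{0}}[-\sup P_0] \ast \cP_{\mod H^{0}}[-\sup P_1 + 1] \ast \cdots \ast \cP_{\mod H^{0}}[-\sup M_e + e]$, which is one of the finite $\ast$-products in the union. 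This proves the inclusion and also re-proves $M \in \Perf R$.

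Assembling: $\sfD_{\mod H^{0}}(R)_{\textup{fpd}} \subseteq \bigcup \cP_{\mod H^{0}}[a]\ast\cdots\ast\cP_{\mod H^{0}}[b] \subseteq \Perf R = \thick \cP_{\mod H^{0}} \subseteq \sfD_{\mod H^{0}}(R)_{\textup{fpd}}$, so all three agree. The step I expect to be the main obstacle — or at least the one needing the most care — is verifying that finite generation of cohomology is genuinely preserved along the whole sppj resolution and at the terminal object $M_e$: one must check that at each stage the chosen $P_i$ can be taken in $\add R$ and that $\tuH^{\sup}(M_{i+1})$ (hence the next projective cover) stays finitely generated, which is exactly where piecewise Noetherianity of $R$ and Noetherianity of $H^0$ are used. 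Everything else is a bookkeeping assembly of results already proved in the section, principally Corollary \ref{20171219208}, Lemma \ref{20171219205II}, and the argument of Theorem \ref{sppj resolution theorem}.
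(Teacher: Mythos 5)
Your proposal is correct and follows essentially the same route as the paper: the easy inclusions among the three classes, and then, for $M$ with finitely generated cohomology and finite projective dimension, the construction of a sppj resolution whose terms lie in $\cP_{\mod H^{0}}=\add R$ (using piecewise Noetherianity to keep the cohomologies finitely generated) together with the termination argument of Theorem \ref{sppj resolution theorem lemma II}. Your write-up simply makes explicit the finite-generation bookkeeping that the paper's brief proof leaves implicit.
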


\begin{proof}
It is clear that the first one contains the second one 
and that the second one contains the third one. 

For an object $M \in \sfD_{\mod H^{0}}(R)$, 
we can construct a sppj resolution $P_{\bullet}$ of $M$ 
such that $P_{i}$ belongs to $\cP_{\mod H^{0}}$. 
Thus if more over $\pd M < \infty$, it belongs to the third one. 
\end{proof}

\subsection{Tensor product}

For $P \in \cP$ and $L \in \Mod (H^{0})^{\op}$, 
\[
\tuH^{n}(P \lotimes_{R} L) = 
\begin{cases}
 \tuH^{0}(P) \otimes_{H^{0}} L & n = 0,\\
  0 & n \neq 0. 
 \end{cases}
\]
In a similar way of Lemma \ref{sppj resolution lemma}, 
we can prove the following lemma. 

\begin{lemma}\label{sppj resolution tensor lemma}
Let $L \in \Mod (H^{0})^{\op}, M \in \sfD^{< \infty}(R)$ and $P^{\bullet}$  a sppj resolution of $M$. 
We denote the complexes below by $Y_{i}, Y'_{i}$. 
\[
\begin{split}
Y_{i} : 
\tuH^{\sup P_{i}}(P_{i+1}\lotimes_{R} L) \to 
\tuH^{\sup P_{i}}(P_{i}\lotimes_{R} L) \to
\tuH^{\sup P_{i}}(P_{i-1}\lotimes_{R} L)  \\ 
Y'_{i} : 
\tuH^{\sup P_{i}}(P_{i+1}) \otimes_{H^{0}} L \to
\tuH^{\sup P_{i}}(P_{i}) \otimes_{H^{0}} L \to
\tuH^{\sup P_{i}}(P_{i-1}) \otimes_{H^{0}} L 
\end{split}
\] 
Then, 
\[
\tuH^{n}(M \lotimes_{R} L) =
\begin{cases} 
0 & n \neq -i +\sup P_{i} \textup{ for any } i \geq 0 \\
\tuH(Y_{i}) & n = -i +\sup P_{i} \textup{ for some } i \geq 0 
\end{cases}
\]

Moreover, in the case $n =- i + \sup P_{i}$,  we have  
\[
\tuH(Y_{i}) =
\begin{cases}
\tuH^{\sup P_{i}}(P_{i}) \otimes_{H^{0}} L  & \sup P_{i -1} \neq \sup P_{i} \neq \sup P_{i+1}, \\
\Coker[\tuH^{\sup P_{i}}(P_{i+1}) \otimes_{H^{0}} L \to
\tuH^{\sup P_{i}}(P_{i}) \otimes_{H^{0}} L ],  &
 \sup P_{i-1} \neq \sup P_{i} = \sup P_{i+1}, \\ 
 \Ker[
\tuH^{\sup P_{i}}(P_{i}) \otimes_{H^{0}} L \to
\tuH^{\sup P_{i}}(P_{i-1}) \otimes_{H^{0}} L 
 ], & 
 \sup P_{i -1} = \sup P_{i} \neq \sup P_{i+1}, \\ 
\tuH(Y'_{i}), & 
 \sup P_{i -1} = \sup P_{i} = \sup P_{i +1}. 
\end{cases}  
\]
\end{lemma}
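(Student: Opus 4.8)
The argument is a transcription of the proof of Lemma~\ref{sppj resolution lemma}, with the contravariant functor $\Hom(-,N[n])$ replaced throughout by the covariant functor $\tuH^{n}(-\lotimes_{R}L)$. The one structural change is that the cohomological degree \emph{increases} by one when one passes from $M_{i}$ to $M_{i+1}$, which is why the relevant degree is $n=-i+\sup P_{i}$ in place of $n=i-\sup P_{i}$ and why the arrows in the auxiliary complexes and diagrams are reversed.

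I would start from the input. For $P\in\cP$ the formula displayed just before the lemma gives $\tuH^{n}(P\lotimes_{R}L)=\tuH^{0}(P)\otimes_{H^{0}}L$ for $n=0$ and $0$ otherwise; after the appropriate shift, $\tuH^{n}(P_{i}\lotimes_{R}L)$ equals $\tuH^{\sup}(P_{i})\otimes_{H^{0}}L$ when $n=\sup P_{i}$ and vanishes otherwise. Put $s_{i}:=\sup P_{i}$; since $s_{0}\geq s_{1}\geq\cdots$, the sequence $\{-i+s_{i}\}_{i\geq 0}$ is strictly decreasing. Applying $-\lotimes_{R}L$ to the defining triangles $M_{i+1}\xrightarrow{g_{i+1}}P_{i}\xrightarrow{f_{i}}M_{i}\to$ gives exact triangles, and because $R$ is connective and $L$ lies in degree $0$ we have $M_{i}\lotimes_{R}L\in\sfD^{\leq s_{i}}(\kk)$, so $\tuH^{>s_{i}}(M_{i}\lotimes_{R}L)=0$. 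Reading the associated long exact sequences off exactly as in the cited proof yields: $\tuH^{n}(M_{j}\lotimes_{R}L)\cong\tuH^{n+1}(M_{j+1}\lotimes_{R}L)$ whenever $n\notin\{s_{j}-1,s_{j}\}$; the map $\tuH^{s_{j}}(P_{j+1}\lotimes_{R}L)\to\tuH^{s_{j}}(M_{j+1}\lotimes_{R}L)$ is surjective; and around degree $s_{j}$ there is a four-term exact sequence ending in $\tuH^{s_{j}}(M_{j+1}\lotimes_{R}L)\to\tuH^{s_{j}+1}(M_{j+1}\lotimes_{R}L)=0$.

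The d\'evissage then runs as follows. Fix $n=-i+s_{i}$. For $0\leq j\leq i-2$ one has $n+j=s_{i}-(i-j)\leq s_{i}-2\leq s_{j}-2$, so the isomorphism step applies $i-1$ times and $\tuH^{n}(M\lotimes_{R}L)\cong\tuH^{s_{i}-1}(M_{i-1}\lotimes_{R}L)$, the convention $P_{-1}=0$ taking care of $i=0$ (there the chain is empty). Since $s_{i-1}\geq s_{i}$, the term $\tuH^{s_{i}-1}(P_{i-1}\lotimes_{R}L)$ vanishes, so the triangle at level $i-1$ gives $\tuH^{s_{i}-1}(M_{i-1}\lotimes_{R}L)=\Ker\bigl[\tuH^{s_{i}}(M_{i}\lotimes_{R}L)\to\tuH^{s_{i}}(P_{i-1}\lotimes_{R}L)\bigr]$, while the triangle at level $i$ together with the surjection from $P_{i+1}$ gives $\tuH^{s_{i}}(M_{i}\lotimes_{R}L)=\Coker\bigl[\tuH^{s_{i}}(P_{i+1}\lotimes_{R}L)\to\tuH^{s_{i}}(P_{i}\lotimes_{R}L)\bigr]$, the inner map being induced by $\delta_{i+1}$ and the outer one by $\delta_{i}$. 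Assembling these — equivalently, forming the $3\times 3$ diagram with exact rows and columns as in Lemma~\ref{sppj resolution lemma}, with all arrows reversed — identifies the group with the cohomology of the slant line, namely $Y_{i}$. For the remaining $n$ (those with $n>\sup M$, and those $n<\sup M$ not of the form $-i+\sup P_{i}$) the same climbing terminates in some $\tuH^{\ast}(M_{j}\lotimes_{R}L)$ that vanishes by the bound $\tuH^{>s_{j}}(M_{j}\lotimes_{R}L)=0$, exactly as in cases (I), (II-i), (II-ii) of the cited proof, so $\tuH^{n}(M\lotimes_{R}L)=0$.

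The ``moreover'' clause is the degeneration of $Y_{i}$: if $s_{i-1}\neq s_{i}$ then $\tuH^{s_{i}}(P_{i-1}\lotimes_{R}L)=0$; if $s_{i+1}\neq s_{i}$ then $\tuH^{s_{i}}(P_{i+1}\lotimes_{R}L)=0$; and when $s_{i\pm1}=s_{i}$ the basic computation turns those terms into $\tuH^{\sup}(P_{i\pm1})\otimes_{H^{0}}L$, so $Y_{i}$ becomes $Y'_{i}$. Reading off the kernel and cokernel of $Y_{i}$ in the four cases gives the displayed formulas. I do not anticipate a genuine obstacle — the whole proof is a faithful copy of Lemma~\ref{sppj resolution lemma} — but the step deserving care is the sign bookkeeping: checking that $-\lotimes_{R}L$ reverses all the connecting maps and moves the active degree so that the index $-i+\sup P_{i}$ and the shape of $Y_{i}$ emerge precisely as stated, and that the reversed $3\times 3$ diagram indeed still has exact rows and columns.
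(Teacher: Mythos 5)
Your proposal is correct and is exactly the argument the paper intends: the paper omits the proof with the remark that it goes ``in a similar way'' to Lemma \ref{sppj resolution lemma}, and your dualization — replacing $\Hom(-,N[n])$ by $\tuH^{n}(-\lotimes_{R}L)$, using $\tuH^{>\sup M_{i}}(M_{i}\lotimes_{R}L)=0$ in place of the $t$-structure vanishing, and shifting the active degree to $n=-i+\sup P_{i}$ with the arrows reversed — carries out precisely that analogy, with all the key steps (the isomorphism outside the two special degrees, the surjectivity of $\tuH^{\sup P_{i}}(P_{i+1}\lotimes_{R}L)\to\tuH^{\sup P_{i}}(M_{i+1}\lotimes_{R}L)$, and the kernel-of-cokernel identification of $\tuH(Y_{i})$) verified correctly.
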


\subsection{Minimal sppj resolution}

From Lemma \ref{sppj resolution lemma} we deduce the following corollary.

\begin{corollary}\label{sppj resolution corollary 1}
Assume that $M \in \sfD^{< \infty}(R)$ admits a minimal sppj resolution $P_{\bullet}$. 
Then for a simple $H^{0}$-module $S$ we have 
\[
\Hom(M,S[n]) 
= 
\begin{cases}
0 & n \neq i -\sup P_{i} \textup{ for any } i \geq 0,  \\
\Hom(\tuH^{\sup}(P_{i}), S) & n = i - \sup P_{i} \textup{ for some } i \geq 0.
\end{cases}
\]
\end{corollary}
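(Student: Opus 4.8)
The plan is to derive Corollary \ref{sppj resolution corollary 1} as a direct specialization of Lemma \ref{sppj resolution lemma} applied to the module $N = S$, a simple $H^0$-module. Recall that Lemma \ref{sppj resolution lemma} expresses $\Hom(M, N[n])$ as zero unless $n = i - \sup P_i$ for some $i \geq 0$, in which case it equals $\tuH(X_i)$, and then gives a case analysis of $\tuH(X_i)$ in terms of the positions of consecutive $\sup P_j$. So the whole task reduces to simplifying that four-way case analysis when $N = S$ is simple.

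First I would note that since $S$ is simple, any $H^0$-linear map between $\Hom$-spaces of the form $\Hom_{\Mod H^0}(\tuH^{\sup}(P_j), S) \to \Hom_{\Mod H^0}(\tuH^{\sup}(P_{j+1}), S)$ is a map induced by precomposition with $\tuH^{\sup}(\delta_{j+1})$; by minimality of the sppj resolution each $f_i$ is a projective cover, so $\tuH^{\sup}(\delta_{j+1})\colon \tuH^{\sup}(P_{j+1}) \to \tuH^{\sup}(P_j)$ has image contained in the radical whenever $\sup P_{j+1} = \sup P_j$ (this is the standard consequence of minimality: the differential of a minimal projective resolution has entries in the Jacobson radical). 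Therefore the induced map on $\Hom(-,S)$ is zero for a simple $S$: a radical-valued map composed with a map to a simple module is zero. This kills the "Ker" and "Coker" corrections in the middle two cases of Lemma \ref{sppj resolution lemma} — a kernel of a zero map is the whole space, and a cokernel of a zero map is the whole space — and likewise collapses $\tuH(X'_i)$ in the fourth case, since $X'_i$ is then a complex with zero differentials, whose cohomology in the middle spot is just $\Hom(\tuH^{\sup}(P_i), S)$.

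Consequently, in all four cases of the case analysis in Lemma \ref{sppj resolution lemma}, one gets the single uniform answer $\tuH(X_i) = \Hom(\tuH^{\sup}(P_i), S)$, which is exactly the claimed formula. I would also remark, for completeness, that whether $\sup P_{i-1}$, $\sup P_i$, $\sup P_{i+1}$ coincide or not no longer matters once $N = S$: when they differ the lemma already gives $\Hom(\tuH^{\sup}(P_i),S)$ directly, and when they agree the differentials vanish after applying $\Hom(-,S)$, giving the same thing.

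The only genuine point requiring care — and the step I'd expect to be the main obstacle — is justifying that minimality of the sppj resolution forces $\tuH^{\sup}(\delta_{j+1})$ to land in the radical when the sups agree. This should follow from unwinding the definition: $\delta_{j+1} = g_j f_{j+1}$, where $f_{j+1}$ is a minimal sppj morphism (so $\tuH^{\sup}(f_{j+1})$ is a projective cover of $M_{j+1}$) and $g_j\colon M_{j+1} \to P_j$ is the connecting morphism. When $\sup P_{j+1} = \sup P_j = \sup M_{j+1}$, taking $\tuH^{\sup}$ of the defining triangle $M_{j+1} \to P_j \to M_j$ and using that $\tuH^{\sup}(f_j)$ is surjective with $\tuH^{\sup}(f_j)\circ\tuH^{\sup}(g_j) = 0$ shows that the image of $\tuH^{\sup}(g_j)$ is the kernel of a projective cover, hence superfluous, hence contained in the radical of $\tuH^{\sup}(P_j)$; composing with the projective cover $\tuH^{\sup}(f_{j+1})$ keeps us in the radical. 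Once this is in place, the rest is the bookkeeping described above, and the corollary follows.
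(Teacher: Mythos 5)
Your argument is correct and is exactly the deduction the paper intends: specialize Lemma \ref{sppj resolution lemma} to $N=S$ and observe that minimality forces $\operatorname{im}\tuH^{\sup}(\delta_{j})\subseteq\operatorname{rad}\tuH^{\sup}(P_{j-1})$ whenever the sups agree (image of $\tuH^{\sup}(g)$ is the superfluous kernel of the projective cover $\tuH^{\sup}(f)$), so all induced maps into $\Hom(-,S)$ vanish and every case of the lemma collapses to $\Hom(\tuH^{\sup}(P_{i}),S)$. The paper gives no further details, and your handling of the key minimality step is the standard one, so nothing is missing.
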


From Lemma \ref{sppj resolution tensor lemma} we deduce the following corollary.

\begin{corollary}\label{sppj resolution corollary 2}
Assume that $M \in \sfD^{< \infty}(R)$ admits a minimal sppj resolution $P_{\bullet}$. 
Then for a simple $(H^{0})^{\op}$-module $T$ we have 
\[
\tuH^{n}(M\lotimes_{R} T) 
= 
\begin{cases}
0 & n \neq - i +\sup P_{i} \textup{ for any } i \geq 0,  \\
\tuH^{\sup}(P_{i})\otimes_{H^{0}} T & n = -i + \sup P_{i} \textup{ for some } i \geq 0. 
\end{cases}
\]
\end{corollary}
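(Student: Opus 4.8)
The plan is to deduce this directly from Lemma \ref{sppj resolution tensor lemma} applied with $L = T$, using minimality to kill the connecting maps. That lemma already gives $\tuH^n(M \lotimes_R T) = 0$ unless $n = -i + \sup P_i$ for some $i \ge 0$, and $\tuH^n(M \lotimes_R T) = \tuH(Y_i)$ in that case, so everything reduces to showing that $\tuH(Y_i)$ collapses to $\tuH^{\sup}(P_i) \otimes_{H^0} T$ in each of the four sub-cases listed there.

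The one substantive point I would isolate as a preliminary claim is: if $P_\bullet$ is a minimal sppj resolution and $\sup P_{j-1} = \sup P_j$, then $\tuH^{\sup P_j}(\delta_j)$ has image inside $\operatorname{rad}\tuH^{\sup}(P_{j-1})$, and hence $\tuH^{\sup P_j}(\delta_j) \otimes_{H^0} T = 0$ for every simple $T$. For the first assertion, recall $\sup P_j = \sup M_j$ and $\sup P_{j-1} = \sup M_{j-1}$, so the hypothesis says $\sup M_j = \sup M_{j-1}$; applying $\tuH^{\sup M_{j-1}}$ to the defining triangle $M_j \xrightarrow{g_{j-1}} P_{j-1} \xrightarrow{f_{j-1}} M_{j-1} \to$ and using $\tuH^{\sup M_{j-1}+1}(M_j) = 0$ yields a right exact sequence $\tuH^{\sup}(M_j) \to \tuH^{\sup}(P_{j-1}) \to \tuH^{\sup}(M_{j-1}) \to 0$. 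Minimality of $f_{j-1}$ makes $\tuH^{\sup}(f_{j-1})$ a projective cover, so its kernel $\operatorname{im}\tuH^{\sup}(g_{j-1})$ is superfluous in the projective $H^0$-module $\tuH^{\sup}(P_{j-1})$, hence contained in its radical. Since $\delta_j = g_{j-1} f_j$, the image of $\tuH^{\sup}(\delta_j)$ lies inside $\operatorname{im}\tuH^{\sup}(g_{j-1})$, proving the claim. The vanishing after $- \otimes_{H^0} T$ is then the elementary fact that $- \otimes_{H^0} T$ factors through reduction modulo the radical (as $T$ is annihilated by $J = \operatorname{rad} H^0$ and $\operatorname{rad} Q = QJ$ for projective $Q$), so a map of projectives with radical image becomes $0$.

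Granting the claim, I would run through the cases of Lemma \ref{sppj resolution tensor lemma}. When $\sup P_{i-1} \ne \sup P_i \ne \sup P_{i+1}$ the formula is literally the one in the lemma. When $\sup P_{i-1} \ne \sup P_i = \sup P_{i+1}$, $\tuH(Y_i)$ is the cokernel of the map induced by $\delta_{i+1}$, which is zero by the claim, so the cokernel is all of $\tuH^{\sup}(P_i) \otimes_{H^0} T$; the case $\sup P_{i-1} = \sup P_i \ne \sup P_{i+1}$ is dual, the kernel of the $\delta_i$-induced map being everything. In the last case $\sup P_{i-1} = \sup P_i = \sup P_{i+1}$, both differentials of $Y'_i$ are induced by $\delta_{i+1}$ and $\delta_i$ and so vanish by the claim, whence $\tuH(Y'_i) = \tuH^{\sup}(P_i) \otimes_{H^0} T$.

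I expect no serious obstacle: the content is exactly the classical observation that a minimal resolution has ``radical differentials'', transported through Lemma \ref{sppj resolution tensor lemma}. The only mildly delicate points are matching the maps in $Y_i$ and $Y'_i$ with the $\delta_j$ (which is built into the proof of Lemma \ref{sppj resolution tensor lemma}, mirroring Lemma \ref{sppj resolution lemma}) and the two routine module-theoretic facts used above. This is the precise analogue of Corollary \ref{sppj resolution corollary 1} with $\Hom(-,S)$ replaced by $- \lotimes_R T$, and one could equally present it by running that same argument on the complexes $Y_i$ in place of the $X_i$.
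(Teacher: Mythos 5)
Your proposal is correct and takes essentially the same route the paper intends: the paper offers no written proof beyond ``deduce from Lemma \ref{sppj resolution tensor lemma}'', and your preliminary claim (minimality forces the image of $\tuH^{\sup}$ of each relevant $\delta_{j}$ into the radical of the projective $H^{0}$-module $\tuH^{\sup}(P_{j-1})$, so the induced maps vanish after $-\otimes_{H^{0}}T$ for simple $T$) is precisely the standard deduction that collapses all four cases of that lemma to $\tuH^{\sup}(P_{i})\otimes_{H^{0}}T$. No gaps.
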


\section{Inf-injective resolutions of DG-modules}\label{ifij resolution}

\subsection{Injective dimension of $M\in \sfD(R)$ after Yekutieli}

We recall the definition of the injective dimension of $M \in \sfD(R)$ 
introduced by Yekutieli. 

\begin{definition}[{\cite[Definition 2.4]{Yekutieli}}]
Let  $a \leq b \in \{ -\infty \} \cup \ZZ \cup \{\infty\}$. 

\begin{enumerate}[(1)] 
\item 
An object $M \in \sfD(R)$ 
is said 
to have \textit{injective concentration} $[a,b]$ 
if 
such that the functor  $F = \RHom_{R}(-, M)$ 
sends $\sfD^{[m,n]}(R)$ to 
$\sfD^{[a-n,b -m ]}(\kk)$ 
for any $m \leq n \in \{-\infty\} \cup \ZZ \cup \{\infty\}$.
\[
F(\sfD^{[m,n]}(R)) \subset \sfD^{[a-n, b-m]}(\kk).
\]

\item 
An object $M \in \sfD(R)$ 
is said 
to have \textit{strict injective concentration} $[a,b]$
if it has injective concentration $[a,b]$ 
and does't have injective concentration $[c,d]$ 
such that $[c,d] \subsetneq [a,b]$. 

\item 
An object $M \in \sfD(R)$ 
is said 
to have injective dimension $d \in \NN$ 
if it has strict injective concentration $[a,b]$ for $a,b\in \ZZ$ 
such that $d= b-a$.

In the case where, $M$ does't have a finite interval as  injective concentration, 
it is said to have infinite injective dimension. 

We denote the injective  dimension by $\id M$.
\end{enumerate}

\end{definition}

\begin{remark}
Let $R$ be an ordinary algebra. 
Avramov-Foxby \cite{Avramov-Foxby:Homological dimensions} introduced 
another   injective dimension for a complex $M \in \sfC(R)$. 
If we denote by $\textup{AF}\injdim M$ the projective dimension of Avramov-Foxby, 
then it is easy to see that $\textup{AF}\injdim M = \injdim M +\inf M$. 
\end{remark}

The following lemma is the injective version of Lemma \ref{201711191911}.  

\begin{lemma}
If $M \in \sfD(R)$ has finite injective dimension, 
then it belongs to $\sfD^{>-\infty}(R)$. 
\end{lemma}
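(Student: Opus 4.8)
The plan is to reduce the statement to evaluating the functor $F=\RHom_R(-,M)$ on $R$ itself, using that $\RHom_R(R,M)$ recovers $M$ on cohomology. Unlike the projective analogue (Lemma \ref{201711191911}), which is why that lemma had to wait until after Theorem \ref{Shaul theorem}, the injective case needs no auxiliary input beyond connectivity of $R$.

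First I would unwind the hypothesis. Since $\id M<\infty$, by definition $M$ has strict injective concentration $[a,b]$ for some integers $a\le b$; in particular $F=\RHom_R(-,M)$ sends $\sfD^{[m,n]}(R)$ into $\sfD^{[a-n,\,b-m]}(\kk)$ for all $m\le n$ in $\{-\infty\}\cup\ZZ\cup\{\infty\}$.

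Next I would feed $R$ into $F$. As $R$ is connective we have $\tuH^{>0}(R)=0$, so $R\in\sfD^{[-\infty,0]}(R)$. Applying the injective concentration property with $m=-\infty$ and $n=0$ yields
\[
M \cong \RHom_R(R,M) = F(R) \in F\!\left(\sfD^{[-\infty,0]}(R)\right) \subset \sfD^{[a,\infty]}(\kk),
\]
the first identification being the canonical isomorphism $\RHom_R(R,-)\cong\operatorname{id}$ (an isomorphism on cohomology). Hence $\tuH^n(M)=0$ for every $n<a$, so $\inf M\ge a>-\infty$, i.e.\ $M\in\sfD^{>-\infty}(R)$.

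There is no real obstacle here; the only points deserving a word of care are that the endpoints of $\sfD^{[m,n]}$ are allowed to be infinite (so the substitution $m=-\infty,\ n=0$ is legitimate, with $b-m=\infty$), and the identification of $F(R)$ with $M$ at the level of cohomology. Alternatively one could dualize whichever argument establishes Lemma \ref{201711191911}, but the direct argument above is shorter and self-contained.
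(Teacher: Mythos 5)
Your argument is correct and is essentially the paper's own proof: the paper also observes that $R\in\sfD^{\leq 0}(R)$ and that $M\cong\RHom_R(R,M)$, so finite injective concentration forces $M\in\sfD^{>-\infty}(R)$. You have simply spelled out the substitution $m=-\infty$, $n=0$ that the paper leaves implicit.
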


\begin{proof}
Since $R\in \sfD^{\leq 0}(R)$, 
the complex $M = \RHom(R, M)$ must belong $\sfD^{> -\infty}(\kk)$. 
\end{proof}

The following lemma is the injective version of Lemma \ref{201712182258}. 
We omit the proof, since it can be proved in a similar way.

\begin{lemma}
Let $M \in \sfD^{>-\infty}(R)$ and $ F:=\RHom(-,M)$. 
Then for any $m\leq n$, 
\[
F(\sfD^{[m,n]}(R)) \subset \sfD^{[-n+\inf M, \infty]}(\kk). 
\]
Moreover, there exists $N \in \sfD^{[m,n]}(R)$ 
such that $\tuH^{-n+\inf M}(F(N)) \neq 0$. 
\end{lemma}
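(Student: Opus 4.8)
\emph{Proposal.} This is the injective analogue of Lemma \ref{201712182258}, and the proof follows the same two-step pattern, using the identity $\tuH^{\ell}(F(N)) = \Hom(N, M[\ell])$ for $N \in \sfD(R)$ together with the orthogonality property of the standard $t$-structure. Throughout I assume $M \neq 0$, so that $\inf M \in \ZZ$ is defined (the $M = 0$ case being vacuous); and, as in Lemma \ref{201712182258}, the argument takes place entirely inside $\sfD(R)$ even though $F$ lands in $\sfD(\kk)$.

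For the containment $F(\sfD^{[m,n]}(R)) \subset \sfD^{[-n+\inf M, \infty]}(\kk)$, fix $N \in \sfD^{[m,n]}(R)$ and an integer $\ell < -n + \inf M$; the plan is to show $\tuH^{\ell}(F(N)) = \Hom(N, M[\ell]) = 0$. Since $\tuH^{j}(M[\ell]) = \tuH^{j+\ell}(M)$ vanishes for $j + \ell < \inf M$, the module $M[\ell]$ lies in $\sfD^{\geq \inf M - \ell}(R)$, and the hypothesis $\ell < -n + \inf M$ gives $\inf M - \ell > n$, so $M[\ell] \in \sfD^{> n}(R)$. As $N \in \sfD^{\leq n}(R)$, orthogonality of the standard $t$-structure forces $\Hom(N, M[\ell]) = 0$, which is the desired vanishing.

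For the second assertion, the plan is to exhibit $N := \tuH^{\inf}(M)[-n]$. Since $\tuH^{\inf}(M)$ is an $H^{0}$-module, it lies in the heart, so $N$ is concentrated in degree $n$ and hence $N \in \sfD^{[n,n]}(R) \subseteq \sfD^{[m,n]}(R)$ (using $m \leq n$). As in the proof of Lemma \ref{201712182258}, the standard $t$-structure provides a nonzero morphism $\tuH^{\inf}(M)[-\inf M] \to M$ (it induces an isomorphism on $\tuH^{\inf M}$, which is nonzero). Shifting this morphism by $-n + \inf M$ and rewriting, one gets a nonzero element of
\[
\Hom\big(N, M[-n+\inf M]\big) = \Hom\big(\tuH^{\inf}(M)[-\inf M], M\big) = \tuH^{-n+\inf M}(F(N)),
\]
so $N$ has the required property. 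The only point needing care is the bookkeeping of shifts in this last chain of identifications; beyond that there is no real obstacle, since placing $N$ in the heart after a shift makes the $t$-structure orthogonality apply cleanly.
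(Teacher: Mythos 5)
Your proof is correct and is exactly the dual of the paper's proof of Lemma \ref{201712182258}, which is all the paper intends here (it omits the proof as "similar"): the vanishing follows from the orthogonality $\Hom(\sfD^{\leq n}(R),\sfD^{>n}(R))=0$ of the standard $t$-structure, and the nonvanishing from the truncation morphism $\sigma^{\leq \inf M}M \cong \tuH^{\inf}(M)[-\inf M]\to M$ applied to $N=\tuH^{\inf}(M)[-n]$. The shift bookkeeping in your final identification is accurate, so no changes are needed.
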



We deduce the following useful corollaries. 

\begin{corollary}\label{injective dimension basics corolllary} 
Let $M \in \sfD^{> -\infty}(R)$ and $d \in \NN$. 
Then $\injdim M = d$ if and only if $M$ has a strict injective concentration $[\inf M,\inf M+d]$. 
\end{corollary}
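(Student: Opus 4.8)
The statement is the injective analogue of Corollary \ref{201712182258II}, so the plan is to mirror that proof, using the lemma immediately preceding this corollary (the injective version of Lemma \ref{201712182258}) as the essential input. I would prove both implications at once by characterizing precisely which intervals $[a,b]$ can be injective concentrations of $M$.

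\emph{First direction.} Suppose $M \in \sfD^{>-\infty}(R)$ has $\injdim M = d$, so $M$ has a strict injective concentration $[a,b]$ with $a,b \in \ZZ$ and $b - a = d$. By the preceding lemma, for every $m \le n$ we have $F(\sfD^{[m,n]}(R)) \subset \sfD^{[-n+\inf M, \infty]}(\kk)$ where $F = \RHom(-,M)$; comparing with the defining inclusion $F(\sfD^{[m,n]}(R)) \subset \sfD^{[a-n, b-m]}(\kk)$ forces $a - n \le -n + \inf M$ is automatic, but the nonvanishing clause of that lemma — there is $N \in \sfD^{[m,n]}(R)$ with $\tuH^{-n+\inf M}(F(N)) \neq 0$ — shows that any injective concentration $[a',b']$ must satisfy $a' \le \inf M$; actually the correct bookkeeping is $a' - n \le -n + \inf M$, i.e. $a' \le \inf M$, and I should double-check the direction here but it yields $a \le \inf M$. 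Dually, applying $F$ to $N = \tuH^{\inf}(M)[\ell]$-type test objects (or directly using $R \in \sfD^{\le 0}(R)$ and $\RHom(R,M) = M$) pins down $b \ge \inf M$, and in fact $\inf M \in [a,b]$. Then I claim $[\inf M, \inf M + d]$ is also an injective concentration: given $[a,b] \supseteq$ (suitable translate), since $b - a = d$ and $a \le \inf M \le b$, the interval $[\inf M, b]$ has length $\le d$; if it were strictly shorter it would contradict strictness of $[a,b]$ once I show $[\inf M, \inf M + (b - \inf M)]$ is a concentration whenever $[a,b]$ is — which follows because the preceding lemma already gives the sharp left endpoint $-n + \inf M$. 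Hence the strict concentration must be $[\inf M, \inf M + d]$.

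\emph{Converse.} If $M$ has strict injective concentration $[\inf M, \inf M + d]$, then by definition $\injdim M = (\inf M + d) - \inf M = d$. This direction is immediate once the notion is unwound.

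\emph{Main obstacle.} The real work is the bookkeeping of endpoints: showing that the left endpoint of \emph{any} injective concentration of $M$ is exactly $\inf M$ (not larger, not smaller), which combines the vanishing half and the nonvanishing half of the preceding lemma, and then deducing that translating an arbitrary strict concentration $[a,b]$ to $[\inf M, \inf M + (b-a)]$ again gives a concentration of the same length. I expect the cleanest route is to state and use the equality ``$\inf F(N) = -\sup N + \inf M$ for $N$ with $\tuH^{\sup}(N)$ suitably chosen'', exactly paralleling the proof of Corollary \ref{201712182258II} from Lemma \ref{201712182258}, so that no genuinely new idea is needed — only care with signs, since the injective case reverses inequalities relative to the projective case. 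Accordingly I would simply write: \emph{The proof is dual to that of Corollary \ref{201712182258II}, replacing $\sup$ by $\inf$ and $\RHom(M,-)$ by $\RHom(-,M)$, and using the preceding lemma in place of Lemma \ref{201712182258}.}
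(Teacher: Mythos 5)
Your argument is correct and is exactly the intended deduction: the paper states this corollary without proof as a direct consequence of the preceding lemma, and your combination of the nonvanishing clause (forcing the left endpoint $a\le \inf M$ for any injective concentration $[a,b]$), the vanishing clause (showing $[\inf M,b]$ is again a concentration, so strictness forces $a=\inf M$), and the definition of $\injdim$ for the converse is precisely how Corollary \ref{201712182258II} is meant to follow from Lemma \ref{201712182258} in the dual case. The one point you leave vague, that $\inf M\le b$, follows immediately by taking $m=n$ in the nonvanishing clause of the lemma.
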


\begin{corollary}\label{201804030002II}
Let $L \to M  \to N \to $  be an exact triangle in $\sfD(R)$. 
Then, we have 
\[
\injdim M + \inf M \leq \sup \{ \injdim L + \inf  L, \injdim N + \inf N\}.
\]
\end{corollary}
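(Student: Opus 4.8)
The plan is to follow the pattern of the proof of Corollary~\ref{201804030002}, working directly with injective concentrations and the triangulated functor $\RHom(X,-)$. I begin with the easy reductions: the inequality is vacuous unless $\injdim L+\inf L$ and $\injdim N+\inf N$ are both finite, so I assume this; then $\injdim L,\injdim N<\infty$, hence $L,N\in\sfD^{>-\infty}(R)$ by the injective analogue of Lemma~\ref{201711191911}, and therefore $M\in\sfD^{>-\infty}(R)$ as well, being the third term of an exact triangle whose other two terms are bounded below. If $M=0$ there is nothing to prove, so assume $M\neq0$.

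Set $b:=\sup\{\injdim L+\inf L,\ \injdim N+\inf N\}$ and $a:=\min\{\inf L,\inf N\}$, and note $a\leq b$ since $\inf L\leq\inf L+\injdim L\leq b$ and likewise for $N$. I claim $M$ has injective concentration $[a,b]$. Indeed, the strict injective concentration of $L$ is $[\inf L,\inf L+\injdim L]$ by Corollary~\ref{injective dimension basics corolllary}, and it is contained in $[a,b]$, so $L$ has injective concentration $[a,b]$; symmetrically so does $N$. Now fix $m\leq n$ and $X\in\sfD^{[m,n]}(R)$ and apply $\RHom(X,-)$ to the triangle $L\to M\to N\to$, obtaining an exact triangle $\RHom(X,L)\to\RHom(X,M)\to\RHom(X,N)\to$ in $\sfD(\kk)$; since the two outer terms lie in $\sfD^{[a-n,\,b-m]}(\kk)$, the long exact cohomology sequence forces $\RHom(X,M)\in\sfD^{[a-n,\,b-m]}(\kk)$ too. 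As $m\leq n$ and $X$ were arbitrary, this is precisely the assertion that $M$ has injective concentration $[a,b]$.

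Finally, the strict injective concentration $[\inf M,\inf M+\injdim M]$ of $M$ (Corollary~\ref{injective dimension basics corolllary}) is contained in $[a,b]$, because the intersection of two injective concentrations is again an injective concentration: the intersection of the two target regions $\sfD^{[a_1-n,b_1-m]}(\kk)$ and $\sfD^{[a_2-n,b_2-m]}(\kk)$ is exactly the target region of the intersection interval, so any injective concentration of $M$ must contain the strict one. Hence $\inf M+\injdim M\leq b=\sup\{\injdim L+\inf L,\ \injdim N+\inf N\}$, which is the desired inequality. The one point that is more than formal bookkeeping is exactly this last observation — that $\inf M+\injdim M$ is the least possible right endpoint of an injective concentration of $M$; once that is in hand, the rest is the long exact sequence together with elementary manipulation of the intervals.
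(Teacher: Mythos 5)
Your argument is correct and is exactly the deduction the paper intends (the corollary is stated there without proof, as a consequence of the preceding lemma and of Corollary \ref{injective dimension basics corolllary}): monotonicity of injective concentrations in the interval, the long exact sequence obtained by applying $\RHom(X,-)$ to the triangle, and the fact that every injective concentration of $M$ contains the strict one $[\inf M,\inf M+\injdim M]$. The only point worth making explicit in your last step is that the intersection of two injective concentrations of a nonzero $M\in\sfD^{>-\infty}(R)$ is a nonempty interval; this follows from the injective version of Lemma \ref{201712182258}, which forces any injective concentration $[a,b]$ of such an $M$ to satisfy $a\leq\inf M\leq b$.
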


\subsection{The class $\cI$}\label{the class cI} 

The aim of Section \ref{the class cI} 
is to introduce the full subcategory $\cI \subset \sfD(R)$ 
which play the role of $\cP$ for inf-injective resolutions. 

\begin{definition}
Let $A = \bigoplus_{i \leq 0} A^{i}$ be a graded algebra. 
By $\Inj^{0} A \subset \GrMod A$, 
we denote the full subcategory 
consisting of injective graded $A$-modules $J$ 
cogenerated by degree $0$-part. 
More precisely, 
$J$ is assumed to 
satisfy the following conditions: 
\begin{enumerate}[(a)] 
\item $J$ is a graded injective $A$-module.

\item $J^{0}$ is an essential submodule of $J$.  

It might be worth noting that $J^{< 0} = 0$ follows from the second condition. 
\end{enumerate}
\end{definition}

A point is that $\Inj^{0} A$ is equivalent to the category $\Inj A^{0}$ 
of injective $A^{0}$-modules  as shown in Lemma below. 

\begin{lemma}\label{201710231648} 
Let $A = \bigoplus_{i\leq 0}A^{i}$ be a graded algebra. 
Then 
the following assertion is true. 
\begin{enumerate}[(1)] 
\item 
We have an adjoint pair 
\[
\phi : \GrMod A \rightleftarrows \Mod A^{0}: \psi
\]
defined by $\phi(M) := M^{0}$ and $\psi(L) := \Hom_{A^{0}}^{\bullet}(A, L)$. 

\item 
The adjoint pair $\phi \dashv \psi$ is restricted to give  an equivalence 
$\Inj^{0} A \cong \Inj A^{0}$. 
\end{enumerate}
\end{lemma}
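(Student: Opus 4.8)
The plan is to verify the adjunction $\phi \dashv \psi$ directly, then identify the essential image of $\psi$ and the subcategory on which the unit/counit become isomorphisms, and finally read off the claimed equivalence $\Inj^{0} A \cong \Inj A^{0}$. First I would establish part (1). Since $A = \bigoplus_{i\le 0} A^{i}$ is non-positively graded, for a graded $A$-module $M$ the degree-$0$ part $M^{0}$ is an $A^{0}$-module (there are no positive-degree elements of $A$ to move $M^{0}$ out of degree $0$, and $A^{0}$ preserves degree), so $\phi$ is well-defined; it is visibly exact and additive. For $L \in \Mod A^{0}$, the graded module $\psi(L) = \Hom^{\bullet}_{A^{0}}(A,L)$ has degree-$n$ part $\Hom_{A^{0}}(A^{-n}, L)$, which vanishes for $n < 0$ because $A^{-n} = 0$; its degree-$0$ part is $\Hom_{A^{0}}(A^{0},L) \cong L$. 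The adjunction isomorphism $\Hom_{\GrMod A}(M, \psi(L)) \cong \Hom_{A^{0}}(M^{0}, L)$ is the standard graded hom-tensor / extension-restriction adjunction along the graded algebra map $A^{0}\hookrightarrow A$ (equivalently, $\psi$ is the graded coinduction functor), and I would spell it out by sending a graded $A$-map $f\colon M\to\psi(L)$ to its degree-$0$ component $M^{0}\to L$, with inverse reconstructing $f$ in each degree via the $A$-action; naturality is routine.

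Next I would prove part (2). The key computation is that $\psi$ sends injectives to injectives: since $\phi$ is exact, its right adjoint $\psi$ preserves injective objects, so $\psi(\Inj A^{0}) \subseteq \GrInj A$. Moreover for $L$ injective over $A^{0}$, the module $\psi(L)$ is concentrated in degrees $\le 0$ with $\psi(L)^{0}\cong L$, and I would check that $\psi(L)^{0}$ is essential in $\psi(L)$: any nonzero graded submodule $N\subseteq\psi(L)$ contains a nonzero homogeneous element $x$ in some degree $-n\le 0$; because $\psi(L)=\Hom^{\bullet}_{A^{0}}(A,L)$, the element $x$ as a map $A^{-n}\to L$ is nonzero, so there is $a\in A^{-n}$ with $xa\neq 0$ in degree $0$, i.e. $0\neq xa\in N\cap\psi(L)^{0}$. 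Hence $\psi(L)\in\Inj^{0}A$, so $\psi$ restricts to $\Inj A^{0}\to\Inj^{0}A$, and $\phi$ restricted to $\Inj^{0}A$ is a candidate inverse.

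Then I would check the unit and counit are isomorphisms on the relevant subcategories. The counit $\phi\psi(L) = \psi(L)^{0}\to L$ is an isomorphism for \emph{every} $L\in\Mod A^{0}$ by the computation $\psi(L)^{0}\cong L$ above, so $\phi\circ\psi\cong\id_{\Mod A^{0}}$ on the nose. For the unit $\eta_M\colon M\to\psi(M^{0})=\psi\phi(M)$ with $M\in\Inj^{0}A$: in degree $0$ it is the isomorphism identifying $M^{0}$ with $\psi(M^{0})^{0}$, and $M^{<0}=0$ matches $\psi(M^{0})^{<0}=0$, so $\eta_M$ is an isomorphism in all non-positive degrees; since both $M$ and $\psi(M^{0})$ are concentrated in degrees $\le 0$, $\eta_M$ is an isomorphism. (One small point to record: $J\in\Inj^{0}A$ forces $J^{<0}=0$, because $J^{0}$ essential plus $J$ having a homogeneous element in degree $-n<0$ would, as in part (2), produce a nonzero element of $J^{<0}\cap J^{0}$ only if the $A$-action could lower degree past $0$ — more directly, a nonzero homogeneous $x$ of negative degree generates a submodule meeting $J^{0}$ nontrivially by essentiality only after multiplying \emph{up} in degree, which is impossible since $A$ has no positive part; so one argues instead that the submodule $A\cdot x\subseteq J$ lies entirely in degrees $\le \deg x<0$, hence misses $J^{0}$, contradicting essentiality unless $x=0$. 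This is the remark already flagged in the statement.) Combining, $\phi$ and $\psi$ are mutually inverse equivalences between $\Inj^{0}A$ and $\Inj A^{0}$.

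The main obstacle I anticipate is not any single deep step but getting the degree bookkeeping exactly right — in particular the direction of the grading (the paper uses cohomological/non-positive conventions, so $\psi(L)^{n}=\Hom_{A^{0}}(A^{-n},L)$ is supported in $n\le 0$), and the clean verification that $\psi(L)$ is cogenerated by its degree-$0$ part, which is where the hypothesis $A^{>0}=0$ is genuinely used. Everything else (exactness of $\phi$, right adjoints preserve injectives, the hom-tensor adjunction along $A^{0}\hookrightarrow A$) is formal.
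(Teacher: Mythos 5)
Your part (1) and your verification that the counit $\phi\psi(L)\to L$ is an isomorphism are fine, and your essentiality argument for $\psi(L)^{0}\subseteq\psi(L)$ is the right idea (modulo consistent sign slips: a nonzero homogeneous $x$ of degree $n\geq 0$ is a nonzero map $A^{-n}\to L$, and $x\cdot a$ for a suitable $a\in A^{-n}$ lands in degree $0$). But there is a genuine gap in the step where you claim the unit $\eta_{M}\colon M\to\psi\phi(M)$ is an isomorphism for $M\in\Inj^{0}A$. Your own first-paragraph computation shows $\psi(L)^{n}=\Hom_{A^{0}}(A^{-n},L)$ vanishes for $n<0$, so $\psi(L)$ --- and likewise any $M\in\Inj^{0}A$, since $M^{<0}=0$ --- is concentrated in degrees $\geq 0$, not $\leq 0$ as you assert in the third paragraph. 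Consequently, checking that $\eta_{M}$ is an isomorphism in all non-positive degrees only covers degree $0$; the entire content of the lemma sits in the positive degrees, where $\eta_{M}$ is the map $M^{n}\to\Hom_{A^{0}}(A^{-n},M^{0})$, $m\mapsto(a\mapsto ma)$, and this is not addressed at all. It cannot be a formality: your argument never invokes the graded injectivity of $M$, yet surjectivity of $\eta_{M}$ in positive degrees genuinely requires it (for a general graded module with $M^{<0}=0$ and $M^{0}$ essential, this map need not be surjective).

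The paper closes this gap as follows: $\eta_{M}$ is injective because $\Ker\eta_{M}\cap M^{0}=0$ (it is the identity in degree $0$) and $M^{0}$ is essential; since $M$ is graded injective, the monomorphism $\eta_{M}$ splits, giving $\psi\phi(M)\cong M\oplus J'$; and $J'$ meets $\psi\phi(M)^{0}$ trivially, so $J'=0$ by the essentiality of $\psi(L)^{0}$ in $\psi(L)$ that you already established. You have all the ingredients in place (injectivity preservation by $\psi$, essentiality of the degree-$0$ part on both sides), but you need to assemble them via this split-mono argument rather than via a degree-support count.
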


\begin{proof}
(1) is  standard and so  is left to the readers. 

(2) 
We can check that $\phi$ and $\psi$ are restricted to the functors 
between $\Inj^{0} A$ and $\Inj A^{0}$. 
It is also easy to check that $\phi\circ \psi = \id_{\Inj A^{0}}$. 

For $J \in \Inj^{0} A$, we have a canonical map $f: J \to \Hom^{\bullet}_{A^{0}}(A, J^{0}) = \psi \circ \phi(J)$ 
which is the identity map $f^{0} = \id_{J^{0}}$ at degree $0$. 
This implies that $\Ker f \cap J^{0} = 0$, hence by the definition on $\Inj^{0} A$, $\Ker f = 0$. 
Thus $\psi \circ \phi(J) \cong J \oplus J'$ for some $J'$. 
However, $J' \cap \psi \circ \phi(J)^{0} = 0$, we conclude that $J' = 0$. 
This prove that $\psi \circ \phi  \cong \id_{\Inj^{0} A}$. 
\end{proof}

Since $R$ is a DG-algebra, 
we may equip  
$
\psi_{R}(K) := \Hom_{R^{0}}^{\bullet}(R, K)
$
 with the differential induced from that of $R$
and regard it as a DG-$R$-module. 



We denote by $\Inj^{0} R $ 
 the full subcategory of $\sfC(R)$ 
consisting of  the DG-$R$-modules of the forms 
$\psi_{R} (K) = \Hom_{R^{0}}^{\bullet}(R, K)$
for some $K \in \Inj R^{0}$.

\begin{definition}
By $\cI \subset \sfD(R)$, 
we denote the full subcategory consisting of (the quasi-isomorphism class of) $\psi_{R}(K)$ 
for $K \in \Inj R^{0}$. 
In the case we need to emphasize the DG-algebra $R$, we denote $\cI$ by $\cI(R)$. 
\end{definition} 

The properties  of $\cI$ which is used to construct inf-injective resolution  is summarized in 
the following Theorem, 
which is an injective version Lemma \ref{basic of cP lemma}.

\begin{theorem}\label{Shaul theorem}
The followings hold. 
\begin{enumerate}[(1)]
\item 
For $N \in \sfD(R)$ and $I \in \cI$, 
\[
\Hom(N,I[n] ) \cong \Hom_{\Mod H^{0}}(\tuH^{-n}(N), \tuH^{0}(I))
\]

\item 
For $N \in \Mod H^{0} $ and $I \in \cI$, 
\[
\Hom(N,I[n] ) \cong 
\begin{cases} 
\Hom_{\Mod H^{0}}(\tuH^{0}(N), \tuH^{0}(I)) & n =0, \\
0 & n \neq 0. 
\end{cases} 
\]

\item The cohomology functor $\tuH : \sfD(R) \to \GrMod H$ induces an equivalence 
\[
\tuH: \cI \xrightarrow{\cong} \Inj^{0} H.
\] 

\item 
Therefore, the $0$-th cohomology functor $\tuH^{0}: \sfD(R) \to \GrMod H$ induces an equivalence
\[
\tuH^{0}: \cI \xrightarrow{\cong} \Inj H^{0}.
\] 
\end{enumerate}
\end{theorem}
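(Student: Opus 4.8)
The plan is to reduce everything to the structure of $\cI$ as the image of the functor $\psi_R = \Hom^\bullet_{R^0}(R,-)$ applied to injective $R^0$-modules, and to the adjunction $\phi_R \dashv \psi_R$ at the level of DG-modules. First I would upgrade the adjunction of Lemma \ref{201710231648}(1) from the graded/ungraded setting to the DG setting: for $K \in \Inj R^0$ and any DG-$R$-module $N$, there is a natural isomorphism of complexes $\Hom_R(N, \psi_R(K)) \cong \Hom_{R^0}(N, K)$, where on the right $N$ is viewed as a DG-$R^0$-module by restriction. Since $R^0$ is an ordinary ring and $K$ is injective over $R^0$, the functor $\Hom_{R^0}(-,K)$ is exact, so it commutes with cohomology: $\tuH^n \Hom_{R^0}(N,K) \cong \Hom_{R^0}(\tuH^{-n}(N), K)$. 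Composing, this gives
\[
\Hom(N, I[n]) = \tuH^n\Hom_R(N, \psi_R(K)) \cong \Hom_{R^0}(\tuH^{-n}(N), K).
\]
It then remains to rewrite $\Hom_{R^0}(\tuH^{-n}(N), K)$ as $\Hom_{\Mod H^0}(\tuH^{-n}(N), \tuH^0(I))$. Here $\tuH^{-n}(N)$ is naturally an $H^0$-module (it is a cohomology group of a DG-$R$-module), and $\tuH^0(\psi_R(K)) = (\Hom^\bullet_{R^0}(R,K))^0 \cap \ker = \Hom_{R^0}(\tuH^0(R), K) = \Hom_{R^0}(H^0, K)$; I would check that as an $H^0$-module this is an injective $H^0$-module and that $\Hom_{R^0}(L, K) \cong \Hom_{H^0}(L, \tuH^0(I))$ naturally for any $H^0$-module $L$, which follows because $R^0 \to H^0$ is surjective with nilpotent-free quotient behaviour on $\Hom$ into a module killed appropriately — more precisely, both sides represent the same functor once one observes $L$ is an $H^0$-module so the $R^0$-action factors. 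That establishes (1).

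Statement (2) is the special case of (1) with $N$ replaced by an object of the heart $\Mod H^0 \subset \sfD(R)$: such $N$ has $\tuH^{-n}(N) = 0$ for $n \neq 0$ and $\tuH^0(N) = N$, so the formula in (1) collapses to the stated case distinction. For (3), I would argue that $\tuH$ restricted to $\Inj^0 R$ (the subcategory of $\sfC(R)$) lands in $\Inj^0 H$: indeed $\tuH(\psi_R(K)) = \tuH(\Hom^\bullet_{R^0}(R,K))$, and since $\Hom_{R^0}(-,K)$ is exact this equals $\Hom^\bullet_{R^0}(\tuH(R), K)$, which one identifies with an injective graded $H$-module cogenerated in degree $0$ (this is exactly the $H$-analogue of $\psi$ applied to $K$, using Lemma \ref{201710231648}(2) for the graded algebra $H$). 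Faithfulness and fullness of $\tuH: \cI \to \Inj^0 H$ follow from part (1): morphisms $\Hom(I, I') = \Hom_{\Mod H^0}(\tuH^0(I), \tuH^0(I'))$ on one hand, and on the other hand morphisms in $\Inj^0 H$ are determined by their degree-$0$ part via Lemma \ref{201710231648}(2) applied to $H$; essential surjectivity is the observation that every object of $\Inj^0 H$ is $\psi_H(K)$ for some $K \in \Inj H^0 = \Inj R^0$-modules-that-are-$H^0$-modules, hence is $\tuH$ of $\psi_R(K)$. Finally (4) is immediate from (3) composed with the equivalence $\Inj^0 H \cong \Inj H^0$ of Lemma \ref{201710231648}(2) (for the graded algebra $A = H$), since taking $\tuH^0$ of an object of $\Inj^0 H$ is exactly the functor $\phi$ there.

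The main obstacle I anticipate is the DG-adjunction isomorphism $\Hom_R(N, \psi_R(K)) \cong \Hom_{R^0}(N, K)$ at the chain level together with compatibility of differentials: one must check that the differential on $\psi_R(K) = \Hom^\bullet_{R^0}(R,K)$ induced from $\partial_R$ makes the classical graded adjunction into an isomorphism of complexes, not merely of graded modules, and then that passing to cohomology is legitimate because $K$ is $R^0$-injective (exactness of $\Hom_{R^0}(-,K)$). A secondary subtlety is that $N$ need not be bounded, so one should make sure the $\Hom$-complex and its cohomology behave well — but since we only ever evaluate on a fixed degree $n$ and the formula $\tuH^n\Hom_{R^0}(N,K) = \Hom_{R^0}(\tuH^{-n}N, K)$ is a pointwise statement, no boundedness hypothesis is needed. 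Everything else is bookkeeping with the already-established Lemma \ref{201710231648} transported from $R^0 \subset R$ to $H^0 \subset H$ and the identification of the heart with $\Mod H^0$.
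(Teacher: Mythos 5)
Your overall route is the paper's: the chain-level adjunction $\Hom_{R}^{\bullet}(N,\psi_{R}(K))\cong\Hom_{R^{0}}^{\bullet}(N,K)$, exactness of $\Hom_{R^{0}}(-,K)$, and factoring through $H^{0}$ via $\pi^{!}(K)=\Hom_{R^{0}}(H^{0},K)$ (this is Lemma \ref{201710231815} and Corollary \ref{201711241612} in the paper), with (2)--(4) deduced formally. But there is a genuine gap in your proof of essential surjectivity in (3)--(4). You assert that $\Inj H^{0}$ coincides with the injective $R^{0}$-modules that happen to be $H^{0}$-modules; only one inclusion is true. An injective $H^{0}$-module need not be injective over $R^{0}$: for example if $R^{0}=\kk[x]$ and the image of $\partial\colon R^{-1}\to R^{0}$ is $(x)$, then $H^{0}=\kk[x]/(x)$ is injective over itself but not over $R^{0}$. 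Since $\tuH^{0}(\psi_{R}(K))\cong\pi^{!}(K)$ and not $K$, essential surjectivity of $\tuH^{0}\colon\cI\to\Inj H^{0}$ is exactly essential surjectivity of $\pi^{!}\colon\Inj R^{0}\to\Inj H^{0}$, and this requires an argument: the paper's Lemma \ref{201711241633} takes $K:=E_{R^{0}}(J)$, the injective hull of $J$ over $R^{0}$, and shows $\pi^{!}K=J$ using essentiality of $J\subset E_{R^{0}}(J)$ and the fact that an injective $H^{0}$-module admits no proper essential extension of $H^{0}$-modules. Your shortcut of taking $K=J$ itself breaks down precisely here, and without this step the equivalences in (3) and (4) are only fully faithful functors.

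A second, smaller gap: your opening equality $\Hom(N,I[n])=\tuH^{n}\Hom_{R}^{\bullet}(N,\psi_{R}(K))$ identifies a derived-category $\Hom$ with the cohomology of the naive $\Hom$-complex, i.e.\ with $\Hom_{\sfK(R)}(N,I[n])$; for a general DG-module these differ, and the identification needs $\psi_{R}(K)$ to be DG-injective (the paper isolates this as Corollary \ref{201711241611}). The needed acyclicity does follow from your own formula (an acyclic $N$ yields an acyclic $\Hom$-complex, by exactness of $\Hom_{R^{0}}(-,K)$), but it must be stated and invoked; as written, "passing to cohomology is legitimate" only addresses the computation of the cohomology of $\Hom_{R^{0}}^{\bullet}(N,K)$, not the passage from $\sfK(R)$ to $\sfD(R)$.
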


We need preparations. 
Let $\pi: R^{0} \to H^{0}$ be the canonical projection. 
We denote by $\pi^{!}$ the functor $\pi^{!}(M) := \Hom_{R^{0}}(H^{0}, M)$. 
\[
\pi^{!}: \Mod R^{0} \to \Mod H^{0}. 
\]
It is easy to see that the functor $\pi^{!}$ can be restricted to the functor 
$\pi^{!}: \Inj R^{0} \to \Inj H^{0}$.

\begin{lemma}\label{201710231815}
Let $K \in \Inj R^{0}$. 
Then for a DG-$R$-module $M$, we have isomorphisms 
\[
\begin{split}
(1)  \ \ \Hom_{R}^{\bullet}(M, \psi_{R}(K) ) & \cong \Hom_{R^{0}}^{\bullet}(M, K), \\  
(2)  \ \  \Hom_{\sfC(R)}(M,\psi_{R}(K)) &\cong  \tuZ^{0}(\Hom_{R}^{\bullet}(M, \psi_{R}(K))) \cong \Hom^{0}_{R^{0}}(M/\tuB(M), K), \\
(3)  \ \  \Hom_{\sfK(R)}(M,\psi_{R}(K)) &\cong  \tuH^{0}(\Hom_{R}^{\bullet}(M, \psi_{R}(K) )) \\
  & \cong \Hom^{0}_{H}(\tuH(M), \psi_{H} \pi^{!}(K) ) \\ 
  &\cong \Hom_{\Mod H^{0}}(\tuH^{0}(M), \pi^{!}K). 
\end{split}
\]
\end{lemma}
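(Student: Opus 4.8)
The plan is to derive all three isomorphisms from a single source: the chain-level adjunction between restriction of scalars along the inclusion $R^{0}\hookrightarrow R$ and the coinduction functor $\psi_{R}=\Hom_{R^{0}}^{\bullet}(R,-)$, combined with the two standard identities $\Hom_{\sfC(R)}(M,-)=\tuZ^{0}\Hom_{R}^{\bullet}(M,-)$ and $\Hom_{\sfK(R)}(M,-)=\tuH^{0}\Hom_{R}^{\bullet}(M,-)$.

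First, for (1) I would prove the natural isomorphism of complexes $\Hom_{R}^{\bullet}(M,\psi_{R}(K))\cong\Hom_{R^{0}}^{\bullet}(M,K)$, which is the chain-enriched refinement of Lemma~\ref{201710231648}.(1). Concretely, a degree-$n$ $R$-linear map $M\to\psi_{R}(K)$ is sent to its composite with evaluation at $1\in R^{0}$, and the inverse sends a degree-$n$ $R^{0}$-linear map $g\colon M\to K$ to $m\mapsto(r\mapsto g(rm))$; one checks that this bijection is compatible with the differentials of the two Hom-complexes, so it is an isomorphism of cochain complexes. Since $K$ is concentrated in degree $0$, the complex $\Hom_{R^{0}}^{\bullet}(M,K)$ has $n$-th term $\Hom_{R^{0}}(M^{-n},K)$ with differential induced by $\partial_{M}$; in other words it is, up to sign, the functor $\Hom_{R^{0}}(-,K)$ applied to the underlying complex of $M$.

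Part (2) is then immediate: $\Hom_{\sfC(R)}(M,\psi_{R}K)=\tuZ^{0}\Hom_{R}^{\bullet}(M,\psi_{R}K)\cong\tuZ^{0}\Hom_{R^{0}}^{\bullet}(M,K)$, and a degree-$0$ cocycle is an $R^{0}$-linear map $f\colon M^{0}\to K$ with $f\circ\partial_{M}=0$ (the term $\partial_{K}\circ f$ vanishes since $\partial_{K}=0$), i.e.\ one killing $\tuB^{0}(M)$; such maps are precisely the elements of $\Hom_{R^{0}}^{0}(M/\tuB(M),K)$. Part (3) proceeds along the same lines: $\Hom_{\sfK(R)}(M,\psi_{R}K)=\tuH^{0}\Hom_{R}^{\bullet}(M,\psi_{R}K)\cong\tuH^{0}\Hom_{R^{0}}^{\bullet}(M,K)$, and since $K$ is an injective $R^{0}$-module the functor $\Hom_{R^{0}}(-,K)$ is exact, so $\tuH^{n}\Hom_{R^{0}}^{\bullet}(M,K)\cong\Hom_{R^{0}}(\tuH^{-n}(M),K)$ for every $n$, and in particular the degree-$0$ cohomology is $\Hom_{R^{0}}(\tuH^{0}(M),K)$. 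Finally, $\tuH^{0}(M)$ is an $H^{0}$-module via $\pi\colon R^{0}\to H^{0}$, and the restriction/coinduction adjunction along $\pi$ gives $\Hom_{R^{0}}(N,K)\cong\Hom_{H^{0}}(N,\pi^{!}K)$ for every $H^{0}$-module $N$; applying this to $N=\tuH^{0}(M)$ yields the term $\Hom_{\Mod H^{0}}(\tuH^{0}(M),\pi^{!}K)$. The intermediate term $\Hom_{H}^{0}(\tuH(M),\psi_{H}\pi^{!}(K))$ is obtained by running the same adjunction over the graded algebra $H$ (the graded instance of Lemma~\ref{201710231648}.(1) with $A=H$), together with the degreewise identification $\tuH^{\bullet}\Hom_{R^{0}}^{\bullet}(M,K)\cong\Hom_{H}^{\bullet}(\tuH(M),\psi_{H}\pi^{!}K)$ coming from the exactness argument just used.

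I expect the only genuine work to be the bookkeeping in the first step, namely pinning down the sign conventions so that the adjunction bijection is an isomorphism of complexes and not merely of graded modules; once that is in place, parts (2) and (3) are formal, resting on nothing more than the exactness of $\Hom_{R^{0}}(-,K)$ for injective $K$ and the standard restriction/coinduction adjunctions.
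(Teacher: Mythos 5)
Your proposal is correct and follows essentially the same route as the paper: the chain-level coinduction adjunction $\Hom_{R}^{\bullet}(M,\psi_{R}(K))\cong\Hom_{R^{0}}^{\bullet}(M,K)$ for (1), the identification of degree-$0$ cocycles with maps killing $\tuB^{0}(M)$ for (2), and for (3) the exactness of $\Hom_{R^{0}}(-,K)$ for injective $K$ followed by the adjunction along $\pi\colon R^{0}\to H^{0}$ (the paper phrases this last step via the observation that $B^{0}\tuH(M)=0$, i.e.\ that $\tuH^{0}(M)$ is an $H^{0}$-module, which is the same point). The only detail to watch is the sign/side convention in the adjunction bijection, which you correctly flag as the one piece of genuine bookkeeping.
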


\begin{proof}
(1) We have  the adjunction isomorphism $\gamma$ below. 
\[
\gamma: \Hom_{R}^{\bullet}(M, \psi_{R}(K)) \xrightarrow{\cong} \Hom_{R^{0}}(M,K), \ \ \gamma(\phi)(m) := \phi(m)(1)
\]
It is an isomorphism of graded $\kk$-modules. 
We can check that $\gamma$ is compatible with the differentials of both sides 
and hence it gives an isomorphism in $\sfC(\kk)$.

(2) Since $K$ is regarded  as a DG-$R^{0}$-module with zero differentiall, 
for an homogeneous element $f \in \Hom_{R^{0}}^{\bullet}(M, K)$, 
we have 
$\partial(f) = (-1)^{|f|+ 1}f \circ \partial_{M}$. 
Therefore we have 
\[
\tuZ^{0} \Hom_{R^{0}}^{\bullet}(M, K)  \cong \Hom_{R^{0}}^{0}(M/\tuB M, K). 
\]

(3)
follows from the string of isomorphism 
\[
\begin{split}
\tuH^{0} \Hom_{R^{0}}^{\bullet}(M, K)  
&
\stackrel{\textup{(a)}}{\cong} \Hom_{R^{0}}^{0}(\tuH(M), K) 
\stackrel{\textup{(b)}}{\cong}  \Hom_{\Mod R^{0}}(\tuH^{0}(M), K)\\
&\stackrel{\textup{(c)}}{\cong} \Hom_{\Mod H^{0}}(\tuH^{0}(M), \pi^{!}K)
\stackrel{\textup{(d)}}{\cong} \Hom_{H}^{\bullet}(\tuH(M), \psi_{H} \pi^{!}(K))
\end{split}
\]
where $\stackrel{\textup{(a)}}{\cong}$ follows from the fact that  $K$ is injective $R^{0}$-module, 
$\stackrel{\textup{(b)}}{\cong}$
is a consequence of the fact that $K$ is concentrated at $0$-th degree, 
$\stackrel{\textup{(c)}}{\cong}$
 is deduced from the equation $B^{0} \tuH(M) = 0$ 
and 
$\stackrel{\textup{(d)}}{\cong}$
is nothing but the adjoint isomorphism. 
\end{proof}

The above lemma has two corollaries. 

\begin{corollary}\label{201711241612}
For $K \in \Inj R^{0}$, we have
\[
\begin{split}
&(1) \ \tuH \psi_{R} (K) \cong \psi_{H}\pi^{!}(K), \\
&(2) \ \tuH^{0} \psi_{R} (K) \cong \pi^{!}(K).
\end{split}
\]
\end{corollary}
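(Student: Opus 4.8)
The plan is to compute the cohomology of $\psi_R(K) = \Hom_{R^0}^\bullet(R, K)$ directly, reusing the two ingredients already present in the proof of Lemma \ref{201710231815}: the exactness of $\Hom_{R^0}(-, K)$ (valid because $K$ is an injective $R^0$-module) and the coinduction adjunction along the canonical projection $\pi \colon R^0 \to H^0$.

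First I would record the graded refinement of step $\textup{(a)}$ of that proof: since $K$ is injective over $R^0$, the functor $\Hom_{R^0}(-, K)$ is exact, hence commutes with the formation of cohomology of any complex of $R^0$-modules. Applying this to $R$ itself --- regarded, via $R^{>0} = 0$, as a complex of $R^0$-modules with $\tuH^{-n}(R) = H^{-n}$, and noting that $\psi_R(K)$ has $n$-th component $\Hom_{R^0}(R^{-n}, K)$ --- together with the adjunction of step $\textup{(c)}$ (restriction of scalars along $\pi$ is left adjoint to $\pi^{!}$, and $H^{-n}$ is an $R^0$-module pulled back from $H^0$ along $\pi$), gives natural isomorphisms
\[
\tuH^n\bigl(\psi_R(K)\bigr) \;\cong\; \Hom_{R^0}\bigl(H^{-n}, K\bigr) \;\cong\; \Hom_{H^0}\bigl(H^{-n}, \pi^{!}K\bigr) \;=\; \bigl(\psi_H(\pi^{!}K)\bigr)^n .
\]
Since $\psi_H(\pi^{!}K) = \Hom_{H^0}^\bullet(H, \pi^{!}K)$ carries the zero differential, its cohomology is itself, so these isomorphisms assemble into $\tuH\psi_R(K) \cong \psi_H(\pi^{!}K)$; part $(2)$ is then the degree-zero component, $\tuH^0\psi_R(K) \cong \Hom_{H^0}(H^0, \pi^{!}K) = \pi^{!}K$ --- or, directly, $\tuH^0\psi_R(K) \cong \Hom_{R^0}(H^0, K) = \pi^{!}(K)$ by the very definition of $\pi^{!}$.

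What then remains, and is the only real work, is to upgrade this to an isomorphism of graded $H$-modules. One must check that the cohomology isomorphism $\tuH^n(\psi_R(K)) \cong \Hom_{R^0}(H^{-n}, K)$ is compatible with the $H$-actions --- the $R$-module structure on $\psi_R(K)$ induces on $\tuH\psi_R(K)$ an action that factors through $H = \tuH(R)$ --- and that the coinduction adjunction is $H^0$-linear, hence $H$-linear once bundled over all internal degrees. This is routine naturality bookkeeping, complicated only by the cohomological-versus-module regrading $R^{-n} \leftrightarrow \textup{degree } n$; I do not anticipate any genuine obstacle, since the mathematical substance is already contained in Lemma \ref{201710231815}.
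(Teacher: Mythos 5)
Your proposal is correct and follows essentially the same route as the paper: the paper proves the corollary by substituting $M=R$ (with shifts) into Lemma \ref{201710231815}.(3), whose proof rests on exactly the two ingredients you use, namely exactness of $\Hom_{R^{0}}(-,K)$ for $K$ injective and the adjunction along $\pi\colon R^{0}\to H^{0}$ together with $B^{0}\tuH(M)=0$. Your degreewise computation just unpacks that lemma in all cohomological degrees, and your remark about checking $H$-linearity fills in what the paper dismisses as ``clear.''
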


\begin{proof}
(1) The first isomorphism is clear. 
Substituting $M = R$ to Lemma \ref{201710231815}.(3), we obtain  the second isomorphism. 

(2) is obtained from (1) by applying  $\phi_{H}$. 
\end{proof}

Recall that a DG-$R$-module $I$ is called \emph{DG-injective} if its underlying graded $R$-module is injective 
and the $\Hom$-complex $\Hom_{R}^{\bullet}(A, I) $ is acyclic for any acyclic DG-$R$-module $A$. 
An important property of DG-injective module $M$ is that 
for any  DG-$R$-module $M$, 
the $\Hom$-space $\Hom_{\sfK(R)}(M, I)$ in the homotopy category $\sfK(R)$ is 
isomorphic to $\Hom$-space $\Hom(M, I)$ in the derived category $\sfD(R)$ via the canonical map $
\Hom_{\sfK(R)}(M ,I) \xrightarrow{\cong} \Hom(M ,I)$  (see for example \cite{Keller:ddc, Positselski}).

We can immediately deduce the following corollary from Lemma \ref{201710231815}.(3).  

\begin{corollary}\label{201711241611}
$\psi_{R}(K)$ is a DG-injective DG-$R$-module. 
In particular, for $M \in \sfC(R)$ we have 
\[
\Hom(M,\psi_{R}(K)) \cong \Hom_{\sfK(R)}(M, \psi_{R}(K)). 
\]
\end{corollary}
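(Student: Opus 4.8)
The plan is to verify directly the two defining conditions of DG-injectivity for $\psi_R(K)$, and then to read off the displayed isomorphism from the standard property of DG-injective modules recalled just above.

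For the acyclicity condition, let $A$ be an acyclic DG-$R$-module. By Lemma \ref{201710231815}.(1) there is an isomorphism of complexes of $\kk$-modules
\[
\Hom_R^\bullet(A,\psi_R(K)) \cong \Hom_{R^0}^\bullet(A,K).
\]
Since $K$ is concentrated in degree $0$, the right-hand side is, up to reindexing, the complex obtained by applying $\Hom_{R^0}(-,K)$ degreewise to the underlying complex of $R^0$-modules of $A$, which is acyclic. As $K$ is an injective $R^0$-module, $\Hom_{R^0}(-,K)$ is exact, so $\Hom_R^\bullet(A,\psi_R(K))$ is acyclic. (Alternatively, Lemma \ref{201710231815}.(3) applied to $A[n]$ gives $H^{-n}\Hom_R^\bullet(A,\psi_R(K)) \cong \Hom_{\Mod H^0}(\tuH^{n}(A),\pi^!K) = 0$ for every $n$, which is again the acyclicity.)

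For the injectivity of the underlying graded $R$-module, let $\iota\colon R^0 \hookrightarrow R$ denote the inclusion, regarded as a morphism of graded algebras with $R^0$ concentrated in degree $0$; then $\psi_R(K) = \Hom_{R^0}^\bullet(R,K)$ is the coinduced module $\iota_*K$ attached to $K$ placed in degree $0$. The restriction functor $\iota^*\colon \GrMod R \to \GrMod R^0$ is exact and $\iota_*$ is its right adjoint, hence $\iota_*$ preserves injective objects. Because $\GrMod R^0$ is the product $\prod_{n\in\ZZ}\Mod R^0$ and $K$ is an injective $R^0$-module sitting in degree $0$, it is injective in $\GrMod R^0$; therefore $\psi_R(K)$ is injective in $\GrMod R$. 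This establishes that $\psi_R(K)$ is DG-injective, and the ``in particular'' statement follows immediately from the recalled fact that $\Hom_{\sfK(R)}(M,I) \xrightarrow{\cong} \Hom(M,I)$ for every DG-injective $I$ and every $M\in\sfC(R)$.

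I expect no real obstacle here: the corollary is a routine consequence of Lemma \ref{201710231815} together with the coinduction adjunction. The only place demanding a little care is the grading bookkeeping — checking that an injective $R^0$-module placed in degree $0$ is genuinely injective as a graded $R^0$-module, and that coinduction along $\iota$, being right adjoint to an exact functor, carries this injectivity over to $\GrMod R$.
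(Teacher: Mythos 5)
Your proof is correct and follows essentially the route the paper intends: the acyclicity of $\Hom_{R}^{\bullet}(A,\psi_{R}(K))$ for acyclic $A$ is exactly what Lemma \ref{201710231815} provides, and the displayed isomorphism is the standard property of DG-injectives recalled just before the corollary. You are in fact more complete than the paper, which deduces the corollary ``immediately'' from Lemma \ref{201710231815}.(3) and never addresses the injectivity of the underlying graded module; your coinduction argument (the right adjoint of the exact restriction $\GrMod R \to \GrMod R^{0}$ preserves injectives, and $K$ placed in degree $0$ is injective in $\GrMod R^{0}$) correctly supplies that missing check.
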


The next lemma is the last preparation.

\begin{lemma}\label{201711241633}
$\pi^{!}: \Inj R^{0} \to \Inj H^{0}$ is essentially surjective. 
\end{lemma}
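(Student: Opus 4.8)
The idea is to realize an arbitrary injective $H^{0}$-module as $\pi^{!}$ of its injective hull taken over $R^{0}$. First I would recall the concrete description of $\pi^{!}$: since $\pi\colon R^{0}\to H^{0}$ is surjective, restriction of scalars $\Mod H^{0}\to \Mod R^{0}$ is exact and $\pi^{!}=\Hom_{R^{0}}(H^{0},-)$ is its right adjoint, whence $\pi^{!}$ preserves injectives (this is the remark made just before the statement). Moreover, evaluation at $1$ identifies, for an $R^{0}$-module $M$, the $H^{0}$-module $\pi^{!}(M)=\Hom_{R^{0}}(H^{0},M)$ with the submodule $\Ann_{M}(\ker\pi)\subseteq M$, where the $H^{0}$-action is the evident one. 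I will use this identification throughout.

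Next, fix $J\in\Inj H^{0}$ and regard $J$ as an $R^{0}$-module via $\pi$; note that $\ker\pi$ annihilates $J$. Let $M:=E_{R^{0}}(J)$ be an injective hull of $J$ over $R^{0}$. The proof then proceeds through three observations. (i) Since $\ker\pi$ kills $J$, we have $J\subseteq \Ann_{M}(\ker\pi)=\pi^{!}(M)$ as $H^{0}$-modules. (ii) By construction $J$ is essential in $M$ over $R^{0}$, hence essential over $R^{0}$ in the intermediate submodule $\pi^{!}(M)$; as every nonzero $H^{0}$-submodule of $\pi^{!}(M)$ is in particular a nonzero $R^{0}$-submodule, $J$ is essential in $\pi^{!}(M)$ over $H^{0}$. (iii) By the first paragraph $\pi^{!}(M)$ is injective over $H^{0}$, and $J$ is injective over $H^{0}$ by hypothesis.

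Finally, an injective module that is an essential extension of an injective submodule coincides with that submodule: the submodule splits off by injectivity, and essentiality forces the complement to be zero. Applying this to $J\subseteq\pi^{!}(M)$ gives $\pi^{!}(M)=J$, so $J$ lies in the essential image of $\pi^{!}\colon \Inj R^{0}\to\Inj H^{0}$, which is the assertion. I do not expect a genuine obstacle here; the only point requiring care is the change of rings in step (ii), namely that essentiality over $R^{0}$ passes to essentiality over $H^{0}$ for submodules of $\pi^{!}(M)$, and this is immediate because $H^{0}$-submodules are $R^{0}$-submodules. The degenerate case $J=0$ is subsumed, with $M=0$.
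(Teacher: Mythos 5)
Your proposal is correct and follows essentially the same route as the paper: both take $E=E_{R^{0}}(J)$, identify $\pi^{!}(E)$ with $\Ann_{E}(\ker\pi)$, observe that $J$ is essential in $\pi^{!}(E)$ as an $H^{0}$-module, and conclude $J=\pi^{!}(E)$ from the fact that an injective module admits no proper essential extension (the paper argues element-by-element with $J+R^{0}x$ and cites maximality of essential extensions, while you split off $J$ globally, which is the same fact). The only cosmetic difference is that your step (iii) on the injectivity of $\pi^{!}(E)$ is not actually needed for the splitting argument.
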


\begin{proof}
Let $J \in \Inj H^{0}$ 
and let $E =E_{R^{0}}(J)$ denotes the injective hull of $J$ as an $R^{0}$-module. 
Then, we prove that $J = \pi^{!}E$. 

It is clear $J \subset \pi^{!} E$. 
Let $x \in \pi^{!}E$. 
In other word, $x$ is an element of $E$ such that $B^{0} x = 0$. 
Then the submodule $J + Rx$ is regarded as an $H^{0}$-module. 
Since the extension $J \subset E$  is essential, 
so is the extension $J \subset J + Rx$. 
Thus, $J \subset J + Rx$ is an essential extension of $H^{0}$-modules. 
As  injective $H^{0}$-modules have a maximality of essential extensions (see e.g. \cite[18.11]{Anderson-Fuller}), 
we have $J = J +Rx$. 
In particular we have $x \in J$. 
This shows $\pi^{!} E \subset J$. 
\end{proof}

\begin{proof}[Proof of Theorem \ref{Shaul theorem}]
Let $I := \psi_{R}(K)$ for some $K \in \Inj^{0} R$. 
Then, combining Lemma \ref{201710231815}.(3), Corollary \ref{201711241612}.(2)  and Corollary \ref{201711241611}
we obtain an isomorphism 
\[
\begin{split}
\Hom(N,I)  &\cong \Hom_{\sfK(R)}(N, \phi_{R}(K)) \\
&  \cong \Hom_{\Mod H^{0}}(\tuH^{0}(N),\pi^{!}K ) \\
& \cong \Hom_{\Mod H^{0}}(\tuH^{0}(N), \tuH^{0}(I))
\end{split} 
\]
for $N \in \sfD(R)$. 
Considering shifts, we obtain (1). 

(2) immediately follows from (1). 

We prove (4). 
By (1) the functor $\tuH^{0}: \cI \to \Inj H^{0}$ is fully faithful. 

By Lemma \ref{201711241612}, the upper square of the following diagram is commutative. 
\[
\begin{xymatrix}{ 
\Inj R^{0} \ar[d]_{\psi_{R}} \ar[rr]^{\pi^{!}} && 
\Inj H^{0}  \\
\Inj^{0} R \ar[rr]^{\tuH} \ar[d]_{\textup{qis. class}} &&
\Inj^{0} H \ar[u]_{\phi_{H}}\\
\cI \ar[urr]_{\tuH}&&
}\end{xymatrix}
\]
The commutativity of the lower triangle is obvious. 
Thus the functor $H^{0} := \phi_{H} \tuH : \cI \to \Inj H^{0}$ 
is essentially surjective  by Lemma \ref{201711241633}. 
This complete the proof of (4). 

(3) follows from (4) by Lemma \ref{201710231648}. 
\end{proof}

\begin{proof}[Proof of Lemma \ref{201711191911}]
Let $F: =\RHom(M, -)$. 
By the assumption, there exists an integer $a$ such that 
$F(\sfD^{\geq 0}(R)) \subset \sfD^{\geq a}(\kk)$. 
Let  $J \in \Inj H^{0}$ and $I \in \cI$ be a unique object such that $\tuH^{0}(I) \cong J$. 
By  Theorem \ref{Shaul theorem}.(1), we have $\tuH^{n}(F(I)) \cong \Hom(M, I[n]) \cong \Hom(\tuH^{-n}(M), J)$. 
Therefore we have $\Hom(\tuH^{-n}(M), J) = 0$ for any $J \in \Inj H^{0}$ provided that $n < a$. 
Since  $\Mod H^{0}$ has injective cogenerator, it follows  that $\tuH^{>- a}(M) = 0$.
\end{proof}

\begin{remark}\label{Shaul remark}
In this remark we explain a relationship between this section and Shaul's paper \cite{Shaul:Injective}. 

In that paper Shaul defined the full subcategory  $\mathsf{Inj} R \subset \sfD(R)$ as a full subcategory consisting objects $I$ such that either $I =0$ or $\inf I = 0$ and $\injdim I = 0$. 
He showed that an object $I \in \mathsf{Inj } R$ satisfies the properties of Theorem \ref{Shaul theorem}. (1) and (2) and proved that 
the statement of Theorem \ref{Shaul theorem}.(3) holds for $\mathsf{Inj }R$.

As we mentioned before, Theorem \ref{Shaul theorem} is the dual of Lemma \ref{basic of cP lemma} which is stated for the full subcategory $\cP$. 
Moreover,   Corollary \ref{201901042037} which  tells $\cP$ consists of objects $P$ such that either $P= 0$ or $\sup P = 0$ and $\pd P = 0$,  
is a formal consequence of Lemma \ref{basic of cP lemma}. 
 Therefore, 
by the  dual argument we can prove  that if a  full subcategory $\cI \subset \sfD(R)$ satisfies properties given in Theorem \ref{Shaul theorem}, then  
$\cI$ consists of objects $I$ such that either $I = 0$ or $\inf I = 0$ and $\injdim I = 0$. 
This part can be said as the converse of Shaul's result.

In this paper we define the full subcategory $\cI$ as a full subcategory of $\sfD(R)$ consisting of DG-modules defined concretely. 
Then we prove that Theorem \ref{Shaul theorem} holds for $\cI$ and hence that $\cI$ consists of   either $I = 0$ or $\inf I = 0$ and $\injdim I = 0$. 

We will make use of the concrete construction of objects of $\cI$ in Section \ref{Bass-Papp section} and the subsequent work \cite{CDGA}. 
\end{remark}

\subsection{Inf-injective (ifij) resolutions}
We introduce the notion of an inf-injective (ifij)-resolution of $M \in \sfD^{> -\infty}(R)$ 
and show its basic properties. 
Since almost all the proofs are analogous to that for the similar statement of sppj resolution, 
we omit them.

\begin{definition}[ifij morphism and ifij resolution]\label{ifij morphism definition} 
Let $M \in \sfD^{>-\infty}(R), M \neq 0$. 

\begin{enumerate}
\item 
A ifij morphism $f: M \to I $ is a morphism in $\sfD(R)$ 
such that $I \in \cI[- \inf M]$ 
and the morphism $\tuH^{\inf M}(f)$ is injective. 

\item 
A ifij morphism $f: M \to I$ is called minimal 
if 
the morphism $\tuH^{\inf M}(f) $ is an injective envelope.

\item 
A ifij resolution $I_{\bullet}$ of $M$ is a sequence of exact triangles  for $i \leq 0$ 
with $M_{0} := M$ 
\[
M_{i} \xrightarrow{f_{i}} I_{i} \xrightarrow{g_{i}} M_{i-1}
\]
such that $f_{i}$ is ifij.

The following inequality folds
\[
\inf M_{i} = \inf I_{i} \leq \inf I_{i-1} = \inf M_{i-1} 
\]

For an ifij resolution $I_{\bullet}$ with the above notations, 
we set $\delta_{i} := f_{i-1} \circ g_{i}$. 
\[
\delta_{i} :I_{ i} \to I_{i-1}.
\]
Moreover we write 
\[M \to I_{0} \xrightarrow{\delta_{0}} 
I_{-1} \xrightarrow{\delta_{-1}}  \cdots \to I_{-i}  \xrightarrow{\delta_{-i}} I_{-i-1} \to \cdots. 
\]

\item 
A ifij resolution $I_{\bullet}$ is said to have length $e$ if 
$I_{-i} = 0$ for $i> e$ and 
$I_{-e} \neq 0$. 

\item 
A ifij resolution $I_{\bullet}$ is called minimal if $f_{i}$ is minimal 
for $i \leq 0$.
\end{enumerate}
\end{definition}

By Theorem \ref{Shaul theorem}, 
for any $M \in \sfD^{> -\infty}(R)$, 
there exists an ifij morphism $f:M \to I$.

We give basic properties of  ifij resolutions. 
We omit almost all the proof,  since these are done by the argument which is dual to that of the

\begin{lemma}
Let $M \in \sfD(R)$. 
Then, $\injdim M = 0$ if and only if $\inf M > - \infty$ and 
$M \in \cI[-\inf M]$. 
\end{lemma}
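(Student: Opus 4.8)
This is the injective analogue of Lemma~\ref{20171219205II}, and my plan is to dualise the proof of that lemma throughout: replace $\cP$ by $\cI$, $\sup$ by $\inf$, and the structural facts of Lemma~\ref{basic of cP lemma} by those of Theorem~\ref{Shaul theorem}. I treat the case $M\neq 0$ (for $M=0$ the statement is degenerate). For the ``if'' direction, write $m:=\inf M$ and $M\cong I[-m]$ with $I\in\cI$. By Theorem~\ref{Shaul theorem}.(1) one has $\tuH^{k}(\RHom(Y,I))\cong\Hom_{\Mod H^{0}}(\tuH^{-k}(Y),\tuH^{0}(I))$, so $\RHom(Y,I)\in\sfD^{[-q,-p]}(\kk)$ whenever $Y\in\sfD^{[p,q]}(R)$; hence $I$ has injective concentration $[0,0]$, which is automatically strict, so after the shift $M$ has strict injective concentration $[m,m]$, i.e.\ $\injdim M=0$.

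For the converse, assume $\injdim M=0$. Then $M\in\sfD^{>-\infty}(R)$ (the injective version of Lemma~\ref{201711191911}), and shifting we may assume $\inf M=0$, so $M\in\sfD^{\geq0}(R)$ and $M\neq 0$. Pick an ifij morphism $f\colon M\to I$ (so $I\in\cI$ and $\tuH^{0}(f)$ is injective) and complete it to an exact triangle $M\xrightarrow{f}I\xrightarrow{g}N\xrightarrow{w}M[1]$; the long exact cohomology sequence together with $\tuH^{<0}(I)=0$ and injectivity of $\tuH^{0}(f)$ gives $N\in\sfD^{\geq0}(R)$. Since $\tuH^{0}$ restricts to an equivalence $\cI\cong\Inj H^{0}$ (Theorem~\ref{Shaul theorem}), idempotents split inside $\cI$, and therefore $\cI$ is closed under direct summands in $\sfD(R)$; so it suffices to prove that $f$ is a split monomorphism, for then $M$ is a direct summand of $I\in\cI$.

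To split $f$ I would dualise the argument of Lemma~\ref{20171219205II}. Applying $\Hom(\tuH^{0}(N),-)$ to the triangle and using $\Hom(\tuH^{0}(N),N[-1])=0$ (the $t$-structure, as $N[-1]\in\sfD^{\geq1}$) and $\Hom(\tuH^{0}(N),M[1])=0$ (from $\injdim M=0$, Corollary~\ref{injective dimension basics corolllary}) produces a short exact sequence $0\to\Hom(\tuH^{0}(N),M)\to\Hom(\tuH^{0}(N),I)\xrightarrow{g_{*}}\Hom(\tuH^{0}(N),N)\to0$. The canonical morphism $v\colon\tuH^{0}(N)\to N$ satisfies $wv=0$, so $v=gk$ for some $k\colon\tuH^{0}(N)\to I$; and since $v$ induces an isomorphism $\Hom(N,I)\xrightarrow{\cong}\Hom(\tuH^{0}(N),I)$ by Theorem~\ref{Shaul theorem}.(1), we get $k=\ell v$ for some $\ell\colon N\to I$, whence $g\ell v=v$ forces $\tuH^{0}(g)\tuH^{0}(\ell)=\id_{\tuH^{0}(N)}$. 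Thus $e:=\ell g$ is an idempotent of $\End(I)$ (using that $\tuH^{0}\colon\End(I)\xrightarrow{\cong}\End_{H^{0}}(\tuH^{0}(I))$ is a ring isomorphism), and splitting the complementary idempotent inside $\cI$ writes $I\cong J\oplus J''$ with $J''\in\cI$, inclusion $q\colon J''\to I$ and projection $p''\colon I\to J''$. Because $ef=\ell gf=0$, the morphism $f$ factors as $f=q f''$ with $f'':=p''f\colon M\to J''$, and a diagram chase on $\tuH^{0}$ (identifying $\tuH^{0}(J'')$ with $\ker\tuH^{0}(g)=\im\tuH^{0}(f)$ and using injectivity of $\tuH^{0}(f)$) shows $\tuH^{0}(f'')$ is an isomorphism.

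It then remains to show $f''$ is an isomorphism, equivalently that $N'':=\cone(f'')$ vanishes. From $M,J''\in\sfD^{\geq0}(R)$ and $\tuH^{0}(f'')$ an isomorphism one gets $N''\in\sfD^{\geq1}(R)$; if $N''\neq0$, put $s:=\inf N''\geq1$ and $X:=\tuH^{s}(N'')[-s]$. The canonical nonzero morphism $X\to N''$ shows $\Hom(X,N'')\neq0$, whereas applying $\Hom(X,-)$ to the triangle $M\xrightarrow{f''}J''\to N''\to M[1]$ gives $\Hom(X,J'')=0$ (Theorem~\ref{Shaul theorem}.(2), since $J''\in\cI$ and $s\neq0$) and $\Hom(X,M[1])=0$ (since $\injdim M=0$ with $\inf M=0$ forces $\RHom(-,M)$ to carry $\Mod H^{0}$ into $\sfD^{[0,0]}(\kk)$, while $1+s\neq0$), hence $\Hom(X,N'')=0$ — a contradiction. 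Therefore $N''=0$, so $f=qf''$ is a split monomorphism and $M\in\cI=\cI[-\inf M]$. I expect this last step, the vanishing of $N''$, to be the real content: it is the only place where injective concentration being \emph{exactly} $[0,0]$ (rather than merely $\inf M=0$) is used, together with the degreewise vanishing of $\Hom(-,\cI)$ from Theorem~\ref{Shaul theorem}; everything before it is formal dualisation of Lemma~\ref{20171219205II}, the one extra point to verify being that $\cI$ — defined here via the concrete modules $\psi_{R}(K)$ rather than as an additive closure — is closed under direct summands, which follows from the equivalence $\tuH^{0}\colon\cI\cong\Inj H^{0}$.
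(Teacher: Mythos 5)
Your overall route is the intended one: the paper omits the proof of this lemma precisely because it is the dual of Lemma~\ref{20171219205II}, and your argument is essentially that dualisation (you split $f$ through a summand of $I$ rather than showing $g\colon I\to N$ is a split epimorphism, but the two variants are equivalent), including the genuinely non-formal extra point that $\cI$ is closed under direct summands, which you correctly derive from the equivalence $\tuH^{0}\colon\cI\cong\Inj H^{0}$ of Theorem~\ref{Shaul theorem} and idempotent splitting in $\sfD(R)$. There is, however, one step whose stated justification is not valid: the claim that $M,J''\in\sfD^{\geq0}(R)$ together with $\tuH^{0}(f'')$ being an isomorphism already gives $N'':=\cone(f'')\in\sfD^{\geq1}(R)$. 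That implication is false in general: over an ordinary algebra take $M=A\oplus B[-1]$ and $f''\colon M\to A$ the projection; then $\tuH^{0}(f'')$ is an isomorphism but $\cone(f'')\cong B$ has nonzero $\tuH^{0}$. From the long exact sequence one only gets $N''\in\sfD^{\geq0}(R)$ with $\tuH^{0}(N'')\cong\Ker\tuH^{1}(f'')$, and if $\inf N''=0$ your contradiction collapses, because Theorem~\ref{Shaul theorem}.(2) only kills $\Hom(X,J''[n])$ for $n\neq0$.

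The gap is local and repairable with material already in your proof. Since $\tuH^{0}(g)\tuH^{0}(\ell)=\id_{\tuH^{0}(N)}$, the map $\tuH^{0}(g)$ is surjective, so the connecting map $\tuH^{0}(N)\to\tuH^{1}(M)$ for the triangle $M\xrightarrow{f}I\xrightarrow{g}N$ vanishes and $\tuH^{1}(f)$ is injective; from $ef=0$ one has $\tuH^{1}(q)\,\tuH^{1}(p'')\,\tuH^{1}(f)=\tuH^{1}\bigl((1-e)f\bigr)=\tuH^{1}(f)$, so $\tuH^{1}(f'')=\tuH^{1}(p'')\tuH^{1}(f)$ is injective, whence $\tuH^{0}(N'')=\Ker\tuH^{1}(f'')=0$ and indeed $N''\in\sfD^{\geq1}(R)$; your contradiction argument then runs as written. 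Alternatively, prove $\tuH^{0}(N'')=0$ directly: $\Hom(\tuH^{0}(N''),M[1])=0$ (injective concentration $[0,0]$) forces the canonical map $\tuH^{0}(N'')\to N''$ to factor through $g''\colon J''\to N''$, while $\tuH^{0}(g'')=0$ because $\tuH^{0}(f'')$ is surjective, so $\id_{\tuH^{0}(N'')}=0$. With either repair the proof is complete and matches the argument the paper intends, namely the dual of Lemma~\ref{20171219205II}, where the corresponding vanishing ($\tuH^{0}$ of the complementary summand) holds by construction because the idempotent is split on the cone $N$ rather than on $I$.
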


\begin{lemma}
Let $a ,b \in \ZZ$ such that $a \leq b$. 
Then for any object $M \in \cI[a] \ast \cI[a + 1] \ast \cdots \ast \cI[b]$, 
we have $\injdim M \leq b-a$.  
\end{lemma}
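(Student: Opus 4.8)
The plan is to mimic the proof of the sup-projective analogue, Corollary~\ref{20171219208}, working directly with \emph{injective concentrations}. First I would record the building block: every object $I\in\cI$ has injective concentration $[0,0]$. This is immediate from Theorem~\ref{Shaul theorem}.(1): for $L\in\sfD^{[m,n]}(R)$ one has $\tuH^{i}\RHom(L,I)\cong\Hom(L,I[i])\cong\Hom_{\Mod H^{0}}(\tuH^{-i}(L),\tuH^{0}(I))$, which vanishes unless $\inf L\le -i\le\sup L$, so $\RHom(L,I)\in\sfD^{[-n,-m]}(\kk)$. (Equivalently, one may invoke the preceding lemma together with Corollary~\ref{injective dimension basics corolllary}, since $\injdim I=0$ and $\inf I=0$.) Shifting, an object of $\cI[j]$ has injective concentration $[-j,-j]$: if $M$ has injective concentration $[a,b]$ then $\RHom(L,M[j])=\RHom(L,M)[j]$ lies in $\sfD^{[a-n-j,\,b-m-j]}(\kk)$, so $M[j]$ has injective concentration $[a-j,b-j]$.

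Next I would prove the key stability statement under extensions: if $X\to M\to Y\to X[1]$ is an exact triangle in $\sfD(R)$, and $X$ (resp. $Y$) has injective concentration $[a_{X},b_{X}]$ (resp. $[a_{Y},b_{Y}]$), then $M$ has injective concentration $[\min(a_{X},a_{Y}),\,\max(b_{X},b_{Y})]$. Indeed, fix $m\le n$ and $L\in\sfD^{[m,n]}(R)$ and apply the triangulated functor $\RHom(L,-)$; then $\RHom(L,X)$ and $\RHom(L,Y)$ both lie in $\sfD^{[\,\min(a_{X},a_{Y})-n,\ \max(b_{X},b_{Y})-m\,]}(\kk)$, which is closed under extensions (being the intersection $\sfD^{\ge p}(\kk)\cap\sfD^{\le q}(\kk)$, and each of those is extension-closed by the long exact cohomology sequence). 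Since $\RHom(L,M)$ is an extension of $\RHom(L,Y)$ by $\RHom(L,X)$, it lies there too; as $L$ and $m,n$ were arbitrary, $M$ has the asserted injective concentration.

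Finally I would run an induction on $b-a$, using associativity of $\ast$. For $b=a$ the object $M\in\cI[a]$ has injective concentration $[-a,-a]$, so $\injdim M\le 0$. For $b>a$, present $M$ as sitting in an exact triangle $X\to M\to Y\to X[1]$ with $X\in\cI[a]$ and $Y\in\cI[a+1]\ast\cdots\ast\cI[b]$; by the induction hypothesis $Y$ has injective concentration $[-b,-a-1]$, while $X$ has injective concentration $[-a,-a]$, so by the stability statement $M$ has injective concentration $[\min(-a,-b),\,\max(-a,-a-1)]=[-b,-a]$. Hence $\injdim M\le(-a)-(-b)=b-a$. The degenerate cases where some constituent is $0$ cause no trouble, since the zero object has every injective concentration.

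There is no genuine obstacle here: the argument is formally dual to that of Corollary~\ref{20171219208}, so the only thing needing care is the bookkeeping forced by the contravariance of $\RHom(-,M)$ in the definition of injective concentration (the interval $\sfD^{[m,n]}(R)$ being sent to $\sfD^{[a-n,\,b-m]}(\kk)$) and the routine check that $\sfD^{[p,q]}(\kk)$ is extension-closed. Alternatively one could phrase the induction through Corollary~\ref{201804030002II} rather than the stability statement, but keeping track of the relevant $\inf$'s then becomes slightly more delicate, so I would favour the formulation above.
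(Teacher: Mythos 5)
Your proof is correct and takes essentially the same route the paper intends: the lemma's proof is omitted there as being dual to Corollary~\ref{20171219208}, whose ``standard argument of triangulated category'' is precisely your combination of (i) each object of $\cI[j]$ having injective concentration $[-j,-j]$ (via Theorem~\ref{Shaul theorem}.(1)) and (ii) closure of concentration intervals under extensions, followed by induction on $b-a$ using associativity of $\ast$. The extra bookkeeping you do for the contravariance of $\RHom(-,M)$ is handled correctly, so nothing further is needed.
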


\begin{lemma}
Let $M \in \sfD^{>- \infty}(R) , M \neq 0$ 
and $M \xrightarrow{f}  I \xrightarrow{g} N$ an exact triangle with $f$ ifij. 
Assume that $ \injdim M \geq 1$. 
Then $\injdim N = \injdim  M -1 + \inf M - \inf N $. 
\end{lemma}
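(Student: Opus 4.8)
The plan is to dualize the proof of Lemma \ref{20171219228} step by step. Set $d := \injdim M$ and $F := \RHom(-, M)$, and let $h : N \to M[1]$ denote the connecting morphism of the triangle $M \xrightarrow{f} I \xrightarrow{g} N \to$. By shifting degrees I may assume $\inf M = 0$, so that by Corollary \ref{injective dimension basics corolllary} the module $M$ has strict injective concentration $[0, d]$. Since $f$ is ifij, $I \in \cI$ and $\tuH^{0}(f)$ is injective; plugging this into the long exact cohomology sequence of the triangle, together with $\tuH^{\neq 0}(I) = 0$, gives $\tuH^{< 0}(N) = 0$, hence $\inf N \geq 0$.

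The main step is the claim that $N$ has injective concentration $[0, d-1]$. Fix $m \leq n$ and $L \in \sfD^{[m,n]}(R)$; I must show $\Hom(L, N[i]) = 0$ for every $i \notin [-n, d-1-m]$. Applying $\Hom(L, -)$ to the triangle, the inputs are $\Hom(L, M[i]) = 0$ for $i \notin [-n, d-m]$ (injective concentration of $M$) and, by Theorem \ref{Shaul theorem}.(1), $\Hom(L, I[i]) \cong \Hom_{\Mod H^{0}}(\tuH^{-i}(L), \tuH^{0}(I)) = 0$ for $i \notin [-n, -m]$. A short diagram chase in the long exact sequence then kills $\Hom(L, N[i])$ for $i > d-1-m$ (here $d \geq 1$ ensures $d-1-m \geq -m$, so $\Hom(L, I[i])$ vanishes) and for $i \leq -n-2$. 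The remaining value $i = -n-1$ is the crux: the sequence collapses to
\[
0 = \Hom(L, I[-n-1]) \to \Hom(L, N[-n-1]) \to \Hom(L, M[-n]) \xrightarrow{f_{*}} \Hom(L, I[-n]),
\]
so I must check $f_{*}$ is injective. Since $M \in \sfD^{\geq 0}(R)$ and $L[n] \in \sfD^{\leq 0}(R)$, the standard $t$-structure identifies $\Hom(L, M[-n]) \cong \Hom_{\Mod H^{0}}(\tuH^{n}(L), \tuH^{0}(M))$, while Theorem \ref{Shaul theorem}.(1) gives $\Hom(L, I[-n]) \cong \Hom_{\Mod H^{0}}(\tuH^{n}(L), \tuH^{0}(I))$; under these, $f_{*}$ becomes $\Hom_{\Mod H^{0}}(\tuH^{n}(L), \tuH^{0}(f))$, which is injective because $\tuH^{0}(f)$ is injective and $\Hom(X, -)$ is left exact. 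I expect this to be the main obstacle: it is the one place the ifij hypothesis (rather than merely $I \in \cI[-\inf M]$) is used, and it is the exact dual of the injectivity of $\Hom(\tuH^{0}(f), \tuH^{m}(L))$ in the sppj argument.

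To finish, combine the claim with the injective analogue of Lemma \ref{201712182258} to conclude that $N$ has injective concentration $[\inf N, d-1]$; by Corollary \ref{injective dimension basics corolllary} its strict injective concentration equals $[\inf N, \inf N + \injdim N]$, which is contained in $[\inf N, d-1]$, so $\injdim N \leq d-1-\inf N$. For the reverse inequality, use $d = \injdim M \geq 1$ to choose $m \leq n$ and $L \in \sfD^{[m,n]}(R)$ with $\Hom(L, M[d-m]) \neq 0$. Because $d \geq 1$, Theorem \ref{Shaul theorem}.(1) gives $\Hom(L, I[d-m]) \cong \Hom_{\Mod H^{0}}(\tuH^{m-d}(L), \tuH^{0}(I)) = 0$, so the long exact sequence yields a surjection $\Hom(L, N[d-1-m]) \twoheadrightarrow \Hom(L, M[d-m]) \neq 0$; hence the upper endpoint of the strict injective concentration of $N$ is at least $d-1$, forcing $\injdim N = d-1-\inf N$. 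Undoing the initial shift turns this into $\injdim N = \injdim M - 1 + \inf M - \inf N$, as claimed.
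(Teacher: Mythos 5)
Your argument is correct and is exactly the approach the paper intends: the paper omits this proof as being dual to that of Lemma \ref{20171219228}, and your proof is precisely that dualization (the crux being the injectivity of $f_{*}$ at the boundary degree via Theorem \ref{Shaul theorem}.(1) and the $t$-structure, mirroring the injectivity of $\Hom(\tuH^{0}(f),\tuH^{m}(L))$ in the sppj case, plus the dual sharpness surjection). One small slip: for $I\in\cI$ the claim $\tuH^{\neq 0}(I)=0$ is false in general, since $\tuH^{i}(I)\cong\Hom_{H^{0}}(H^{-i},\tuH^{0}(I))$ may be nonzero for $i>0$; but only $\tuH^{<0}(I)=0$ together with the injectivity of $\tuH^{0}(f)$ is needed to get $\inf N\geq 0$, so the argument is unaffected.
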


\begin{lemma}\label{ifij resolution lemma}
Let $N \in \mod H^{0}, M \in \sfD^{> -\infty}(R)$ and $I_{\bullet}$  an ifij resolution of $M$. 
We denote the complexes below by $Z_{i}, Z'_{i}$. 
\[
\begin{split}
Z_{-i} : \Hom(N, I_{-i+1}[\inf I_{-i}]) \to \Hom(N, I_{-i}[\inf I_{-i}] ) \to \Hom(N, I_{-i-1}[\inf I_{i}]) \\ 
Z'_{i} :\Hom(N, \tuH^{\inf}(I_{-i+1})) \to \Hom(N, \tuH^{\sup}(P_{-i})) \to \Hom(N, \tuH^{\sup}(P_{-i-1})) 
\end{split}
\] 
Then, 
\[
\Hom(N,M[n]) =
\begin{cases} 
0 & n \neq  i + \inf I_{-i} \textup{ for any } i \geq 0, \\
\tuH(Z_{-i}) & n = i + \inf I_{-i} \textup{ for some } i \geq 0. 
\end{cases}
\]

Moreover, in the case $n = i -  \inf I_{-i}$,  we have  
\[
\begin{split}
&\tuH(Z_{-i}) \\
&=
\begin{cases}
\Hom(N, \tuH^{\inf}(I_{-i})), & \inf I_{-i+1} \neq \inf I_{-i} \neq \inf I_{-i+1}, \\
\Ker[ \Hom(N, \tuH^{\inf}(I_{-i})) \to \Hom(N, \tuH^{\inf }(I_{-i-1})) ],  &
 \inf I_{-i+1}  \neq \inf I_{-i} = \inf I_{-i-1}, \\ 
 \Coker[\Hom(N, \tuH^{\inf}(I_{-i +1})) \to \Hom(N,  \tuH^{\inf}(I_{-i}) ) ], & 
 \inf I_{-i+1} = \inf I_{-i} \neq \inf I_{-i-1}, \\ 
\tuH(Z'_{i}), & 
 \inf I_{-i+1} = \inf I_{-i} = \inf I_{-i-1}. 
\end{cases}  
\end{split}
\]
\end{lemma}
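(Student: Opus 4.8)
The proof will run the argument of Lemma~\ref{sppj resolution lemma} in the opposite category: every appeal to $\cP$, to a sppj morphism, and to Lemma~\ref{basic of cP lemma} gets replaced by the corresponding appeal to $\cI$, to an ifij morphism, and to Theorem~\ref{Shaul theorem}, and the contravariant functor $\Hom(-,N)$ is replaced throughout by the covariant functor $\Hom(N,-)$. Write $s_{-j}:=\inf I_{-j}$; by Definition~\ref{ifij morphism definition} one has $s_0=\inf M$ and $s_0\le s_{-1}\le s_{-2}\le\cdots$, and since $I_{-j}\in\cI[-s_{-j}]$, Theorem~\ref{Shaul theorem}.(2) shows that $\Hom(N,I_{-j}[n])$ vanishes unless $n=s_{-j}$, where it equals $\Hom_{\Mod H^0}(\tuH^0(N),\tuH^{\inf}(I_{-j}))$. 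This is the exact mirror of the concentration of $\Hom(P_i,N[n])$ exploited in the sppj case.

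First I would apply $\Hom(N,-)$ to each defining triangle $M_{-j}\to I_{-j}\to M_{-j-1}\to M_{-j}[1]$, producing for every $j\ge 0$ and $m\in\ZZ$ the exact sequence
\[
\Hom(N,M_{-j}[m])\to\Hom(N,I_{-j}[m])\to\Hom(N,M_{-j-1}[m])\to\Hom(N,M_{-j}[m+1])\to\Hom(N,I_{-j}[m+1]),
\]
whose two $I$-terms vanish except for $m=s_{-j}$ and $m=s_{-j}-1$ respectively. From this I extract: (i) for $m\ne s_{-j}-1,s_{-j}$ an isomorphism $\Hom(N,M_{-j-1}[m])\xrightarrow{\ \cong\ }\Hom(N,M_{-j}[m+1])$, so that splicing yields $\Hom(N,M[n])\cong\Hom(N,M_{-i+1}[n-i+1])$ as long as $n$ has not yet been skipped; and (ii) for $m=s_{-j}$ the four-term exact sequence
\[
0\to\Hom(N,M_{-j}[s_{-j}])\to\Hom(N,I_{-j}[s_{-j}])\to\Hom(N,M_{-j-1}[s_{-j}])\to\Hom(N,M_{-j}[s_{-j}+1])\to 0,
\]
using that $\Hom(N,M_{-j-1}[s_{-j}-1])=0$ because $M_{-j-1}[s_{-j}-1]\in\sfD^{>0}(R)$ while $N\in\sfD^{\le 0}(R)$; and, again because $M_{-j-2}[s_{-j}-1]\in\sfD^{>0}(R)$, the injectivity of $\Hom(N,M_{-j-1}[s_{-j}])\to\Hom(N,I_{-j-1}[s_{-j}])$. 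The bookkeeping facts that $j\mapsto j+s_{-j}$ is strictly increasing and that $j+s_{-j}=j'+1+s_{-j'}$ forces $j'=j-1$ and $s_{-(j-1)}=s_{-j}$ — the duals of the remark following Lemma~\ref{sppj resolution lemma} — guarantee that the splicing in (i) reaches exactly $\Hom(N,M_{-i+1}[s_{-i}+1])$ when $n=i+s_{-i}$.

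Next, for $n=i+s_{-i}$ I would assemble, out of the triangles with indices $-i+1$, $-i$, $-i-1$, a commutative diagram with exact rows and columns whose ``slant'' complex is $Z_{-i}$ — the mirror of the diagram displayed in the proof of Lemma~\ref{sppj resolution lemma} — and read off $\tuH(Z_{-i})\cong\Hom(N,M_{-i+1}[s_{-i}+1])\cong\Hom(N,M[n])$; the four-case formula for $\tuH(Z_{-i})$ then records which rows and columns of the diagram collapse according as $s_{-i+1}$ and $s_{-i-1}$ coincide with $s_{-i}$. The boundary value $i=0$ is the one in which the term $I_{1}$ does not occur, so $Z_0$ degenerates to the two-term complex $0\to\Hom(N,I_0[s_0])\to\Hom(N,I_{-1}[s_0])$, whose cohomology is $\Hom(N,M[s_0])$ by step~(ii). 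For the degrees $n$ not of the form $i+s_{-i}$ one argues as in the sppj case: if $n=i+s_{-i}+1$ then the surjection $\Hom(N,M_{-i-1}[s_{-i}])\twoheadrightarrow\Hom(N,M_{-i}[s_{-i}+1])$ from step~(ii), together with $\Hom(N,M_{-i-1}[s_{-i}])=0$ (which holds since $n$ not being of the form $j+s_{-j}$ forces $s_{-i}\ne s_{-i-1}$, whence the injectivity in (ii) applies), gives $\Hom(N,M[n])\cong\Hom(N,M_{-i}[s_{-i}+1])=0$; and for every remaining $n$ either $n<\inf M$, or splicing sufficiently many steps reduces to some $\Hom(N,M_{-i}[n-i])$ with $n-i<\inf M_{-i}$, so the vanishing follows directly from the standard $t$-structure.

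I expect the only genuine obstacle to be clerical rather than conceptual: keeping every degree shift and every connecting-homomorphism direction consistent while transporting the sppj argument across the duality, and handling the degenerate terminal case $i=0$ where $Z_0$ has only two nonzero terms. All the homological content is already packaged in Theorem~\ref{Shaul theorem}, which is why the details may safely be omitted.
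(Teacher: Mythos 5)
Your proposal is correct and is exactly the route the paper intends: the paper omits this proof precisely because it is the term-by-term dual of the proof of Lemma \ref{sppj resolution lemma}, with Theorem \ref{Shaul theorem}.(2) replacing Lemma \ref{basic of cP lemma}.(2), and your degree bookkeeping (the strict monotonicity of $j\mapsto j+\inf I_{-j}$, the vanishing $\Hom(N,M_{-j-1}[\inf I_{-j}-1])=0$ and the injectivity into $\Hom(N,I_{-j-1}[\inf I_{-j}])$ via the standard $t$-structure, the mirrored slant diagram, and the degenerate $i=0$ case) all check out.
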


We give an ifij version of Theorem \ref{sppj resolution theorem}

\begin{theorem}\label{ifij resolution theorem}
Let $M \in \sfD^{>-\infty}(R)$ and $d$ a natural number. 
Set $F :=\RHom(-,M) $. 
Then  
the following conditions are equivalent 

\begin{enumerate}[(1)]
\item 
$\injdim  M  = d$.

\item 
For any ifij resolution $I_{\bullet}$, 
there exists a natural number $e \in \NN$ 
which satisfying the following properties.  

\begin{enumerate}[(a)]
\item 
$M_{e} \in \cI[-\inf M_{e}]$. 

\item 
$d = e + \inf M_{-e} - \inf I_{0}$. 

\item 
$g_{-e}$ is not a split-epimorphism. 
\end{enumerate}

\item 
$M$ has ifij resolution $I_{\bullet}$ of length $e$ 
which satisfies the following properties.

\begin{enumerate}[(a)]
\item $d = e + \inf I_{-e} - \inf I_{0}$.

\item 
$\delta_{e}$ is not a split-epimorphism. 
\end{enumerate} 

\item 

The functor $F := \RHom(- ,M)$ sends the standard heart $\Mod H^{0}$ to 
$\sfD^{[\inf M, \inf M+d]}(\kk)$ 
and there exists 
$N \in \Mod H^{0}$ 
such that $\tuH^{\inf M +d}(F(N) ) \neq 0$.

\item 
The following conditions hold. 
\begin{enumerate}[(a)]
\item 
the functor $F=\RHom(-,M) $ 
sends finitely generated $H^{0}$-modules 
to $\sfD^{[\inf M, \inf M + d]}(\kk)$.

\item 
there exists a finitely generated $H^{0}$-module  $N \in \Mod H^{0}$
such that $\tuH^{\inf M + d}(F(N) ) \neq 0$.
\end{enumerate}

\item 
$d$ is the smallest number 
which satisfies  
\[
M \in \cI[-\inf M ] \ast \cI[-\inf M+1] \ast \cdots \ast \cI[-\inf M +d]. 
\]

\end{enumerate}

\end{theorem}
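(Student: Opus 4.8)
The bulk of the equivalences is a formal transposition of the proof of Theorem~\ref{sppj resolution theorem}. For the equivalence of (1), (2), (3), (4) and (6) the plan is to first establish the ifij analogue of Theorem~\ref{sppj resolution theorem lemma II}---equivalent criteria for $\injdim M\le d$, in particular the existence of a finite-length ifij resolution, via the dual of Claim~\ref{201804052018}---and then to deduce the ``$\injdim M=d$'' statement exactly as Theorem~\ref{sppj resolution theorem} was deduced from it. Throughout one reads the dictionary $\cP\leftrightarrow\cI$, $\sup\leftrightarrow\inf$, $\sfD^{<\infty}\leftrightarrow\sfD^{>-\infty}$, sppj $\leftrightarrow$ ifij, split monomorphism $\leftrightarrow$ split epimorphism, and replaces Lemma~\ref{basic of cP lemma}, Lemma~\ref{sppj resolution lemma} and Lemma~\ref{201712182258} by Theorem~\ref{Shaul theorem}, Lemma~\ref{ifij resolution lemma} and its injective analogue (together with Corollary~\ref{injective dimension basics corolllary} and the three ifij lemmas already stated). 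Since these arguments are purely formal, I would omit them as elsewhere in this section.

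It remains to weave condition (5) into the chain, and here there is genuine new content, since (5) tests $F$ only on finitely generated modules and one must know this suffices. I would prove (4)$\Rightarrow$(5) and (5)$\Rightarrow$(1). The implication (4)$\Rightarrow$(5)(a) is just the restriction of (4)(a); everything else rests on a \emph{Baer-type detection principle}, which I regard as the heart of the theorem. In the computation of $\Hom(N,M[n])$ provided by Lemma~\ref{ifij resolution lemma} (whose proof uses only Theorem~\ref{Shaul theorem}.(2), hence works for every $N\in\Mod H^0$, not just finitely generated ones), each group $\tuH(Z_{-i})$ has one of the forms $\Hom_{H^0}(N,J)$, $\Ker[\Hom_{H^0}(N,J')\to\Hom_{H^0}(N,J)]$, $\Coker[\Hom_{H^0}(N,J')\to\Hom_{H^0}(N,J)]$, or $\tuH(Z'_i)$, with all the $H^0$-modules occurring injective; and any such functor of $N\in\Mod H^0$ that vanishes on all cyclic modules $H^0/\frka$ vanishes on all of $\Mod H^0$. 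The decisive case is the cokernel: for the underlying map $\phi\colon J'\to J$ a short diagram chase presents $\Coker[\Hom_{H^0}(N,J')\to\Hom_{H^0}(N,J)]$ as an extension of a subobject of $\Hom_{H^0}(N,\Coker\phi)$ by $\Ext^1_{H^0}(N,\Ker\phi)$; by the classical Baer criterion (e.g.\ \cite{Anderson-Fuller}), $\Ext^1_{H^0}(-,\Ker\phi)$ vanishes on all $H^0/\frka$ iff $\Ker\phi$ is injective and $\Hom_{H^0}(-,\Coker\phi)$ vanishes on all $H^0/\frka$ iff $\Coker\phi=0$, so the cokernel vanishes on all cyclic $N$ iff $\phi$ is epic with injective kernel, i.e.\ iff $\phi$ is a split epimorphism, in which case it vanishes on all $N$. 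The other three shapes are treated the same way and more easily.

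Granting the detection principle, the remaining implications follow. For (5)$\Rightarrow$(1): let $I_\bullet$ be a minimal ifij resolution of $M$; by (5)(a) and the detection principle $F$ sends all of $\Mod H^0$ into $\sfD^{\le\inf M+d}(\kk)$ (and into $\sfD^{\ge\inf M}(\kk)$ automatically, since $N\in\sfD^{\le 0}(R)$ while $M[n]\in\sfD^{\ge 1}(R)$ for $n<\inf M$), so by the dual of the argument in Theorem~\ref{sppj resolution theorem lemma II} (and Claim~\ref{201804052018}) the resolution $I_\bullet$ has finite length $e$, $\injdim M\le d$, and $\injdim M=e+\inf I_{-e}-\inf M$. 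Condition (5)(b) gives a finitely generated $N_0$ with $\Hom(N_0,M[\inf M+d])\ne 0$, so $M$ has no injective concentration $[\inf M,\inf M+d-1]$ (as $N_0\in\sfD^{[0,0]}(R)$) and $\injdim M\ge d$; hence $\injdim M=d$. For (1)$\Rightarrow$(5): (5)(a) is a special case of (4)(a); and for (5)(b), a minimal ifij resolution has length $e$ with $e+\inf I_{-e}-\inf M=d$ and last differential $\delta_e$ not a split epimorphism, so the detection principle, applied to the map $\phi$ of injective $H^0$-modules governing $\tuH(Z_{-e})$, yields a cyclic $N_0$ with $\tuH(Z_{-e})\ne 0$, i.e.\ $\tuH^{\inf M+d}(F(N_0))\ne 0$.

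The step I expect to be the main obstacle is pinning down, in the detection principle, exactly which map $\phi$ of injective $H^0$-modules controls $\tuH(Z_{-i})$ in each of the cases distinguished by how $\inf I_{-i}$ evolves with $i$, and verifying that ``$\delta_e$ is not a split epimorphism'' (respectively, that the minimal ifij resolution has no shorter length) translates into ``$\Ker\phi$ is not injective''. Once this translation is in place, the exclusion of infinite length, the two inequalities bounding $\injdim M$, and the production of a finitely generated witness are routine bookkeeping on top of the classical Baer criterion and the already-transposed proof of Theorem~\ref{sppj resolution theorem}.
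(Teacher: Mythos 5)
Your proposal is correct and follows essentially the same route as the paper: the equivalences (1)--(4), (6) are the formal dual of Theorems \ref{sppj resolution theorem lemma II} and \ref{sppj resolution theorem}, and the only new content, condition (5), is handled in the paper by exactly your ``detection principle'' --- the lemma that a three-term complex of injective $H^{0}$-modules is split exact iff $\Hom_{H^{0}}(N,-)$ applied to it is exact for all finitely generated $N$, proved via the Baer criterion (your case-by-case analysis of the four shapes of $\tuH(Z_{-i})$, with the cokernel case decomposed into $\Ext^{1}_{H^{0}}(N,\Ker\phi)$ and a subobject of $\Hom_{H^{0}}(N,\Coker\phi)$, is a mild repackaging of that same lemma).
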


To deduce from the condition (5) to the other conditions, 
we need to use the following lemma. 

\begin{lemma}
A complex  $J_{\bullet}: J_{3} \xrightarrow{d_{3}}  J_{2} \xrightarrow{d_{2}} J_{1}$  of injective $H^{0}$-modules 
is exact and splits 
if and only if 
a complex below is exact for all finitely generated $H^{0}$-module $N$. 
\[
\Hom_{H^{0}}(N, J_{3}) \to \Hom_{H^{0}}(N, J_{2}) \to \Hom_{H^{0}}(N, J_{1})
\]
\end{lemma}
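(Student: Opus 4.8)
The plan is to prove the ``only if'' direction by inspection and to obtain ``if'' by dualizing the proof of Lemma \ref{201711171950}.

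For ``only if'': if $J_{\bullet}$ splits as in \eqref{201804052136}, i.e.\ there are decompositions $J_{3}\cong A\oplus B$, $J_{2}\cong A\oplus C$, $J_{1}\cong D\oplus C$ under which $d_{3}$ and $d_{2}$ become $\begin{pmatrix}\id&0\\0&0\end{pmatrix}$ and $\begin{pmatrix}0&0\\0&\id\end{pmatrix}$, then applying the additive functor $\Hom_{H^{0}}(N,-)$ produces a complex of the same shape, which is visibly exact at its middle term for \emph{every} module $N$, a fortiori for every finitely generated one.

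For ``if'', I would proceed in three steps. \emph{Step 1.} Taking $N=H^{0}$ and using $\Hom_{H^{0}}(H^{0},-)\cong\id$ shows $J_{\bullet}$ itself is exact at $J_{2}$; put $K:=\ker d_{2}=\im d_{3}$. \emph{Step 2.} For any $N$ one has $\ker\big(\Hom(N,J_{2})\to\Hom(N,J_{1})\big)=\Hom(N,K)$ and $\im\big(\Hom(N,J_{3})\to\Hom(N,J_{2})\big)\subseteq\Hom(N,K)$, so the hypothesis is equivalent to surjectivity of $\Hom_{H^{0}}(N,J_{3})\to\Hom_{H^{0}}(N,K)$ for all finitely generated $N$. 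Applying $\Hom_{H^{0}}(N,-)$ to the short exact sequence $0\to\ker d_{3}\to J_{3}\xrightarrow{d_{3}}K\to 0$ and using $\Ext^{1}_{H^{0}}(N,J_{3})=0$, this surjectivity says exactly that $\Ext^{1}_{H^{0}}(N,\ker d_{3})=0$ for every finitely generated $N$; since $H^{0}$ is (right) Noetherian, every right ideal is finitely generated, so Baer's criterion forces $\ker d_{3}$ to be injective. Hence $0\to\ker d_{3}\to J_{3}\to K\to 0$ splits, giving $J_{3}\cong\ker d_{3}\oplus A$ with $d_{3}|_{A}$ an isomorphism onto $K$; in particular $K\cong A$ is a direct summand of the injective module $J_{3}$, hence injective. \emph{Step 3.} Now finish as in Lemma \ref{201711171950}: since $K=\ker d_{2}\subseteq J_{2}$ is injective, $J_{2}\cong K\oplus C$ with $C$ injective; $d_{2}$ kills $K$ and is monic on $C$ (because $C\cap K=0$), so $\im d_{2}\cong C$ is injective, whence $J_{1}\cong\im d_{2}\oplus D$; identifying $A\cong K$ and $\im d_{2}\cong C$ puts $d_{3},d_{2}$ in the required matrix forms, which is \eqref{201804052136}.

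I expect the middle step to be the main obstacle: one must extract genuine injectivity of $\ker d_{3}$ from the vanishing of $\Ext^{1}_{H^{0}}(-,\ker d_{3})$ against finitely generated modules only. This is the precise point at which the ``finitely generated'' restriction in the hypothesis is exploited, and it uses that $H^{0}$ is Noetherian (otherwise one gets only FP-injectivity of $\ker d_{3}$); the remaining manipulations are a routine transcription of the argument in the projective case.
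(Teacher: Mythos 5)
Your argument follows the paper's proof essentially step for step: test against $N=H^{0}$ to get exactness at $J_{2}$, reformulate the hypothesis as surjectivity of $\Hom_{H^{0}}(N,J_{3})\to\Hom_{H^{0}}(N,\ker d_{2})$, translate that via the short exact sequence $0\to\ker d_{3}\to J_{3}\to\ker d_{2}\to 0$ into $\Ext^{1}_{H^{0}}(N,\ker d_{3})=0$ for finitely generated $N$, conclude injectivity of $\ker d_{3}$ by Baer's criterion, and then split the complex. Your Steps 1 and 3 and the ``only if'' direction are fine, and are in fact spelled out in more detail than in the paper.

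The one point to correct is the appeal to Noetherianness of $H^{0}$ in Step 2, which you single out as the crux. The lemma carries no Noetherian hypothesis (and the paper's proof uses none), so as written your argument only proves a weaker statement. Fortunately the hypothesis is not needed: Baer's criterion requires $\Ext^{1}_{H^{0}}(H^{0}/I,\ker d_{3})=0$ for every right ideal $I$, and $H^{0}/I$ is cyclic, hence finitely generated, whether or not $I$ is finitely generated; so the vanishing you established against all finitely generated modules already suffices. Your parenthetical ``otherwise one gets only FP-injectivity'' conflates finitely generated with finitely presented: testing only against finitely presented modules would indeed give just FP-injectivity, but the lemma's hypothesis is against all finitely generated modules, which include $H^{0}/I$ for arbitrary $I$. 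With the Noetherian remark deleted, your proof is correct and coincides with the paper's.
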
 

\begin{proof}
``Only if" part is clear. We prove ``if" part. 
Substituting $N =H^{0}$ we see that the complex $J_{\bullet}$ is exact. 

We prove that $J_{\bullet}$ splits. 
Let $K :=\Ker d_{3}, L := \image d_{3} = \Ker d_{2}$. 
From the assumption, 
we can show that 
the induced map $\Hom_{H^{0}}(N, J_{3}) \to \Hom_{H^{0}}(N, L)$ is surjective 
for any  finitely generated $H^{0}$-modules $N$. 
This shows that $\Ext_{H^{0}}^{1}(N, K) = 0$ 
for any finitely generated $H^{0}$-module $X$. 
By Baer criterion (see \cite[18.3]{Anderson-Fuller}), $K$ is injective $H^{0}$-module.  
Thus the inclusion $K \hookrightarrow J_{3}$ splits 
and consequently $L$ is an injective $H^{0}$-module. 
Therefore the complex $J_{\bullet}$ splits. 
\end{proof}

\subsection{The subcategory of DG-modules of finite injective dimension}

As is the same with the sppj resolution, 
Theorem \ref{ifij resolution theorem} has the following consequences. 
We denote by $\sfD(R)_{\textup{fid}}$ the full subcategory consisting of $M$ having finite injective dimension. 

\begin{proposition}
\[
\sfD(R)_{\textup{fid}} = \thick \cI = \bigcup \cI[a]\ast \cI[a +1] \ast \cdots \ast \cI[b]. 
\]
where $a,b$ run all the pairs of integers such that $a \leq b$. 
\end{proposition}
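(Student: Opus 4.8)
The plan is to establish the cyclic chain of inclusions
\[
\bigcup_{a\le b}\cI[a]\ast\cdots\ast\cI[b]\ \subseteq\ \thick\cI\ \subseteq\ \sfD(R)_{\textup{fid}}\ \subseteq\ \bigcup_{a\le b}\cI[a]\ast\cdots\ast\cI[b],
\]
which forces the three categories to coincide; the whole argument is the exact dual of the one establishing the corresponding statement for $\sfD(R)_{\textup{fpd}}$. The first inclusion comes for free: $\thick\cI$ is closed under shifts, cones and direct summands and contains $\cI$, and each $\cI[a]\ast\cdots\ast\cI[b]$ is built from finitely many iterated extensions of shifts of objects of $\cI$, so it lies in $\thick\cI$; now take the union over $a\le b$.

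For the second inclusion I would check that $\sfD(R)_{\textup{fid}}$ is a thick subcategory of $\sfD(R)$ containing $\cI$, whence it contains $\thick\cI$. It contains $\cI$ since objects of $\cI$ have injective dimension $0$ (Theorem \ref{Shaul theorem}, or the injective analogue of Lemma \ref{20171219205II}), and stability under shifts is clear. The only real content is closure under cones and under direct summands. For cones, given an exact triangle $L\to M\to N\to$ with $\injdim L,\injdim N<\infty$, I may assume $M\ne 0$; then the long exact cohomology sequence, together with the fact that finite injective dimension forces boundedness below (the injective analogue of Lemma \ref{201711191911}), gives $\inf M>-\infty$, and Corollary \ref{201804030002II} bounds $\injdim M\le\sup\{\injdim L+\inf L,\ \injdim N+\inf N\}-\inf M<\infty$. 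For direct summands, if $M=M'\oplus M''$ has injective concentration $[a,b]$ with $a,b\in\ZZ$, then $\RHom(-,M')$ is a functorial direct summand of $\RHom(-,M)$, so $M'$ also has injective concentration $[a,b]$ and hence finite injective dimension (the zero object lying in $\sfD(R)_{\textup{fid}}$ trivially).

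Finally, for the third inclusion I would invoke Theorem \ref{ifij resolution theorem}. Let $M\in\sfD(R)_{\textup{fid}}$ with $\injdim M=d$; if $M=0$ it lies in every term of the union, and otherwise $\inf M>-\infty$, so $a:=-\inf M$ and $b:=-\inf M+d$ are integers and the equivalence $(1)\Leftrightarrow(6)$ of that theorem yields $M\in\cI[-\inf M]\ast\cdots\ast\cI[-\inf M+d]=\cI[a]\ast\cdots\ast\cI[b]$, one of the terms of the union. I expect no genuine obstacle: the proposition is a formal consequence of Theorem \ref{ifij resolution theorem} and of the thickness of $\sfD(R)_{\textup{fid}}$, and the only points needing a little care are the $\inf$-bookkeeping in the cone estimate, the degenerate case $M=0$, and the functorial direct-summand property of $\RHom(-,-)$ that makes $\sfD(R)_{\textup{fid}}$ closed under summands.
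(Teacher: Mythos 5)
Your proposal is correct and follows essentially the route the paper intends: the paper states this proposition as a direct consequence of Theorem \ref{ifij resolution theorem} (dual to the fpd case), and your cyclic chain of inclusions---union $\subseteq\thick\cI$ trivially, $\thick\cI\subseteq\sfD(R)_{\textup{fid}}$ by thickness of the latter (using Corollary \ref{201804030002II} and closure under summands), and $\sfD(R)_{\textup{fid}}\subseteq$ union via condition (6) of that theorem---is exactly the implicit argument, with the routine details (the $\inf$-bookkeeping, the case $M=0$) filled in correctly.
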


We set $\sfD_{\mod H^{0}}(R)_{\textup{fid}} := \sfD_{\mod H^{0}}(R) \cap \sfD(R)_{\textup{fid}}$. 
Assume that  the base ring $\kk$ is a field and $R$ is locally finite dimensional, i.e., $\dim_{\kk} H^{i} < \infty$ for $i \in \ZZ$. 
Then the $\kk$-dual  complex  $R^{*} := \Hom_{\kk}(R, \kk)$ has canonical DG-$R$-module structure such that $\tuH(R^{*}) = H^{*}$ 
as graded $H$-modules. Therefore $R^{*}$ belongs to $\cI_{\mod H^{0}} := \cI \cap \sfD_{\mod H^{0}}(R)$. 
Since the $H^{0}$-module $(H^{0})^{*}$ is a injective cogenerator of $\mod H^{0}$, 
we have $\cI_{\mod H^{0}} = \add R^{*}$. 

By the same argument with Proposition \ref{201901252356} we obtain the following proposition. 

\begin{proposition}
Assume that  the base ring $\kk$ is a field and $R$ is locally finite dimensional. 
Then we have 
\[
\sfD_{\mod H^{0}}(R) = \thick R^{*} = \bigcup \cI_{\mod H^{0}}[a]\ast \cI_{\mod H^{0}}[a +1] \ast \cdots \ast \cI_{\mod H^{0}}[b]
\]
where $a,b$ run all the pairs of integers such that $a \leq b$. 
\end{proposition}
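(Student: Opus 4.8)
The plan is to run the argument of Proposition \ref{201901252356} with the class $\cI_{\mod H^{0}}=\add R^{*}$ in place of $\cP_{\mod H^{0}}=\add R$, transporting the sup-projective arguments through the duality between sppj and ifij resolutions. (Here I read the left-hand side as $\sfD_{\mod H^{0}}(R)_{\textup{fid}}$; an unbounded object such as $\bigoplus_{n\in\ZZ}S[n]$ with $S$ a simple $H^{0}$-module has finitely generated cohomology in each degree but is not in $\thick R^{*}$, so the plain $\sfD_{\mod H^{0}}(R)$ cannot be meant, just as Proposition \ref{201901252356} carries the subscript fpd.) The statement splits into the chain of inclusions
\[
\sfD_{\mod H^{0}}(R)_{\textup{fid}}\ \supseteq\ \thick R^{*}\ \supseteq\ \bigcup_{a\le b}\cI_{\mod H^{0}}[a]\ast\cdots\ast\cI_{\mod H^{0}}[b]\ \supseteq\ \sfD_{\mod H^{0}}(R)_{\textup{fid}}.
\]

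For the first inclusion, $R^{*}\in\cI_{\mod H^{0}}$ (as observed before the statement), so $\injdim R^{*}=0$ and $R^{*}\in\sfD_{\mod H^{0}}(R)$; it therefore suffices that $\sfD_{\mod H^{0}}(R)_{\textup{fid}}$ be a thick subcategory. It is the intersection of $\sfD(R)_{\textup{fid}}$ (which is thick, being $\thick\cI$) with $\sfD_{\mod H^{0}}(R)$, and the latter is thick because $H^{0}$ is Noetherian — indeed finite-dimensional over the field $\kk$ — so finite generation of all cohomology modules passes through the long exact cohomology sequence of a triangle and through direct summands. The second inclusion is formal: $\thick R^{*}=\thick(\add R^{*})=\thick\cI_{\mod H^{0}}$ is closed under shifts and extensions and contains $\cI_{\mod H^{0}}$, so it contains every finite $\ast$-product of shifts of $\cI_{\mod H^{0}}$.

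The substance is in the third inclusion, and the point is to build ifij resolutions inside $\cI_{\mod H^{0}}$. Given $M\in\sfD_{\mod H^{0}}(R)_{\textup{fid}}$ (so $M\in\sfD^{>-\infty}(R)$), I would construct an ifij resolution $I_{\bullet}$ with every $I_{-i}\in\cI_{\mod H^{0}}$ by induction: if $M_{-i}$ has finitely generated cohomology, then $\tuH^{\inf}(M_{-i})$ is a finite-dimensional $H^{0}$-module, hence so is its injective envelope in $\Mod H^{0}$, which therefore lies in $\add(H^{0})^{*}$; via the equivalence $\tuH^{0}\colon\cI\xrightarrow{\cong}\Inj H^{0}$ of Theorem \ref{Shaul theorem}(4) this injective corresponds, up to shift, to an object of $\add R^{*}=\cI_{\mod H^{0}}$, which I take for $I_{-i}$, with the ifij structure map realizing the injective envelope; the long exact cohomology sequence of $M_{-i}\to I_{-i}\to M_{-i-1}\to$ together with the Noetherianity of $H^{0}$ then shows $M_{-i-1}$ still has finitely generated cohomology, continuing the induction. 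Since $\injdim M<\infty$, the implication (1) $\Rightarrow$ (2) of Theorem \ref{ifij resolution theorem} applied to this $I_{\bullet}$ produces $e\in\NN$ with $M_{-e}\in\cI[-\inf M_{-e}]$, and because $M_{-e}$ has finitely generated cohomology this improves to $M_{-e}\in\cI_{\mod H^{0}}[-\inf M_{-e}]$. Splicing the finitely many exact triangles $M_{-i}\to I_{-i}\to M_{-i-1}\to$ for $0\le i<e$ now realizes $M$ as an iterated extension of shifts of $I_{0},\dots,I_{-(e-1)}$ and of $M_{-e}$, all lying in shifts of $\cI_{\mod H^{0}}$, so $M$ belongs to the union on the right.

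The main obstacle is the truncation step: one must check that Theorem \ref{ifij resolution theorem}(2) — the ifij counterpart of the finite-length analysis inside the proof of Theorem \ref{sppj resolution theorem lemma II}, in particular Claim \ref{201804052018} — genuinely yields a finite $e$, and that finite generation is propagated along the resolution precisely so that only $\cI_{\mod H^{0}}$, rather than the full $\cI$, enters the resulting $\ast$-product. Everything else is either formal or a direct transcription of the sup-projective arguments under the duality $\cP\leftrightarrow\cI$, $\sup\leftrightarrow-\inf$, $\Proj H^{0}\leftrightarrow\Inj H^{0}$.
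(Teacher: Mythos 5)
Your proof is correct and follows essentially the same route as the paper, which simply invokes ``the same argument as Proposition \ref{201901252356}'': the two easy inclusions are formal, and the substantive one is obtained by constructing an ifij resolution with all terms in $\cI_{\mod H^{0}}=\add R^{*}$ (using that injective envelopes of finite-dimensional $H^{0}$-modules lie in $\add (H^{0})^{*}$) and truncating it via Theorem \ref{ifij resolution theorem}. Your reading of the left-hand side as $\sfD_{\mod H^{0}}(R)_{\textup{fid}}$ is also the intended one; the missing subscript in the statement is evidently a typo, as your example $\bigoplus_{n\in\ZZ}S[n]$ shows.
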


Jin \cite{Jin} introduced the notion of \textit{Gorenstein DG-algebra} 
and studied their representation theory. 
In \cite{Jin}  a Gorenstein DG-algebra is  defined to be a connective   DG-algebra 
which is proper, i.e., $\dim_{\kk} \sum_{i \in \ZZ} H^{i} < \infty$
 and satisfies the condition (J) $\Perf R = \thick R^{*}$. 
 We note that for a locally finite dimensional connective  DG-algebra $R$ over a field $\kk$, properness follows from the condition (J). 

On the other hand, in ordinary ring theory, 
Iwanaga-Gorenstein algebra is defined to be an (ordinary) algebra $A$ such that $\injdim_{A} A <\infty, \injdim_{A^{\op}} A < \infty$. 
Thus, there are obvious DG-generalization of the notion of a Iwanaga-Gorenstein algebra. 
In the next proposition, we show these two generalizations coincide with each other.

\begin{proposition}
Assume that the base ring $\kk$ is a field. 
Then for a locally finite dimensional DG-algebra $R$, the following conditions are equivalent. 

\begin{enumerate}[(1)] 
\item  $\Perf R = \thick R^{*}$.

\item $\injdim_{R} R < \infty, \injdim_{R^{\op}} R< \infty$. 

\end{enumerate}

\end{proposition}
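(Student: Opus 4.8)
The plan is to reduce each of the two conditions to a membership statement inside $\sfD(R)$ — one for $R$ and one for its $\kk$-dual $R^{*}$ — and then match these up using $\kk$-linear duality. Recall $\Perf R = \thick R$ by definition, and that $R^{*}$ lies in $\cI$ with $\tuH^{0}(R^{*}) \cong (H^{0})^{*}$.

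First I would record the standing consequences of the hypotheses. Since $\kk$ is a field and $R$ is locally finite dimensional, $H^{0}$ is a finite-dimensional, hence Artinian, $\kk$-algebra and each $H^{-i}$ is finitely generated over $H^{0}$; thus $R$ is right and left piecewise Noetherian. Also $\tuH^{i}(R) = H^{i}$ and $\tuH^{i}(R^{*}) = \Hom_{\kk}(H^{-i},\kk)$ are finite dimensional over $\kk$, so $R, R^{*} \in \sfD_{\mod H^{0}}(R)$, and hence both $\thick R$ and $\thick R^{*}$ lie in $\sfD_{\mod H^{0}}(R)$. Using this I would establish the equivalence $\injdim_{R} R < \infty \iff R \in \thick R^{*}$: the implication $\Leftarrow$ holds because $R^{*} \in \cI$ gives $\thick R^{*} \subseteq \thick \cI = \sfD(R)_{\textup{fid}}$, and $\Rightarrow$ holds because $R \in \sfD_{\mod H^{0}}(R)$ has finite injective dimension, so by the proposition above (equivalently, by Theorem \ref{ifij resolution theorem} together with $\cI_{\mod H^{0}} = \add R^{*}$) it lies in $\thick R^{*}$.

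Next I would bring in duality. Since $\kk$ is a field, $\Hom_{\kk}(-,\kk)$ is exact, computes $\RHom_{\kk}(-,\kk)$, and — restricted to DG-modules with finite-dimensional cohomology in each degree — satisfies biduality; hence it induces a contravariant equivalence between $\sfD_{\mod H^{0}}(R)$ and $\sfD_{\mod (H^{0})^{\op}}(R^{\op})$ that sends the right module $R$ to the right $R^{\op}$-module $(R^{\op})^{*}$ and sends $R^{*}$ back to $R$. As $R^{\op}$ is again connective and locally finite dimensional over $\kk$, the argument of the previous paragraph applies to it; transporting the equivalence $\injdim_{R^{\op}} R < \infty \iff R \in \thick_{\sfD(R^{\op})}(R^{\op})^{*}$ through the duality would yield $\injdim_{R^{\op}} R < \infty \iff R^{*} \in \thick R = \Perf R$. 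Finally I would combine: condition (1) is the equality $\thick R = \thick R^{*}$ of thick subcategories, which holds if and only if $R \in \thick R^{*}$ and $R^{*} \in \thick R$, and by the two displayed equivalences this is exactly condition (2).

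I expect the duality step to be the main obstacle: one must set up the contravariant equivalence with $\sfD_{\mod (H^{0})^{\op}}(R^{\op})$ carefully, justify biduality on it (this is exactly where local finite-dimensionality over the field $\kk$ enters), check that $R^{\op}$ inherits the standing hypotheses, and keep the left/right bookkeeping straight so that the right $R$-module $R^{*}$ is correctly identified with the double $\kk$-dual of $R$. Granting that, the remaining manipulations with thick subcategories are purely formal.
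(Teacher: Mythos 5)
Your proposal is correct and follows essentially the same route as the paper: both rest on the preceding proposition identifying $\thick R^{*}$ via $\cI_{\mod H^{0}} = \add R^{*}$ (giving $\injdim_{R} R < \infty \iff R \in \thick R^{*}$) together with the $\kk$-duality contravariant equivalence $\sfD_{\mod H^{0}}(R)^{\op} \simeq \sfD_{\mod (H^{0})^{\op}}(R^{\op})$ to handle the $R^{\op}$-side. Your reformulation as two biconditional memberships ($R \in \thick R^{*}$ and $R^{*} \in \Perf R$) combined via $\thick R = \thick R^{*}$ is only a cosmetic reorganization of the paper's two implications.
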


\begin{proof}
(1) $\Rightarrow$ (2). Since $R \in \Perf R = \thick R^{*}$, we see $\injdim_{R} R< \infty$. 
Since the $\kk$-duality gives an contravariant equivalence $\sfD_{\mod H^{0}}(R)^{\op} \simeq \sfD_{\mod H^{0}}(R^{\op})$, 
we have $\Perf R^{op} = \thick R^{*}$ inside $\sfD(R^{\op})$. 
Thus in the same way as above we see $\injdim_{R^{\op}}R < \infty$. 

(2) $\Rightarrow$ (1). 
It follows from $\injdim_{R} R< \infty$ that $R \in \thick R^{*}$ and that $\Perf R \subset \thick R^{*}$. 
Similarly we have $\Perf R^{\op} \subset \thick R^{*}$ inside $\sfD(R^{\op})$. 
The $\kk$-duality sends the latter inclusion to $\thick R^{*} \subset  \Perf R$ inside $\sfD(R)$. 
Thus we see $\Perf R = \thick R^{*}$ as desired. 
\end{proof}

\subsection{Minimal ifij resolution}

From Lemma \ref{ifij resolution lemma} we deduce the following corollary. 
We note that 
since every module over an ordinary  algebra has an injective-hull, 
every object $M \in \sfD^{> - \infty}(R)$ admits a minimal ifij resolution. 

\begin{corollary} 
Let $M \in \sfD^{>-\infty}(R)$ and $I_{\bullet}$  a minimal ifij resolution of $M$. 
Then for a simple $H^{0}$-module $S$ we have 
\[
\Hom(S,M[n]) 
= 
\begin{cases}
0 & n \neq i +\inf I_{-i} \textup{ for any } i \geq 0, \\
\Hom(S, \tuH^{\inf}(I_{-i})) & n = i + \inf I_{-i} \textup{ for some } i \geq 0.
\end{cases}
\]
\end{corollary}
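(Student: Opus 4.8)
The plan is to read off the statement from Lemma~\ref{ifij resolution lemma} applied with $N=S$, exactly dual to the way Corollary~\ref{sppj resolution corollary 1} was obtained from Lemma~\ref{sppj resolution lemma}. First I would recall, as noted just before the statement, that a minimal ifij resolution $I_{\bullet}$ of $M$ exists because $\Mod H^{0}$ has injective hulls. Substituting $N=S$ into Lemma~\ref{ifij resolution lemma} already gives $\Hom(S,M[n])=0$ unless $n=i+\inf I_{-i}$ for some $i\geq 0$, and in the "separated" case $\inf I_{-i+1}\neq \inf I_{-i}\neq \inf I_{-i-1}$ it gives $\Hom(S,\tuH^{\inf}(I_{-i}))$ directly. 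So the only work is in the three cases where $\inf I_{-i}$ coincides with $\inf I_{-i+1}$ and/or with $\inf I_{-i-1}$, in which the answer is a kernel, a cokernel, or $\tuH$ of the subcomplex $Z'_{i}$.

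The key claim to establish is: whenever $\inf I_{j}=\inf I_{j-1}$, the map $\Hom(S,\tuH^{\inf}(\delta_{j}))\colon \Hom(S,\tuH^{\inf}(I_{j}))\to \Hom(S,\tuH^{\inf}(I_{j-1}))$ vanishes. To see this, write $\delta_{j}=f_{j-1}\circ g_{j}$ with $I_{j}\xrightarrow{g_{j}} M_{j-1}\xrightarrow{f_{j-1}} I_{j-1}$ and set $n:=\inf I_{j}=\inf M_{j}=\inf M_{j-1}=\inf I_{j-1}$ (the last equalities using the hypothesis together with the inequalities in Definition~\ref{ifij morphism definition}). Then $\tuH^{n}(\delta_{j})=\tuH^{n}(f_{j-1})\circ \tuH^{n}(g_{j})$. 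Minimality of $f_{j-1}$ makes $\tuH^{n}(f_{j-1})$ an injective envelope in $\Mod H^{0}$, so it is an essential extension and hence $\Hom(S,\tuH^{n}(f_{j-1}))$ is an isomorphism (a simple module maps into the socle, which is preserved by essential extensions). On the other hand, in the exact triangle $M_{j}\xrightarrow{f_{j}} I_{j}\xrightarrow{g_{j}} M_{j-1}\to$ the long exact cohomology sequence, using $\tuH^{n-1}(M_{j-1})=0$ since $\inf M_{j-1}=n$, identifies $\ker \tuH^{n}(g_{j})$ with $\image \tuH^{n}(f_{j})$, which is essential in $\tuH^{n}(I_{j})$ by minimality of $f_{j}$; hence $\soc \tuH^{n}(I_{j})\subseteq \ker \tuH^{n}(g_{j})$, and every morphism $S\to \tuH^{n}(I_{j})$ lands in $\ker \tuH^{n}(g_{j})$, i.e. $\Hom(S,\tuH^{n}(g_{j}))=0$. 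Composing, $\Hom(S,\tuH^{n}(\delta_{j}))=0$.

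Feeding this into Lemma~\ref{ifij resolution lemma}: in the cokernel case ($\inf I_{-i+1}=\inf I_{-i}\neq \inf I_{-i-1}$) the map defining the cokernel is $\Hom(S,\tuH^{\inf}(\delta_{-i+1}))=0$, so the cokernel is $\Hom(S,\tuH^{\inf}(I_{-i}))$; in the kernel case ($\inf I_{-i+1}\neq \inf I_{-i}=\inf I_{-i-1}$) the map defining the kernel is $\Hom(S,\tuH^{\inf}(\delta_{-i}))=0$, so again the value is $\Hom(S,\tuH^{\inf}(I_{-i}))$; and in the case $\inf I_{-i+1}=\inf I_{-i}=\inf I_{-i-1}$ both differentials of $Z'_{i}$ are zero, so $\tuH(Z'_{i})=\Hom(S,\tuH^{\inf}(I_{-i}))$. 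Together with the vanishing and the separated case this yields the displayed formula.

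The main obstacle is the key claim, i.e. extracting from the definition of a \emph{minimal} ifij morphism (that $\tuH^{\inf}$ of the structure morphism is an injective envelope) the two facts used above: that $\Hom(S,-)$ of an essential extension is an isomorphism, and that $\soc \tuH^{\inf}(I_{j})\subseteq \ker \tuH^{\inf}(g_{j})$. These are routine socle/essential-extension facts in $\Mod H^{0}$, so once the bookkeeping with the values of $\inf$ and the long exact cohomology sequence is arranged the argument is short; alternatively, one may simply invoke the duality with Corollary~\ref{sppj resolution corollary 1}, replacing projective covers and radicals by injective envelopes and socles throughout.
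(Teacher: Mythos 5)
Your proposal is correct and is exactly the deduction the paper intends: the paper states this corollary as an immediate consequence of Lemma \ref{ifij resolution lemma} (dually to how Corollary \ref{sppj resolution corollary 1} follows from Lemma \ref{sppj resolution lemma}), and your key claim --- that minimality forces $\Hom(S,\tuH^{\inf}(\delta_j))=0$ whenever $\inf I_j=\inf I_{j-1}$, via $\soc\tuH^{\inf}(I_j)\subseteq\image\tuH^{\inf}(f_j)=\Ker\tuH^{\inf}(g_j)$ --- is precisely the socle/essential-extension argument that collapses the kernel, cokernel, and $\tuH(Z'_i)$ cases to $\Hom(S,\tuH^{\inf}(I_{-i}))$.
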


\subsection{The Bass-Papp theorem}\label{Bass-Papp section}

Recall that 
the Bass-Papp theorem claim that 
an ordinary algebra $A$ is right Noetherian 
if and only if any direct sum of injective (right) $A$-module is injective 
(see  \cite[Theorem 3.46]{Lam}). 
Shaul \cite{Shaul:Injective} gave a DG-version of it. 
We provide another proof by using the  method developed here.  

\begin{theorem}[{Shaul \cite{Shaul:Injective}}]\label{Bass-Papp theorem}
A connective DG-algebra $R$ is right piecewise Noetherian 
if and only if the class $\cI$ is closed under direct sums in $\cD(R)$. 
\end{theorem}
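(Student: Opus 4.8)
The plan is to transport the classical Bass–Papp argument to the DG setting via the equivalence $\tuH^{0}\colon \cI \xrightarrow{\cong} \Inj H^{0}$ of Theorem \ref{Shaul theorem}.(4), together with the cohomology computation of Theorem \ref{Shaul theorem}.(1). First I would reduce everything about $\cI$ to statements about $\Inj H^{0}$. The key observation is that for a family $\{I_\lambda\}$ in $\cI$ the coproduct $\bigoplus_\lambda I_\lambda$ in $\sfD(R)$ satisfies $\tuH^{n}(\bigoplus_\lambda I_\lambda) = \bigoplus_\lambda \tuH^{n}(I_\lambda)$, which vanishes for $n\neq 0$ and equals $\bigoplus_\lambda \tuH^{0}(I_\lambda)$ for $n=0$; hence $\bigoplus_\lambda I_\lambda$ lies in the standard heart $\Mod H^{0}$ and is, as an object there, the module $\bigoplus_\lambda \tuH^{0}(I_\lambda)$. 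Therefore $\bigoplus_\lambda I_\lambda \in \cI$ if and only if this $H^{0}$-module $\bigoplus_\lambda \tuH^{0}(I_\lambda)$ is again injective and lies in the essential image of $\tuH^0$ — but by Theorem \ref{Shaul theorem}.(4) every injective $H^{0}$-module is in that essential image, so the condition reduces cleanly to: a direct sum of injective $H^{0}$-modules is injective.

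For the ``if'' direction I would argue contrapositively in the usual Bass–Papp style: suppose $R$ is not right piecewise Noetherian. Then either $H^{0}$ is not right Noetherian, or some $H^{-i}$ ($i>0$) is not finitely generated over $H^{0}$. In the first case the classical Bass–Papp theorem (\cite[Theorem 3.46]{Lam}) produces a family of injective $H^{0}$-modules whose direct sum is not injective; by the reduction above, the corresponding family in $\cI$ has a coproduct not in $\cI$. In the second case — which has no classical analogue — I expect to use Theorem \ref{Shaul theorem}.(1): for $I\in\cI$ with $\tuH^0(I)=J$ one has $\tuH^{n}(\RHom(N,I))\cong\Hom_{H^0}(\tuH^{-n}(N),J)$, so testing against $N=R$ sees all the $H^{-i}$ at once. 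If $H^{-i}$ is not finitely generated, choose an $H^0$-injective cogenerator $J$ and an ascending chain in $H^{-i}$; one builds a family $I_\lambda\in\cI$ (each with $\tuH^0=J$) such that a class in $\tuH^{-i}(\RHom(R,\prod I_\lambda))$ fails to come from $\tuH^{-i}(\RHom(R,\bigoplus I_\lambda))$, forcing $\bigoplus I_\lambda$ out of $\cI$ because membership in $\cI$ forces concentration in cohomological degree $0$ of $\RHom(R,-)$ paired against $H$. The cleanest route here is probably to observe that $\bigoplus_\lambda I_\lambda\in\cI$ would force $\RHom(R,\bigoplus_\lambda I_\lambda)$ to have cohomology only in degree $0$, i.e. $\bigoplus_\lambda\tuH^{-i}(I_\lambda)$-type obstructions must vanish, and to contradict this using a non-finitely-generated $H^{-i}$.

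For the ``only if'' direction, assume $R$ is right piecewise Noetherian. The reduction shows it suffices to prove that a direct sum of injective $H^{0}$-modules is injective, and $H^0$ is right Noetherian by hypothesis, so this is exactly the classical Bass–Papp theorem again; then invoke Theorem \ref{Shaul theorem}.(4) to land back inside $\cI$. The main obstacle I anticipate is the ``if'' direction in the case where Noetherianity of $H^{0}$ holds but some $H^{-i}$ is infinitely generated: here one must genuinely exploit the DG-structure, and the care required is in exhibiting a concrete family in $\cI$ — via the functor $\psi_R$ and Lemma \ref{201711241633} — whose coproduct fails the cohomological concentration characterization of $\cI$, using that $\RHom(R,-)$ applied to an object of $\cI$ records $\tuH^{\bullet}$ of that object against all of $H$ through Theorem \ref{Shaul theorem}.(1). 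Everything else is a faithful translation of the classical proof through the two equivalences.
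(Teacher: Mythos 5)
Your opening ``key observation'' is false, and the whole reduction collapses with it. Objects of $\cI$ do \emph{not} have cohomology concentrated in degree $0$: by construction $I=\psi_{R}(K)=\Hom^{\bullet}_{R^{0}}(R,K)$, and by Corollary \ref{201711241612} together with Theorem \ref{Shaul theorem}.(3) one has $\tuH(I)\cong \Hom^{\bullet}_{H^{0}}(H,\tuH^{0}(I))$, so $\tuH^{i}(I)\cong \Hom_{H^{0}}(H^{-i},\tuH^{0}(I))$, which is in general nonzero for every $i>0$ with $H^{-i}\neq 0$ (objects of $\cI$ live in $\sfD^{[0,-\inf R]}(R)$, not in the heart). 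Consequently it is not true that ``$\bigoplus_{\lambda}I_{\lambda}\in\cI$ iff $\bigoplus_{\lambda}\tuH^{0}(I_{\lambda})$ is injective over $H^{0}$'': if that reduction held, Theorem \ref{Bass-Papp theorem} would assert that closure of $\cI$ under direct sums is equivalent to $H^{0}$ being right Noetherian, with no condition on the $H^{-i}$, which is precisely the content the DG statement adds. Your later remark that ``membership in $\cI$ forces concentration in cohomological degree $0$ of $\RHom(R,-)$'' rests on the same false premise ($\RHom(R,I)\cong I$ has cohomology in all degrees $0\le i\le -\inf R$), so the sketch for the non-finitely-generated $H^{-i}$ case does not stand as written either.

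Both directions are affected. For ``only if'' it does not suffice to quote classical Bass--Papp for $H^{0}$ and then ``land back in $\cI$'' via Theorem \ref{Shaul theorem}.(4): that theorem gives you \emph{some} object of $\cI$ with $\tuH^{0}=\bigoplus_{\lambda}J_{\lambda}$, but you must still identify the actual direct sum $\bigoplus_{\lambda}I_{\lambda}$ with it in $\sfD(R)$. The paper does this with the explicit models $G(J)=\psi_{R}(E_{R^{0}}(J))$ and the canonical map $f\colon\bigoplus_{\lambda}G(J_{\lambda})\to G(\bigoplus_{\lambda}J_{\lambda})$, whose $i$-th cohomology is the canonical map $\bigoplus_{\lambda}\Hom_{H^{0}}(H^{-i},J_{\lambda})\to\Hom_{H^{0}}(H^{-i},\bigoplus_{\lambda}J_{\lambda})$; this is an isomorphism in \emph{every} degree exactly because $H^{0}$ is Noetherian and each $H^{-i}$ is finitely generated -- the part your argument never verifies. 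For ``if'', the correct mechanism is the converse of this: closure under sums puts $I=\bigoplus_{\lambda}G(J_{\lambda})$ in $\cI$, whence $\bigoplus_{\lambda}J_{\lambda}$ is injective (classical Bass--Papp gives $H^{0}$ Noetherian), and then the equivalence $\tuH^{0}\colon\cI\cong\Inj H^{0}$ forces $f$ above to be an isomorphism, so the displayed canonical map is bijective for all $i$ and all families; finite generation of each $H^{-i}$ then follows from a module-theoretic lemma (Lemma \ref{noeth fg lemma} in the paper, proved with an ascending chain and injective hulls of successive quotients). You correctly sensed that the $H^{-i}$ case ``has no classical analogue'' and that the obstruction should be a failure of $\Hom$ out of $H^{-i}$ to commute with direct sums, but as proposed the argument has no valid criterion for membership in $\cI$ to exploit, so this step is a genuine gap rather than a routine completion.
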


\begin{proof}
For $J \in \Inj H^{0}$, we set  
\[
G(J): = \psi_{R} (E_{R^{0}}(J) )= \Hom_{R^{0}}^{\bullet}(R, E_{R^{0}}(J))
\] 
where $E_{R^{0}}(J)$ denotes the injective-hull of $J$ as $R^{0}$-module. 
Then,  $G(J)$ belongs to $\cI$. 
Moreover, 
by Corollary \ref{201711241612}.(2) and the proof of Lemma \ref{201711241633}, 
we have  $\tuH^{0} G(J)  \cong J$ for $J \in \Inj H^{0}$.

We prove ``only if" part by showing that 
if  a small family $\{J_{\lambda}\}_{\lambda \in \Lambda}$ in $\Inj H^{0}$ is given, 
then the DG-$R$-module $\bigoplus_{\lambda \in \Lambda} G(J_{\lambda})$ is quasi-isomorphic to 
$G(J)$ for some $J \in \Inj H^{0}$. What we  actually prove is that  if we set  $J:=\bigoplus_{\lambda \in \Lambda} J_{\lambda}$, 
then there exists a quasi-isomorphism $f:\bigoplus_{\lambda \in \Lambda}G(J_{\lambda}) \xrightarrow{\sim} G(J)$. 

Let $\iota_{\lambda}: J_{\lambda} \to J$ be a canonical inclusion. 
It can be extended to a homomorphism $\underline{\iota}_{\lambda}: E_{R^{0}}(J_{\lambda}) \to E_{R^{0}}(J)$ 
between injective-hulls. 
Then, the morphism $f$ induced from the collection   $ \{G(\underline{\iota}_{\lambda})\}_{\lambda \in \Lambda}$ 
satisfies the desired property. 
\[
f:= (G(\underline{\iota}_{\lambda}))_{\lambda \in \Lambda}: \bigoplus_{\lambda \in \Lambda} G(J_{\lambda}) \to G(J)
\]
Indeed, taking the cohomology group of $G(\underline{\iota}_{\lambda})$, 
we obtain the morphism below
\[
\Hom(H,\iota_{\lambda}): \Hom_{H^{0}}(H, J_{\lambda}) \to \Hom_{H^{0}}(H,J). 
\]
Therefore, we have the equality of morphisms 
\[
\tuH(f) = (\Hom(H, \iota_{\lambda}))_{\lambda \in \Lambda}: 
\bigoplus_{\lambda \in \Lambda}\Hom_{H^{0}}(H, J_{\lambda}) 
\to \Hom_{H^{0}}(H,J).
\]
Finally, we observe that the right hand side is ensured to be an isomorphism 
by the assumption that $R$ is right piecewise Noetherian.

We prove ``if" part.
Let $\{J_{\lambda}\}_{\lambda \in \Lambda}$ be a small family in $\Inj H^{0}$. 
We set $J := \bigoplus_{\lambda \in \Lambda}J_{\lambda}$. 
By the assumption, 
the direct sum   $I := \bigoplus_{\lambda \in \Lambda} G(J_{\lambda})$ belongs to $\cI$. 
Thus  $J \cong \tuH^{0}(I)$ is 
an injective $H^{0}$-module by Theorem \ref{Shaul theorem}.  
We have shown that the class of  injective $H^{0}$-modules is closed under direct sum. 
Thus, by Bass-Papp theorem for ordinary rings, we conclude that $H^{0}$ is right Noetherian.

Since $J$ belongs to $\Inj H^{0}$, 
the canonical morphism $f: I \to G(J)$ defined as  above become an isomorphism 
after taking 
the $0$-th cohomology morphism $\tuH^{0}(f)$. 
Therefore, $f$ is an isomorphism by Theorem \ref{Shaul theorem}. 

Observe  that
the $i$-th cohomology morphism of $f$ is a canonical morphism  
induced from universal property of direct product
\[
\begin{split}
\bigoplus_{\lambda \in \Lambda} \Hom_{H^{0}}(H^{-i}, J_{\lambda}) 
\cong \tuH^{i}(I) 
\xrightarrow{\tuH^{i}(f) }
\tuH^{i}(G(J)) \cong 
\Hom_{H^{0}}(H^{-i}, \bigoplus_{\lambda \in \Lambda}J_{\lambda}). 
\end{split}
\]
It follows from 
Lemma \ref{noeth fg lemma} below that $H^{-i}$ is finitely generated. 
\end{proof}

\begin{lemma}\label{noeth fg lemma}
Let $A$ be a right noetherian algebra. 
Then an $A$-module $M$ is finitely generated if and only if 
for any family $\{J_{\lambda}\}_{\lambda \in \Lambda}$ of injective $A$-modules, 
the canonical morphism below is an isomorphism
\[ 
\gamma: \bigoplus_{\lambda \in \Lambda}\Hom_{A}(M, J_{\lambda}) \rightarrow \Hom_{A}(M, \bigoplus_{\lambda \in \Lambda} J_{\lambda}). 
\]
\end{lemma}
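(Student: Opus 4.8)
The plan is to prove the two implications separately, with right noetherianness of $A$ entering in a controlled way in each.

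For the ``only if'' direction, suppose $M$ is finitely generated. Since $A$ is right noetherian, $M$ is then finitely presented, so there is an exact sequence $A^{\oplus m} \to A^{\oplus n} \to M \to 0$. Applying the contravariant left exact functor $\Hom_A(-,J)$ to this sequence for $J = J_\lambda$ gives, for each $\lambda$, an exact sequence $0 \to \Hom_A(M,J_\lambda) \to \Hom_A(A^{\oplus n},J_\lambda) \to \Hom_A(A^{\oplus m},J_\lambda)$. Summing over $\lambda$ (direct sum being exact) and comparing with the analogous left exact sequence obtained by applying $\Hom_A(-,\bigoplus_\lambda J_\lambda)$, I would get a commutative ladder with exact rows in which the two comparison maps on the free modules are the evident isomorphisms $\bigoplus_\lambda \Hom_A(A^{\oplus k},J_\lambda) \cong \bigoplus_\lambda J_\lambda^{\oplus k} \cong \Hom_A(A^{\oplus k},\bigoplus_\lambda J_\lambda)$. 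A short diagram chase (or the five lemma) then forces $\gamma$ to be an isomorphism. Injectivity of the $J_\lambda$ is not used here.

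For the ``if'' direction I would argue by contraposition. If $M$ is not finitely generated, choose inductively $x_1,x_2,\dots \in M$ with $x_{n+1} \notin M_n := \textstyle\sum_{i=1}^{n} x_i A$, which is possible since no $M_n$ can equal $M$, and set $M_\infty := \bigcup_{n\geq 1} M_n$. For each $n$ let $E_n$ be the injective hull of $M_\infty/M_n$ and let $f_n : M_\infty \to E_n$ be the quotient map $M_\infty \to M_\infty/M_n$ followed by the inclusion into the hull. Every $x \in M_\infty$ lies in some $M_N$, hence is killed by $f_n$ for all $n \geq N$, so the $f_n$ assemble into a homomorphism $g : M_\infty \to \bigoplus_n E_n$. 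Because $A$ is right noetherian the direct sum $\bigoplus_n E_n$ is injective, so $g$ extends to a homomorphism $\tilde g : M \to \bigoplus_n E_n$. For each $n$, the $n$-th component of $\tilde g$ restricts on $M_\infty$ to $f_n$, and $f_n(x_{n+1}) \neq 0$ since $x_{n+1} \notin M_n$; thus $\tilde g$ has infinitely many nonzero components, so it does not factor through any finite sub-sum, i.e.\ $\tilde g \notin \operatorname{im}\gamma$ for the family $\{E_n\}_{n\geq 1}$. This contradicts the hypothesis that $\gamma$ is always an isomorphism, so $M$ must be finitely generated.

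The routine parts are checking that $g$ actually lands in $\bigoplus_n E_n$ rather than the product, and identifying $\operatorname{im}\gamma$ with the set of homomorphisms into $\bigoplus_\lambda J_\lambda$ all but finitely many of whose components vanish. The one substantive ingredient is the use of right noetherianness to know that an arbitrary direct sum of injective $A$-modules is again injective, which is exactly what permits extending $g$ from $M_\infty$ to $M$; this is the only place the hypothesis on $A$ is genuinely needed in the second implication, and it is the step I would flag as the crux.
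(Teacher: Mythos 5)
Your proof is correct and follows essentially the same strategy as the paper's: both directions agree, and in the ``if'' direction you build the same kind of counterexample family from a strictly increasing chain of finitely generated submodules, using noetherianness only to make the direct sum of injectives injective so that the map extends from the union to all of $M$. The one cosmetic difference is that you take $E_n = E_A(M_\infty/M_n)$, so each $f_n$ is defined on all of $M_\infty$ from the start, whereas the paper uses the hulls $E_A(M_i/M_{i-1})$ of the successive quotients and must first extend each $f_i$ from $M_i$ to the union; your variant slightly streamlines that step.
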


\begin{proof}
``only if" part is clear. We prove ``if" part by showing that 
if $M$ is infinitely generated, then 
the  canonical morphism $\gamma$ happens to become not surjective.

An infinitely generated $A$-module $M$ has  
 a strictly increasing sequence of submodules of $M$
\[
0 = : M_{0} \subsetneq M_{1} \subsetneq M_{2} \subsetneq \cdots M.
\]
Let $J_{i} := E_{A}(M_{i}/M_{i-1})$ be the injective-hull of $M_{i}/M_{i-1}$. 
We set $N := \bigcup_{i\geq 1} M_{i}, \ J = \bigoplus_{i \geq 1} J_{i}$. 
Then the composite morphism of the canonical morphisms  
$f_{i}: M_{i} \to M_{i}/M_{i-1} \to J_{i}$ 
extends to a morphism $f'_{i}: N \to J_{i}$ of $A$-modules.
Observe that the collection $\{f'_{i}\}$ induces a morphism $f: N \to J$. 
Since $A$ is right Noetherian, $J$ is injective. 
Therefore $f$ extends to $\tilde{f}: M \to J$. 

Let $\pi_{i}: J \to J_{i}$ be the canonical projection. 
Then, we have $\pi _{i}(\tilde{f}|_{M_{i}}) = f_{i}$. 
\[
f_{i} : M_{i} \hookrightarrow M \xrightarrow{\tilde{f}} J \xrightarrow{\pi_{i}} J_{i}. 
\]
Thus in particular $\pi_{i} \tilde{f} \neq 0$ for any $i \geq 1$. 
This shows that $\tilde{f}$ does not  belong to the image of $\gamma$. 
\end{proof}

\section{Sup-flat resolutions of DG-modules}\label{spft resolution}

\subsection{Flat dimension of $M\in \sfD(R)$ after Yekutieli}

We denote the opposite DG-algebra of $R$ by $R^{\op}$ 
and identify left DG-$R$-modules with (right) DG-$R^{\op}$-modules.

\begin{definition}[{\cite[Definition 2.4]{Yekutieli}}]
Let  $a \leq b \in \{ -\infty \} \cup \ZZ \cup \{\infty\}$. 

\begin{enumerate}[(1)] 
\item 
An object $M \in \sfD(R)$ 
is said 
to have \textit{flat concentration} $[a,b]$ 
if  the functor  $F = M \lotimes_{R}-$ 
sends $\sfD^{[m,n]}(R^{\op})$ to 
$\sfD^{[a+m,b+n ]}(\kk)$ 
for any $m \leq n \in \{-\infty\} \cup \ZZ \cup \{\infty\}$.
\[
F(\sfD^{[m,n]}(R^{\op})) \subset \sfD^{[a+m, b+n]}(\kk).
\]

\item 
An object $M \in \sfD(R)$ 
is said 
to have \textit{strict flat concentration} $[a,b]$
if it has flat concentration $[a,b]$ 
and does't have projective concentration $[c,d]$ such that $[c,d] \subsetneq [a,b]$. 

\item 
An object $M \in \sfD(R)$ 
is said 
to have flat dimension $d \in \NN$ 
if it has strict projective concentration $[a,b]$ for $a,b\in \ZZ$.
such that $d= b-a$.

In the case where, $M$ does't have a finite interval as  flat concentration, 
it is said to have infinite flat dimension.

We denote the flat dimension of $M$ by $\fd M$. 
\end{enumerate}

\end{definition}

\begin{remark}
Let $R$ be an ordinary algebra. 
Avramov-Foxby \cite{Avramov-Foxby:Homological dimensions} introduced 
another  flat dimension for a complex $M \in \sfC(R)$. 
If we denote by $\textup{AF}\fd M$ the projective dimension of Avramov-Foxby, 
then it is easy to see that $\textup{AF}\fd M = \fd M -\sup M$. 
\end{remark}

The following flat version of Lemma \ref{201711191911} follows from 
the isomorphism $M \lotimes_{R} R \cong M$. 

\begin{lemma}\label{201712191846}
If $M \in \sfD(R)$ has finite flat dimension, then it belongs to $\sfD^{< \infty}(R)$. 
\end{lemma}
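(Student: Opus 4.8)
The plan is to imitate the one-line argument used for the injective analogue, reading the conclusion off the defining property of finite flat dimension. Assume $M$ has finite flat dimension, so by definition it has a (strict) flat concentration $[a,b]$ with $a,b \in \ZZ$; explicitly, the functor $F := M \lotimes_{R} -$ satisfies $F(\sfD^{[m,n]}(R^{\op})) \subset \sfD^{[a+m,\,b+n]}(\kk)$ for all $m \le n$.

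The key observation is that $R$ is connective, so $\tuH^{>0}(R)=0$; since $R^{\op}$ has the same underlying cohomology groups, the rank-one free module $R^{\op} \in \sfD(R^{\op})$ lies in $\sfD^{\le 0}(R^{\op}) = \sfD^{[-\infty,0]}(R^{\op})$. Applying the concentration inclusion with $m = -\infty$ and $n = 0$ gives $F(R^{\op}) \in \sfD^{[-\infty,\,b]}(\kk) = \sfD^{\le b}(\kk)$. Finally, under the identification of left DG-$R$-modules with right DG-$R^{\op}$-modules the object $R^{\op}$ corresponds to $R$ regarded as a left module, and $M \lotimes_{R} R \cong M$ canonically in $\sfD(\kk)$; hence $M \cong F(R^{\op}) \in \sfD^{\le b}(\kk)$, i.e.\ $\tuH^{n}(M) = 0$ for $n > b$. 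Since the cohomology of $M$ as a complex of $\kk$-modules agrees with its cohomology as a DG-$R$-module, this means $\sup M \le b < \infty$, so $M \in \sfD^{<\infty}(R)$.

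I do not anticipate any real obstacle. The only points requiring a little care are purely clerical: keeping track of the side on which the modules act so that the relevant tensor product is literally $M \lotimes_{R} R$ rather than something needing a swap, and applying the concentration estimate to the half-line $[-\infty,0]$ (and not to a finite window), so that the resulting bound is a genuine upper bound on $\sup M$ with no constraint from below.
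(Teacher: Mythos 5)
Your proposal is correct and is essentially the paper's own argument: the paper derives the lemma in one line from the isomorphism $M \lotimes_{R} R \cong M$ together with $R \in \sfD^{\leq 0}(R^{\op})$ and the flat concentration bound, exactly as you do. The only difference is that you spell out the bookkeeping (taking $m=-\infty$, $n=0$) that the paper leaves implicit.
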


The following lemma is deduced from the property of the derived tensor products.
\begin{lemma}\label{201712191857}
Let $M \in \sfD^{< \infty}(R)$ 
and $F:= M \lotimes_{R}-$. 
Then for all $m \leq n$, 
\[
F(\sfD^{[m,n]}(R^{\op})) \subset \sfD^{[-\infty, n + \sup M ]} (\kk)
\] 
and there is $N \in \sfD^{[m,n]}(R^{\op})$ such that $\tuH^{n + \sup M}(F(N) ) \neq 0$. 
\end{lemma}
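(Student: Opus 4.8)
The plan is to follow the proof of Lemma~\ref{201712182258} essentially verbatim, with $\RHom_R(M,-)$ replaced by $M\lotimes_R-$ and the elementary bounds on $\Hom$-spaces replaced by the elementary bounds on derived tensor products over a connective DG-algebra. Put $a:=\sup M<\infty$; we may assume $M\neq 0$, so that $\tuH^{a}(M)\neq 0$.

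For the containment, fix $N\in\sfD^{[m,n]}(R^{\op})\subseteq\sfD^{\le n}(R^{\op})$. The only input needed is the standard fact that over a connective DG-algebra one has $\sup(M\lotimes_R N)\le\sup M+\sup N$. (This is immediate from the observation that, since $R^{>0}=0$, every object of $\sfD^{\le c}(R)$ admits a semifree resolution concentrated in cohomological degrees $\le c$; tensoring two such resolutions produces a DG-$\kk$-module concentrated in degrees $\le \sup M+\sup N$.) Applying this with $\sup N\le n$ gives $\tuH^{\ell}(M\lotimes_R N)=0$ for $\ell>n+a$, i.e. $F(N)\in\sfD^{[-\infty,\,n+a]}(\kk)$, which is the first assertion.

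For the nonvanishing, I take $N:=H^{0}[-n]$, where $H^{0}=\tuH^{0}(R)$ is viewed as a DG-$R^{\op}$-module via the canonical surjection $R\to H^{0}$; then $N\in\sfD^{[n,n]}(R^{\op})\subseteq\sfD^{[m,n]}(R^{\op})$ and $F(N)=(M\lotimes_R H^{0})[-n]$, so $\tuH^{n+a}(F(N))=\tuH^{a}(M\lotimes_R H^{0})$ and it suffices to prove this is nonzero. This I would extract from two truncation triangles together with the bound of the previous paragraph: tensoring $\sigma^{<a}M\to M\to\tuH^{a}(M)[-a]\xrightarrow{+1}$ with $H^{0}$ and taking $\tuH^{a}$ (using $\sigma^{<a}M\in\sfD^{\le a-1}(R)$) reduces the claim to $\tuH^{0}(\tuH^{a}(M)\lotimes_R H^{0})\cong\tuH^{a}(M)$; and tensoring $\sigma^{<0}R\to R\to H^{0}\xrightarrow{+1}$ (note $\sigma^{<0}R\in\sfD^{\le-1}(R^{\op})$, since $\tuH^{0}(R)=H^{0}$) with the right $H^{0}$-module $\tuH^{a}(M)$ and taking $\tuH^{0}$ gives exactly this isomorphism, because $\tuH^{a}(M)\lotimes_R\sigma^{<0}R\in\sfD^{\le-1}(\kk)$ and $\tuH^{a}(M)\lotimes_R R=\tuH^{a}(M)$. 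Alternatively, one may simply quote the standard top-degree Künneth identification $\tuH^{\sup M+\sup N}(M\lotimes_R N)\cong\tuH^{\sup M}(M)\otimes_{H^{0}}\tuH^{\sup N}(N)$ over a connective DG-algebra, applied to $N=H^{0}[-n]$.

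There is no genuine obstacle here. The single nonformal ingredient is the behaviour of $\lotimes_R$ with respect to the standard $t$-structure over a connective DG-algebra — boundedness above and the top-degree Künneth isomorphism — which is classical; everything else is the same bookkeeping with shifts and truncation triangles as in Lemma~\ref{201712182258}.
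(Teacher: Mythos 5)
Your proof is correct and follows essentially the same route as the paper: the upper bound via semifree/DG-projective resolutions bounded above, and the nonvanishing with the same test object $N=H^{0}[-n]$, the paper simply quoting the top-degree identification $\tuH^{\sup M}(M\lotimes_R H^{0})\cong\tuH^{\sup M}(M)\otimes_{H^{0}}H^{0}$ where you derive it from truncation triangles. The extra detail you supply is fine but not a different method.
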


\begin{proof}
The first statement follows from computation of $M \lotimes_{R} N$ by using DG-projective resolution of $M$ 
(see e.g. \cite{Positselski}). 

We set $b = \sup M$. Then $\tuH^{b}(M \lotimes_{R} H^{0} ) \cong \tuH^{b}(M) \otimes_{H^{0}} H^{0} \cong \tuH^{b}(M)$. 
Thus, $N := H^{0}[-n]$ has the desired property. 
\end{proof}

\subsection{The class $\cF$ and sup-flat (spft) resolution}

The class $\cF$ plays the role of flat modules in the usual flat resolutions  for  sup-flat resolutions.
However, an explicit description like $\cP, \cI$ has not been obtained at now. 

\begin{definition}
We denote by  $\cF \subset \sfD(R)$  
the full subcategory of those object $F \in \sfD(R)$ such that $\fd F = 0$. 
\end{definition}

The basic properties of $\cF$ are summarized in the lemma below. 

\begin{lemma}\label{basic of cF lemma} 
Let $F \in \cF$. 
Then the following statements hold. 
\begin{enumerate}[(1)]
\item 
Let $N \in \sfD(R^{\op})$. 
The canonical morphism $N \to \sigma^{\geq n} N$ induces an isomorphism below for $n \in \ZZ$. 
\[
\tuH^{n}(F \lotimes_{R} N) \to \tuH^{n}(F \lotimes_{R} \sigma^{\geq n} N)\]

\item 
Let $N \in \sfD(R^{\op})$. 
The canonical morphism $\sigma^{\leq n} N \to N$ induces an isomorphism below for $n \in \ZZ$. 
\[
\tuH^{n}(F \lotimes_{R} \sigma^{\leq n} N) \to \tuH^{n}(F \lotimes_{R}  N)
\]

\item 
For $N \in \Mod H^{0}$, we have 
\[
\tuH^{n}(F \lotimes_{R} N) =
\begin{cases}
\tuH^{0}(F) \otimes_{H^{0}} N & n = 0 \\ 
0  & n \neq 0
\end{cases}
\]

\item 
For $N \in \sfD(R^{\op})$, 
we have $\tuH^{n}(F \lotimes_{R} N) \cong \tuH^{0}(F) \otimes_{H^{0}}\tuH^{n}(N)$.

\item 
$\tuH^{0}(F)$ is a flat $H^{0}$-module 
and $\tuH(F) \cong \tuH^{0}(F) \otimes_{H^{0}} H$.  

\end{enumerate}
\end{lemma}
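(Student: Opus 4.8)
The plan is to derive all five assertions from the single property that $F$ — which we may assume nonzero, the case $F=0$ being vacuous — has flat concentration $[0,0]$, i.e.\ $F\lotimes_{R}\sfD^{[m,n]}(R^{\op})\subseteq\sfD^{[m,n]}(\kk)$ for all $m\le n$; this is the substance of $F\in\cF$, and it is the only input used below. The assertions are proved in turn, with (1) and (2) first. For (1) I would apply the triangulated functor $F\lotimes_{R}-$ to the truncation triangle $\sigma^{<n}N\to N\to\sigma^{\ge n}N\to$ and pass to cohomology. Since $\sigma^{<n}N\in\sfD^{<n}(R^{\op})$, flat concentration gives $F\lotimes_{R}\sigma^{<n}N\in\sfD^{<n}(\kk)$, so $\tuH^{n}(F\lotimes_{R}\sigma^{<n}N)=\tuH^{n+1}(F\lotimes_{R}\sigma^{<n}N)=0$, and the long exact sequence then forces $\tuH^{n}(F\lotimes_{R}N)\to\tuH^{n}(F\lotimes_{R}\sigma^{\ge n}N)$ to be an isomorphism. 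Part (2) is entirely dual: apply $F\lotimes_{R}-$ to $\sigma^{\le n}N\to N\to\sigma^{>n}N\to$ and use $F\lotimes_{R}\sigma^{>n}N\in\sfD^{>n}(\kk)$ to kill the $\tuH^{n-1}$ and $\tuH^{n}$ of that term.

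For (3), let $N\in\Mod H^{0}$, regarded as concentrated in degree $0$. Parts (1) and (2) immediately give $\tuH^{n}(F\lotimes_{R}N)=0$ for $n\ne 0$ (for $n>0$ one has $\sigma^{\ge n}N=0$, for $n<0$ one has $\sigma^{\le n}N=0$), and, applied to a short exact sequence $0\to A\to B\to C\to0$ in $\Mod H^{0}$, they make $\tuH^{-1}(F\lotimes_{R}C)$ and $\tuH^{1}(F\lotimes_{R}A)$ vanish, so that $\tuH^{0}(F\lotimes_{R}-)$ is an exact functor on $\Mod H^{0}$. Since $R$ is connective, $H^{0}=\sigma^{\ge 0}R$, so (1) with $N=R$ yields $\tuH^{0}(F\lotimes_{R}H^{0})\cong\tuH^{0}(F\lotimes_{R}R)=\tuH^{0}(F)$; as $\lotimes_{R}$ and $\tuH^{0}$ commute with coproducts, the canonical comparison map $\tuH^{0}(F)\otimes_{H^{0}}N\to\tuH^{0}(F\lotimes_{R}N)$ is thus an isomorphism for every free $N$. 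Both $\tuH^{0}(F)\otimes_{H^{0}}-$ and $\tuH^{0}(F\lotimes_{R}-)$ being right exact, a free presentation $Q_{1}\to Q_{0}\to N\to0$ exhibits both sides as the cokernel of the (compatibly identified) map on $Q_{0},Q_{1}$, which proves (3).

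Parts (4) and (5) are then formal. For (4), applying (1) and then (2) gives $\tuH^{n}(F\lotimes_{R}N)\cong\tuH^{n}(F\lotimes_{R}\,\sigma^{\le n}\sigma^{\ge n}N)=\tuH^{0}(F\lotimes_{R}\tuH^{n}(N))$, using $\sigma^{\le n}\sigma^{\ge n}N=\tuH^{n}(N)[-n]$, and by (3) this equals $\tuH^{0}(F)\otimes_{H^{0}}\tuH^{n}(N)$. For (5), the exactness established in (3), transported along the isomorphism of (3), shows that $\tuH^{0}(F)\otimes_{H^{0}}-$ is exact, i.e.\ $\tuH^{0}(F)$ is flat over $H^{0}$; and taking $N=R$ in (4), together with $F\lotimes_{R}R\cong F$ and $\tuH^{n}(R)=H^{n}$, yields $\tuH^{n}(F)\cong\tuH^{0}(F)\otimes_{H^{0}}H^{n}$, an isomorphism of graded $H$-modules (the $H$-action matching by naturality of the comparison maps).

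The hard part — or rather, the only part with genuine content — is (1) and (2): the truncation-plus-flat-concentration vanishing argument. Everything after that is standard homological bookkeeping, the one spot requiring care being the upgrade inside (3) from ``isomorphism on free $H^{0}$-modules'' to ``isomorphism on all of $\Mod H^{0}$'', which must be run through the natural comparison map and the right-exactness of both functors rather than a bare comparison of values.
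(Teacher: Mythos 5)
Your proof is correct and follows essentially the same route as the paper: truncation triangles plus the flat-concentration vanishing for (1) and (2), the composite $\sigma^{\le n}\sigma^{\ge n}$ reduction for (4), and (5) as a formal consequence of (3) and (4). The only difference is that you spell out the $n=0$ case of (3) (free modules, coproducts, right-exactness), which the paper simply cites as well known.
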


\begin{remark}
Lurie \cite[Section 7.2]{Lurie:HA} studied the class $\cF$ for  connective $\mathbb{E}_{1}$-algebras 
and characterized flat $\mathbb{E}_{1}$-modules by the condition (5) of above lemma  in \cite[Theorem 7.2.2.15]{Lurie:HA}. 
\end{remark}

\begin{proof}
(1) Since $F \lotimes_{R} \sigma^{< n} N$ belongs to $\sfD^{< n}(R)$, 
we have $\tuH^{i}( F \lotimes_{R} \sigma^{< n} N) = 0$ for $i = n, n+1$. 
Now the desired isomorphism is derived from the canonical exact triangle 
\[
\sigma^{< n} N \to N \to \sigma^{\geq n}N \to. 
\]

(2) is proved in a similar way of (1). 

(3) The case $n= 0$ is well-known. 
The case $n\neq 0$ follows from (1) and (2). 

(4) Combining (1), (2) and (3), we obtain the desired isomorphism as below 
\[
\begin{split}
\tuH^{n}(F \lotimes_{R} N) 
& \cong \tuH^{n}(F \lotimes_{R} \sigma^{\leq n}\sigma^{\geq n} N ) \\
& \cong \tuH^{n}(F \lotimes_{R}  \tuH^{n}(N)[-n] ) \\
& \cong \tuH^{0}(F \lotimes_{R} \tuH^{n}(N) ) \cong \tuH^{0}(F) \otimes_{H^{0}} \tuH^{n}(N).
\end{split}
\]

(5) The fist statement follows from (3). 
The second statement follows from (4).
\end{proof}

Using standard argument of triangulated categories,  
we obtain the following lemma.

\begin{lemma}\label{20171220037}
Let $a ,b \in \ZZ$ such that $a \leq b$. 
Then for any object $M \in \cF[a] \ast \cF[a + 1] \ast \cdots \ast \cF[b]$, 
we have $\fd M \leq b-a$.  
\end{lemma}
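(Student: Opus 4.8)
The plan is to imitate the proof of Corollary~\ref{20171219208}, the projective analogue of this statement: there the decisive fact is that every object of $\cP$ has projective dimension $0$, and the conclusion follows by a standard triangulated argument together with an induction; here the decisive fact will be that every object of $\cF$ has flat concentration $[0,0]$, which is immediate from Lemma~\ref{basic of cF lemma}(4) — for $F\in\cF$ and $N\in\sfD^{[m,n]}(R^{\op})$ one has $\tuH^{i}(F\lotimes_{R}N)\cong\tuH^{0}(F)\otimes_{H^{0}}\tuH^{i}(N)$, which vanishes for $i\notin[m,n]$, so $F\lotimes_{R}N\in\sfD^{[m,n]}(\kk)$. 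Concretely, I would prove by induction on $b-a$ the slightly more precise assertion: if $M\in\cF[a]\ast\cF[a+1]\ast\cdots\ast\cF[b]$, then $M\lotimes_{R}N\in\sfD^{[m-b,\,n-a]}(\kk)$ for every $N\in\sfD^{[m,n]}(R^{\op})$ and all $m\le n$ in $\{-\infty\}\cup\ZZ\cup\{\infty\}$; equivalently $M$ has flat concentration $[-b,-a]$, and therefore $\fd M\le b-a$.

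For the base case $b=a$ I would write $M\cong F[a]$ with $F\in\cF$; the observation above gives $F\lotimes_{R}N\in\sfD^{[m,n]}(\kk)$, and shifting by $a$ yields $M\lotimes_{R}N\in\sfD^{[m-a,n-a]}(\kk)$, which is the claim since $b=a$. For the inductive step, with $b-a\ge1$, associativity of $\ast$ produces an exact triangle $X\to M\to Y\to$ with $X\in\cF[a]$ and $Y\in\cF[a+1]\ast\cdots\ast\cF[b]$. Applying the triangulated functor $-\lotimes_{R}N$, the base case applied to $X$ gives $X\lotimes_{R}N\in\sfD^{[m-a,n-a]}(\kk)\subseteq\sfD^{[m-b,n-a]}(\kk)$ (here $a\le b$), and the inductive hypothesis applied to $Y$ gives $Y\lotimes_{R}N\in\sfD^{[m-b,\,n-a-1]}(\kk)\subseteq\sfD^{[m-b,n-a]}(\kk)$. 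Since both $\sfD^{\ge m-b}(\kk)$ and $\sfD^{\le n-a}(\kk)$ are closed under extensions (as one sees from the long exact cohomology sequence of the triangle), it follows that $M\lotimes_{R}N\in\sfD^{[m-b,n-a]}(\kk)$. This closes the induction, and the original $M$ then has flat concentration $[-b,-a]$, so $\fd M\le b-a$.

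I do not anticipate a real obstacle here: once Lemma~\ref{basic of cF lemma}(4) is in hand, the argument is a routine triangulated induction that runs exactly parallel to the projective case. The only points requiring a little attention are the bookkeeping of cohomological degrees under the shift by $a$ and the possibility that $m$ or $n$ equals $\pm\infty$; but the displayed inclusions and the extension-closedness of $\sfD^{[c,d]}(\kk)$ remain valid verbatim in those cases, so no separate treatment is needed.
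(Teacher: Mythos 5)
Your argument is correct, and it is precisely the ``standard argument of triangulated categories'' that the paper invokes without writing out: the base case that each $F\in\cF$ has flat concentration $[0,0]$ (via Lemma \ref{basic of cF lemma}(4)), followed by induction on $b-a$ using an exact triangle from associativity of $\ast$ and the extension-closedness of $\sfD^{[m-b,\,n-a]}(\kk)$, exactly as in the projective analogue Corollary \ref{20171219208}. No gaps; the degree bookkeeping and the treatment of $m,n=\pm\infty$ are handled correctly.
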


We give the definition of a sup-flat (spft) resolution of $M \in \sfD^{< \infty}(R)$.

\begin{definition}[spft morphism and spft resolution]\label{spft morphism definition} 
Let $M \in \sfD^{< \infty}(R), M \neq 0$. 

\begin{enumerate}[(1)]
\item 
A spft  morphism $f: F \to M $ is a morphism in $\sfD(R)$ 
such that $F \in \cF[- \sup M]$ 
and the morphism $\tuH^{\sup M}(f)$ is surjecitve. 

\item 
A spft morphism $f: F \to M $ is called minimal  if 
the morphism $\tuH^{\sup M}(f) $ is a flat cover.

\item 
A spft resolution $F_{\bullet}$ of $M$ is a sequence of exact triangles  for $i \geq  0$ 
with $M_{0} := M$ 
\[
M_{i+1} \xrightarrow{g_{i+1}} F_{  i } \xrightarrow{f_{i}} M_{i}
\]
such that $f_{i}$ is spft.

The following inequality folds
\[
\sup M_{i+1} = \sup F_{i+1} \leq \sup F_{i} =\sup M_{i}. 
\]

For a spft resolution $F_{\bullet}$ with the above notations, 
we set $\delta_{i} := g_{i-1} \circ f_{i}$. 
\[
\delta_{i} :F_{ i} \to F_{i-1}.
\]
Moreover we write 
\[
 \cdots \to F_{i} \xrightarrow{\delta_{i}} F_{ i -1} \to \cdots \to F_{1} \xrightarrow{\delta_{1}} F_{0} \to M. 
\]

\item 
A spft resolution $F_{\bullet}$ is said to have length $e$ if 
$F_{i } = 0$ for $i> e$ and 
$F_{e} \neq 0$.

\item 
A spft resolution $F_{\bullet}$ is called minimal if $f_{i}$ is minimal 
for $i \geq 0$.

\end{enumerate}
\end{definition}

It is clear that $\cP \subset \cF$. 
Therefore every $M \in \cD^{< \infty}(R)$  admits a spft morphism $F \to M$. 

We collect basic properties of spft resolutions. 

\begin{lemma}\label{201712192018}
Let $M \in \sfD^{< \infty}(R)$ and $f: F \to M$ a spft morphism 
and $N := \cone (f)[-1]$ the cocone of $f$. 
Assume that $1< \fd M $. 
Then,    $\fd N = \fd M -1 -\sup M +\sup N$. 
\end{lemma}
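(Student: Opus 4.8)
The plan is to transcribe the proof of the projective analogue, Lemma~\ref{20171219228}, replacing the functor $\RHom(M,-)$ by $T:=M\lotimes_{R}-$ and the amplitude estimates of Lemma~\ref{201712182258} by those of Lemma~\ref{201712191857}. First I would shift degrees so that $\sup M=0$; since $1<\fd M$ we have $d:=\fd M\ge 2$, and $M\neq 0$ forces $F\neq 0$. By Lemma~\ref{basic of cF lemma}(5) this gives $\sup F=0$, and by Lemma~\ref{basic of cF lemma}(4), $\tuH^{i}(F\lotimes_{R}L)=0$ for $i\notin[m,n]$ whenever $L\in\sfD^{[m,n]}(R^{\op})$, i.e. $F$ has flat concentration $[0,0]$. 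Likewise, arguing as in Corollary~\ref{201712182258II} but using Lemma~\ref{201712191857} in place of Lemma~\ref{201712182258}, the strict flat concentration of $M$ is $[\sup M-d,\sup M]=[-d,0]$, so $\tuH^{i}(M\lotimes_{R}L)=0$ for $i\notin[m-d,n]$. Applying $\tuH^{>0}$ to the triangle $N\xrightarrow{g}F\xrightarrow{f}M\to$ and using that $\tuH^{0}(f)$ is surjective (the defining property of a spft morphism) shows $\tuH^{>0}(N)=0$; also $N\neq 0$, since otherwise $f$ is an isomorphism and $\fd M=\fd F=0$. Hence $j:=\sup N$ is an integer with $j\le 0$.

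Next I would establish the two inequalities. For the upper bound, tensor the triangle with $L\in\sfD^{[m,n]}(R^{\op})$ and read off the long exact sequence $\cdots\to\tuH^{i-1}(M\lotimes_{R}L)\to\tuH^{i}(N\lotimes_{R}L)\to\tuH^{i}(F\lotimes_{R}L)\to\cdots$: for $i<m+1-d$ both flanking terms vanish by the estimates above (here $d\ge 2$ is used), so $\tuH^{i}(N\lotimes_{R}L)=0$, while $\tuH^{i}(N\lotimes_{R}L)=0$ for $i>n+j$ by Lemma~\ref{201712191857}. Thus $N$ has flat concentration $[1-d,j]$, whence $\fd N\le j-(1-d)=d-1+j$. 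For the lower bound, since $[-d,0]$ is the strict flat concentration of $M$ there exist $m\le n$ and $L\in\sfD^{[m,n]}(R^{\op})$ with $\tuH^{m-d}(M\lotimes_{R}L)\neq 0$; in the long exact sequence the terms $\tuH^{m-d}(F\lotimes_{R}L)$ and $\tuH^{m-d+1}(F\lotimes_{R}L)$ vanish (both degrees lie strictly below $m$ since $d\ge 2$), so $\tuH^{m+1-d}(N\lotimes_{R}L)\cong\tuH^{m-d}(M\lotimes_{R}L)\neq 0$. Consequently every flat concentration of $N$ has lower end $\le 1-d$, so the strict flat concentration of $N$ — the smallest among all of them, exactly as in the projective case — has lower end $1-d$ by the previous step, and upper end $j$ by Lemma~\ref{201712191857}. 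Hence $\fd N=d-1+j$, and undoing the shift yields $\fd N=\fd M-1-\sup M+\sup N$.

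The only slightly delicate point, as in the projective case, is the bookkeeping identifying the strict flat concentration of a DG-module with the intersection of all its flat concentrations, so that exhibiting a single nonzero cohomology group pins the lower end down exactly; the relevant poset of intervals is closed under meets because $\sfD^{[c,d]}=\sfD^{\ge c}\cap\sfD^{\le d}$. Apart from this, everything is a routine translation of the computations in Lemma~\ref{20171219228}, the only change being that $\lotimes$ shifts amplitudes in the opposite direction from $\RHom$. I do not anticipate any essential obstacle.
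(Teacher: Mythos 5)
Your proposal is correct and is exactly the argument the paper intends: it omits the proof of this lemma and implicitly refers to the proof of Lemma \ref{20171219228}, which you transcribe faithfully, with the right adjustments (replacing $\RHom(M,-)$ by $M\lotimes_R -$, using Lemma \ref{201712191857} in place of Lemma \ref{201712182258} to pin the upper end of the concentration at $\sup N$, and correctly locating where the hypothesis $\fd M\ge 2$ enters, namely the vanishing of $\tuH^{m-d+1}(F\lotimes_R L)$ in the lower-bound step). The bookkeeping identifying the strict flat concentration with the intersection of all flat concentrations is handled as in the projective case, so there is no gap.
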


\begin{lemma}\label{spft resolution theorem lemma II}
Let $M \in \sfD^{< \infty}(R)$ and $d \in \NN$ a natural number. 
Then  
the following conditions are equivalent 

\begin{enumerate}[(1)]
\item 
$\fd  M  \leq  d$. 

\item 
For any spft resolution $F_{\bullet}$, 
there exists a natural number $e \in \NN$ 
such that 
$M_{e} \in \cF[-\sup F_{e}]$ 
and 
$ e+ \sup F_{0} -\sup M_{e}\leq d$. 
In particular, 
we have a spft resolution of length $e$. 
\[
 M_{e} \to F_{e-1} \to F_{e-2} \to \cdots F_{1} \to F_{0} \to M.
\]

\item 
$M$ has spft resolution $F_{\bullet}$ of length $e$ 
such that $ e + \sup F_{0} - \sup F_{e} \leq d$. 
\end{enumerate}
\end{lemma}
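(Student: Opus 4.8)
The plan is to transcribe the proof of the equivalence $(1)\Leftrightarrow(2)\Leftrightarrow(3)$ in Theorem \ref{sppj resolution theorem lemma II}, replacing $\cP$ by $\cF$, ``sppj'' by ``spft'', Corollary \ref{20171219208} by Lemma \ref{20171220037}, and Lemma \ref{20171219228} by Lemma \ref{201712192018}; the role of Lemma \ref{20171219205II} is taken by the tautology that $\fd M=0$ is equivalent to $M\in\cF[-\sup M]$. The implication $(2)\Rightarrow(3)$ is immediate, and $(3)\Rightarrow(1)$ goes exactly as in Theorem \ref{sppj resolution theorem lemma II}: splicing the defining triangles of a length-$e$ spft resolution exhibits $M$ in $\cF[-\sup F_{0}]\ast\cdots\ast\cF[-\sup F_{e}+e]$, whence Lemma \ref{20171220037} yields $\fd M\le e+\sup F_{0}-\sup F_{e}\le d$.

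The substance is $(1)\Rightarrow(2)$. Fix a spft resolution $M_{i+1}\xrightarrow{g_{i+1}}F_{i}\xrightarrow{f_{i}}M_{i}$ with $M_{0}=M$, and put $N_{i}:=M_{i+1}=\cone(f_{i})[-1]$. The heart of the argument is the ``dimension-drop'' assertion: \emph{if $\fd M_{i}\ge 1$, then $\fd M_{i+1}\le\fd M_{i}-1$ and moreover $\fd M_{i+1}+(i+1)+\sup M-\sup M_{i+1}=\fd M$} (recall $\sup M_{i}\ge\sup M_{i+1}$ along the resolution). Granting this, the proof finishes as in the projective case: the equation and $\fd M_{i+1}\ge 0$ force $i\le\fd M\le d$ whenever $\fd M_{i-1}>0$, so $\{\,i\ge 1:\fd M_{i-1}>0\,\}$ is finite; taking $e$ one larger than its maximum (or $e=0$ when it is empty) gives $\fd M_{e}=0$, i.e. $M_{e}\in\cF[-\sup M_{e}]$, and evaluating the equation at $i=e$ gives $e+\sup F_{0}-\sup M_{e}=\fd M\le d$ (using $\sup F_{0}=\sup M$).

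For the dimension-drop assertion, the case $\fd M_{i}\ge 2$ is precisely Lemma \ref{201712192018} combined with the inductive hypothesis. The case $\fd M_{i}=1$ is \emph{not} covered by Lemma \ref{201712192018}, and handling it is the main obstacle; I would treat it by a direct long-exact-sequence computation. Put $s:=\sup M_{i}$. Since $f_{i}$ is spft, $F_{i}\in\cF[-s]$ and $\tuH^{s}(f_{i})$ is surjective; arguing as in Corollary \ref{201712182258II} (via Lemma \ref{201712191857}) one sees that $M_{i}$ has strict flat concentration $[s-1,s]$. Tensor the triangle $N_{i}\to F_{i}\xrightarrow{f_{i}}M_{i}\to$ with an arbitrary $L\in\sfD^{[m,n]}(R^{\op})$. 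By Lemma \ref{basic of cF lemma} one has $F_{i}\lotimes_{R}L\in\sfD^{[s+m,\,s+n]}(\kk)$ with top cohomology $\tuH^{s}(F_{i})\otimes_{H^{0}}\tuH^{n}(L)$, while $M_{i}\lotimes_{R}L\in\sfD^{[s-1+m,\,s+n]}(\kk)$ with top cohomology $\tuH^{s}(M_{i})\otimes_{H^{0}}\tuH^{n}(L)$ (the top cohomology of a derived tensor product is the tensor product of the top cohomologies). The induced map on these is $\tuH^{s}(f_{i})\otimes_{H^{0}}\tuH^{n}(L)$, surjective because $\tuH^{s}(f_{i})$ is and $-\otimes_{H^{0}}\tuH^{n}(L)$ is right exact; hence the connecting homomorphism kills $\tuH^{s+n+1}(N_{i}\lotimes_{R}L)$, and the long exact sequence gives $N_{i}\lotimes_{R}L\in\sfD^{[s+m,\,s+n]}(\kk)$. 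Thus $N_{i}$ has flat concentration $[s,s]$, so $\fd N_{i}=0$; and since $\fd N_{i}=0$ forces the strict flat concentration of $N_{i}$ to be $[\sup N_{i},\sup N_{i}]$, comparison with $[s,s]$ yields $\sup N_{i}=s=\sup M_{i}$, which is the dimension-drop equation in this case. Finally, running the same computation with $\fd M_{i}=1$ replaced by $\fd M_{i}=0$ shows that flat dimension $0$ propagates along the resolution, so $\{\,i:\fd M_{i}>0\,\}$ is an initial segment of $\NN$ and the induction above is legitimate.
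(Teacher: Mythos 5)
Your overall route is exactly the one the paper intends: the proof of this lemma is omitted because it is the transcription of Theorem \ref{sppj resolution theorem lemma II} with $\cP$, Corollary \ref{20171219208}, Lemma \ref{20171219228} replaced by $\cF$, Lemma \ref{20171220037}, Lemma \ref{201712192018}, and that is what you do. Your separate treatment of the case $\fd M_{i}=1$ is a genuine improvement rather than a detour: as printed, Lemma \ref{201712192018} assumes $1<\fd M$ (almost certainly a slip for $1\le \fd M$, in parallel with Lemma \ref{20171219228}), and your long-exact-sequence argument is sound: since the map induced on top cohomology of the tensored triangle is $\tuH^{s}(f_{i})\otimes_{H^{0}}\tuH^{n}(L)$ (the standard right-exactness fact also used in the proof of Lemma \ref{201712191857}), it is surjective, so the cocone $N_{i}$ has flat concentration $[s,s]$, whence $\fd N_{i}=0$ and, by the ``moreover'' part of Lemma \ref{201712191857}, $\sup N_{i}=s$. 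The same computation gives the propagation of flat dimension $0$, the analogue of Corollary \ref{20171219205}, which you correctly note is needed to make the telescoping legitimate.

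The one flaw is an off-by-one in the choice of $e$. The telescoped identity $\fd M_{i}+i+\sup M-\sup M_{i}=\fd M$ is only valid for those $i$ with $\fd M_{i-1}>0$. If you set $E=\{\,i\ge 1:\fd M_{i-1}>0\,\}$ and take $e$ to be one larger than $\max E$, then $\fd M_{e-1}=0$, so the identity cannot be evaluated at $i=e$; indeed your own computation shows $\sup M_{e}=\sup M_{e-1}$ whenever $M_{e}\neq 0$, so with that choice $e+\sup F_{0}-\sup M_{e}=\fd M+1$, which may exceed $d$, and the verification of condition (2) breaks down. The fix is immediate: take $e=\max E$ itself (this is what the proof of Theorem \ref{sppj resolution theorem lemma II} does), equivalently one more than the maximum of $\{\,i\ge 0:\fd M_{i}>0\,\}$; then $\fd M_{e}=0$ because $e+1\notin E$, and evaluating the identity at $i=e$ gives $e+\sup F_{0}-\sup M_{e}=\fd M\le d$ as required.
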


\begin{lemma}\label{spft resolution  lemma}
Let $L \in \Mod (H^{0})^{\op}, M \in \sfD^{< \infty}(R)$ and $F^{\bullet}$  a spft resolution of $M$. 
We denote the complexes below by $Y_{i}, Y'_{i}$. 
\[
\begin{split}
Y_{i} : 
\tuH^{\sup F_{i}}(F_{i+1}\lotimes_{R} L) \to 
\tuH^{\sup F_{i}}(F_{i}\lotimes_{R} L) \to
\tuH^{\sup F_{i}}(F_{i-1}\lotimes_{R} L)  \\ 
Y'_{i} : 
\tuH^{\sup F_{i}}(F_{i+1}) \otimes_{H^{0}} L \to
\tuH^{\sup F_{i}}(F_{i}) \otimes_{H^{0}} L \to
\tuH^{\sup F_{i}}(F_{i-1}) \otimes_{H^{0}} L 
\end{split}
\] 
Then, 
\[
\tuH^{n}(M \lotimes_{R} L) =
\begin{cases} 
0 & n \neq -i +\sup F_{i} \textup{ for any } i \geq 0 \\
\tuH(Y_{i}) & n = -i +\sup F_{i} \textup{ for some } i \geq 0 
\end{cases}
\]

Moreover, in the case $n =- i + \sup F_{i}$,  we have  
\[
\tuH(Y_{i}) =
\begin{cases}
\tuH^{\sup F_{i}}(F_{i}) \otimes_{H^{0}} L  & \sup F_{i -1} \neq \sup F_{i} \neq \sup F_{i+1}, \\
\Coker[\tuH^{\sup F_{i}}(F_{i+1}) \otimes_{H^{0}} L \to
\tuH^{\sup F_{i}}(F_{i}) \otimes_{H^{0}} L ],  &
 \sup F_{i-1} \neq \sup F_{i} = \sup F_{i+1}, \\ 
 \Ker[
\tuH^{\sup F_{i}}(F_{i}) \otimes_{H^{0}} L \to
\tuH^{\sup F_{i}}(F_{i-1}) \otimes_{H^{0}} L 
 ], & 
 \sup F_{i -1} = \sup F_{i} \neq \sup F_{i+1}, \\ 
\tuH(Y'_{i}), & 
 \sup F_{i -1} = \sup F_{i} = \sup F_{i +1}. 
\end{cases}  
\]
\end{lemma}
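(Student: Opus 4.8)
The plan is to run the argument of Lemma~\ref{sppj resolution lemma} and Lemma~\ref{sppj resolution tensor lemma} essentially verbatim, with the functor $\Hom(-,N[\bullet])$ replaced throughout by $\tuH^{\bullet}(-\lotimes_{R}L)$. The only property of the class $\cF$ that is needed is the shifted form of Lemma~\ref{basic of cF lemma}(3): if $F\in\cF[-s]$ for some $s\in\ZZ$ (so that $\sup F=s$), then $F\lotimes_{R}L$ has cohomology concentrated in the single degree $s$, where it equals $\tuH^{\sup}(F)\otimes_{H^{0}}L$. This is the exact analogue of Lemma~\ref{basic of cP lemma}(2), which was all that the proof of Lemma~\ref{sppj resolution lemma} used about $\cP$. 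Set $b_{i}:=\sup F_{i}$, so that the monotonicity $b_{0}\geq b_{1}\geq\cdots$ recorded in Definition~\ref{spft morphism definition} stands in for the chain $-\sup P_{0}\leq-\sup P_{1}\leq\cdots$ used there.

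First I would apply the triangulated functor $-\lotimes_{R}L$ to each defining exact triangle $M_{i+1}\xrightarrow{g_{i+1}}F_{i}\xrightarrow{f_{i}}M_{i}\to$ and pass to cohomology. Since $\tuH^{n}(F_{i}\lotimes_{R}L)=0$ unless $n=b_{i}$, the resulting long exact sequences give, for every $n\notin\{b_{i}-1,b_{i}\}$, an isomorphism $\tuH^{n}(M_{i}\lotimes_{R}L)\xrightarrow{\ \cong\ }\tuH^{n+1}(M_{i+1}\lotimes_{R}L)$; note that the degree now shifts \emph{up} by one, the opposite of the $\Hom$ case. Splicing these isomorphisms identifies $\tuH^{n}(M\lotimes_{R}L)$ with $\tuH^{n+i}(M_{i}\lotimes_{R}L)$, resp.\ with $\tuH^{n+i-1}(M_{i-1}\lotimes_{R}L)$, for the appropriate index $i$, and simultaneously shows that $\tuH^{n}(M\lotimes_{R}L)=0$ whenever $n\neq-i+b_{i}$ for all $i\geq0$. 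The short exact pieces of these long exact sequences that straddle the degree $b_{i}$ --- the analogues of the displayed sequences in the proof of Lemma~\ref{sppj resolution lemma} --- then fit together into exactly the complex $Y_{i}$, and computing its cohomology yields $\tuH^{-i+b_{i}}(M\lotimes_{R}L)\cong\tuH(Y_{i})$.

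To obtain the refined description of $\tuH(Y_{i})$ I would then split into cases according to whether $b_{i-1}=b_{i}$ and whether $b_{i}=b_{i+1}$, since these equalities determine which of the adjacent groups $\tuH^{b_{i}}(F_{i\pm1}\lotimes_{R}L)=\tuH^{\sup}(F_{i\pm1})\otimes_{H^{0}}L$ actually appear. This is the same four-way split as in Lemma~\ref{sppj resolution lemma}, but with $\Ker$ and $\Coker$ interchanged: the functor $-\lotimes_{R}L$ is covariant in its first argument and the complex $Y_{i}$ runs $F_{i+1}\to F_{i}\to F_{i-1}$, opposite to the complex $X_{i}$ there. The boundary vanishing (for $n$ below the lowest, or above the highest, relevant degree) comes, as before, from $M\lotimes_{R}L\in\sfD^{\leq\sup M}(\kk)$ (Lemma~\ref{201712191857}) together with connectivity of $R$, i.e.\ from the standard $t$-structure; the edge cases $i=0$, where $F_{-1}$ is absent, and resolutions of infinite length are treated exactly as in the projective case.

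I do not expect a genuine obstacle here: the mathematical content is identical to that of Lemma~\ref{sppj resolution lemma}, and the remaining work is bookkeeping. The two points that require care are, first, that the cohomological degree now moves up rather than down as one descends the resolution --- so every ``$n-1$'' in the projective proof becomes ``$n+1$'' and the kernels and cokernels in the four-way split swap roles accordingly --- and, second, that unlike $\cP$ and $\cI$ the class $\cF$ has no concrete model, but Lemma~\ref{basic of cF lemma} supplies every property of $\cF$ that the proof invokes, so this is not actually a difficulty.
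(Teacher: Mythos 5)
Your proposal is correct and is exactly the route the paper intends: the paper omits the proof because it is the tensor analogue of Lemma \ref{sppj resolution lemma} (cf.\ Lemma \ref{sppj resolution tensor lemma}), with the concentration of $F\lotimes_{R}L$ in degree $\sup F$ supplied by Lemma \ref{basic of cF lemma}(3) playing the role of Lemma \ref{basic of cP lemma}(2). Your two cautionary points (the degree shift reversing direction, hence $\Ker$/$\Coker$ swapping in the four-way case split, and $\cF$ being used only through Lemma \ref{basic of cF lemma}) are precisely the right bookkeeping.
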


We recall the notion of pure-injection. 
Let $A$ be an algebra. 
A morphism $f: M \to N$ between $A$-modules said to be a  pure-injection 
if $f \otimes_{A} N: M \otimes_{A} N \to N \otimes_{A} N$ is injective for all $A^{\op}$-modules $N$.

\begin{theorem}\label{spft resolution theorem}
Let $M \in \sfD^{< \infty}(R)$ and $d \in \NN$ a natural number. 
Then  
the following conditions are equivalent 

\begin{enumerate}[(1)]
\item 
$\fd  M  = d$.

\item 
For any spft resolution $F_{\bullet}$, 
there exists a natural number $e \in \NN$ 
which satisfying the following properties 

\begin{enumerate}[(a)]
\item $ M_{e} \in \cF[-\sup M_{e}]$. 

\item 
$d = e+ \sup F_{0} -\sup M_{e}$. 

\item 
$\tuH^{\sup M_{e}}(g_{e})$ is not a pure-injection. 
\end{enumerate}

\item 
$M$ has spft resolution $F_{\bullet}$ of length $e$ 
which satisfies the following properties.

\begin{enumerate}[(a)]

\item 
$d = e+ \sup F_{0} -\sup F_{e}$. 

\item 
$\tuH^{\sup P_{e}}(\delta_{e})$ is not a pure-injection. 
\end{enumerate} 
\end{enumerate}
\end{theorem}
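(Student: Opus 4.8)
The plan is to prove the cycle $(1)\Rightarrow(2)\Rightarrow(3)\Rightarrow(1)$, exactly paralleling the proof of Theorem \ref{sppj resolution theorem}, with Lemma \ref{spft resolution theorem lemma II} playing the role of Theorem \ref{sppj resolution theorem lemma II}, Lemma \ref{spft resolution lemma} the role of Lemma \ref{sppj resolution lemma}, and \emph{non-pure-injectivity of the induced cohomology morphism} replacing \emph{non-split-monomorphy}. Two facts will be used throughout: $\sup F_0=\sup M$ for any spft resolution (as $f_0$ is a spft morphism), and, for $F\in\cF$, $\tuH^n(F\lotimes_R L)\cong\tuH^0(F)\otimes_{H^0}\tuH^n(L)$ (Lemma \ref{basic of cF lemma}). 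I will also use that a morphism $\varphi$ of $H^0$-modules is a pure monomorphism exactly when $\varphi\otimes_{H^0}L$ is injective for every $L\in\Mod(H^0)^{\op}$, together with the routine Postnikov/truncation reduction showing that the flat concentration of an object of $\sfD^{<\infty}(R)$ is already detected on $\Mod(H^0)^{\op}$.

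For $(2)\Rightarrow(3)$ I truncate the given resolution: put $F'_i:=F_i$ for $i<e$, $F'_e:=M_e$ with $f'_e=\id$, and $F'_i:=0$ for $i>e$; this is a spft resolution of length $e$ (note $M_e\neq0$, for otherwise (b) forces $d=\infty$) whose $e$-th differential is $g_e$ and with $\sup F'_e=\sup M_e$, so (3)(a) holds by (2)(b) and (3)(b) holds by (2)(c). For $(3)\Rightarrow(1)$, Lemma \ref{spft resolution theorem lemma II} already yields $\fd M\le e+\sup F_0-\sup F_e=d$; for the reverse I evaluate Lemma \ref{spft resolution lemma} at $i=e$ (where $\sup F_{e+1}=-\infty\neq\sup F_e$): if $\sup F_{e-1}\neq\sup F_e$ then $\tuH^{-e+\sup F_e}(M\lotimes_R H^0)\cong\tuH^{\sup F_e}(F_e)\neq0$, while if $\sup F_{e-1}=\sup F_e$ then $\tuH^{-e+\sup F_e}(M\lotimes_R L)\cong\Ker[\tuH^{\sup F_e}(\delta_e)\otimes_{H^0}L]$, which is nonzero for a suitable $L$ precisely because $\tuH^{\sup F_e}(\delta_e)$ is not a pure monomorphism. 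Either way $\tuH^{-e+\sup F_e}(M\lotimes_R L)\neq0$ for some $L\in\Mod(H^0)^{\op}$, so by Lemma \ref{201712191857} the lower endpoint of the strict flat concentration of $M$ is $\le -e+\sup F_e$, whence $\fd M\ge\sup M-(-e+\sup F_e)=e+\sup F_0-\sup F_e=d$.

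For $(1)\Rightarrow(2)$, set $e:=\max\{i\ge1:\fd M_{i-1}>0\}$; as in the proof of Lemma \ref{spft resolution theorem lemma II} this set is finite, $M_e\in\cF[-\sup M_e]$, and $\fd M=e+\sup M-\sup M_e$, which (using $\sup F_0=\sup M$) gives (a) and (b). For (c) I suppose $\tuH^{\sup M_e}(g_e)$ is a pure monomorphism and derive a contradiction with $\fd M_{e-1}>0$. The triangle $M_e\xrightarrow{g_e}F_{e-1}\xrightarrow{f_{e-1}}M_{e-1}\to$ is a length-$1$ spft resolution of $M_{e-1}$ with first differential $g_e$. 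When $\sup M_{e-1}=\sup M_e$, Lemma \ref{spft resolution lemma} shows the only possibly nonzero groups are $\tuH^{\sup M_{e-1}}(M_{e-1}\lotimes_R L)=\Coker[\tuH^{\sup M_e}(g_e)\otimes_{H^0}L]$ and $\tuH^{\sup M_{e-1}-1}(M_{e-1}\lotimes_R L)=\Ker[\tuH^{\sup M_e}(g_e)\otimes_{H^0}L]$; the pure-injectivity hypothesis kills the second for every $L$, so $\fd M_{e-1}=0$, a contradiction. When $\sup M_{e-1}>\sup M_e$, I instead argue that $\tuH^{\sup M_e}(g_e)$ cannot be a pure monomorphism: here Lemma \ref{spft resolution lemma} forces $\fd M_{e-1}=\sup M_{e-1}-\sup M_e+1\ge2$, and chasing the long exact cohomology sequence of the defining triangle against the shapes of $\tuH^{\bullet}(F_{e-1})$ and $\tuH^{\bullet}(M_e)$ supplied by Lemma \ref{basic of cF lemma}(5) shows the connecting map $\tuH^{\sup M_e-1}(M_{e-1})\to\tuH^{\sup M_e}(M_e)$ is nonzero, so $\tuH^{\sup M_e}(g_e)$ is not even injective. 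The remaining identifications (there are only the three conditions (1)--(3) in this theorem) are then immediate from the above and Lemma \ref{spft resolution theorem lemma II}.

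The genuinely delicate point will be exactly this last case of $(1)\Rightarrow(2)$: showing the induced cohomology morphism fails to be a pure-injection when the sup jumps, $\sup M_{e-1}>\sup M_e$. In the balanced case $\sup M_{e-1}=\sup M_e$ the argument is clean, since purity of $\tuH^{\sup M_e}(g_e)$ is literally the vanishing of the kernel term thrown off by Lemma \ref{spft resolution lemma}; but when the sup jumps the positivity of $\fd M_{e-1}$ is forced by the jump itself rather than by $g_e$, so one must prove separately that purity fails, which I plan to do by showing $\tuH^{\sup M_e}(g_e)$ is not monic via the connecting homomorphism, tracking carefully the $H$-module structure on the cohomology of objects of $\cF$. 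Everything else — the other two implications and the bookkeeping with suprema — is routine once Lemmas \ref{spft resolution theorem lemma II}, \ref{spft resolution lemma}, \ref{basic of cF lemma} and \ref{201712191857} are in hand.
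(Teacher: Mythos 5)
Your outline for $(2)\Rightarrow(3)$ and $(3)\Rightarrow(1)$ is sound, and the balanced case of $(1)\Rightarrow(2)$ works as you describe (granting the reduction that flat concentration of an object of $\sfD^{<\infty}(R)$ is detected on $\Mod (H^{0})^{\op}$ — this is true, but it is not in the paper and does require a homotopy-colimit/truncation argument, not just a remark). The genuine gap is exactly at the point you flagged, and it is not merely delicate: the claim you plan to prove in the jump case is false. Take $\kk$ a field, $R=\kk[x]/(x^{2})$ with $x$ in cohomological degree $-1$ and zero differential, let $g\colon R[1]\to R$ be multiplication by $x$, and set $M:=\cone(g)$. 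Then $f_{0}\colon R\to M$ is a spft morphism with cocone $M_{1}=R[1]$ and $g_{1}=g$; since $M_{1}\in\cF[1]$ we may take $F_{1}=M_{1}$, $f_{1}=\id$, obtaining a spft resolution of length $1$ with $\sup F_{0}=0$, $\sup M_{1}=-1$ (the jump case). Here $\fd M_{1}=0$, $\fd M=2$ (indeed $M\in\{R\}\ast\{R[2]\}$ gives $\fd M\le 2$, while $\tuH^{\bullet}(M\lotimes_{R}\kk)=\cone(0\colon \kk[1]\to\kk)$ is nonzero in degrees $-2$ and $0$), so $e=1$ is the unique candidate and satisfies (a),(b) with $d=2$. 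But $\tuH^{-1}(g_{1})\colon \tuH^{0}(R)\to\tuH^{-1}(R)$ sends $1\mapsto x$ and is an isomorphism of $H^{0}=\kk$-modules, hence a pure-injection; correspondingly your connecting map $\tuH^{\sup M_{e}-1}(M_{e-1})\to\tuH^{\sup M_{e}}(M_{e})$ is zero here, because $\tuH(M)$ is concentrated in degrees $0$ and $-3$, so $\tuH^{-2}(M)=0$. Thus the strategy ``the sup jumps, hence $\tuH^{\sup M_{e}}(g_{e})$ is not even injective'' cannot be carried out.

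Worse, this example shows that condition (2)(c), read literally as a purity condition on $\tuH^{\sup M_{e}}(g_{e})$ over $H^{0}$, can fail while $\fd M=d$, so $(1)\Rightarrow(2)$ is not provable as stated; no amount of LES-chasing will close the gap. What Lemma \ref{spft resolution lemma} actually controls is the map $\tuH^{\sup M_{e}}(M_{e}\lotimes_{R}L)\to\tuH^{\sup M_{e}}(F_{e-1}\lotimes_{R}L)$ for $L\in\Mod(H^{0})^{\op}$: when $\sup F_{e-1}=\sup M_{e}$ this is $\tuH^{\sup M_{e}}(g_{e})\otimes_{H^{0}}L$ and non-purity is exactly what is needed, but when $\sup F_{e-1}>\sup M_{e}$ the target vanishes (Lemma \ref{basic of cF lemma}), the relevant Tor group $\tuH^{-e+\sup M_{e}}(M\lotimes_{R}H^{0})\cong\tuH^{\sup}(M_{e})$ is automatically nonzero, and no condition on $g_{e}$ is available or needed. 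So either you must reformulate (c) in these Tor-theoretic terms (trivially satisfied in the jump case, equivalent to your reading in the balanced case) and prove that version, or you must restrict the purity requirement to the case $\sup M_{e}=\sup F_{e-1}$; as written, your proof of $(1)\Rightarrow(2)$ — and the literal statement it targets — breaks precisely in the jump case. Note that $(1)\Leftrightarrow(3)$ is not contradicted by the example (a longer resolution of the above $M$ does witness (3)), so the defect is confined to the ``for any resolution'' clause your argument is trying to establish.
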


As is the same with the sppj resolution, 
Theorem \ref{spft resolution theorem} has the following consequences. 
We denote by $\sfD(R)_{\textup{ffd}}$ the full subcategory consisting of $M$ having finite flat dimension. 

\begin{proposition}
\[
\sfD(R)_{\textup{ffd}} = \thick \cF = \bigcup \cF[a]\ast \cF[a +1] \ast \cdots \ast \cF[b]. 
\]
where $a,b$ run all the pairs of integers such that $a \leq b$. 
\end{proposition}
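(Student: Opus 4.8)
The plan is to establish both equalities through the chain of inclusions: $\bigcup_{a\le b}\cF[a]\ast\cdots\ast\cF[b]\subseteq\sfD(R)_{\textup{ffd}}$, then $\sfD(R)_{\textup{ffd}}\subseteq\bigcup_{a\le b}\cF[a]\ast\cdots\ast\cF[b]$, then $\bigcup_{a\le b}\cF[a]\ast\cdots\ast\cF[b]\subseteq\thick\cF$, and finally $\thick\cF\subseteq\bigcup_{a\le b}\cF[a]\ast\cdots\ast\cF[b]$; together these force all four subcategories to coincide (here $a,b$ range over integers with $a\le b$). This is the flat counterpart of the statement for $\cP$ following Theorem \ref{sppj resolution theorem}, and, in contrast to $\cP$ and $\cI$, no explicit description of $\cF$ is needed.

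The first inclusion is exactly Lemma \ref{20171220037}: any $M\in\cF[a]\ast\cdots\ast\cF[b]$ has $\fd M\le b-a<\infty$. For the second, let $M\in\sfD(R)_{\textup{ffd}}$. By Lemma \ref{201712191846}, $M\in\sfD^{<\infty}(R)$, so $M$ admits a spft resolution, and by Lemma \ref{spft resolution theorem lemma II} it admits one of finite length, say $F_e\to F_{e-1}\to\cdots\to F_0\to M$ with $M_{e+1}=0$, whence $F_e\cong M_e$. Rotating each defining triangle $M_{i+1}\to F_i\to M_i\to$ to $F_i\to M_i\to M_{i+1}[1]\to$ shows that $M_i$ sits in a triangle with $F_i\in\cF[-\sup F_i]$ and $M_{i+1}[1]$; splicing these for $i=0,\dots,e-1$ and using that the shift $[1]$ is a triangulated autoequivalence (so $(\cX\ast\cY)[1]=\cX[1]\ast\cY[1]$), together with $M_e=F_e\in\cF[-\sup F_e]$, yields
\[
M\in\cF[-\sup F_0]\ast\cF[1-\sup F_1]\ast\cdots\ast\cF[e-\sup F_e].
\]
Since $\sup F_0\ge\sup F_1\ge\cdots\ge\sup F_e$, the exponents $i-\sup F_i$ are non-decreasing, and since $0\in\cF[j]$ for every $j$ we may pad the factorization with trivial factors so that its exponents run over all integers from $a:=-\sup F_0$ to $b:=e-\sup F_e$; hence $M\in\cF[a]\ast\cdots\ast\cF[b]$. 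This gives $\sfD(R)_{\textup{ffd}}=\bigcup_{a\le b}\cF[a]\ast\cdots\ast\cF[b]$.

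The third inclusion holds because $\thick\cF$ contains $\cF$ and is closed under shifts and extensions, hence contains every $\cF[a]\ast\cdots\ast\cF[b]$. For the fourth it suffices to note that the union is itself a thick subcategory containing $\cF$: it is visibly closed under shifts and extensions, and it is closed under direct summands because if $M$ is a summand of $N\in\cF[a]\ast\cdots\ast\cF[b]$ then $M\lotimes_R-$ is a summand of $N\lotimes_R-$, so $M$ inherits a flat concentration $[a',b']$ with $b'-a'\le b-a$, giving $\fd M<\infty$, i.e. $M\in\sfD(R)_{\textup{ffd}}$, which we have just identified with the union. Being a thick subcategory containing $\cF$, the union contains $\thick\cF$, which closes the chain.

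The only point requiring real care is the middle paragraph: one must first invoke Lemma \ref{spft resolution theorem lemma II} to replace an arbitrary spft resolution by a finite one, and then keep track of the degree shifts through the splicing. Everything else is formal, using only Lemma \ref{20171220037}, Lemma \ref{201712191846}, and the elementary behaviour of flat dimension under direct summands; the still-conjectural description of $\cF$ enters nowhere.
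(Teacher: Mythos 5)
Your proof is correct and is essentially the argument the paper intends (the paper omits the proof, noting it follows from the spft resolution results exactly as in the sppj case): Lemma \ref{20171220037} for one inclusion, finite spft resolutions from Lemma \ref{spft resolution theorem lemma II} spliced and padded into a consecutive $\ast$-decomposition for the other, and formal thickness for $\thick\cF$. The only wording to tighten is the claim that the union is ``visibly'' closed under extensions: in its $\ast$-product form this is not visible (the shifted factors cannot in general be reordered or merged), but it does follow from your already-established identification of the union with $\sfD(R)_{\textup{ffd}}$, whose closure under extensions is immediate from flat concentrations, so the argument stands.
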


\subsection{Conjecture about an explicit description of $\cF$.}

The following is a spft version of Lemma \ref{20171219205II} 
is still a conjecture. 

We denote by  $\cF' \subset \sfD(R)$  
the full subcategory of those object $F' \in \sfD(R)$ 
which is a quasi-isomorphism class of DG-$R$-modules 
of the forms $F^{0}\otimes_{R^{0}}R$ 
for some flat $R^{0}$-module $F^{0}$.
It is easy to check that $\cF' \subset \cF$

\begin{conjecture}\label{201712192021} 
$\cF' = \cF$. 
\end{conjecture}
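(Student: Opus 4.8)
The plan is to prove the nontrivial inclusion $\cF\subseteq\cF'$ (the other one being already noted) by reducing Conjecture~\ref{201712192021} --- which is the flat analogue of Lemma~\ref{20171219205II} --- to a classical lifting question about the ring surjection $R^0\twoheadrightarrow H^0$, and carrying out everything except that lifting step. So let $F\in\cF$; shifting we may assume $\sup F=0$, and set $G:=\tuH^0(F)$, a flat $H^0$-module with $\tuH(F)\cong G\otimes_{H^0}H$ as graded $H$-modules by Lemma~\ref{basic of cF lemma}(5). Suppose for the moment that we have found a flat $R^0$-module $F^0$ together with an isomorphism $F^0\otimes_{R^0}H^0\cong G$. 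Put $F':=F^0\otimes_{R^0}R$, which lies in $\cF'$ by definition; since $F^0$ is flat over $R^0$ we get $\tuH^n(F')\cong F^0\otimes_{R^0}H^n\cong G\otimes_{H^0}H^n$, so $F'$ and $F$ have the same cohomology, and it remains to produce a quasi-isomorphism $F'\xrightarrow{\sim}F$.

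This last point I expect to be routine. By the extension--restriction adjunction along $R^0\hookrightarrow R$ and flatness of $F^0$ one has $\RHom_R(F',F)\cong\RHom_{R^0}(F^0,F|_{R^0})$. As $\sup_{R^0}F|_{R^0}=0$, the canonical morphism $c\colon F|_{R^0}\to\tuH^0(F|_{R^0})=G$ has cocone in $\sfD^{\leq-1}(R^0)$, and since $F^0$ is concentrated in degree $0$ the functor $\RHom_{R^0}(F^0,-)$ annihilates $\sfD^{\leq-1}(R^0)$ in cohomological degrees $\geq0$; hence $c$ induces an isomorphism $\Hom_{\sfD(R)}(F',F)\cong\tuH^0\RHom_{R^0}(F^0,G)$. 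A second, derived adjunction along $R^0\twoheadrightarrow H^0$, together with flatness of $F^0$, identifies $\RHom_{R^0}(F^0,G)$ with $\RHom_{H^0}(F^0\otimes_{R^0}H^0,G)\cong\RHom_{H^0}(G,G)$. Let $\phi\colon F'\to F$ be the morphism corresponding to $\id_G$; then $\tuH^0(\phi)=\id_G$. Since $\tuH(\phi)$ is $H$-linear and the multiplication maps $\tuH^0(F')\otimes_{H^0}H\to\tuH(F')$ and $\tuH^0(F)\otimes_{H^0}H\to\tuH(F)$ are isomorphisms (Lemma~\ref{basic of cF lemma}(5)), $\tuH^n(\phi)$ is an isomorphism for every $n$, so $\phi$ is a quasi-isomorphism and $F\cong F'\in\cF'$.

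Thus everything comes down to the following statement, which no longer mentions differential graded algebra: for $R^0\twoheadrightarrow H^0=R^0/B^0$ with $B^0=\im(\partial\colon R^{-1}\to R^0)$, every flat $H^0$-module $G$ has the form $F^0/B^0F^0$ for some flat $R^0$-module $F^0$. The natural attack is via Lazard's theorem: write $G\cong\colim_\lambda P_\lambda$ as a filtered colimit of finitely generated free $H^0$-modules, lift each $P_\lambda$ to a finitely generated free $R^0$-module $\widetilde P_\lambda$, and lift the transition maps along $R^0\twoheadrightarrow H^0$ (possible since $\Hom_{R^0}(\widetilde P_\lambda,\widetilde P_\mu)\twoheadrightarrow\Hom_{H^0}(P_\lambda,P_\mu)$); then $\colim_\lambda\widetilde P_\lambda$ is flat over $R^0$ and reduces to $G$ modulo $B^0$.

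The hard part --- and, I expect, the reason the statement is still a conjecture --- is that the lifted transition maps only compose modulo $B^0$, so a priori one obtains a lax rather than a genuine filtered diagram, and rectifying it to an honest direct system of $R^0$-modules (say by a cofinal re-indexing or a telescope construction) is exactly the missing step. It seems reasonable to try the piecewise-Noetherian case first, where $H^0$ is Noetherian: there one may hope to exploit the better behaviour of flat modules, for instance by reducing to countable directed systems where a Milnor-type telescope argument is available, or by comparison with Proposition~\ref{201901252356}. Conversely, if the general lifting statement fails, a counterexample would most plausibly occur for non-Noetherian $R^0$ with a badly behaved ideal $B^0$.
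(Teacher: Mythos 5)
This statement is one the paper does not prove: it is stated, both in the introduction and in Section~\ref{spft resolution}, as an open conjecture, so there is no proof of the author's to measure yours against, and your proposal (by its own admission) does not close the question either. Your reduction strategy is reasonable, and the rectification problem you isolate at the end is a genuine obstruction. However, you have also dismissed as ``routine'' a step that in fact fails as written. To lift $\id_G$ along $\Hom_{\sfD(R)}(F',F)\to\Hom_{H^0}(G,G)$ you use the truncation triangle $\sigma^{<0}(F|_{R^0})\to F|_{R^0}\to G\to$ and assert that $\RHom_{R^0}(F^0,-)$ annihilates $\sfD^{\leq-1}(R^0)$ in cohomological degrees $\geq 0$ ``since $F^0$ is concentrated in degree~$0$''. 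That vanishing amounts to $\Ext^{\geq1}_{R^0}(F^0,-)=0$ on the relevant cohomology modules, which holds when $F^0$ is \emph{projective} but not when it is merely flat: a flat module can have arbitrary, even infinite, projective dimension (already $\Ext^1_{\ZZ}(\QQ,\ZZ)\neq0$). The obstruction to lifting $\id_G$ lives in $\Hom_{\sfD(R^0)}(F^0,\sigma^{<0}(F|_{R^0})[1])$, which receives contributions from $\Ext^{\geq2}_{R^0}(F^0,\tuH^{<0}(F))$, and you have no reason for these to vanish. In effect you are assuming that $\tuH^0\colon\cF\to\Flat H^0$ is full, which is part of what the conjecture is meant to establish; note that the analogous fullness statements for $\cP$ (Lemma~\ref{basic of cP lemma}(1)) and $\cI$ (Theorem~\ref{Shaul theorem}(1)) are proved in the paper precisely by exploiting projectivity, respectively injectivity, and have no flat counterpart available.

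A way to package the argument so that only the rectification problem remains is to run Lazard's theorem at the level of $R$ rather than of $H^0$: present $F$ as a filtered homotopy colimit of finite free DG-$R$-modules (the $\mathbb{E}_1$-version of Lazard's theorem in \cite[Section 7.2.2]{Lurie:HA}, which the paper already cites for Lemma~\ref{basic of cF lemma}), rectify this to a strict filtered diagram in $\sfC(R)$, observe that strict chain maps $R^{n}\to R^{m}$ are matrices over $\tuZ^{0}(R)\subseteq R^{0}$, and let $F^{0}$ be the colimit of the corresponding diagram of free $R^{0}$-modules. Then $F^{0}$ is flat by the classical Lazard theorem and $F^{0}\otimes_{R^{0}}R\cong F$ because filtered colimits are exact, so the comparison quasi-isomorphism comes for free. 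The entire difficulty is then concentrated in the rectification of the homotopy-coherent filtered diagram, i.e.\ exactly the lax-versus-strict issue you identified; until that is resolved (or a counterexample produced), the statement remains a conjecture.
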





\section{The global dimension}\label{The global dimension}

We introduce the notion of the global dimension of a connective DG-algebra $R$.

\begin{theorem}\label{global dimension theorem}
Let $R$ be a connective DG-algebra. Then the following numbers are the same. 
\[
\begin{split}
&\sup\{ \pd M-\champ M \mid M \in \sfD^{< \infty}(R) \} \\
& \sup\{ \pd M \mid  M \in \Mod H^{0} \} \\
&\sup\{  \injdim M - \champ M \mid M \in \sfD^{> -\infty}(R) \} \\
&\sup\{\injdim M\mid M \in \Mod H^{0}\} 
\end{split}
\]
This common number is called the \emph{(right) global dimension} of $R$ and is denoted as $\gldim R$. 
\end{theorem}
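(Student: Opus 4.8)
The plan is to establish the chain of equalities in two stages. First I would identify the two numbers taken over the standard heart $\Mod H^{0}$ with a single invariant that is manifestly symmetric; this forces the second and fourth numbers to agree. Then I would reduce the two numbers taken over $\sfD^{<\infty}(R)$ and $\sfD^{>-\infty}(R)$ to the heart by a d\'evissage along the standard $t$-structure, which identifies the first with the second and the third with the fourth.

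For the first stage, fix $M\in\Mod H^{0}$, so that $\sup M=0$. The equivalence (1)$\Leftrightarrow$(4) of Theorem \ref{sppj resolution theorem lemma II} says that $\pd M\le d$ if and only if $\RHom(M,N)\in\sfD^{[0,d]}$ for every $N\in\Mod H^{0}$; since $\tuH^{n}(\RHom(M,N))=\Hom_{\sfD(R)}(M,N[n])$, and $\Hom_{\sfD(R)}(M,N[n])=0$ for $n<0$ because the heart of a $t$-structure has no negative self-extensions, this gives
\[
\pd M=\sup\{\,n\ :\ \Hom_{\sfD(R)}(M,N[n])\neq 0\ \text{for some }N\in\Mod H^{0}\,\}.
\]
Dually, for $I\in\Mod H^{0}$ one has $\inf I=0$, and unwinding condition (4) of Theorem \ref{ifij resolution theorem} (again using the vanishing for $n<0$) gives
\[
\injdim I=\sup\{\,n\ :\ \Hom_{\sfD(R)}(N,I[n])\neq 0\ \text{for some }N\in\Mod H^{0}\,\}.
\]
Taking the supremum over $M$, resp.\ over $I$, both the second and the fourth numbers are therefore equal to
\[
g:=\sup\{\,n\ :\ \Hom_{\sfD(R)}(M,I[n])\neq 0\ \text{for some }M,I\in\Mod H^{0}\,\},
\]
and in particular coincide.

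For the second stage I would show the first number equals $g$. The inequality $g\le\sup\{\pd M-\champ M\mid M\in\sfD^{<\infty}(R)\}$ is clear, since a nonzero object of the heart has $\champ M=0$. For the converse, let $M\in\sfD^{<\infty}(R)$; if $\inf M=-\infty$ the claimed bound $\pd M\le g+\champ M$ is vacuous, so I may assume $M\in\sfD^{\mrb}(R)$ and proceed by induction on $\champ M$. If $\champ M=0$ then $M\cong\tuH^{\sup}(M)[-\sup M]$ and, since $\pd$ is shift-invariant, $\pd M-\champ M=\pd\tuH^{\sup}(M)\le g$. If $\champ M>0$, put $s=\sup M$ and apply Corollary \ref{201804030002} to the truncation triangle $\sigma^{<s}M\to M\to\tuH^{s}(M)[-s]\to$. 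Because $\inf(\sigma^{<s}M)=\inf M$ and $\champ(\sigma^{<s}M)<\champ M$, the induction hypothesis gives $\pd(\sigma^{<s}M)-\sup(\sigma^{<s}M)+\inf M\le g$, while $\pd\tuH^{s}(M)\le g$ and $\inf M\le s$ give $\pd\tuH^{s}(M)-s+\inf M\le g$; hence $\pd M-\champ M=(\pd M-\sup M)+\inf M\le g$. The equality of the third number with $g$ is obtained by the same argument with the roles of $\inf$ and $\sup$ interchanged, using Corollary \ref{201804030002II} and the truncation triangle $\tuH^{\inf M}(M)[-\inf M]\to M\to\sigma^{>\inf M}M\to$.

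The main obstacle here is organisational rather than conceptual: the substantive work is already done in Theorems \ref{sppj resolution theorem lemma II} and \ref{ifij resolution theorem}, so the two points that need care are (i) that Yekutieli's projective (resp.\ injective) dimension of a heart object, a priori tested against all of $\sfD(R)$, is actually controlled by the heart alone---this is what produces the $M\leftrightarrow I$ symmetry making the second and fourth numbers agree---and (ii) the bookkeeping of the shift constants $\sup M$ and $\inf M$ in the d\'evissage, together with the degenerate cases in which $\champ M$ or the relevant homological dimension is infinite.
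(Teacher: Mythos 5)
Your proof is correct and follows essentially the same route as the paper: the equality of the two heart-level suprema via condition (4) of Theorems \ref{sppj resolution theorem lemma II} and \ref{ifij resolution theorem} (your symmetric quantity $g$ is just a repackaging of the paper's two inequalities $v_2\le v_4$ and $v_4\le v_2$), and the reduction of the unbounded cases to the heart by induction on $\champ M$ along the standard truncation triangle using Corollaries \ref{201804030002} and \ref{201804030002II}. Your write-up is in fact slightly more careful than the paper's about the shift bookkeeping and the degenerate cases.
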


\begin{proof}
For simplicity, we set the $i$-th value in question to be $v_{i}$ for $i = 1,2,3,4$. 
For example $v_{2} := \inf\{ n \in \NN \mid \pd M \leq n \textup{ for all } M \in \Mod H^{0} \}$.

We claim   $v_{1} =v_{2}$. 
It is clear $v_{2} \leq v_{1}$. 
Therefore it is enough to prove  that 
  $\pd M \leq v_{2} + \champ M$ for any $M \in \sfD^{< \infty}$.   
In the case where  $\champ M = \infty$, there is nothing to prove. 
We deal with the case $\champ M < \infty$ by the induction on $a=\champ M$. 
The case $\champ M = 0$ is obvious. 
Assume that the case $< a$ is already proved. 
Let 
$
\sigma^{< \sup M} M \to M \to \tuH^{\sup} (M)[-\sup M] \to 
$ be the exact triangle induced from the standard $t$-structure. 
Note that  $\champ \sigma^{< \sup M}M < \champ M$. 
Using  Lemma \ref{201804030002} and the induction hypothesis, 
we deduce the desired inequality. 

We can prove $v_{3} =v_{4}$ in the same way.  

We prove $v_{2} \leq v_{4}$. If $v_{4} = \infty$, then there is nothing to prove. 
Assume that $v_{4} < \infty$.  
Let $M \in \Mod H^{0}$. 
Then for any $N \in \Mod H^{0}$, 
we have $\RHom(M,N) \in \sfD^{[0,v_{4}]}$. 
Thus, by Theorem \ref{sppj resolution theorem lemma II}, we deduce that $\pd M \leq v_{4}$. 
This proves the desired inequality. 

We can show $v_{4} \leq v_{2}$ in the same way and  
prove the equality $v_{2} = v_{4}$. 
\end{proof}

\begin{remark}
For any connective DG-algebra $R$,  we have 
\[ \sup\{ \pd M\mid M \in \sfD^{< \infty}(R) \} = \infty,\]
since $\pd (R \oplus R[n]) = n$ for $n \in \NN$. 
\end{remark}

Observe that if $R$ is an ordinary algebra, 
then the global dimension defined in Theorem \ref{global dimension theorem} 
coincides with the ordinary global dimension. 
The ordinary global dimensions is not preserved by derived equivalence, 
but their finiteness is preserved.
We prove the DG-version. 

Let $R$ and $S$ be connective DG-algebra. 
Assume that they are derived equivalent to each other.
Namely, there exists an equivalence $\sfD(R) \simeq \sfD(S)$ of triangulated categories, 
by which we identify $\sfD(R)$ with $\sfD(S)$. 

\begin{proposition}
Under the above situation the following assertions hold.
\begin{enumerate}[(1)]
\item  $\pd_{S} R < \infty$. 

\item $\gldim S \leq \gldim R + \pd_{S} R$. 

\item $gldim R < \infty$ if and only if $\gldim S < \infty$. 
\end{enumerate}
\end{proposition}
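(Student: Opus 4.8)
The plan is to fix the triangulated equivalence $\Phi\colon \sfD(R)\xrightarrow{\sim}\sfD(S)$ through which we identify the two derived categories, and to organize everything around the object $X:=\Phi(R)\in\sfD(S)$, the image of the free module. Write $\cP_R:=\Add R\subset\sfD(R)$, $\cP_S:=\Add S\subset\sfD(S)$, $\cA:=\Add X\subset\sfD(S)$, $s_0:=\sup_S X$ and $p:=\pd_S R=\pd_S X$. For (1): being a compact object is a purely categorical condition, so $X$ is a compact generator of $\sfD(S)$, hence $X\in\Perf S=\thick S$; since $S\in\cP_S$ we get $\thick S\subseteq\thick\cP_S=\sfD(S)_{\textup{fpd}}$ by the description of $\sfD(S)_{\textup{fpd}}$ obtained in Section~\ref{Resolutions of DG-modules}, so $p<\infty$. (The same argument applied to $\Phi^{-1}$ gives $\pd_R S<\infty$, which we will reuse for the converse in (3).)

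The key step for (2) is a bound locating $\Phi^{-1}$ of an $\tuH^0(S)$-module inside a narrow slab of $\sfD(R)$. By Corollary~\ref{201712182258II}, $X$ has strict projective concentration $[s_0-p,\ s_0]$, so $\RHom_S(X,-)$ sends the standard heart $\Mod\tuH^0(S)=\sfD^{[0,0]}(S)$ into $\sfD^{[-s_0,\ p-s_0]}(\kk)$. Hence for $N\in\Mod\tuH^0(S)$ and $i\in\ZZ$,
\[
\tuH^{i}(\Phi^{-1}N)=\Hom_{\sfD(R)}(R,\Phi^{-1}N[i])\cong\Hom_{\sfD(S)}(X,N[i])=\tuH^{i}\bigl(\RHom_S(X,N)\bigr),
\]
which vanishes for $i\notin[-s_0,\ p-s_0]$, so $\Phi^{-1}N\in\sfD^{[-s_0,\ p-s_0]}(R)$; in particular $\inf_R\Phi^{-1}N\ge -s_0$ and $\champ_R\Phi^{-1}N\le p$. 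Only graded Hom-groups are used, so no structure on $\Phi$ beyond being a triangulated equivalence is required.

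For (2) we may assume $g:=\gldim R<\infty$, and by Theorem~\ref{global dimension theorem} it suffices to prove $\pd_S N\le g+p$ for every $N\in\Mod\tuH^0(S)$. Put $N':=\Phi^{-1}N$. The previous paragraph gives $d:=\pd_R N'\le g+\champ_R N'\le g+p<\infty$, so by Theorem~\ref{sppj resolution theorem lemma II}, $N'\in\cP_R[-\sup_R N']\ast\cdots\ast\cP_R[-\sup_R N'+d]$; applying $\Phi$ (which preserves $\ast$ and sends $\cP_R$ to $\cA$) yields $N\in\cA[-\sup_R N']\ast\cdots\ast\cA[-\sup_R N'+d]$. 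Next, for any nonzero $A\in\cA$, the functor $\RHom_S(A,-)$ is a retract of $\prod_I\RHom_S(X,-)$, so every projective concentration of $X$ is one of $A$; thus the strict projective concentration $[c,d']$ of $A$ satisfies $[c,d']\subseteq[s_0-p,\ s_0]$, whence by Corollary~\ref{201712182258II} $\pd_S A-\sup_S A=(d'-c)-d'=-c\le p-s_0$. Iterating Corollary~\ref{201804030002} along the $\ast$-filtration, together with the identity $\pd_S(Z[n])-\sup_S(Z[n])=(\pd_S Z-\sup_S Z)+n$, gives
\[
\pd_S N-\sup_S N\ \le\ (p-s_0)+\bigl(-\sup_R N'+d\bigr).
\]
Since $\sup_S N=0$ and $d\le g+\sup_R N'-\inf_R N'$, the right-hand side equals $g+p-s_0-\inf_R N'\le g+p$ by $\inf_R N'\ge -s_0$. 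Taking the supremum over $N$ proves $\gldim S\le\gldim R+\pd_S R$. Part (3) is then immediate: if $\gldim R<\infty$ then $\gldim S\le\gldim R+\pd_S R<\infty$ by (1) and (2), and the reverse implication follows by interchanging $R$ and $S$.

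I expect the delicate point to be the sharpness in (2): a cruder form of the same argument only places $\Phi^{-1}N$ in some interval $[-w,w]$ depending on $\Phi$ and yields $\gldim S\le\gldim R+(\text{constant})$, which already suffices for (3); forcing the constant down to exactly $\pd_S R$ is precisely what makes the two uses of the strict projective concentration of $X$ necessary --- once to bound $\inf_R\Phi^{-1}N$ from below, and once to bound $\pd_S A-\sup_S A$ uniformly over $A\in\Add X$.
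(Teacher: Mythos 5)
Your proposal is correct and follows essentially the same route as the paper: $R\in\thick S$ gives (1); the strict projective concentration of $R$ over $S$ bounds $\champ_R$ and $\inf_R$ of any module in the heart of $S$, and then a sppj/$\ast$-decomposition over $R$ of length at most $\gldim R+\champ_R$, transported through the equivalence and combined with Corollary~\ref{201804030002}, yields $\pd_S N\le\gldim R+\pd_S R$, with (3) by symmetry. Your retract-of-$\prod_I\RHom_S(X,-)$ argument bounding $\pd_S A-\sup_S A$ uniformly over $A\in\Add X$ is a slightly more careful justification of the step the paper states as ``$\pd_S P=\pd_S R$ for $P\in\cP$'', but the argument is the same.
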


\begin{proof}
(1) It is well-known that $R \in \thick S$. 
Thus, in particular $\pd_{S}R < \infty$.

(2) 
If $\gldim R = \infty$, then there is nothing to prove. 
We assume $\gldim R < \infty$. 
Let $M \in \Mod \tuH^{0}(S)$. 
Then, $\RHom(R, M) \in \sfD^{[-\sup_{S} R,\pd_{S}R -\sup_{S}R]}(\kk)$. 
Therefore $M$ belongs to $D^{[-\sup_{S} R,\pd_{S}R -\sup_{S}R]}(R)$. 
Thus we have $\champ_{R} M \leq \pd_{S} R < \infty$ and 
\begin{equation}\label{201804051540}
\champ_{R} M - \sup_{R}M -\sup_{S} R= - \inf_{R} M -\sup_{S} R \leq 0.
\end{equation}

Note that  $\pd_{R} M \leq \gldim R + \champ_{R} M < \infty$. 
Let $P_{\bullet}$ be  a sppj resolution of $M$ as an object of $\sfD(R)$ 
of length $e$ such that $\pd_{R} M = e + \sup_{R} M - \sup_{R} P_{e}$.
 Then, $M \in \cP[-\sup_{R} P_{0}] \ast \cdots \ast \cP[e - \sup_{R} P_{e}]$. 
 Since for $P\in \cP$ we have $\pd_{S} P = \pd_{S} R$, 
 using Corollary \ref{201804030002}, we deduce the  inequality  below.  
\begin{equation}\label{201804051246} 
\pd_{S} M  
 \leq  \pd_{S} R + e -\sup_{S} P_{e}  =\pd_{S} R +\pd_{R} M  -\sup_{R} M  +\sup_{R} P_{e} -\sup_{S} P_{e}.
 \end{equation}

Observe that $\sup_{R} P_{e} - \sup_{S} P_{e} = -\sup_{S} R$.
Thus, combining the inequalities \eqref{201804051246}, \eqref{201804051540} 
and $\pd_{R} M \leq \gldim R + \champ_{R} M$, 
we obtain the desired inequality. 
\[
\begin{split}
\pd_{S} M  
& \leq  \pd_{S} R +  \pd_{R} M  -\sup_{R} M  +\sup_{R} P_{e} -\sup_{S} P_{e} \\
& \leq \pd_{S} R + \gldim R + \champ_{R}M -\sup_{R} M -\sup_{S}R\\
& \leq \pd_{S} R + \gldim R
\end{split}
\]

(3) follows from (1) and (2). 
\end{proof}

To finish this section we identify a connective DG-algebra of global dimension $0$.

\begin{proposition}\label{global dimension 0}
For a connective DG-algebra $R$, the following conditions are equivalent. 
\begin{enumerate}[(1)]
\item $\gldim R= 0$. 

\item $R$ is an ordinary algebra (i.e., $H^{<0}= 0$)  which is semi-simple.
\end{enumerate}
\end{proposition}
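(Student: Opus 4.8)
The plan is to dispatch $(2)\Rightarrow(1)$ in essentially one line and to put the real work into $(1)\Rightarrow(2)$. For $(2)\Rightarrow(1)$: if $H^{<0}(R)=0$ then, $R$ being connective, $H(R)$ is concentrated in degree $0$, so $R$ is quasi-isomorphic to the ordinary algebra $H^{0}$. As observed in the prose just before the statement, the global dimension of Theorem \ref{global dimension theorem} then coincides with the ordinary global dimension of $H^{0}$; since a semisimple algebra has ordinary global dimension $0$, we get $\gldim R=0$.

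For $(1)\Rightarrow(2)$: by Theorem \ref{global dimension theorem} the hypothesis $\gldim R=0$ means $\sup\{\pd M\mid M\in\Mod H^{0}\}=0$, so $\pd M=0$ for every nonzero $M\in\Mod H^{0}$. I would apply this to the object $\underline{H^{0}}\in\sfD(R)$ corresponding to the regular module $H^{0}$ under the identification $\sfH\cong\Mod H^{0}$ of the standard heart. Since $\sup\underline{H^{0}}=0$, Lemma \ref{20171219205II} gives $\underline{H^{0}}\in\cP$, and then $\cP\subset\cF$ together with Lemma \ref{basic of cF lemma}(5) yields $\tuH(\underline{H^{0}})\cong\tuH^{0}(\underline{H^{0}})\otimes_{H^{0}}H=H^{0}\otimes_{H^{0}}H=H$; in particular $\tuH^{n}(\underline{H^{0}})\cong H^{n}$ for every $n$. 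But $\underline{H^{0}}$ lies in the heart, so $\tuH^{n}(\underline{H^{0}})=0$ for $n\neq0$; hence $H^{n}=0$ for $n<0$ (and for $n>0$ by connectivity), i.e., $R$ is an ordinary algebra. Once $R$ is known to be ordinary, $\gldim R=0$ is its classical global dimension, which forces $H^{0}$ to be semisimple, giving $(2)$.

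The only nontrivial point, and where the argument genuinely uses the machinery built earlier, is the step that upgrades ``every $H^{0}$-module has projective dimension $0$'' to ``$H^{<0}(R)=0$'': the regular module $H^{0}$, viewed in $\sfD(R)$, has cohomology concentrated in degree $0$, yet being of projective dimension $0$ it must lie in $\cP$, and every object of $\cP$ (lying in $\cF$) has cohomology of the shape $\tuH^{0}(-)\otimes_{H^{0}}H$, so the negatively graded pieces of $H$ are forced to vanish. The rest — the comparison of $\gldim$ with the ordinary global dimension for ordinary algebras, and the classical equivalence ``global dimension $0$'' $\Leftrightarrow$ ``semisimple'' — is standard.
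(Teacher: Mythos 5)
Your proof is correct, and for the crucial implication (1) $\Rightarrow$ (2) it reaches the conclusion by a different mechanism than the paper, although both arguments start from the same input, namely that the regular module $H^{0}$, viewed in the standard heart of $\sfD(R)$, has $\pd H^{0}=0$. The paper stays inside the sppj framework of Section 2: from $\pd H^{0}=0$ it deduces that the canonical sppj morphism $R \to H^{0}$ splits in $\sfD(R)$, hence that the exact sequence $0 \to H^{<0} \to H \to H^{0} \to 0$ of graded $H$-modules splits, and a graded $H$-linear section necessarily sends $1$ to $1 \in H^{0}\subset H$ and therefore annihilates $H^{<0}$, forcing $H^{<0}=0$. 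You instead use Lemma \ref{20171219205II} to place $\underline{H^{0}}$ in $\cP$, then pass through the inclusion $\cP\subset\cF$ and the cohomology formula $\tuH(F)\cong \tuH^{0}(F)\otimes_{H^{0}}H$ of Lemma \ref{basic of cF lemma}(5), so that concentration of $\tuH(\underline{H^{0}})$ in degree $0$ kills $H^{<0}$ at once. Both routes are valid: the paper's is more self-contained (it never leaves the sppj section, the splitting being essentially the fully-faithfulness of $\tuH^{0}$ on $\cP$), while yours imports the flat-dimension machinery of Section 4 but finishes in one line and isolates exactly which structural property of objects of $\cP$ (cohomology of the form $\tuH^{0}(-)\otimes_{H^{0}}H$) is responsible for the vanishing. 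Your treatment of (2) $\Rightarrow$ (1) and of the final reduction ``ordinary with $\gldim =0$ implies semisimple'' matches what the paper leaves as clear, modulo the harmless observation that $\gldim$ only depends on $\sfD(R)$ with its standard $t$-structure and is therefore unchanged under the quasi-isomorphism $R\simeq H^{0}$.
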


\begin{proof}
The implication (2) $\Rightarrow$ (1) is clear. 
We prove the implication (1) $\Rightarrow$ (2). 
We only have to show that $H^{< 0} = 0$. 
Since $\pd H^{0} = 0$, the canonical morphism $R \to H^{0}$ which is sppj splits in $\sfD(R)$. 
Thus, the canonical exact sequence $ 0 \to H^{< 0} \to H \to H^{0} \to 0$ of graded $H$-modules splits. 
It follows that $H^{< 0} = 0$. 
\end{proof}



\end{document}